
\RequirePackage[l2tabu, orthodox]{nag}

\documentclass[12pt]{amsart}
\usepackage{fullpage,url,amssymb,enumerate,colonequals}
\usepackage[all]{xy} 
\usepackage{comment}
\usepackage{graphicx}

\subjclass[2020]{14H10, 14H30, 14N10, 14C17}
\keywords{moduli of curves, admissible covers, tautological classes, intersection theory}

\usepackage[OT2,T1]{fontenc}

\usepackage{color}


\def\Adm{\mathcal{A}dm}

\def\cA{\mathcal{A}}
\def\cB{\mathcal{B}}
\def\bC{\mathbb{C}}
\def\cC{\mathcal{C}}

\def\cH{\mathcal{H}}

\def\cM{\mathcal{M}}

\def\cO{\mathcal{O}}
\def\bP{\mathbb{P}}

\def\bQ{\mathbb{Q}}

\def\cT{\mathcal{T}}

\def\cY{\mathcal{Y}}
\def\bZ{\mathbb{Z}}

\def\barM{\overline{\cM}}
\def\barH{\overline{\cH}}

\def\wt{\widetilde}

\DeclareMathOperator{\Aut}{Aut}

\DeclareMathOperator{\ch}{ch}
\DeclareMathOperator{\cores}{cores}

\DeclareMathOperator{\id}{id}

\DeclareMathOperator{\im}{im}

\DeclareMathOperator{\lcm}{lcm}

\DeclareMathOperator{\ord}{ord}

\DeclareMathOperator{\pr}{pr}

\DeclareMathOperator{\Qmod}{Qmod}
\DeclareMathOperator{\red}{red}
\DeclareMathOperator{\res}{res}

\DeclareMathOperator{\Spec}{Spec}
\DeclareMathOperator{\Stab}{Stab}

\DeclareMathOperator{\topchern}{top}


\newtheorem{thm}{Theorem}[section]
\newtheorem{lem}[thm]{Lemma}

\newtheorem{prop}[thm]{Proposition}

\newtheorem{conjecture}{Conjecture}

\theoremstyle{definition}
\newtheorem{rem}[thm]{Remark}
\newtheorem{ex}[thm]{Example}
\newtheorem{defn}[thm]{Definition}

\makeatletter
\g@addto@macro\bfseries{\boldmath} 
\makeatother

\usepackage{microtype}

\usepackage[
	backref,
	pdfauthor={Carl Lian}, 
	pdftitle={The H-tautological ring},
]{hyperref}

\begin{document}

\title{The $\cH$-tautological ring}

\author{Carl Lian}
\address{Institut f\"{u}r Mathematik, Humboldt-Universit\"{a}t zu Berlin, Berlin, 12489, Germany}
\email{liancarl@hu-berlin.de}
\urladdr{\url{http://sites.google.com/view/carllian/}}

\date{\today}

\begin{abstract}
We extend the theory of tautological classes on moduli spaces of stable curves to the more general setting of moduli spaces of admissible Galois covers of curves, introducing the so-called $\mathcal{H}$-tautological ring. The main new feature is the existence of restriction-corestriction morphisms remembering intermediate quotients of Galois covers, which are a rich source of new classes. In particular, our new framework includes classes of Harris-Mumford admissible covers on moduli spaces of curves, which are known in some (and speculatively many more) examples to lie outside the usual tautological ring. We give additive generators for the $\mathcal{H}$-tautological ring and show that their intersections may be algorithmically computed, building on work of Schmitt-van Zelm. As an application, we give a method for computing integrals of Harris-Mumford loci against tautological classes of complementary dimension, recovering and giving a mild generalization of a recent quasi-modularity result of the author for covers of elliptic curves.
\end{abstract}

\maketitle


\tableofcontents

\section{Introduction}\label{intro}

\subsection{Hurwitz loci on moduli spaces of curves}

The cycle theory of the moduli space of stable curves $\barM_{g,n}$ is central to understanding its geometry. Moduli spaces of branched covers of curves provide a particularly rich source of algebraic cycles on $\barM_{g,n}$. Let $\cH_{g/h,d}$ be a moduli space of branched covers (a \textit{Hurwitz space}), parametrizing finite degree $d$ morphisms $f:X\to Y$, where $X,Y$ are smooth curves of genus $g,h$, respectively. Suppose further that $f$ is simply branched over
\begin{equation*}
b=(2g-2)-d(2h-2)
\end{equation*}
points. We then get morphisms
\begin{equation*}
\xymatrix{
\cH_{g/h,d} \ar[r]^{\phi_{g/h,d}} \ar[d]^{\delta_{g/h,d}} & \cM_{g,b} \\
\cM_{h,b}
}
\end{equation*}
remembering the source and target, marked with the ramification and branch points, respectively.

Via the Harris-Mumford space of admissible covers, one can compactify $\cH_{g/h,d}$ and extend the morphisms above:
\begin{equation*}
\xymatrix{
\barH_{g/h,d} \ar[r]^{\phi_{g/h,d}} \ar[d]^{\delta_{g/h,d}} & \barM_{g,b} \\
\barM_{h,b}
}
\end{equation*}

By to the Riemann Existence Theorem, the map $\delta_{g/h,d}$ is flat and quasi-finite of degree given combinatorially by a \textit{Hurwitz number} counting monodromy actions of the fundamental group of the target curve, punctured at the branch points, on the fibers of $f$.

On the other hand, the map $\phi_{g/h,d}$ gives an interesting cycle of homological degree $3h-3+b$ on $\barM_{g,b}$ upon pushing forward the fundamental class $1\in A_{*}(\barH_{g/h,d})$; we refer to such cycles as \textit{branched cover loci} or \textit{Hurwitz loci}. 

We note three reasons for a detailed study of such cycles:
\begin{enumerate}
\item Cycles $(\phi_{g/h,d})_{*}(1)$ are interesting from the point of view of birational geometry. For example, Eisenbud, Harris, and Mumford prove \cite{hm,harris,eh_kodaira} that $\barM_g$ is of general type for $g\ge24$ by producing effective divisors $E$ on $\barM_g$ for which $K_{\barM_g}=A+E$, for some ample divisor $A$. When $g$ is odd, they take $E$ to be the branched cover locus $(\pi_{g/0,(g+1)/2})_{*}(1)$ of curves of sub-generic gonality.

More recently, numerous examples of extremal classes on $\barM_{g,n}$ have been constructed via branched cover loci, see, for instance, \cite{cc1,cc2,cp,ct,blankers}.

\item Cycles $(\phi_{g/h,d})_{*}(1)$ satisfy interesting numerological phenomena. For example, in the case $h=1$, we have the following:
\begin{conjecture}\label{qmod_conj}\cite[Conjecture 1]{lian_qmod}
Fix an integer $g\ge2$. We have:
\begin{equation*}
\sum_{d\ge1}[(\phi_{g/1,d})_{*}(1)]q^d\in A^{g-1}(\barM_g)\otimes\Qmod,
\end{equation*}
where $\Qmod$ is the ring of quasimodular forms.
\end{conjecture}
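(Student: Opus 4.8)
The plan is to reduce the vector-valued statement to the scalar quasimodularity of intersection numbers, then to compute those numbers through the $\cH$-tautological machinery, where the $q$-dependence is concentrated in genus-$1$ covering counts known to be quasimodular. Since $A^{g-1}(\barM_g)$ is a finite-dimensional $\bQ$-vector space and $\Qmod$ is a ring, a power series $\sum_d c_d q^d$ with $c_d\in A^{g-1}(\barM_g)$ lies in $A^{g-1}(\barM_g)\otimes\Qmod$ if and only if $\sum_d \ell(c_d)q^d\in\Qmod$ for every $\ell$ in a spanning set of the dual. The natural functionals are $\ell_\alpha(c)=\int_{\barM_g} c\cdot\alpha$ for $\alpha$ of complementary dimension, so it suffices to prove that
\begin{equation*}
\sum_{d\ge1}\left(\int_{\barM_g}(\phi_{g/1,d})_*(1)\cdot\alpha\right)q^d\in\Qmod
\end{equation*}
for $\alpha$ ranging over a spanning set of $A^{2g-2}(\barM_g)$. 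By the projection formula this equals $\sum_d\left(\int_{\barH_{g/1,d}}\phi_{g/1,d}^*\alpha\right)q^d$, so the problem becomes one of computing top intersections of pullbacks of complementary classes on the admissible cover spaces.

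Next I would feed these integrals into the computational framework developed in this paper. Writing $\alpha$ as a tautological class and pulling back along $\phi_{g/1,d}$, the $\cH$-tautological algorithm (extending Schmitt--van Zelm) expresses $\int_{\barH_{g/1,d}}\phi^*\alpha$ as a finite sum over admissible-cover degeneration types of products of local contributions. The key structural point to establish is that in each such product the degree $d$ enters only through \emph{genus-$1$ factors}: connected or disconnected counts of degree-$d$ covers of the base elliptic curve with prescribed ramification and $\psi$/$\kappa$-descendent insertions, while all remaining factors are $d$-independent tautological top intersections attached to the positive-genus source data. Equivalently, pushing forward along the proper map $\delta_{g/1,d}$ to $\barM_{1,b}$ via $\int_{\barH_{g/1,d}}\phi^*\alpha=\int_{\barM_{1,b}}(\delta_{g/1,d})_*\phi^*\alpha$ isolates the $d$-dependence into Hurwitz-type weights over the elliptic base.

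Finally, I would invoke quasimodularity of the genus-$1$ building blocks: generating series in $q$ of (descendent) Hurwitz numbers of the elliptic curve are quasimodular, by Bloch--Okounkov and Eskin--Okounkov, and in Gromov--Witten form by Okounkov--Pandharipande. Since $\Qmod$ is closed under the operations appearing in the stratum sum --- products, the weight-raising derivative $q\,d/dq$, and finite $\bQ$-linear combinations --- quasimodularity of each scalar generating series follows, and hence the conjecture.

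The main obstacle is the middle step: showing that the $\cH$-tautological evaluation of $\int\phi^*\alpha$ genuinely factors, stratum by stratum, as ($d$-independent tautological number) $\times$ (genus-$1$ covering count), with clean control of the combinatorics of the degeneration graphs, the automorphism and gluing factors, and the bookkeeping of how the $\psi$- and $\kappa$-insertions distribute between source and base. A secondary, more conceptual obstacle concerns the passage from scalar pairings back to the class statement: the complementary pairings against \emph{tautological} $\alpha$ detect only the tautological shadow of $(\phi_{g/1,d})_*(1)$, so the conjecture as phrased at the level of classes requires either that these Hurwitz loci be tautological (or detected by tautological complementary pairings) when $h=1$, or an additional input controlling a possible non-tautological contribution; this is precisely the delicate point that the $\cH$-tautological ring is designed to confront.
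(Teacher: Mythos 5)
You are attempting to prove a statement that the paper does not prove: Conjecture \ref{qmod_conj} is imported from \cite{lian_qmod} and remains open. The paper only recovers the case $g=2$ (Theorems \ref{d-elliptic_recover} and \ref{d-elliptic_recover_general}), and it explicitly warns that the full conjecture ``is unlikely to be proven by (classical) tautological methods, as the classes appearing are not all tautological.'' So the relevant question is whether your sketch closes the gap the paper points to, and it does not.

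The fatal problem is your very first reduction. The conjecture is a statement about the classes $(\phi_{g/1,d})_{*}(1)$ in the Chow group $A^{g-1}(\barM_g)$, and you propose to detect them by the functionals $\ell_\alpha(c)=\int_{\barM_g}c\cdot\alpha$ with $\alpha$ running over a spanning set of $A^{2g-2}(\barM_g)$ --- or, as your implementation actually requires, over \emph{tautological} classes $\alpha\in R^{2g-2}(\barM_g)$, since those are the only ones the $\cH$-tautological algorithm can pair against. This fails on two counts. First, $A^{g-1}(\barM_g)$ is not known to be finite-dimensional, and the intersection pairing on Chow groups is not known to be perfect, so ``quasimodularity of all pairings'' does not imply ``the series of classes lies in $A^{g-1}(\barM_g)\otimes\Qmod$.'' Second, and decisively, the classes $(\phi_{g/1,d})_{*}(1)$ are in general non-tautological (this is the whole point of \S\ref{non-taut_section} and of \cite{gp,vanzelm}), so pairings against tautological $\alpha$ see only a quotient of the information in the class; quasimodularity of those numbers is strictly weaker than the conjecture. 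You concede this in your final paragraph, but that concession is precisely the statement that your argument proves something else --- essentially the numerical statement of Theorem \ref{d-elliptic_recover_general}, generalized in $g$ --- rather than Conjecture \ref{qmod_conj}. (For $g=2$ the reduction happens to be legitimate because $A^{*}(\barM_2)$ is known to be tautological and spanned by boundary classes, which is exactly why the paper's computation of the two numbers $\int(\phi'_d)_{*}(1)\cdot\Delta_{00}$ and $\int(\phi'_d)_{*}(1)\cdot\Delta_{01}$ determines the divisor class there; no such fact is available for general $g$.) Separately, your middle step --- that each stratum contribution factors as a $d$-independent tautological number times a quasimodular genus-$1$ covering count --- is asserted, not proved; the paper's $g=2$ computation shows how delicate the bookkeeping of multiplicities, non-reduced structure, and excess terms already is in the smallest case, and no general factorization of this kind is established anywhere in the paper.
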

Conjecture \cite{lian_qmod} is known to be true for $g=2,3$ \cite[Theorem 1.2]{lian_qmod}, with a number of variants, for example, allowing for marked points to be taken into account. We give a slight improvement on this result in the case $g=2$ in Theorem \ref{d-elliptic_recover_general}.

\item Cycles $(\phi_{g/h,d})_{*}(1)$ are known in some cases to be non-tautological, see, for example, \cite{gp,vanzelm}. We defer a more detailed discussion of this phenomenon to the next section.
\end{enumerate}

\subsection{The classical tautological ring}\label{intro_classical}

Both the Chow ring $A^{*}(\barM_{g,n})$ and the cohomology ring $H^{*}(\barM_{g.n})$ are extremely complicated and a full understanding of these objects seems out of reach. However, Mumford \cite{mumford} initiated a study of certain \textit{tautological classes} on $\barM_{g,n}$ that appear in many natural geometric situations and are largely computable in practice.

By definition, the tautological rings $R^{*}(\barM_{g,n})\subset A^{*}(\barM_{g,n})$ form the smallest system of $\bQ$-subalgebras containing the $\psi$ and $\kappa$ classes and closed under all pushforwards by forgetful morphisms $\pi:\barM_{g,n+1}\to\barM_{g,n}$ and boundary morphisms $\xi_{\Gamma}:\barM_{\Gamma}\to\barM_{g,n}$, indexed by stable graphs $\Gamma$ of genus $g$ with $n$ marked points.\footnote{In fact, one can make this definition without mentioning the $\psi$ and $\kappa$ classes, which will automatically be included, but we include them explicitly to highlight their fundamental importance.} Additive generators for the tautological ring may be given by \textit{decorated boundary strata}, that is, pushforwards of polynomials in $\psi$ and $\kappa$ classes by boundary morphisms, see \cite[Appendix A]{gp}, and indeed such classes may be algorithmically intersected in terms of the combinatorics of dual graphs.

Many cohomology classes on moduli spaces of curves arising in geometry turn out to be tautological. However, as first shown by Graber-Pandharipande \cite{gp}, Hurwitz loci, in general, are not. For example, as we review in \S\ref{non-taut_section}, the $(3h-1)$-dimensional locus of stable curves of genus $2h$ admitting an admissible double cover of a curve of genus $h$ is non-tautological for $h$ sufficiently large. Indeed, its pullback to the boundary stratum $\barM_{h,1}\times\barM_{h,1}\to\barM_{2h}$ is a positive multiple of the diagonal class. However, if it were tautological, it would have a tautological K\"{u}nneth decomoposition upon pullback, which is impossible because $\barM_{h,1}$ is known to have odd cohomology for $h$ sufficiently large.

This example yields two insights: firstly, one would get a strictly larger theory by adding Hurwitz loci to the calculus of tautological classes, and secondly, doing so would require diagonal classes to be added as well. We will, in fact, go further: we develop this theory in the Chow rings of Hurwitz spaces themselves.

\subsection{Galois covers}

The main drawback to the Harris-Mumford compactification of Hurwitz spaces by admissible covers is that in general, spaces of admissible covers fail to be smooth. In order to carry out intersection theory on them in a robust way, therefore, we must pass to a resolution of singularities.

We will instead work with \textit{Galois covers}. Suppose that $f:X\to Y$ is a degree $d$ \'{e}tale cover of (possibly non-proper) curves. Then, we have a principal $S_d$ bundle
\begin{equation*}
\wt{f}:\wt{X}:=(X\times_Y\cdots\times_Y X)-\Delta\to Y
\end{equation*}
given by taking the $d$-fold product over $X$ and removing all diagonals. Moreover, the data of $f$ can be recovered from the $S_d$-cover $\wt{X}\to Y$ by defining $X=\wt{X}/S_{d-1}$.

If $f:X\to Y$ is now assumed to be a possibly branched cover of smooth curves, one can carry out the same construction on the \'{e}tale locus, then pass to the unique extension to proper curves, to produce a map $\wt{f}:\wt{X}\to Y$ that is an $S_d$-cover, but no longer a principal $S_d$-bundle.

If, however, $f:X\to Y$ is an admissible cover of nodal curves, the construction breaks down. On the other hand, the inverse construction is still valid: an \textit{admissible $S_d$-cover} $\wt{f}:\wt{X}\to Y$ gives rise to an admissible cover $f:\wt{X}/S_{d-1}\to Y$.

Thus, we have a map $\nu:\wt{H}_{g/h,d}\to\barH_{g/h,d}$, where $\wt{H}_{g/h,d}$ parametrizes $S_d$-covers $\wt{f}:\wt{X}\to Y$ that, upon taking the intermediate quotient by $S_{d-1}$, gives an admissible cover of the required type. The work of Abramovich-Corti-Vistoli in the language of \textit{twisted stable maps} (where the scheme-theoretic $S_d$-cover $\wt{f}:\wt{X}\to Y$ is replaced by a principal $S_d$-bundle $\wt{f}:\wt{X}\to\cY$ over a stacky curve $\cY$) implies that $\wt{H}_{g/h,d}$ is smooth and $\nu$ is a normalization, see \cite[Proposition 4.2.2]{acv}.

Therefore, we work in the following setting, following \cite{schmittvanzelm}: if $G$ is a finite group, let $\barH_{g,G,\xi}$ be the moduli space of \textit{pointed admissible $G$-covers} with source genus $g$ and with \textit{monodromy data} given by $\xi$, see Definition \ref{admissible_galois_main}. Then, $\barH_{g,G,\xi}$ is a smooth Deligne-Mumford stack, see Theorem \ref{source_target_properties}; it admits source and target maps 
\begin{align*}
\phi:\barH_{g,G,\xi}\to\barM_{g,r}\\
\delta:\barH_{g,G,\xi}\to\barM_{g',b}
\end{align*} 
as in the Harris-Mumford setting.

\subsection{The $\cH$-tautological ring}

We develop the theory of tautological classes on spaces of admissible $G$-covers $\barH_{g,G,\xi}$ in parallel with that of the classical theory.

As in the classical setting, we have natural notions of forgetful morphisms $\pi:\barH_{g,G,\xi+\{1\}}\to\barH_{g,G,\xi}$ and boundary morphisms $\xi_{(\Gamma,G)}:\barH_{(\Gamma,G)}\to\barH_{g,G,\xi}$, the latter indexed by \textit{admissible $G$-graphs}. In addition, one can define $\psi$ and $\kappa$ classes on $\barH_{g,G,\xi}$ by pullback under $\phi$.

However, the main new feature of the theory of spaces of admissible $G$-covers is the existence of \textit{restriction morphisms}
\begin{equation*}
\res^{G}_{G_1}:\barH_{g,G,\xi}\to\barH_{g,G_1,\xi'}
\end{equation*}
restricting the $G$-action on a stable curve to that of a subgroup $G_1\subset G$, and \textit{corestriction morphisms}
\begin{equation*}
\cores^{G}_{G/G_1}:\barH_{g,G,\xi}\to\barH_{g_1,G/G_1,\xi'}
\end{equation*}
corestricting the $G$-action on a stable curve $X$ to that of $G/G_1$ on the partial quotient $X/G_1$, where $G_1\subset G$ is a normal subgroup.

Combining these operations gives a more general notion of \textit{restriction-corestriction morphisms}, see Definition \ref{res_cores_defn}.

For example, restriction maps $\res^{G}_{\{1\}}$ recover source maps $\phi$, and corestriction maps $\cores^{G}_{\{1\}}$ recover target maps $\delta$. The pre-composition
\begin{equation*}
\phi_{g/h,d}\circ\nu:\wt{H}_{g/h,d}\to\barM_{g,b}
\end{equation*}
of the source map on a Harris-Mumford space with its normalization by $S_d$-covers may also be regarded as the composition
\begin{equation*}
\cores^{S_{d-1}}_{\{1\}}\circ\res^{S_d}_{S_{d-1}}.
\end{equation*}

Finally, as indicated in \S\ref{intro_classical}, our theory must also include diagonal morphisms
\begin{equation*}
\Delta:\barH_{g,G,\xi}\to\barH_{g,G,\xi}\times\barH_{g,G,\xi}
\end{equation*}
in the calculus, as pullbacks of restriction maps by boundary morphisms may include contributions from such classes.

The main definition of the paper (see also Definition \ref{main_defn}) is then:
\begin{defn}
The \textit{$\cH$-tautological ring} is the smallest system of $\bQ$-subalgebras
\begin{equation*}
R^{*}_{\cH}\left(\prod_{j=1}^{m}\barH_{g_j,G_j,\xi_j}\right)\subset A^{*}\left(\prod_{j=1}^{m}\barH_{g_j,G_j,\xi_j}\right),
\end{equation*}
ranging over all finite products of moduli spaces of admissible $G$-covers, that contains all $\psi$ and $\kappa$ classes and is closed under pushfoward by all (products of) boundary, forgetful, restriction-corestriction, and diagonal maps.
\end{defn}

If $\gamma$ is an arbitrary composition of tautological morphisms and $\theta$ is a polynomial in the $\psi$ and $\kappa$ classes, then certainly $\gamma_{*}(\theta)$ must be $\cH$-tautological. Our main theorem (see also Theorem \ref{main_thm}) is that such classes additively generate the $\cH$-tautological ring.

\begin{thm}\label{main_thm_intro}
The classes $\gamma_{*}(\theta)$ comprise a set of additive generators for the $\cH$-tautological ring. Moreover, the $\cH$-tautological ring is closed under bivariant pullback by the previously-described tautological morphisms.
\end{thm}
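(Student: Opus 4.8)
The plan is to show that the $\bQ$-span $T$ of the classes $\gamma_*(\theta)$ is already closed under all the operations defining $R^*_{\cH}$, so that the minimality in Definition~\ref{main_defn} forces $T = R^*_{\cH}$. Since each $\gamma_*(\theta)$ is visibly $\cH$-tautological, the inclusion $T \subseteq R^*_{\cH}$ is automatic, and it remains to check that $T$ is a system of $\bQ$-subalgebras containing the $\psi$ and $\kappa$ classes and closed under pushforward by (products of) boundary, forgetful, restriction-corestriction, and diagonal maps. Two of these are immediate: the $\psi$ and $\kappa$ classes are of the form $\gamma_*(\theta)$ with $\gamma = \id$, and if $\gamma$ is a composition of tautological morphisms then so is $f \circ \gamma$ for any further tautological morphism $f$, whence $f_*\gamma_*(\theta) = (f\circ\gamma)_*(\theta) \in T$. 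Thus the entire statement reduces to closure of $T$ under multiplication, together with the bivariant pullback assertion, and I would extract both from a single fiber-product computation.

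The technical core is as follows. Given two tautological morphisms $\gamma\colon A \to P$ and $\gamma'\colon B \to P$ onto a common product $P = \prod_j \barH_{g_j, G_j, \xi_j}$, I would form the fiber square with projections $p\colon A\times_P B \to A$ and $q\colon A\times_P B \to B$, and compute the refined product using that all spaces in sight are smooth (Theorem~\ref{source_target_properties}), so that every tautological morphism is lci and carries a refined Gysin pullback. The content is twofold. First, one must show that $A \times_P B$, after normalization, is a disjoint union of (products of) admissible $G$-cover spaces $\barH_{(\Gamma,G)}$, with both projections $p,q$ again tautological morphisms. For the boundary and restriction-corestriction morphisms this is exactly the combinatorial matching of admissible $G$-graphs carried out by Schmitt--van Zelm \cite{schmittvanzelm}: pulling a boundary stratum back along a source, target, or more general restriction-corestriction map decomposes as a sum over admissible $G$-graphs compatible with the underlying stable graph. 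The forgetful maps are incorporated through the usual comparison formulas, and the diagonal $\Delta$ is handled by the self-intersection formula discussed below.

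Second, the excess contribution must itself be $\cH$-tautological, in fact a combination of monomials in $\psi$ and $\kappa$ classes and their boundary pushforwards. For node-smoothing along boundary morphisms this is the classical contribution by the first Chern class of the tensor product of the two cotangent lines at a node, hence a $\psi$-class expression; for forgetful maps it is the standard $\psi$-comparison correction, whose error terms are boundary pushforwards of the unit. The genuinely new point is the diagonal $\Delta\colon \barH_{g,G,\xi} \to \barH_{g,G,\xi}\times\barH_{g,G,\xi}$, whose self-intersection introduces the total Chern class of the tangent bundle of $\barH_{g,G,\xi}$. I would argue that these Chern classes are $\cH$-tautological by controlling the (log) tangent complex of $\barH_{g,G,\xi}$ through the source map $\phi$ and the deformation theory of admissible $G$-covers, reducing to Mumford's theorem that the Chern classes of the tangent bundle of $\barM_{g,n}$ are tautological. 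This is the step I expect to be the main obstacle, since it is precisely where the restriction-corestriction and diagonal morphisms, absent from the classical theory, must be shown to produce only tautological excess data.

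Granting the fiber-product computation, both assertions follow. For multiplicativity, the projection formula gives $\gamma_*(\theta)\cdot\gamma'_*(\theta') = p_*\bigl(p^*\theta \cdot q^*\theta' \cdot e\bigr)$, where $e$ is the excess class and $p$ lands, componentwise, in a product of admissible $G$-cover spaces via a tautological morphism. Since $p^*\theta$ and $q^*\theta'$ are polynomials in $\psi$ and $\kappa$ classes up to boundary pushforwards, and $e$ is of the same type, repeated application of the projection formula (using that the boundary restriction of a $\psi$-$\kappa$ monomial is again explicitly a $\psi$-$\kappa$ monomial, so the process terminates) rewrites the result as a $\bQ$-linear combination of classes $\beta_*(\theta'')$ with $\beta$ tautological, placing the product in $T$. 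For the bivariant pullback statement, I would write an arbitrary $\cH$-tautological class as $\sum \gamma_*(\theta)$, and for a tautological morphism $f$ compute $f^*\gamma_*(\theta)$ by base change along the fiber square of $f$ and $\gamma$; the same decomposition and excess-bundle analysis exhibits $f^*\gamma_*(\theta)$ as a combination of tautological pushforwards of $\psi$-$\kappa$ monomials, hence as an element of $R^*_{\cH}$. Together these complete the proof.
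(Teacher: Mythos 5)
Your overall architecture coincides with the paper's: reduce everything to pairwise intersections of the basic tautological morphisms, show that each fiber product is dominated by a space mapping tautologically to both factors, and check that the excess data is itself $\cH$-tautological. But the proposal has a genuine gap at exactly the point carrying the paper's technical weight: you assert that the decomposition of the fiber product for restriction-corestriction morphisms ``is exactly the combinatorial matching of admissible $G$-graphs carried out by Schmitt--van Zelm.'' It is not. Schmitt--van Zelm treat only the source map $\phi=\res^{G}_{\{1\}}$ against boundary strata. Three of the cases you need are absent from their work and require new constructions: (1) a corestriction map against a boundary stratum, where the diagram is \emph{not} Cartesian and the fiber product is non-reduced because boundary strata lie in the branch locus of $\cores^{G}_{G/G_1}$ --- here one must compute multiplicities $\ord_G(h_{\wt\ell})/\ord_{G/G_1}(h_\ell)$ by an explicit analysis of complete local rings, and your phrase ``after normalization'' silently discards precisely this non-reduced structure; (2) two restriction maps against each other, where the subgroup $\langle H_1,H_2\rangle\subset\Aut(X)$ can act with nontrivial kernel on individual components, so the fiber product is \emph{not} a union of boundary strata of admissible-cover spaces and one needs the notion of augmented admissible $H$-graphs; (3) a restriction against a corestriction, where the components of the fiber product are spaces $\barH_{\wt g,\wt H,\wt\xi}$ for \emph{new} groups $\wt H$ arising from Galois closures (GC-diagrams), and similarly two corestrictions, where they are indexed by subgroups of $H_1\times_G H_2$. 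Without these constructions the claim that the projections $p,q$ are again tautological morphisms --- the hinge of your induction --- is unsupported.

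A secondary divergence: for the diagonal self-intersection you propose to control $c(\cT_{\barH_{g,G,\xi}})$ via the source map $\phi$ and $G$-equivariant deformation theory. The paper instead uses the target map $\delta$, which is quasi-finite and flat, so that $\cT_{\barH_{g,G,\xi}}$ differs from $\delta^{*}\cT_{\barM_{g',b}}$ only by a twist along the explicitly known ramification divisor (boundary divisors with ramification over nodes, weighted by $e_\Gamma-1$), after which Bini's formula applies. Your route through $\phi$ would require expressing the Chern classes of $N_\phi$ --- the non-$G$-invariant part of the deformation space of the source curve --- in tautological terms, which you do not indicate how to do; this is harder than the corresponding step via $\delta$ and is not simply a reduction to Mumford's theorem.
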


In fact, one can take $\gamma$ to be a composition of diagonal, restriction-corestriction, forgetful, and boundary morphisms, in that order, see Theorem \ref{additive_gens_efficient}.

The proof of Theorem \ref{main_thm_intro} is constructive. We give algorithms for expressing the bivariant pullback of any $\cH$-tautological class by any tautological morphism. Many of the methods are analogous to those in the classical situation. For example, one can intersect boundary classes on spaces of admissible $G$-covers (see \S\ref{boundary_strata_intersection_section}) in an essentially identical fashion to that of boundary classes on $\barM_{g,n}$ (see \S\ref{boundary_intersection_classical_section} or \cite[Appendix A]{gp}), keeping track of the additional data of a group action.

However, we highlight here two aspects of the theory requiring new ideas.
\begin{enumerate}
\item Intersections of restriction-corestriction morphisms with boundary strata, see \S\ref{res-cores_boundary}. This was initiated in the case of the source map $\phi=\res^{G}_{\{1\}}$ by Schmitt-van Zelm \cite{schmittvanzelm}, where many examples of classes of Galois Hurwitz loci on $\barM_{g,n}$ determined by their integrals against classical tautological classes are computed.

We extend the results of Schmitt-van Zelm to the setting of restriction-corestriction morphisms, where the functorial intersections are, in general, non-reduced, owing to the fact that boundary strata in general lie in the branch locus of corestriction morphisms.

In particular, our methods allow for the integration of classical tautological classes against arbitrary (non-Galois) Hurwitz loci coming from Harris-Mumford spaces, which is already new. In \S\ref{harris_mumford_section}, we employ our methods to re-compute the classes of $d$-elliptic loci in genus 2, as originally carried out in \cite{lian_qmod}, and state a mild generalization of the quasimodularity observed therein, see Theorem \ref{d-elliptic_recover_general}.

\item Intersections of two restriction-corestriction morphisms, see \S\ref{res-cores_int}. These methods are new, but because the intersection-theoretic formulas are quite complicated, with terms indexed by group-theoretic data that seem difficult to compute in practice, the results may perhaps be more profitably viewed as proofs of existence. 
\end{enumerate}

\subsection{Further directions}
We briefly propose here four avenues for further investigation on the $\cH$-tautological ring.
\begin{enumerate}
\item Improvement and implementation of algorithms. SAGE code of Delecroix-Schmitt-van Zelm is available to perform computations in the classical tautological ring, see \cite{sage}. Their code has additional functionality for certain Galois Hurwitz loci on $\barM_{g,n}$.

A next step would be extend this work to include functionality for the general $\cH$-tautological calculus, particularly that of Harris-Mumford loci, which, as we explain in \S\ref{harris_mumford_section}, can in fact can be dealt with computationally without making explicit reference to Galois covers.

As we indicated above, the results of \S\ref{res-cores_int} are, at least as stated, likely too unwieldy to be implemented computationally; some theoretical speedups would thus be desirable here.

\item Study of the structure of the $\cH$-tautological ring, following, for instance, the Faber conjectures for the classical tautological ring, see \cite{faber_conj}.

Two places to start would be the following questions:
\begin{itemize}
\item What is the rank of $(R_{\cH})_{0}(\barH_{g,G,\xi})$? In the classical situation, the answer is always 1, by a result of Graber-Vakil \cite{gv}. When $G=\{1\}$, that is, $\barH_{g,G,\xi}=\barM_{g,n}$, the answer in the $\cH$-tautological setting is again 1 assuming that zero-cycles of $\barM_{g,n}$ defined over $\overline{\bQ}$ are tautological (see \cite[Speculation 1.3]{pandharipandeschmitt}), though we do not have an independent proof. In the general case, the situation is complicated by the fact that $\barH_{g,G,\xi}$ may be disconnected, but in principle the group of zero-dimensional $\cH$-tautological classes could still have rank 1.
\item What is the largest integer $k$ for which $R_{\cH}^{k}(\cM_{g})=0$? In the classical situation, the answer is $k=g-1$ due to Looijenga and Faber, see \cite{looijenga,faber_non-vanishing}. However, when one allows $\cH$-tautological classes, this becomes false when $g=12$ due to van Zelm's result that the bielliptic locus on $\cM_{12}$ is non-tautological, see \cite[Theorem 2]{vanzelm}.
\end{itemize}

\item Construction of non-trivial $\cH$-tautological relations. For classical tautological classes on $\barM_{g,n}$, the Pixton relations \cite{pixton,ppz} are conjecturally a complete set of relations; these can be pulled back to $\barH_{g,G,\xi}$. However, establishing $\cH$-tautological relations not arising in this way would require new tools. For example, the full Conjecture \ref{qmod_conj} is unlikely to be proven by (classical) ``tautological methods,'' as the classes appearing are not all tautological. 

As a step in this direction, Clader \cite{clader} has constructed relations on the moduli space of cyclic covers, pushing forward to (classical) tautological relations on $\barM_{g,n}$.

\item Study of the size of the $\cH$-tautological ring, relative to that (for example) of the classical tautological ring, or on the other hand the entire cohomology or Chow ring of $\barM_{g,n}$. In particular, it would be interesting now to attempt to construct cycles that are not $\cH$-tautological.
\end{enumerate}

\subsection{Conventions}\label{conventions} We work over $\bC$. All curves are assumed projective and connected with only nodes as singularities, unless otherwise indicated. The \textit{genus} of a curve refers to its arithmetic genus. All moduli spaces are understood to be stacks, rather than coarse moduli spaces.

We will work throughout with rational Chow groups $A^{*}(\mathcal{X})$, where $\mathcal{X}$ ranges over a system of products of Hurwitz spaces.

All group actions are left group actions, unless otherwise stated.

For sake of clarity of exposition, we will often describe constructions involving moduli spaces on the level of closed points, rather than functors; we invite the reader to generalize the discussion appropriately, for example, by replacing marked points on curves with sections of families of curves.

\subsection{Acknowledgments}

I thank Alessio Cela, Johan de Jong, Johannes Schmitt, Johannes Schwab and Jason van Zelm for helpful comments and discussions about ideas related to this paper, and David Yang for providing the proof of Lemma \ref{david_lemma}. I also thank the anonymous referee for numerous suggestions that improved the exposition and content of the paper.

This work was completed primarily with the support of a NSF Postdoctoral Research Fellowship, grant DMS-2001976.


\section{Tautological classes on moduli spaces of curves}\label{tautological_classical_section}

In this section, we recall the classical intersection theory of tautological classes on moduli spaces of curves, as initiated by \cite{mumford}; we follow the approaches of \cite{arbarellocornalba} and \cite{gp}.

\subsection{The universal family}\label{universal_family_mgn}

\begin{defn}\label{forgetful_mgn_defn}
We define the forgetful morphism $\pi:\barM_{g,n+1}\to\barM_{g,n}$ forgetting the last marked point and contracting non-stable components, along with its \textbf{universal sections} $\sigma_i:\barM_{g,n}\to\barM_{g,n+1}$ attaching a 2-pointed rational tail to the $i$-th marked point.
\end{defn}
Taken together, $\pi$ and the sections $\sigma_i$ form the universal family over $\barM_{g,n}$.

We have the following easy result:

\begin{lem}\label{double_forgetful_mgn}
The commutative diagram
\begin{equation*}
\xymatrix{
\barM_{g,n+2} \ar[r]^{\pi_{n+1}} \ar[d]^{\pi_{n+2}} & \barM_{g,n+1}\ar[d]^{\pi}\\
\barM_{g,n+1} \ar[r]^{\pi} & \barM_{g,n}
}
\end{equation*}
is Cartesian up to birational equivalence.\footnote{by this we mean: $\barM_{g,n+2}$ maps properly and birationally onto the fiber product.}  Here, the two maps out of $\barM_{g,n+2}$ forget the $(n+1)$-st and $(n+2)$-nd points, and the maps into $\barM_{g,n}$ forget the other of these two points.
\end{lem}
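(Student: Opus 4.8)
The plan is to realize the square as giving a canonical map to the fiber product and then show that map is proper, surjective, and birational. Write $P := \barM_{g,n+1}\times_{\barM_{g,n}}\barM_{g,n+1}$ for the fiber product along the two copies of $\pi$. First I would check the square commutes: both composites $\barM_{g,n+2}\to\barM_{g,n}$ forget the points labeled $n+1$ and $n+2$, and forgetful morphisms that forget disjoint sets of points commute (Knudsen). The universal property of $P$ then produces a canonical morphism $\Phi = (\pi_{n+1},\pi_{n+2})\colon \barM_{g,n+2}\to P$, and it suffices to prove that $\Phi$ is proper, surjective, and birational (the footnote's ``properly and birationally onto'').

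Properness is formal: $\barM_{g,n+2}$ is a proper Deligne--Mumford stack over $\bC$, while $P$, being a fiber product of separated stacks, is separated over $\bC$, and any morphism from a proper stack to a separated one is proper. In particular the image $\Phi(\barM_{g,n+2})$ is closed in $P$.

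For birationality I would exhibit a dense open substack over which $\Phi$ is an isomorphism. Let $\cM_{g,n}\subset\barM_{g,n}$ be the open locus of smooth curves. Over $\cM_{g,n}$ the universal curve $\pi$ restricts to the honest family $\cC\to\cM_{g,n}$ (a fiber of $\pi$ over a smooth curve is that curve itself, with rational tails sprouting at the markings), so the base change of $P$ to $\cM_{g,n}$ is identified with $\cC\times_{\cM_{g,n}}\cC$. Inside this, let $W$ be the open locus where the two extra points are distinct from each other and from the markings $p_1,\dots,p_n$; each such point corresponds to a genuine $(n+2)$-pointed smooth stable curve. Over $W$ no stabilization occurs in either forgetful direction, so $\Phi$ restricts to an isomorphism $\Phi^{-1}(W)\xrightarrow{\ \sim\ }W$.

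Finally I would confirm $W$ is dense. Both $\barM_{g,n+2}$ and $P$ are irreducible of dimension $3g-3+n+2$ (for $P$ because $\barM_{g,n}$ is irreducible and the generic fiber of $P\to\barM_{g,n}$ is $C\times C$ with $C$ smooth connected), and the complement of $W$ in $P$ lies in the boundary together with the diagonal $\{x=y\}$ and the universal sections $\{x=p_i\},\{y=p_i\}$, all of positive codimension. Hence $W$ is dense in $P$ and $\Phi^{-1}(W)$ is dense in $\barM_{g,n+2}$, giving birationality; since the closed image $\Phi(\barM_{g,n+2})$ contains the dense set $W$ and $P$ is irreducible, $\Phi$ is surjective as well. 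The only mildly delicate points are the identification of $\Phi$ as an isomorphism over $W$ and the density bookkeeping; the behavior away from $W$---where $\Phi$ contracts the rational bridges that $\barM_{g,n+2}$ sprouts along the diagonal and the sections---does not need to be analyzed for the birational statement, which is exactly why the lemma is only asserted ``up to birational equivalence.''
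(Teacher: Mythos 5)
The paper gives no proof of this lemma at all (it is stated as an ``easy result''), so there is no argument of the paper to compare yours against; on its own terms, your strategy is the right one: form the canonical map $\Phi\colon\barM_{g,n+2}\to P:=\barM_{g,n+1}\times_{\barM_{g,n}}\barM_{g,n+1}$, get properness for free from properness of the source and separatedness of the target, exhibit the isomorphism over the locus $W$ of smooth curves with two extra pairwise-distinct unmarked points, and deduce birationality and surjectivity from density of $W$.

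The one step that is not correct as written is your justification of the irreducibility of $P$. The implication ``irreducible base plus irreducible generic fiber $\Rightarrow$ irreducible total space'' is false in general: a fiber product can perfectly well acquire extra irreducible components lying entirely over the boundary of $\barM_{g,n}$, and any such component would be disjoint from $W$, breaking both the density of $W$ in $P$ and the surjectivity of $\Phi$. Your codimension count does not exclude this, since a stray component is not of positive codimension inside anything --- it is its own component. The missing ingredient is flatness: $\pi\colon\barM_{g,n+1}\to\barM_{g,n}$ is the universal curve, hence flat and proper, so the composite $P\to\barM_{g,n}$ is flat; by going-down, every irreducible component of $P$ dominates $\barM_{g,n}$ and therefore meets the open substack $P|_{\cM_{g,n}}=\cC\times_{\cM_{g,n}}\cC$. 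The latter is irreducible (it is open and surjective over the irreducible $\cM_{g,n}$ with irreducible fibers $C\times C$, and flat morphisms of finite type are open), so $P$ has a unique component and your density and surjectivity arguments then go through verbatim. With this one-line repair the proof is complete.
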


\subsection{$\psi$ and $\kappa$ classes}

\begin{defn}\label{psi_mgn_defn}
Let $\omega_\pi$ be the relative dualizing sheaf of the forgetful morphism $\pi:\barM_{g,n+1}\to\barM_{g,n}$. The \textbf{$\psi$ classes} are defined by
\begin{equation*}
\psi_i:=\sigma_i^{*}(c_1(\omega_\pi))\in A^1(\barM_{g,n}).
\end{equation*}
\end{defn}

\begin{defn}\label{kappa_mgn_defn}
Let $K$ be the divisor class of the log-canonical sheaf $\omega_{\pi}(\sigma_1+\cdots+\sigma_n)$. We define the \textbf{$\kappa$ classes}
\begin{equation*}
\kappa_i:=\pi_{*}(K^{i+1})\in A^i(\barM_{g,n}).
\end{equation*}
\end{defn}

\begin{rem}
One can also take as a definition 
\begin{equation*}
\widetilde{\kappa}_i:=\pi_{*}(c_1(\omega_\pi)^{i+1})\in A^i(\barM_{g,n}).
\end{equation*}
The two possible notions of $\kappa$ classes are related by the formula
\begin{equation*}
\kappa_i=\widetilde{\kappa}_i+\sum_{j=1}^{n}\psi_j^i,
\end{equation*}
see \cite[(1.5)]{arbarellocornalba}.
\end{rem}

\subsection{Stable graphs and boundary strata}

The boundary of $\barM_{g.n}$ admits a stratification by substacks indexed by stable graphs and parametrized by products of moduli spaces of curves of smaller dimension.

\begin{defn}
A \textbf{graph} $\Gamma$ consists of the data of a vertex set $V(\Gamma)$, a half-edge set $H(\Gamma)$, a fixed-point free involution $\iota:H(\Gamma)\to H(\Gamma)$ whose orbits form the edge set $E(\Gamma)$ of $E$, and an assignment $a:H(\Gamma)\to V(\Gamma)$ of a vertex to each half-edge. 

A \textbf{stable graph} includes the additional data of a genus function $g:V(\Gamma)\to\bZ_{\ge0}$ and an ordered set $L(\Gamma)$ of legs with an assignment $\zeta:L(\Gamma)\to V(\Gamma)$ of a vertex to each leg. We denote the \textbf{valency} of a vertex, the sum of the number of half-edges and legs of $\Gamma$ incident at $v$, by $n(v)$, and require that $\Gamma$ satisfies the stability condition that $2g(v)-2+n(v)>0$ for any $v\in V(\Gamma)$.

The \textbf{genus} of $\Gamma$ is defined to be 
\begin{equation*}
g(\Gamma):=h^1(\Gamma)+\sum_{v\in V(\Gamma)}g(v), 
\end{equation*}
and the number of elements $n$ of $L(\Gamma)$ is referred to as the number of \textbf{marked points} of $\Gamma$.
\end{defn}

Stable graphs of genus $g$ with $n$ marked points are exactly the dual graphs of stable curves parametrized by $\barM_{g,n}$. 

\begin{defn}\label{boundary_strata_defn}
Given a stable graph $\Gamma$, we define the \textbf{boundary stratum}
\begin{equation*}
\barM_{\Gamma}:=\prod_{v\in V(\Gamma)}\barM_{g(v),n(v)}
\end{equation*}
We then have a natural \textbf{boundary morphism}
\begin{equation*}
\xi_{\Gamma}:\barM_{\Gamma}\to\barM_{g,n}
\end{equation*}
gluing together marked points corresponding to paired half-edges, whose image is the closure of the locus of stable curves with dual graph $\Gamma$.
\end{defn}

The map $\xi_{\Gamma}$ is finite and unramified; the degree onto its image is given by $\#\Aut(\Gamma)$, where the notion of a morphism (and hence an automorphism) of a stable graph is given below.

\begin{defn}
Suppose $\Gamma,A$ are stable graphs of genus $g$ with $n$ marked points. A \textbf{morphism} of stable graphs $f:\Gamma\to A$ (or an \textbf{$A$-structure on $\Gamma$}) consists of a surjection on vertex sets $f_V:V(\Gamma)\to V(A)$ and an injection (in the opposite direction) on half-edge sets $f_H:H(A)\to H(\Gamma)$, compatible with the half-edge involutions $\iota$, edge attachment maps $a$, and leg assignment maps $\zeta$, and respecting the genus function in the sense that the pre-image of a vertex $v\in V(A)$ along with its incident edges forms a stable graph of genus $g(v)$. (See \cite[Definition 2.5]{schmittvanzelm} for a more explicit definition.)
\end{defn}
 
A morphism $f:\Gamma\to A$ captures the degeneration of a curve with dual graph $A$ into one with dual graph $\Gamma$. Indeed, a map $f:\Gamma\to A$ of stable graphs induces in a natural way a map $\xi_{\Gamma\to A}:\barM_{\Gamma}\to\barM_A$, compatible with the boundary maps $\xi_{\Gamma},\xi_{A}$.

The moduli space $\barM_{\Gamma}$ may also be interpreted as the moduli of stable marked curves $X$ along with a $\Gamma$-structure on its dual graph, see \cite[Proposition 8]{gp}.

The normal bundle of $\xi_{\Gamma}$ is given by a direct sum of line bundle contributions from the edges of $\Gamma$, or equivalently the nodes of the general curve parametrized by $\barM_{\Gamma}$. More precisely, we have the following well-known fact:

\begin{lem}\label{normal_bundle}
\begin{equation*}
N_{\xi_{\Gamma}}=\bigoplus_{(\ell,\ell')\in E(\Gamma)}T_{\ell}\otimes T_{\ell'},
\end{equation*}
where $(\ell,\ell')$ denotes the pairs of half-edges comprising edges of $\Gamma$ and $T_{\ell},T_{\ell'}$ are the corresponding tangent line bundles on the appropriate components $\barM_{g(v),n(v)}$. In particular, we have
\begin{equation*}
c(N_{\xi_{\Gamma}})=\prod_{e\in E(\Gamma)}(1-\psi_{h_e}-\psi_{h'_e}).
\end{equation*}
\end{lem}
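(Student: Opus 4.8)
The plan is to identify the normal bundle of the boundary morphism $\xi_\Gamma$ by working locally at a node, reducing the computation to the case of a single edge. First I would recall that $\xi_\Gamma$ glues together the pairs of marked points corresponding to the half-edges of each edge $e=(\ell,\ell')\in E(\Gamma)$. The boundary stratum $\barM_\Gamma=\prod_v \barM_{g(v),n(v)}$ carries, for each such half-edge $\ell$ (a marked point on some factor $\barM_{g(v),n(v)}$), a tangent line bundle $T_\ell$, which is the dual of the corresponding $\psi$ class; that is, $c_1(T_\ell)=-\psi_{h_\ell}$ where $h_\ell$ is the marked point associated to $\ell$.

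The key step is the local deformation-theoretic description of smoothing a node. For a family of nodal curves, the local structure at a node is $xy=t$, and the first-order deformations smoothing that node are governed by the tensor product of the tangent spaces to the two branches at the node. Concretely, for each edge $e$, the normal direction to $\xi_\Gamma$ corresponding to opening up that node is identified with $T_\ell\otimes T_{\ell'}$, where $T_\ell,T_{\ell'}$ are the tangent lines to the two branches, i.e. the tangent line bundles at the two glued marked points. Since $\xi_\Gamma$ is finite and unramified (as stated just before the lemma), its normal bundle has rank equal to $\#E(\Gamma)$, and these edge contributions are independent, so they sum directly to give
\begin{equation*}
N_{\xi_\Gamma}=\bigoplus_{(\ell,\ell')\in E(\Gamma)}T_\ell\otimes T_{\ell'}.
\end{equation*}

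The Chern class formula then follows formally: for a direct sum of line bundles the total Chern class is the product of the $(1+c_1)$ factors, and $c_1(T_\ell\otimes T_{\ell'})=c_1(T_\ell)+c_1(T_{\ell'})=-\psi_{h_e}-\psi_{h'_e}$, giving
\begin{equation*}
c(N_{\xi_\Gamma})=\prod_{e\in E(\Gamma)}\bigl(1-\psi_{h_e}-\psi_{h'_e}\bigr).
\end{equation*}
I expect the main obstacle to be making the identification of the normal direction with $T_\ell\otimes T_{\ell'}$ fully rigorous: one must verify that the smoothing parameter of the node genuinely corresponds to a section of the tensor product of the two branch-tangent lines, which is a standard but nontrivial local computation in the deformation theory of nodal curves (the ``$xy=t$'' picture). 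Everything else is bookkeeping, using that the edges contribute independently because the node-smoothings are unobstructed and transverse. Since the lemma is described as well-known, I would be content to cite the standard reference for this local model rather than reprove it in detail.
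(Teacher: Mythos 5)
Your proof is correct: the identification of the node-smoothing direction with $T_{\ell}\otimes T_{\ell'}$ via the local model $xy=t$, the direct-sum decomposition over edges (using that $\xi_\Gamma$ is unramified of codimension $\#E(\Gamma)$), and the formal Chern class computation with $c_1(T_\ell)=-\psi_{h_\ell}$ constitute the standard argument. The paper itself states this lemma as a well-known fact and offers no proof, so your write-up supplies exactly the deformation-theoretic justification the paper implicitly relies on.
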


\subsection{Intersections of boundary strata}\label{boundary_intersection_classical_section}

The main reference for this section is \cite[Appendix A]{gp}. Let $A,B$ be stable graphs of genus $g$ with $n$ marked points. We consider the intersection $\xi_A^{*}(\xi_{B*}(1))\in A^{*}(\barM_A)$.

\begin{lem}\label{boundary_intersection_classical_cartesian}
We have a Cartesian diagram
\begin{equation*}
\xymatrix{
\displaystyle\coprod\barM_{\Gamma} \ar[d]_{\xi_{\Gamma\to A}} \ar[r]^(0.53){\xi_{\Gamma\to B}} & \barM_{B} \ar[d]^{\xi_{B}}\\
\barM_{A} \ar[r]^{\xi_{A}} & \barM_{g,n}
}
\end{equation*}
where the disjoint union ranges over \textbf{generic $(A,B)$-graphs} $\Gamma$, that is, stable graphs $\Gamma$ equipped with morphisms $\Gamma\to A,B$ satisfying the genericity condition that the map
\begin{equation*}
E(A)\sqcup E(B)\to E(\Gamma)
\end{equation*}
is surjective.
\end{lem}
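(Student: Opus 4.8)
The plan is to prove the Cartesian property by combining the modular interpretation of boundary strata with a local transversality analysis of the boundary divisors. By \cite[Proposition 8]{gp}, a point of $\barM_A$ is a stable curve $X$ together with an $A$-structure on its dual graph $\Gamma_X$, i.e. a morphism $\Gamma_X \to A$; likewise for $\barM_B$. Hence a point of the fiber product $\barM_A \times_{\barM_{g,n}} \barM_B$ is a triple $(X, \alpha\colon \Gamma_X \to A, \beta\colon \Gamma_X \to B)$. The first task is to match such triples with the data parametrized by $\coprod_\Gamma \barM_\Gamma$, where $\Gamma$ runs over generic $(A,B)$-graphs and the map out of each $\barM_\Gamma$ is $(\xi_{\Gamma \to A}, \xi_{\Gamma \to B})$; these two maps are compatible over $\barM_{g,n}$ since both composites equal $\xi_\Gamma$, so the universal property indeed produces a map $\coprod_\Gamma \barM_\Gamma \to \barM_A \times_{\barM_{g,n}} \barM_B$.

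Second, I would establish the bijection on closed points. The morphisms $\alpha,\beta$ identify subsets $E_A, E_B \subseteq E(\Gamma_X)$ of edges ``remembered'' by $A$ and $B$ respectively (the images of the half-edge injections $H(A)\hookrightarrow H(\Gamma_X)$ and $H(B)\hookrightarrow H(\Gamma_X)$), the remaining edges being contracted. Given $(X,\alpha,\beta)$, I set $\Gamma$ to be the stable graph obtained from $\Gamma_X$ by contracting every edge outside $E_A \cup E_B$. Then $\Gamma$ carries an evident $\Gamma$-structure $\Gamma_X \to \Gamma$ on $X$, together with morphisms $\Gamma \to A$ and $\Gamma \to B$ refactoring $\alpha$ and $\beta$; since $E(\Gamma) = E_A \cup E_B$ is exactly the image of $E(A) \sqcup E(B)$, the graph $\Gamma$ satisfies the genericity condition. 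This construction is inverse to the forward map above, up to the automorphisms of $\Gamma$ that are absorbed by passing to the stack quotient and to the disjoint union over isomorphism classes. A small point to check here is that the contraction of $\Gamma_X$ is again a stable graph of the correct genus, which follows from the stability conditions built into the morphisms $\alpha, \beta$.

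The main obstacle is upgrading this bijection to an isomorphism of stacks, i.e. verifying that the fiber product is reduced with the expected components and no infinitesimal excess. For this I would argue \'{e}tale-locally on $\barM_{g,n}$: near a point with dual graph $\Gamma_X$, the stack is (a quotient of) the product of an equisingular factor with the smoothing space $\prod_{e \in E(\Gamma_X)} \bA^1_{t_e}$, in which each boundary divisor is the coordinate hyperplane $\{t_e = 0\}$ and each stratum image $\im(\xi_A)$ is a union of coordinate subspaces $\bigcap_{e \in E_A}\{t_e=0\}$, one for each $A$-structure on $\Gamma_X$. The scheme-theoretic intersection with $\im(\xi_B)$ is then a transverse intersection of coordinate subspaces, hence reduced, and its irreducible pieces are indexed precisely by the pairs $(E_A, E_B)$, i.e. by the generic $(A,B)$-graphs $\Gamma$ constructed above. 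Since $\xi_A$ and $\xi_B$ are finite and unramified, comparing this local picture with the \'{e}tale-local structure of $\barM_\Gamma$ under $\xi_{\Gamma\to A}$ identifies the fiber product with $\coprod_\Gamma \barM_\Gamma$ exactly. The transversality of the coordinate subspaces is the crux: it is what forces the intersection to be reduced and rules out the non-reduced behavior that, as the paper notes, does appear for corestriction morphisms later on.
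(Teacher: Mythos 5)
Your proposal is correct in substance, but it takes a different route from the one the paper (following \cite[Appendix A]{gp}, and in its own proof of the $G$-cover analogue, Lemma \ref{boundary_intersection_hurwitz_cartesian}) actually uses. The paper's argument is purely functorial: using the modular interpretation of $\barM_A$, $\barM_B$ as moduli of curves with $A$- (resp.\ $B$-) structures, a point of the fiber product over \emph{any} base is a curve together with both structures, and contracting the edges of the dual graph not lying in $E_A\cup E_B$ produces a generic $(A,B)$-graph $\Gamma$ together with a $\Gamma$-structure; this construction is an inverse functor, so the diagram is Cartesian with no separate infinitesimal check. Your first two steps coincide with this, but you only carry the contraction construction out on closed points, and you then supply the missing infinitesimal information by an \'{e}tale-local deformation-theoretic computation: boundary strata become coordinate subspaces in the smoothing parameters $t_e$, and the scheme-theoretic intersection of two coordinate subspaces is again a (reduced, smooth) coordinate subspace, matching the local structure of $\barM_\Gamma$. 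This is a legitimate alternative; what it buys is an explicit explanation of \emph{why} the fiber product is reduced here, and the same local method is exactly what the paper must use later (Lemma \ref{cores_boundary_multiplicities}) in the corestriction setting, where the local equations become $t'^{e'}=t^{e}$ and reducedness genuinely fails. What it costs is that the family/stack-level statement has to be reassembled from the pointwise bijection plus the local isomorphism, which the functorial proof gets for free.

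One correction to your wording, at the spot you yourself call the crux: the intersections in question are \emph{not} transverse in general. When $E_A\cap E_B\neq\emptyset$ (e.g.\ in the self-intersection $A=B$), the two coordinate subspaces $\{t_e=0:e\in E_A\}$ and $\{t_e=0:e\in E_B\}$ meet in excess dimension --- this is precisely why the next lemma in the paper computes a nontrivial excess bundle supported on $E(A)\cap E(B)$. The intersection is nevertheless reduced, not because of transversality, but because the sum of two ideals generated by subsets of the coordinates, $(t_e:e\in E_A)+(t_e:e\in E_B)=(t_e:e\in E_A\cup E_B)$, is again generated by coordinates, hence prime. With that justification substituted, your argument goes through.
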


The excess bundle on $\barM_{\Gamma}$ may be easily understood in terms of the normal bundles of boundary morphisms as in Lemma \ref{normal_bundle}. One finds:
\begin{lem}
\begin{equation*}
\xi_{\Gamma\to A}^{*}N_{\xi_A}/N_{\xi_{\Gamma\to B}}\cong\bigoplus_{(\ell,\ell')\in E(A)\cap E(B)}T_{\ell}\otimes T_{\ell'},
\end{equation*}
where $(\ell,\ell')$ denotes the pair of half-edges comprising an edge.
\end{lem}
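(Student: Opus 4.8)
The plan is to reduce the computation to the edge-indexed decomposition of Lemma~\ref{normal_bundle}, combined with the standard description of the excess bundle for an intersection of two regular embeddings.

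First, I would note that $\xi_{\Gamma\to A}$ and $\xi_{\Gamma\to B}$ are themselves (relative) boundary morphisms: each glues together the marked points of the factors of $\barM_{\Gamma}$ corresponding to the edges of $\Gamma$ that are \emph{not} in the image of the edge map induced by $\Gamma\to A$ (resp.\ $\Gamma\to B$). Writing $\im(E(A)),\im(E(B))\subseteq E(\Gamma)$ for these (injective) images, Lemma~\ref{normal_bundle} applied factorwise gives
\begin{equation*}
N_{\xi_{\Gamma\to A}}=\bigoplus_{e\in E(\Gamma)\setminus\im(E(A))}T_{\ell}\otimes T_{\ell'},\qquad N_{\xi_{\Gamma\to B}}=\bigoplus_{e\in E(\Gamma)\setminus\im(E(B))}T_{\ell}\otimes T_{\ell'}.
\end{equation*}
These, together with $N_{\xi_{\Gamma}}=\bigoplus_{e\in E(\Gamma)}T_{\ell}\otimes T_{\ell'}$, are all canonical direct summands indexed by subsets of $E(\Gamma)$. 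From the normal bundle sequence of the factorization $\xi_{\Gamma}=\xi_A\circ\xi_{\Gamma\to A}$, which splits because each term is a sum of the line bundles $T_{\ell}\otimes T_{\ell'}$, I would identify
\begin{equation*}
\xi_{\Gamma\to A}^{*}N_{\xi_A}=N_{\xi_{\Gamma}}/N_{\xi_{\Gamma\to A}}=\bigoplus_{e\in\im(E(A))}T_{\ell}\otimes T_{\ell'}.
\end{equation*}

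Next I would invoke the excess intersection formula for the Cartesian square of Lemma~\ref{boundary_intersection_classical_cartesian}. As $\xi_B$ is a regular embedding and $\xi_{\Gamma\to A}$ is its base change, the excess bundle contributing to $\xi_A^{*}(\xi_{B*}(1))$ along $\barM_{\Gamma}$ is the cokernel of the natural map $N_{\xi_{\Gamma\to B}}\to\xi_{\Gamma\to A}^{*}N_{\xi_A}$, obtained as the composite
\begin{equation*}
N_{\xi_{\Gamma\to B}}\hookrightarrow N_{\xi_{\Gamma}}\twoheadrightarrow\xi_{\Gamma\to A}^{*}N_{\xi_A},
\end{equation*}
which is precisely the bundle $\xi_{\Gamma\to A}^{*}N_{\xi_A}/N_{\xi_{\Gamma\to B}}$ in the statement. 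Under the summand decompositions above, this composite carries the summand indexed by an edge $e\in E(\Gamma)\setminus\im(E(B))$ isomorphically onto the corresponding summand of $\xi_{\Gamma\to A}^{*}N_{\xi_A}$ when $e\in\im(E(A))$, and to zero otherwise. Here the genericity hypothesis of Lemma~\ref{boundary_intersection_classical_cartesian}---the surjectivity of $E(A)\sqcup E(B)\to E(\Gamma)$---enters: it forces $e\notin\im(E(B))\Rightarrow e\in\im(E(A))$, so the map is the inclusion of the summands indexed by $E(\Gamma)\setminus\im(E(B))$ into those indexed by $\im(E(A))$, and in particular is injective of the expected rank. Its cokernel is the sum over $\im(E(A))\cap\im(E(B))$, the edges of $\Gamma$ arising from an edge of both $A$ and $B$---exactly the index set written $E(A)\cap E(B)$ in the statement---yielding the claimed isomorphism.

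The normal bundle identifications are routine consequences of Lemma~\ref{normal_bundle} once $\xi_{\Gamma\to A}$ and $\xi_{\Gamma\to B}$ are recognized as relative boundary maps. I expect the main obstacle to be the verification that the comparison map $N_{\xi_{\Gamma\to B}}\to\xi_{\Gamma\to A}^{*}N_{\xi_A}$ is genuinely the inclusion of edge-indexed summands, i.e.\ that it is block-diagonal for the edge labelling of $N_{\xi_{\Gamma}}$; this amounts to checking that the two factorizations $\xi_{\Gamma}=\xi_A\circ\xi_{\Gamma\to A}=\xi_B\circ\xi_{\Gamma\to B}$ induce maps compatible with the canonical summand structure of $N_{\xi_{\Gamma}}$. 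Granting this, the genericity condition supplies both the injectivity of the map and the identification of the cokernel with the shared-edge summands.
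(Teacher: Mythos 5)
Your proof is correct and is essentially the argument the paper leaves implicit: recognize $\xi_{\Gamma\to A},\xi_{\Gamma\to B}$ as boundary maps, apply the edge-indexed decomposition of Lemma \ref{normal_bundle} to get $\xi_{\Gamma\to A}^{*}N_{\xi_A}=\bigoplus_{e\in\im(E(A))}T_{\ell}\otimes T_{\ell'}$ and $N_{\xi_{\Gamma\to B}}=\bigoplus_{e\in E(\Gamma)\setminus\im(E(B))}T_{\ell}\otimes T_{\ell'}$, and use the genericity condition to see that the natural comparison map is an inclusion of summands with cokernel indexed by $E(A)\cap E(B)$. The one cosmetic slip is that for computing $\xi_A^{*}(\xi_{B*}(1))$ the refined Gysin map is that of $\xi_A$ (whose base change is $\xi_{\Gamma\to B}$), not of $\xi_B$; the excess bundle you actually write down, $\xi_{\Gamma\to A}^{*}N_{\xi_A}/N_{\xi_{\Gamma\to B}}$, is the correct one for this, so the argument stands as written.
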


Therefore, by the excess intersection formula, we conclude:
\begin{prop}\cite[(11)]{gp}
Let $A,B$ be stable graphs of genus $g$ with $n$ marked points. We have:
\begin{equation*}
\xi^{*}_A(\xi_{B*}(1))=\sum_{\Gamma}(\xi_{\Gamma\to A})_{*}\left(\prod_{(\ell,\ell')\in E(A)\cap E(B)}(-\psi_{\ell}-\psi_{\ell'})\right),
\end{equation*}
where the sum is over $\Gamma$ as above.
\end{prop}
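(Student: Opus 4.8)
The plan is to read the formula off the Cartesian diagram of Lemma~\ref{boundary_intersection_classical_cartesian} by means of the excess intersection formula. First I would observe that every stack appearing is smooth: $\barM_{g,n}$ is smooth, and so is each boundary stratum $\barM_A$, $\barM_B$, and $\barM_\Gamma$, each being a product of moduli spaces of stable curves. Consequently each boundary morphism $\xi_A$, $\xi_B$, $\xi_{\Gamma\to A}$, $\xi_{\Gamma\to B}$ is a local complete intersection morphism of smooth Deligne--Mumford stacks; working with rational Chow groups, we therefore have well-defined Gysin pullbacks and the full machinery of refined intersection theory at our disposal.

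Since $\xi_A$ is l.c.i. it induces a refined Gysin homomorphism $\xi_A^{!}$. Combining its compatibility with proper pushforward (applied to the proper morphism $\xi_B$) with the self-intersection formula, and noting that the base change of $\xi_A$ along $\xi_B$ is exactly the map $\coprod_\Gamma\barM_\Gamma\to\barM_B$ of the Cartesian square, I obtain
\begin{equation*}
\xi_A^{*}(\xi_{B*}(1))=\sum_{\Gamma}(\xi_{\Gamma\to A})_{*}\big(c_{\mathrm{top}}(E_\Gamma)\big),
\end{equation*}
where the sum runs over the generic $(A,B)$-graphs $\Gamma$ and $E_\Gamma$ is the excess bundle $\xi_{\Gamma\to A}^{*}N_{\xi_A}/N_{\xi_{\Gamma\to B}}$ on $\barM_\Gamma$. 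The essential geometric input here is the preceding lemma, which identifies $E_\Gamma\cong\bigoplus_{(\ell,\ell')\in E(A)\cap E(B)}T_\ell\otimes T_{\ell'}$; being a direct sum of line bundles, its top Chern class factors as a product indexed by the shared edges.

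It then remains only to evaluate this product. For each shared edge, $c_1(T_\ell\otimes T_{\ell'})=c_1(T_\ell)+c_1(T_{\ell'})=-\psi_\ell-\psi_{\ell'}$, since $T_\ell$ is the tangent line dual to the cotangent line of class $\psi_\ell$; this is precisely the sign recorded in the Chern class computation of Lemma~\ref{normal_bundle}. Substituting $c_{\mathrm{top}}(E_\Gamma)=\prod_{(\ell,\ell')\in E(A)\cap E(B)}(-\psi_\ell-\psi_{\ell'})$ into the displayed equation gives the claimed formula.

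The one point demanding genuine care---everything else being formal once the Cartesian diagram and the shape of the excess bundle are granted---is the justification of the excess intersection formula in this stacky, rational-coefficient context, together with the bookkeeping that the fiber product decomposes as the asserted disjoint union, so that each generic $(A,B)$-graph contributes its own Euler-class term. One must also keep track of the convention $c_1(T_\ell)=-\psi_\ell$ so as to arrive at $(-\psi_\ell-\psi_{\ell'})$ rather than its negative.
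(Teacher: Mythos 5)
Your proposal is correct and follows exactly the paper's own route: the Cartesian diagram of Lemma~\ref{boundary_intersection_classical_cartesian}, the identification of the excess bundle as $\bigoplus_{(\ell,\ell')\in E(A)\cap E(B)}T_\ell\otimes T_{\ell'}$, and the excess intersection formula, with the sign convention $c_1(T_\ell)=-\psi_\ell$ yielding the factors $(-\psi_\ell-\psi_{\ell'})$. The extra care you take with l.c.i.\ Gysin pullbacks and smoothness of the stacks is exactly the implicit justification the paper (and \cite{gp}) relies on.
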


\subsection{Pullbacks of $\psi$ and $\kappa$ classes}\label{psi_kappa_pullback_classical_section}

Here, we record formulas for the pullbacks of $\psi$ and $\kappa$ classes by forgetful and boundary morphisms.

\begin{prop}\label{psi_kappa_forgetful_pullback_classical}
Let $\pi:\barM_{g,n+1}\to\barM_{g,n}$ be the map forgetting the last marked point. We have:
\begin{equation*}
\pi^{*}(\psi_i)=\psi_i-{\sigma_i}_{*}(1),
\end{equation*}
where $\sigma_i$ is the $i$-th universal section, and 
\begin{equation*}
\pi^{*}(\kappa_i)=\kappa_i-\psi_{n+1}^{i}
\end{equation*}
\end{prop}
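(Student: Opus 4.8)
The two formulas have different flavors, so I would establish the $\psi$-formula first and then use it to derive the $\kappa$-formula.

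For the $\psi$-formula, the plan is to compare the cotangent line $\psi_i$ on the source $\barM_{g,n+1}$ with the pulled-back cotangent line $\pi^{*}\psi_i$. Away from the locus where forgetting $p_{n+1}$ forces a contraction, the stabilization leaves a neighborhood of $p_i$ untouched, so the two line bundles are canonically identified. The discrepancy is supported on the divisor $D_{i,n+1}$ along which $p_i$ and $p_{n+1}$ sit on a rational bridge, and a local computation at such a node shows that the natural comparison map vanishes to order exactly one there. This gives $\psi_i=\pi^{*}\psi_i+[D_{i,n+1}]$. It then remains to identify $[D_{i,n+1}]=(\sigma_i)_{*}(1)$, which is immediate from Definition \ref{forgetful_mgn_defn}, since the image of the $i$-th universal section is precisely the locus carrying a $2$-pointed rational tail at $p_i$. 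Rearranging yields $\pi^{*}(\psi_i)=\psi_i-(\sigma_i)_{*}(1)$.

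For the $\kappa$-formula, I would first rewrite $\kappa_i$ purely in terms of $\psi$. On the universal curve the relative log-canonical class $K=c_1(\omega_\pi(\sigma_1+\cdots+\sigma_n))$ agrees with the class $\psi_{n+1}$ of the cotangent line at the moving point: indeed $c_1(\omega_\pi)$ and this cotangent line differ precisely by the section divisors $D_{j,n+1}$, which is the same cotangent-line comparison used above. Hence $\kappa_i=\pi_{*}(\psi_{n+1}^{\,i+1})$. I would then invoke Lemma \ref{double_forgetful_mgn}: its square is Cartesian up to a proper birational modification $\mu$ onto the fiber product. Flat base change on the genuine fiber product, combined with $\mu_{*}\mu^{*}=\mathrm{id}$ (valid since $\mu$ is proper and birational), gives
\[
\pi^{*}\kappa_i=(\pi_{n+2})_{*}\,\pi_{n+1}^{*}(\psi_{n+2}^{\,i+1}).
\]

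To evaluate the right-hand side, apply the already-proven $\psi$-formula to $\pi_{n+1}$, which forgets $p_{n+1}$: this gives $\pi_{n+1}^{*}\psi_{n+2}=\psi_{n+2}-[D_{n+1,n+2}]$. Because $p_{n+2}$ then lies on a three-pointed rational component, $\psi_{n+2}$ restricts to zero on $D_{n+1,n+2}$; the two classes are orthogonal, so all mixed terms vanish and $\pi_{n+1}^{*}(\psi_{n+2}^{\,i+1})=\psi_{n+2}^{\,i+1}+(-1)^{i+1}[D_{n+1,n+2}]^{\,i+1}$. Pushing forward by $\pi_{n+2}$, the first term returns $\kappa_i$ by the formula $\kappa_i=\pi_{*}(\psi^{\,i+1})$. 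For the second, writing $s$ for the section with image $D_{n+1,n+2}$, the self-intersection $s^{*}[D_{n+1,n+2}]=-\psi_{n+1}$ coming from the normal bundle (Lemma \ref{normal_bundle}) gives $[D_{n+1,n+2}]^{\,i+1}=(-1)^{i}s_{*}(\psi_{n+1}^{\,i})$, and since $\pi_{n+2}\circ s=\mathrm{id}$ this term pushes forward to $-\psi_{n+1}^{\,i}$. Altogether $\pi^{*}(\kappa_i)=\kappa_i-\psi_{n+1}^{\,i}$.

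The step I expect to require the most care is the base-change identity: the square of Lemma \ref{double_forgetful_mgn} is only Cartesian up to proper birational equivalence, so one must verify that the modification $\mu$ is invisible to the pushforward, which is exactly where $\mu_{*}\mu^{*}=\mathrm{id}$ is needed. The remaining inputs—the cotangent-line comparison, the orthogonality $\psi_{n+2}\cdot[D_{n+1,n+2}]=0$, and the section self-intersection—are local and standard, but one must track carefully which marked point is forgotten at each stage to pin down the signs.
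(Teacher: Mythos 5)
Your proof is correct, but it differs from the paper's in that the paper gives no argument at all: its proof of Proposition \ref{psi_kappa_forgetful_pullback_classical} consists of citations, namely \cite[\S 3]{arbarellocornalba_picard} for the $\psi$-formula and \cite[(1.10)]{arbarellocornalba} for the $\kappa$-formula. What you have written is a self-contained reconstruction of essentially those standard arguments: the comparison map $\pi^{*}\psi_i\to\psi_i$ vanishing to order one along $D_{i,n+1}=\im(\sigma_i)$, and, for $\kappa_i$, push--pull through the birationally Cartesian square of Lemma \ref{double_forgetful_mgn}, using the orthogonality $\psi_{n+2}\cdot[D_{n+1,n+2}]=0$ and the self-intersection $s^{*}[D_{n+1,n+2}]=-\psi_{n+1}$ supplied by Lemma \ref{normal_bundle}. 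The sign bookkeeping checks out: $(\psi_{n+2}-[D_{n+1,n+2}])^{i+1}=\psi_{n+2}^{i+1}+(-1)^{i+1}[D_{n+1,n+2}]^{i+1}$, $[D_{n+1,n+2}]^{i+1}=(-1)^{i}s_{*}(\psi_{n+1}^{i})$, and $\pi_{n+2}\circ s=\id$ yields the final $-\psi_{n+1}^{i}$. Your handling of the birational-Cartesian subtlety (flat base change on the honest fiber product, then $\mu_{*}\mu^{*}=\id$ for the proper birational comparison map) is exactly the right fix, and you correctly flag it as the delicate point.

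The one step you treat more casually than it deserves is the identity $K=\psi_{n+1}$, equivalently $\kappa_i=\pi_{*}(\psi_{n+1}^{i+1})$, which your $\kappa$-argument uses twice: once to rewrite $\kappa_i$ on $\barM_{g,n}$ and once to recognize $(\pi_{n+2})_{*}(\psi_{n+2}^{i+1})$ as $\kappa_i$ on $\barM_{g,n+1}$. This is not literally ``the same cotangent-line comparison used above'': the first formula compares $\psi_i$ with its pullback under $\pi$, whereas here one compares $\psi_{n+1}$ with $c_1(\omega_{\pi})$, two line bundles that agree away from $\bigcup_{j}D_{j,n+1}$ and the codimension-two locus where the moving point sits over a node of the stabilized curve, and the coefficient $1$ along each $D_{j,n+1}$ requires its own (standard, analogous) local computation. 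This identity is precisely what reconciles Definition \ref{kappa_mgn_defn} with Arbarello--Cornalba's definition of the $\kappa$ classes; it is true, so nothing in your argument breaks, but it should be isolated as a lemma rather than absorbed by analogy.
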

\begin{proof}
The first statement appears in \cite[\S 3]{arbarellocornalba_picard} and the second is \cite[(1.10)]{arbarellocornalba}.
\end{proof}

\begin{prop}\label{psi_kappa_boundary_pullback_classical}
Let $\xi_\Gamma:\barM_{\Gamma}\to\barM_{g,n}$ be the boundary map associated to the stable graph $\Gamma$. We have:
\begin{equation*}
\xi_\Gamma^{*}(\psi_i)=\pr_v^{*}\psi_i,
\end{equation*}
where $\pr$ is the projection to the component $\barM_{g(v),n(v)}$ of $\barM_{\Gamma}$ corresponding to the component containing the $i$-th marked point, and
\begin{equation*}
\xi_\Gamma^{*}(\kappa_i)=\sum_{v\in V(\Gamma)}\pr_v^{*}(\kappa_i).
\end{equation*}
\begin{proof}
The first statement is trivial, and the second is \cite[(1.8)]{arbarellocornalba}.
\end{proof}
\end{prop}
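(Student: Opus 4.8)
The plan is to derive both formulas from a single source: base change for the forgetful (universal curve) morphism, combined with the way the log-canonical sheaf restricts to the components of a degenerate curve. The geometric input throughout is that the universal curve over $\barM_\Gamma$ is obtained by gluing the universal curves over the factors $\barM_{g(v),n(v)}$ along the sections indexed by the half-edges of $\Gamma$.

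For the $\psi$-class formula I would argue that the statement is local at the $i$-th section. The $i$-th marked point of a curve parametrized by $\barM_\Gamma$ is a leg of $\Gamma$, hence lies on the single component indexed by the vertex $v=\zeta(i)$ carrying that leg, and is never one of the nodes. Consequently the cotangent line of the glued curve at $\sigma_i$ is canonically identified with the cotangent line of the $v$-component at its corresponding marked point, so that pulling back the defining expression $\psi_i=\sigma_i^{*}(c_1(\omega_\pi))$ of Definition \ref{psi_mgn_defn} through $\xi_\Gamma$ produces exactly $\pr_v^{*}\psi_i$. This is the precise content of the ``trivial'' first claim.

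For the $\kappa$-class formula I would set up the Cartesian square
\begin{equation*}
\xymatrix{
\cC' \ar[r]^(0.45){g} \ar[d]_{\pi'} & \barM_{g,n+1} \ar[d]^{\pi}\\
\barM_\Gamma \ar[r]^{\xi_\Gamma} & \barM_{g,n}
}
\end{equation*}
exhibiting $\cC':=\barM_{g,n+1}\times_{\barM_{g,n}}\barM_\Gamma$ as the universal curve over $\barM_\Gamma$. Since $\pi$ is flat and proper, the square is Tor-independent, and base change gives $\xi_\Gamma^{*}\kappa_i=\pi'_{*}\big((g^{*}K)^{i+1}\big)$, where $K=c_1(\omega_\pi(\sigma_1+\cdots+\sigma_n))$ as in Definition \ref{kappa_mgn_defn}. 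The family $\cC'$ is reducible, with components $\cC'_v=\pr_v^{*}\barM_{g(v),n(v)+1}$ indexed by $V(\Gamma)$, meeting along the node sections. The crucial computation is the restriction of $K$ to each component: for a nodal curve $C=\bigcup_v C_v$ one has $\omega_C|_{C_v}=\omega_{C_v}\big(\sum p\big)$, the sum being over the nodes lying on $C_v$, so that twisting by the markings turns the total collection of markings-plus-nodes on $C_v$ into exactly the $n(v)$ special points of the $v$-component. Hence $g^{*}K$ restricts on $\cC'_v$ to the log-canonical class $K_v$ of that component.

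To finish, I would decompose $[\cC']=\sum_v[\cC'_v]$ and use the projection formula to write
\begin{equation*}
(g^{*}K)^{i+1}\cap[\cC']=\sum_{v\in V(\Gamma)}K_v^{i+1}\cap[\cC'_v].
\end{equation*}
Pushing forward by $\pi'$, whose restriction to $\cC'_v$ is the base change of the forgetful map $\barM_{g(v),n(v)+1}\to\barM_{g(v),n(v)}$ along the flat projection $\pr_v$, and applying flat base change on each factor, yields $\pi'_{*}(K_v^{i+1}\cap[\cC'_v])=\pr_v^{*}\kappa_i$, whence $\xi_\Gamma^{*}\kappa_i=\sum_v\pr_v^{*}\kappa_i$. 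I expect the main obstacle to be the bookkeeping forced by the reducibility of $\cC'$: one must check that no correction terms appear along the node sections where the components meet. This reduces precisely to the restriction formula for $\omega_C$ above and to the fact that the legs of $\Gamma$ avoid the nodes, so that every special point of a component is accounted for exactly once. All of this is encoded in \cite[(1.8)]{arbarellocornalba}.
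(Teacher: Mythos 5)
Your proposal is correct, but it takes a genuinely different route from the paper: the paper declares the $\psi$-formula trivial and simply cites \cite[(1.8)]{arbarellocornalba} for the $\kappa$-formula, whereas you reconstruct the proof behind that citation. Your two key points --- that the legs of $\Gamma$ avoid the nodes (which gives the $\psi$-statement and ensures each marking lies on exactly one component of the glued family), and that $\omega_{C}|_{C_v}=\omega_{C_v}(\text{nodes on }C_v)$, so the log-canonical class restricts on each component $\cC'_v$ to the log-canonical class of that component with its $n(v)$ special points --- are exactly what makes the formula hold with no correction terms along the node sections, and this is the substance of Arbarello--Cornalba's argument; what your version buys is a self-contained proof. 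Two labels should be corrected, though neither is a genuine gap. First, $\xi_\Gamma$ is not flat, so the commutation $\xi_\Gamma^{*}\pi_{*}=\pi'_{*}g^{*}$ is not flat base change: it is the compatibility of the refined Gysin pullback for the lci morphism $\xi_\Gamma$ with proper pushforward (\cite[Theorem 6.2]{fulton}), together with the flatness of $\pi$, which identifies $\xi_\Gamma^{!}[\barM_{g,n+1}]$ with $[\cC']$ and lets Gysin pullback commute with the Chern classes forming $K^{i+1}$. Second, the identity $(g^{*}K)^{i+1}\cap[\cC']=\sum_{v}K_v^{i+1}\cap[\cC'_v]$ is not the projection formula but simply the decomposition of the fundamental class of the reduced, reducible family $\cC'$ into its irreducible components, combined with your restriction computation for $g^{*}K$. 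With those invocations named correctly, the argument is complete.
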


\subsection{The tautological ring}

\begin{defn}
The \textbf{tautological ring} is the smallest system of $\bQ$-subalgebras $R^{*}(\barM_{g,n})\subset A^{*}(\barM_{g,n})$ containing all $\psi$ and $\kappa$ classes and closed under pushforwards by all boundary morphisms $\xi_{\Gamma}:\barM_{\Gamma}\to\barM_{g,n}$ and all forgetful morphisms $\pi:\barM_{g,n+1}\to\barM_{g,n}$.
\end{defn}

Similarly, one can define the tautological $RH^{*}(\barM_{g,n})\subset H^{*}(\barM_{g,n})$ by the image of the tautological ring in Chow under the cycle class map.

Additive generators for the tautological ring are given by \textbf{decorated boundary classes}, that is, pushforwards of monomials in $\psi$ and $\kappa$ classes by boundary morphisms, see \cite[Proposition 11]{gp}. Indeed, by definition, these classes must be tautological, and the results of \S\ref{boundary_intersection_classical_section} and \ref{psi_kappa_pullback_classical_section} show that the sub-$\bQ$-vector spaces $A^{*}(\barM_{g,n})$ generated by decorated boundary classes are closed under multiplication. In fact, one easily sees that the tautological ring is closed under all bivariant pullbacks by forgetful and boundary morphisms.

Computing intersection numbers of tautological classes amounts to computing to integrals of polynomials in $\kappa$ and $\psi$ classes in top degree. Such integrals are given by the KdV hierarchy, see \cite{kontsevich}.

\subsection{Chern classes of the tangent bundle}

The Chern classes of the tangent bundle of $\barM_{g,n}$ do not play a direct role in the calculus of tautological classes as we have introduced it, but we will need the following result in our extended theory. Recall from standard deformation theory that the cotangent bundle to $\barM_{g,n}$ is given by $\pi_{*}(\Omega_{\pi}(\sigma_1+\cdots+\sigma_n)\otimes\omega_{\pi})$, where $\Omega_{\pi}$ denotes the relative sheaf of differentials.

Adapting Mumford's \cite{mumford} well-known application of the Grothendieck-Riemann-Roch theorem to the universal family $\pi$, Bini \cite[Theorem 2]{bini} gives an explicit formula for $\ch(\mathcal{T}^{*}_{\barM_{g,n}})$. We do not reproduce the formula here, the following will suffice for us.
\begin{thm}\cite{bini}
We have
\begin{equation*}
c(\mathcal{T}^{*}_{\barM_{g,n}})\in R^{*}(\barM_{g,n}).
\end{equation*}
\end{thm}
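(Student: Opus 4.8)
The plan is to follow Mumford's strategy: compute $\ch(\mathcal{T}_{\barM_{g,n}})$ by Grothendieck--Riemann--Roch (GRR) on the universal curve, and then recover the total Chern class by Newton's identities. Since $\ch(E^{*})$ is obtained from $\ch(E)$ by the sign change $\ch_i\mapsto(-1)^i\ch_i$, under which the $\bQ$-subalgebra $R^{*}(\barM_{g,n})$ is visibly stable, it suffices to prove $\ch(\mathcal{T}_{\barM_{g,n}})\in R^{*}(\barM_{g,n})$. Write $\pi:\barM_{g,n+1}\to\barM_{g,n}$ for the universal curve and $D=\sigma_1+\cdots+\sigma_n$ for the universal marked divisor. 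The deformation theory recalled before the statement identifies $\mathcal{T}_{\barM_{g,n}}=\mathcal{E}xt^1_\pi(\Omega_\pi(D),\mathcal{O})$, while stability forces $\pi_{*}\mathcal{H}om(\Omega_\pi(D),\mathcal{O})=0$ (no infinitesimal automorphisms). Hence in $K$-theory one has
\begin{equation*}
[\mathcal{T}_{\barM_{g,n}}]=-\pi_{!}\,R\mathcal{H}om(\Omega_\pi(D),\mathcal{O}),\qquad \pi_{!}:=\sum_i(-1)^iR^i\pi_{*}.
\end{equation*}

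First I would apply GRR to the lci morphism $\pi$ between smooth stacks,
\begin{equation*}
\ch(\pi_{!}\mathcal{G})=\pi_{*}\big(\ch(\mathcal{G})\,\td(T_\pi)\big),
\end{equation*}
with $\mathcal{G}=R\mathcal{H}om(\Omega_\pi(D),\mathcal{O})$ and $T_\pi$ the virtual relative tangent complex. The subtlety is that $\Omega_\pi$ is not locally free: it fails to be so exactly along the node locus $\mathcal{S}\subset\barM_{g,n+1}$, so $\mathcal{G}$ carries a nonzero $\mathcal{E}xt^1$-sheaf supported on $\mathcal{S}$. Using the standard comparison of $\Omega_\pi$ with $\omega_\pi$ along $\mathcal{S}$, I would split the output into a smooth part and a nodal part. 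Away from $\mathcal{S}$ one has $\mathcal{G}\cong\omega_\pi^{\vee}(-D)$, so the smooth contribution to $\pi_{*}(\ch(\mathcal{G})\td(T_\pi))$ is a push-forward of a polynomial in $c_1(\omega_\pi)$ and the classes $[\sigma_i]$. By the very definitions $\kappa_i=\pi_{*}(K^{i+1})$ and $\psi_i=\sigma_i^{*}c_1(\omega_\pi)$, together with the pullback formulas of \S\ref{psi_kappa_pullback_classical_section}, every such term lies in $R^{*}(\barM_{g,n})$.

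The nodal contribution comes from the $\mathcal{E}xt^1$-term and from $\td(T_\pi)$ restricted to $\mathcal{S}$. The key geometric point is that $\mathcal{S}$ -- the locus where the moving point is a node of the fiber -- is a union of boundary strata of $\barM_{g,n+1}$, and the relevant excess terms along $\mathcal{S}$ are polynomials in the two branch $\psi$-classes at the node. Thus each nodal term is a push-forward by a boundary morphism $\xi_\Gamma$ of a monomial in $\psi$ classes, i.e. a decorated boundary class, and hence tautological. Combining the two parts gives $\ch(\mathcal{T}_{\barM_{g,n}})\in R^{*}(\barM_{g,n})$, and therefore $\ch(\mathcal{T}^{*}_{\barM_{g,n}})\in R^{*}(\barM_{g,n})$.

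Finally, the total Chern class $c(E)$ is a universal polynomial with rational coefficients in the $\ch_i(E)$ (Newton's identities convert the power sums of the Chern roots into their elementary symmetric functions), so the containment $\ch_i(\mathcal{T}^{*}_{\barM_{g,n}})\in R^{*}$ for all $i$ upgrades to $c(\mathcal{T}^{*}_{\barM_{g,n}})\in R^{*}(\barM_{g,n})$, as claimed. I expect the main obstacle to be the GRR bookkeeping across $\mathcal{S}$ -- precisely the content of Bini's explicit formula -- namely verifying that the non-locally-free behaviour of $\Omega_\pi$ produces only push-forwards of $\psi$-monomials from boundary strata, with nothing escaping $R^{*}$. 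Once every GRR term is seen to be of the form $(\xi_\Gamma)_{*}(\text{polynomial in }\psi,\kappa)$, tautologicality is immediate from the definition of the tautological ring.
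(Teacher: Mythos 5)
Your proposal is correct and follows essentially the same route as the source the paper cites for this statement: the paper does not reproduce a proof but refers to Bini's theorem, which is precisely the Mumford-style GRR computation on the universal curve $\pi:\barM_{g,n+1}\to\barM_{g,n}$ that you outline, with the smooth locus contributing $\kappa$ and $\psi$ terms and the node locus contributing decorated boundary classes. Since $R^{*}(\barM_{g,n})$ is a $\bQ$-subalgebra, your final step passing from $\ch$ to $c$ via universal polynomials is also exactly the standard reduction, so there is nothing to add.
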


\section{From moduli of Galois covers to the $\cH$-tautological ring}\label{hurwitz_space_section}

\subsection{Admissible $G$-covers}\label{galois_covers_def_section}

Stacks of admissible covers of nodal curves were introduced by Harris-Mumford in \cite{hm} to compactify Hurwitz spaces of branched covers of smooth curves. Later work by Mochizuki \cite{mochizuki} and Abramovich-Corti-Vistoli \cite{acv} clarified the notion of a family of admissible covers over an arbitrary base scheme. In addition, the moduli space $\barM_{g',b}(BG,d)$ of stable maps to $BG$ constructed by Abramovich-Corti-Vistoli gives a normalization of the Harris-Mumford space by way of twisted $G$-covers. Incarnations of this space of also appear in \cite{jkk} and \cite{br}; for enumerative applications, it is most convenient to adopt the conventions of \cite{schmittvanzelm}, which we now recall.

\begin{defn}
Let $G$ be a finite group and let $S$ be an arbitrary base scheme. An \textbf{admissible $G$-cover} over $S$ is a finite morphism $f:X\to Y$ of connected nodal curves over $S$ with a fiberwise action of $G$ on $X$ and distinct marked sections $q_1,\ldots,q_b:S\to Y$ satisfying the following properties:
\begin{itemize}
\item $[D,q_1,\ldots,q_b]$ is a stable curve over $S$,
\item if $s\in S$ is a geometric point, then $f(x)\in Y_s$ is a node if and only if $x\in X_s$ is,
\item $f$, with the given $G$-action on $X$, is a principal $G$-bundle away from $q_1,\ldots,q_b\in Y$ and the nodes of $Y$
\item in analytic local coordinates over a marked point of $D$, the map $X\to Y\to S$ is isomorphic to
\begin{equation*}
\Spec(R[x'])\to\Spec(R[x])\to\Spec(R),
\end{equation*}
where $e\ge1$ is an integer and $f^{*}(x)=(x')^e$,
\item in analytic local coordinates over a node of $D$, the map $X\to Y\to S$ is isomorphic to
\begin{equation*}
\Spec(R[x',y']/(x'y'-r))\to\Spec(R[x,y]/(xy-r^e))\to\Spec(R),
\end{equation*}
where $e\ge1$ is an integer and $f^{*}(x)=(x')^e$ and $f^{*}(y)=(y')^e$, and
\item in analytic local coordinates at a node $p\in C$ over a geometric point $s\in S$, the $G$-action on $C_s$ has cyclic stabilizer $G_p\cong\bZ/e\bZ$ and is \textbf{balanced}. That is, if
\begin{equation*}
\Spec(R[x',y']/(x'y'-r))\to\Spec(R)
\end{equation*}
is a local coordinate chart near $p$, then a generator of $g_p\in G_p$ acts by
\begin{equation*}
g_p\cdot(x',y')=(\mu x',\mu^{-1}y'),
\end{equation*}
where $\mu$ is a primitive $e$-th root of unity.
\end{itemize}
\end{defn}

Note that, if $f:X\to Y$ is an admissible $G$-cover, then $Y$ is isomorphic to the scheme-theoretic quotient $X/G$.

By definition, the target of an admissible $G$-cover is a stable marked curve, allowing us to define later the target morphism in Definition \ref{target_morphism_defn}. In order for the source to be another such, one needs to mark the points of the marked fibers of an admissible $G$-cover. 

\begin{defn}
A \textbf{pointed admissible $G$-cover} consists of the data of an admissible $G$-cover, along with, for each $i=1,2,\ldots,b$, a distinguished marked point $p_{i1}\in f^{-1}(q_i)$. 
\end{defn}

Given a choice of $p_{i1}$, the $G$-action on the fibers of $f$ yields a marking of the entire fiber $f^{-1}(q_i)$ by the left coset $G/G_{p_{i1}}$. Indeed, if $t\in G$, then we may label the point $t\cdot p_{i1}$ by the coset $tG_{p_{i1}}$. The resulting curve with all pre-images of the $q_i$ taken as marked points is then stable, see \cite[Lemma 3.4]{schmittvanzelm}, allowing us to define later the source morphism in Definition \ref{source_morphism_defn}.

We also record the monodromy of the $G$-cover $f$ at each distinguished marked point. The stabilizer of the $G$-action at $p_{i1}$ is a cyclic group $G_{p_{i1}}$ of order $n_i$, see \cite[Lemma 1]{schmittvanzelm}. Thus, $G_{p_{i1}}$ acts on the tangent space $T_{p_{i1}}X\cong\bC$ by a character of order $n_i$.

\begin{defn}
Let $f:X\to Y$ be a pointed admissible $G$-cover. For $i=1,2,\ldots,b$, we define the element $h_i\in G_{p_{i1}}\subset G$ to be the unique element acting on $T_{p_{i1}}X\cong\bC$ by multiplication by the root of unity $e^{2\pi\sqrt{-1}\cdot n_i}$. We call the $b$-tuple $\xi=(h_1,\ldots,h_b)\in G^b$ the \textbf{monodromy data} of the pointed admissible cover $f$.
\end{defn}.

The monodromy data at the distinguished marked points of an admissible $G$-cover $f$ determines the monodromy data at \textit{all} marked points. Indeed, the stabilizer of the point $t\cdot p_{i1}$ is the conjugate $tG_{p_{i1}}t^{-1}$ with distinguished generator $th_it^{-1}$.

We are now ready to define the moduli spaces of admissible $G$-covers.

\begin{defn}\label{admissible_galois_main}
Let $b,g\ge0$ integers, $G$ be a finite group, and let $\xi=(h_1,\ldots,h_b)\in G^b$ be a tuple of group elements. We define $\barH_{g,G,\xi}$ to be the moduli space parametrizing admissible $G$-covers $f:X\to Y$, along with the data of marked points $q_1,\ldots,q_b\in Y$ and distinguished marked points $p_{i1}\in f^{-1}(q_i)$, such that $X$ has genus $g$ and $f$ has monodromy data $\xi$ (as measured at the $p_{i1}$).
\end{defn}

When $G=\{1\}$, we recover the usual moduli spaces of stable pointed curves. Note that the genus $g'$ of $Y$ is determined by the Riemann-Hurwitz formula:
\begin{equation*}
2g-2=\#G\cdot \left[(2g'-2)+\sum_{i=1}^{b}\frac{\ord_{G}(h_i)-1}{\ord_G(h_i)}\right].
\end{equation*}

The space $\barH_{g,G,\xi}$ is equipped with two natural maps remembering the source and target curve, respectively.

\begin{defn}\label{target_morphism_defn}
The \textbf{target morphism}
\begin{equation*}
\delta:\barH_{g,G,\xi}\to\barM_{g',b}
\end{equation*}
sends a pointed $G$-cover $f:X\to Y$ to the curve $[Y,\{y_i\}]$.
\end{defn}

\begin{defn}\label{source_morphism_defn}
The \textbf{source morphism}
\begin{equation*}
\phi:\barH_{g,G,\xi}\to\barM_{g,r}
\end{equation*}
sends a pointed $G$-cover $f:X\to Y$ to the curve $[X,\{p_{ia}\}]$. Here, the index set for the set of marked points is the disjoint union of the left coset spaces $G/\langle h_i\rangle$.
\end{defn}

We point out that there is already a source of ambiguity here, namely that the target $\barM_{g,r}$ parametrizes genus $g$ curves with \textit{ordered} marked points, whereas, while the marked points on a source curve parametrized by $\barH_{g,G,\xi}$ are distinguished, only the marked points of the target (and hence the $G$-orbits on the source) are ordered. Thus, the map $\phi$ must be defined relative to an ordering on the marked points, which we allow to be chosen independently on the ordering of the $G$. For sake of brevity, we will suppress this choice in the notation. We will see this ambiguity in the more general definition of restriction morphisms later.

\begin{thm}\cite[Theorem 3.7]{schmittvanzelm}\label{source_target_properties}
The map $\phi$ is representable, finite, unramified, and a local complete intersection. The map $\delta$ is flat, proper, and quasi-finite, but not necessarily representable (hence not necessarily finite). 

Moreover, the space $\barH_{g,G,\xi}$ is a smooth, proper, Deligne-Mumford stack of dimension $3g'-3+b$.
\end{thm}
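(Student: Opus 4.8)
The plan is to black-box the hardest foundational input—smoothness, properness, and the Deligne–Mumford property—from the theory of twisted stable maps, and then to verify the remaining assertions about the two structure maps by hand. Concretely, I would first recall the equivalence, due to Abramovich–Corti–Vistoli, between the groupoid of admissible $G$-covers $f:X\to Y$ with target numerical data $(g',b)$ and balanced monodromy $\xi$, and the groupoid of balanced twisted stable maps from a $b$-pointed genus-$g'$ twisted curve $\cY$ to $BG$, obtained by replacing the scheme-theoretic quotient $Y=X/G$ by the stacky quotient $\cY=[X/G]$ and $f$ by the classifying morphism $\cY\to BG$ of the principal $G$-bundle $X\to\cY$ (this is exactly the inverse of the $S_{d-1}$-quotient construction recalled in the introduction). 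By \cite[Proposition 4.2.2]{acv} this moduli stack is a smooth, proper Deligne–Mumford stack. Since $BG$ is \'etale of dimension zero, the deformation--obstruction theory of maps to $BG$ is unobstructed and the expected dimension equals that of $\barM_{g',b}$, namely $3g'-3+b$; smoothness then forces this to be the actual dimension.

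Next I would treat the target map $\delta:\barH_{g,G,\xi}\to\barM_{g',b}$. Properness is automatic, as it is a morphism between proper stacks. Quasi-finiteness is the Riemann Existence Theorem: over a smooth target curve punctured at its branch points, connected $G$-covers with prescribed local monodromy $\xi$ correspond to finitely many conjugacy classes of surjections from the punctured fundamental group to $G$, and this finiteness persists over the boundary by the admissibility (balanced) conditions. For flatness I would invoke miracle flatness: $\barH_{g,G,\xi}$ is smooth hence Cohen--Macaulay, $\barM_{g',b}$ is smooth hence regular, the two have the same dimension $3g'-3+b$, and the fibers of the quasi-finite map $\delta$ are zero-dimensional, so the fiber dimension everywhere equals $\dim\barH_{g,G,\xi}-\dim\barM_{g',b}=0$, which is exactly the hypothesis of the local flatness criterion. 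Finally $\delta$ is not representable because a generic object has nontrivial automorphisms fixing the target, coming from the center $Z(G)$ acting by deck transformations (and, over the boundary, from the cyclic stabilizers $\mu_e$ at the nodes imposed by the balanced condition); whenever $Z(G)\neq\{1\}$ these lie in the kernel of $\Aut(f)\to\Aut(Y)$.

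I would then analyze the source map $\phi:\barH_{g,G,\xi}\to\barM_{g,r}$. For representability, observe that an automorphism of a pointed admissible $G$-cover is a $G$-equivariant automorphism $\alpha$ of $X$ preserving the distinguished points; if $\alpha=\id_X$ then, since $Y=X/G$ and $f$ is surjective, the induced map on $Y$ is the identity as well, so the automorphism is trivial. Hence $\Aut(f)\hookrightarrow\Aut(X,\{p_{ia}\})$ is injective and $\phi$ is representable. Being a representable, proper, quasi-finite morphism, $\phi$ is finite; here quasi-finiteness holds because the fiber over a fixed marked source curve parametrizes the finitely many $G$-actions realizing the cover, $\Aut(X)$ being finite. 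For unramifiedness I would identify the tangent space to $\barH_{g,G,\xi}$ at $f$ with the $G$-invariant part of the deformation space of the pointed source curve: deformations of the cover are exactly $G$-equivariant deformations of $(X,\{p_{ia}\})$, because the cover is reconstructed from $X$ together with its rigid, discrete $G$-action. Under this identification $d\phi$ is the inclusion of $G$-invariants $H^1(X,T_X(-\textstyle\sum p_{ia}))^{G}\hookrightarrow H^1(X,T_X(-\sum p_{ia}))$, which is injective; thus $\Omega_\phi=0$ and $\phi$ is unramified. Since $\phi$ is a finite unramified morphism with smooth source and smooth target, it is \'etale-locally a regular closed immersion into a smooth stack, hence a local complete intersection.

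The step I expect to be the main obstacle is the unramifiedness of $\phi$, and more precisely making the tangent-space identification rigorous across the boundary. Away from the nodes the argument is the clean ``inclusion of invariants'' statement above, but at a node of $X$ one must account for the stacky/balanced structure of the twisted target and the local $\mu_e$-action, so the correct object is the equivariant deformation space of the twisted curve rather than the naive $H^1(X,T_X(-\sum p_{ia}))^{G}$. Verifying that $d\phi$ remains the inclusion of an invariant (hence injective) subspace in this twisted setting---equivalently, that no nonzero equisingular-but-non-equivariant first-order deformation of $X$ is induced by a deformation of the cover---is the technical crux; everything else reduces to standard properties of proper Deligne--Mumford stacks together with the Abramovich--Corti--Vistoli smoothness and properness theorem.
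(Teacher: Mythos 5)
The paper itself offers no proof of this theorem: it is imported wholesale from Schmitt--van Zelm \cite[Theorem 3.7]{schmittvanzelm}, whose argument rests in turn on the Abramovich--Corti--Vistoli theory of twisted stable maps. Your reconstruction follows essentially the same strategy as that cited source: black-box smoothness, properness, and the Deligne--Mumford property via the equivalence with balanced twisted stable maps to $BG$ (unobstructedness of maps to an \'{e}tale target giving the dimension $3g'-3+b$); prove representability and unramifiedness of $\phi$ by identifying deformations of the cover with $G$-invariant deformations of the pointed source curve, where exactness of invariants in characteristic zero gives injectivity of $d\phi$; deduce finiteness from representable $+$ proper $+$ quasi-finite; and get flatness of $\delta$ from miracle flatness. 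The paper's own discussion immediately after the theorem (the local picture $\delta^{*}(t)=s^{e}$ at a node) gives an alternative, equally standard route to flatness and quasi-finiteness of $\delta$ via explicit smoothing coordinates, and it confirms your diagnosis that the balanced condition at nodes is exactly what makes $G$-equivariant smoothings of a nodal orbit match deformations of the twisted quotient curve; so the technical crux you flag is real but resolvable precisely as you suggest.

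One slip worth correcting: in your non-representability argument for $\delta$, a central element $z\in Z(G)$ gives an automorphism of the \emph{pointed} cover only if it fixes each distinguished point $p_{i1}$, i.e.\ only if $z\in Z(G)\cap\bigcap_i\langle h_i\rangle$; the assertion ``whenever $Z(G)\neq\{1\}$'' is therefore not sufficient (for $G=\bZ/2\bZ$ with unramified marked orbits, the involution moves every distinguished point and induces no automorphism of the pointed cover). Since the theorem claims only that $\delta$ is \emph{not necessarily} representable, a single example suffices, and your idea does produce one --- take $G=\bZ/2\bZ$ with every $h_i$ equal to the involution, so that the central involution fixes all the $p_{i1}$ and lies in the kernel of $\Aut(f)\to\Aut(Y,\{q_i\})$ --- but the quantifier as you wrote it is wrong.
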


The degree of $\delta$ is given by a \textbf{Hurwitz number}, which may be computed purely group-theoretically, see \cite[Theorem 3.19]{schmittvanzelm}.

The ramification locus of $\delta$ consists of covers $f:X\to Y$ ramified to order $e\ge2$ over at least one node $q\in Y$. Indeed, if $t$ is a smoothing parameter on the deformation space of $Y$ corresponding to $q$, and $s$ is a smoothing parameter on the deformation space of $X$ (and thus for the cover $f$, because $\phi$ is unramified) corresponding to the $G$-orbit above $q$, then $\delta^{*}(t)=s^e$, up to a unit.

However, the deformations of $f$ are controlled by the \textit{twisted curve} $\mathcal{Y}$ associated to the cover $f$, see \cite[\S 3]{acv}. Here, $\mathcal{Y}$ is a Deligne-Mumford stack with coarse space $Y$ such that the natural map $c:\mathcal{Y}\to Y$ is an isomorphism away from the nodes and marked points, equipped with a principal $G$-bundle $\wt{f}:X\to \mathcal{Y}$ such that $c\circ \wt{f}=f$.

\subsection{The universal family}\label{universal_family_hurwitz}

Given a space of admissible $G$-covers $\barH_{g,G,\xi}$, with $\xi=(h_1,\ldots,h_b)$, we will denote the monodromy datum $(h_1,\ldots,h_b,1)$ by the somewhat abusive notation $\xi+\{1\}$. Then, the space $\barH_{g,G,\xi+\{1\}}$ parametrizes the same covers as $\barH_{g,G,\xi}$, along with an additional $G$-orbit of unramified points. 

\begin{defn}
We define the natural forgetful morphism $\pi:\barH_{g,G,\xi+\{1\}}\to \barH_{g,G,\xi}$ forgetting the additional unramified orbit and contracting non-stable components on the source and target.
\end{defn}

The following is straightforward:

\begin{lem}\label{universal_family_hurwitz_comparison}
The diagram
\begin{equation*}
\xymatrix{
\barH_{g,G,\xi+\{1\}} \ar[r]^{\phi'} \ar[d]^{\pi_G} & \barM_{g,r+1} \ar[d]^{\pi}\\
\barH_{g,G,\xi} \ar[r]^{\phi} & \barM_{g,r}
}
\end{equation*}
where $\phi'$ is the composition of the source map on $\barH_{g,G,\xi+\{1\}}$ followed by the map forgetting all marked points in the extra $G$-orbit except the distinguished one, is Cartesian.

Moreover, the forgetful map $\pi_G:\barH_{g,G,\xi+\{1\}}\to \barH_{g,G,\xi}$, taken with the pullbacks of the universal sections $\sigma_{p_{ia}}$ by $\phi$, form the universal family of (sources of) admissible $G$-covers over $\barH_{g,G,\xi}$.
\end{lem}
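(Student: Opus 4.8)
The plan is to establish the two assertions of Lemma \ref{universal_family_hurwitz_comparison} separately: first the Cartesianness of the square, and then the identification of the universal family. The crux is the first claim, since the universal family statement follows fairly directly once the fiber-product description is understood. My strategy for the Cartesian square is to construct an explicit map from $\barH_{g,G,\xi+\{1\}}$ to the fiber product $\barH_{g,G,\xi}\times_{\barM_{g,r}}\barM_{g,r+1}$ and show it is an isomorphism by exhibiting an inverse on the level of moduli functors (or, following the conventions of \S\ref{conventions}, on the level of families and hence closed points).

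First I would unwind what a point of the fiber product is: a pointed admissible $G$-cover $f:X\to Y$ with monodromy data $\xi$, together with a point of $\barM_{g,r+1}$ lying over $[X,\{p_{ia}\}]\in\barM_{g,r}$ under the forgetful map $\pi$. By the modular interpretation of $\pi:\barM_{g,r+1}\to\barM_{g,r}$ via the universal curve (Definition \ref{forgetful_mgn_defn}), the latter datum is precisely a choice of a point $x\in X$ (possibly on a rational bubble, i.e.\ equal to one of the $p_{ia}$, handled by the universal sections $\sigma_i$). The natural map sends a cover in $\barH_{g,G,\xi+\{1\}}$—which carries an extra $G$-orbit of unramified points with distinguished point $p_{(b+1)1}$—to the underlying cover with this orbit forgotten (and unstable components contracted), together with the marked point $x=p_{(b+1)1}\in X$. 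I would check this is well-defined, i.e.\ that contracting the extra orbit on source and target indeed recovers a cover with monodromy $\xi$; here the key point is that the forgotten orbit is \emph{unramified} (its monodromy label is $1$), so no stabilizer data is lost and the balanced-node condition is preserved.

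The inverse is where the real content lies: given $(f:X\to Y,\ x\in X)$, I must reconstruct the extra $G$-orbit and the $G$-cover structure on a possibly-modified curve. The point is that $x$ determines a single source point, but in $\barH_{g,G,\xi+\{1\}}$ we need an entire $G$-orbit $G\cdot x$ together with the distinguished point and compatible target marking $q_{b+1}=f(x)$. Since the added monodromy is trivial, the orbit $G\cdot x$ is free and maps bijectively to a single point $q_{b+1}=f(x)\in Y$ away from existing special points; when $x$ collides with existing structure (a node, a marked point, or lands so that stability fails) one must insert the appropriate rational bubbles on both $X$ and $Y$ equivariantly, exactly as in the classical contraction/stabilization for $\pi$. \textbf{The hard part will be} verifying that this bubbling-up is canonical and $G$-equivariant, i.e.\ that the stabilization process on $Y$ lifts uniquely to a $G$-admissible-cover stabilization on $X$ compatible with the balanced condition at any newly created nodes; this is what makes the reconstructed object a genuine point of $\barH_{g,G,\xi+\{1\}}$ and the two constructions mutually inverse. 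I expect this to reduce to the statement that the forgetful map $\pi_G$ is itself the ``universal curve'' in the equivariant sense, which is essentially the second assertion of the lemma, so the two parts reinforce one another.

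For the universal family claim, once the square is known to be Cartesian I would argue that $\barH_{g,G,\xi+\{1\}}$, via the extra distinguished section, carries a tautological family of sources: its fiber over a point of $\barH_{g,G,\xi}$ is exactly the source curve $X$ (the $G$-orbit of the moving point sweeps out $X$ as the point varies over its coset space, and the distinguished point sweeps out the full source since the added monodromy is trivial). I would then identify the sections $\sigma_{p_{ia}}$ as the pullbacks under $\phi$ of the universal sections on $\barM_{g,r+1}\to\barM_{g,r}$ (Definition \ref{forgetful_mgn_defn}), which is immediate from the Cartesian square since pullback of a universal section along the base-change map is again a section of $\pi_G$. The only subtlety to record is the bookkeeping of which marked points survive the contraction, so that the index set on the source matches the coset description $\coprod_i G/\langle h_i\rangle$ of Definition \ref{source_morphism_defn}; this is purely combinatorial and follows from tracking the labels through $\phi'$, the map forgetting all but the distinguished point of the new orbit.
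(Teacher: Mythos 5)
Your proposal is correct and follows what the paper intends: the paper states this lemma without proof (labeling it ``straightforward''), and the content you identify as the crux --- the $G$-equivariant bubbling needed to define the inverse functor when the new point collides with a marked point or node --- is precisely the explicit universal-section construction the paper spells out immediately after the lemma (attach rational tails at the whole $G$-orbit of $p_{ia}$, take $[Y']=\sigma_i([Y])$, and make the new cover totally ramified over the new node and over $q_i$; a generic fiber over the new target tail is then a free $G$-orbit, as required for trivial monodromy). One caution: you write that the hard part ``reduces to'' the second assertion of the lemma, which you then derive from the first --- stated that way the argument is circular. You should instead carry out the bubbling construction directly to establish Cartesianness (it is self-contained: the only verifications are that each new rational tail maps with degree $n_i=\ord\langle h_i\rangle$, totally ramified at its two special points, so the balanced condition holds at the new nodes and the new orbit is free), and only then deduce the universal-family statement as you do. Also, the case ``lands so that stability fails'' is vacuous --- adding a marked point never destabilizes --- so the only collision cases are a node or an existing marked point.
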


We also describe a more explicit construction of the universal sections to $\pi_G:\barH_{g,G,\xi+\{1\}}\to \barH_{g,G,\xi}$, also abusively called $\sigma_{p_{ia}}$. One may also re-interpret this construction in the language of boundary classes, explained in \S\ref{boundary_strata_hurwitz_section}.

Suppose $f:X\to Y$ is a pointed admissible $G$-cover, and let $p_{ia}$ be a marked point of $X$ mapping to $q_i\in Y$ with ramification index $e_i$. We define the admissible $G$-cover $f':X'\to Y'$ as follows. Let $[Y']=\sigma_i([Y])$, and let $X'$ be the curve obtained from $X$ by attaching rational tails to the points in the $G$-orbit of $p_{ia}$. Then, $f'$ is is the admissible $G$-cover agreeing with $f$ after contraction of the new rational components, and totally ramified over the new node of $Y'$ and over $q_i$. Finally, any point in the pre-image of $q_{b+1}$ on the same component of $p_{ia}$ may be taken to be the distinguished marked point of the new marked fiber, giving $f'$ the structure of a pointed admissible $G$-cover. We then have $\sigma_{p_{ia}}([f])=[f']$.

\subsection{$\psi$ and $\kappa$ classes}

\begin{defn}
We define the $\psi$ and $\kappa$ classes on $\barH_{g,G,\xi}$ by:
\begin{equation*}
\psi_{p_{ia}}=\phi^{*}\psi_{p_{ia}}\in A^1(\barH_{g,G,\xi})
\end{equation*} and 
\begin{equation*}
\kappa_i=\phi^{*}\kappa_i\in A^i(\barH_{g,G,\xi}),
\end{equation*}
respectively, where as usual $\phi:\barH_{g,G,\xi}\to\barM_{g,r}$ is the source morphism.
\end{defn}
Lemma \ref{universal_family_hurwitz_comparison} shows that one can equivalently define the $\psi$ and $\kappa$ classes in terms of the relative dualizing sheaf of $\pi:\barH_{g,G,\xi+\{1\}}\to\barH_{g,G,\xi}$, as in Definitions \ref{psi_mgn_defn} and \ref{kappa_mgn_defn}.

Up to constant factors, the $\psi$ and $\kappa$ classes on $\barH_{g,G,\xi}$ may also be defined by pullback by $\delta$. Indeed, we have the following:

\begin{prop}\cite[Lemma 3.9]{schmittvanzelm}\label{psi_kappa_comparison_forgetful}
We have:
\begin{equation*}
\psi_{p_{ia}}=\frac{1}{\ord_G(h_i)}\cdot\delta^{*}(\psi_{q_i}).
\end{equation*}
Moreover,
\begin{equation*}
\kappa_{i}=(\# G)\cdot\delta^{*}(\kappa_i).
\end{equation*}
\end{prop}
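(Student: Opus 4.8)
The plan is to compute both sides directly on $\barH_{g,G,\xi}$ from the universal curves. Let $\rho'\colon\cD\to\barH_{g,G,\xi}$ and $\rho\colon\cC\to\barH_{g,G,\xi}$ denote the universal source and target curves, obtained by pulling back the universal curves over $\barM_{g,r}$ and $\barM_{g',b}$ along $\phi$ and $\delta$; write $P_{ia}$ and $Q_i$ for the corresponding tautological sections and $F\colon\cD\to\cC$ for the universal admissible $G$-cover, so that $\rho\circ F=\rho'$ and $F$ is finite flat of degree $\#G$. Since the universal curves are flat, base change along the Cartesian squares defining $\phi$ and $\delta$ gives $\psi_{p_{ia}}=P_{ia}^{*}c_1(\omega_{\rho'})$ and $\delta^{*}\psi_{q_i}=Q_i^{*}c_1(\omega_{\rho})$, and likewise $\kappa_i=\rho'_{*}(K_{\cD}^{i+1})$ and $\delta^{*}\kappa_i=\rho_{*}(K_{\cC}^{i+1})$, where $K_{\cD}$ and $K_{\cC}$ are the relative log-canonical classes $c_1(\omega_{\rho'}(\sum_{i,a}P_{ia}))$ and $c_1(\omega_{\rho}(\sum_i Q_i))$.

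The engine for both formulas is the relative Riemann--Hurwitz identity for the admissible cover $F$. First I would establish
\begin{equation*}
\omega_{\rho'}=F^{*}\omega_{\rho}\otimes\cO\Big(\sum_{i,a}(e_i-1)P_{ia}\Big),\qquad e_i:=\ord_G(h_i),
\end{equation*}
with ramification divisor supported only on the marked sections. The content is that the nodes contribute nothing: in the balanced local model $x'y'=r$, $xy=r^{e}$ with $f^{*}x=(x')^{e}$, $f^{*}y=(y')^{e}$, one computes $F^{*}\tfrac{dx}{x}=e\,\tfrac{dx'}{x'}$, so the comparison map $F^{*}\omega_{\rho}\to\omega_{\rho'}$ is an isomorphism across each node, whereas at a smooth marked point $x=(x')^{e_i}$ yields a zero of order $e_i-1$ along $P_{ia}$.

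For the $\psi$ statement I would pull this identity back along $P_{ia}$. The term $P_{ia}^{*}F^{*}c_1(\omega_{\rho})$ equals $Q_i^{*}c_1(\omega_{\rho})=\delta^{*}\psi_{q_i}$ since $F\circ P_{ia}=Q_i$, while among the divisor summands only $(e_i-1)P_{ia}$ meets the section $P_{ia}$, contributing $(e_i-1)\,c_1(N_{P_{ia}})=-(e_i-1)\psi_{p_{ia}}$ via the self-intersection formula $P_{ia}^{*}\cO(P_{ia})=N_{P_{ia}}$. Hence $\psi_{p_{ia}}=\delta^{*}\psi_{q_i}-(e_i-1)\psi_{p_{ia}}$, i.e.\ $\psi_{p_{ia}}=\tfrac{1}{e_i}\delta^{*}\psi_{q_i}$. (Equivalently, $x=(x')^{e_i}$ canonically identifies the target cotangent line with the $e_i$-th power of the source one, giving $\delta^{*}\psi_{q_i}=e_i\,\psi_{p_{ia}}$ at once.) For the $\kappa$ statement the crucial point is that the ramification is exactly absorbed into the markings: since $F^{*}Q_i=e_i\sum_a P_{ia}$, the displayed identity upgrades to an equality of log-canonical classes $K_{\cD}=F^{*}K_{\cC}$. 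Then the projection formula together with $\deg F=\#G$ gives
\begin{equation*}
\kappa_i=\rho'_{*}(K_{\cD}^{i+1})=\rho_{*}F_{*}\big(F^{*}(K_{\cC}^{i+1})\big)=(\#G)\,\rho_{*}(K_{\cC}^{i+1})=(\#G)\,\delta^{*}\kappa_i.
\end{equation*}

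The hard part will be the clean Riemann--Hurwitz identity over the nodes: one must verify, from the balanced condition and the explicit relative charts $x'y'=r$, $xy=r^{e}$ of an admissible $G$-cover, that $F^{*}\omega_{\rho}\to\omega_{\rho'}$ is an isomorphism in a neighborhood of each node and ramifies only along the smooth marked sections. This is exactly what forces the \emph{log-canonical} class, rather than $\omega_{\rho'}$ alone, to pull back under $F$, and it is the source of the factors $\tfrac{1}{\ord_G(h_i)}$ and $\#G$. The remaining ingredients --- base change for the Cartesian squares, the self-intersection formula with $c_1(N_{P_{ia}})=-\psi_{p_{ia}}$, and the projection formula --- are routine.
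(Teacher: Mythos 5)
Your proof is correct in substance, and it is worth noting that the paper itself gives no proof of this proposition: it is imported by citation from \cite[Lemma 3.9]{schmittvanzelm}, whose argument is essentially your parenthetical remark --- the local model $x=(x')^{e_i}$ at a marked point identifies the cotangent line of the target universal curve along $Q_i$ with the $e_i$-th tensor power of the cotangent line of the source universal curve along $P_{ia}$, giving $\delta^{*}\psi_{q_i}=e_i\psi_{p_{ia}}$ at once. Your route through the relative Riemann--Hurwitz identity $\omega_{\rho'}\cong F^{*}\omega_{\rho}\bigl(\sum_{i,a}(e_i-1)P_{ia}\bigr)$ is a clean self-contained alternative: the node computation $F^{*}(dx/x)=e\,dx'/x'$ (so the comparison map is an isomorphism near nodes, using balancedness and characteristic $0$) is exactly right, and the upgrade to the single identity $K_{\cD}=F^{*}K_{\cC}$ of log-canonical classes, via $F^{*}Q_i=e_i\sum_a P_{ia}$, is the most transparent way to see why the $\psi$-factor $1/e_i$ and the $\kappa$-factor $\#G$ have a common source; the $\kappa$ statement then reduces to the projection formula.

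One misstatement should be fixed: $F$ is finite but \emph{not} flat. Over a node of the target where the cover ramifies to order $e\ge 2$, the local model is $R[x',y']/(x'y'-r)$ over $R[x,y]/(xy-r^{e})$, whose fiber over the closed point has length $2e-1$ while the generic local degree is $e$, so flatness fails precisely at ramified nodes (this is the classical reason for passing to twisted/stacky covers). This does not damage your argument, because flatness of $F$ is never actually needed: the identifications $\psi_{p_{ia}}=P_{ia}^{*}c_1(\omega_{\rho'})$, $\delta^{*}\psi_{q_i}=Q_i^{*}c_1(\omega_{\rho})$ and $\delta^{*}\kappa_i=\rho_{*}(K_{\cC}^{i+1})$ use only flatness of $\rho,\rho'$ over the base, flatness of $\delta$, and (on the $\phi$-side) the Cartesian diagram of Lemma \ref{universal_family_hurwitz_comparison}; and the key step $\rho'_{*}(K_{\cD}^{i+1})=(\#G)\,\rho_{*}(K_{\cC}^{i+1})$ needs only the projection formula for Chern classes of a pulled-back line bundle along the proper map $F$, in the form $F_{*}\bigl(c_1(F^{*}L)^{i+1}\cap[\cD]\bigr)=c_1(L)^{i+1}\cap F_{*}[\cD]$, together with $F_{*}[\cD]=(\#G)[\cC]$, which holds because $F$ is finite, surjective, and generically a $G$-torsor on connected curves. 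So delete ``flat'' and phrase the last display in terms of $F_{*}[\cD]$; with that adjustment the argument is complete.
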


\subsection{Admissible $G$-graphs and boundary strata}\label{boundary_strata_hurwitz_section}

We review the construction of \cite[\S 3.4-5]{schmittvanzelm}. In analogy with the stratification of $\barM_{g,n}$ indexed by stable graphs, we have a stratification of $\barH_{g,G,\xi}$ indexed by so-called admissible $G$-graphs. The dual graph $\Gamma$ of a (source) curve parametrized by $\barH_{g,G,\xi}$ naturally acquires a $G$-action, along with monodromy data at its legs and half-edges. 

\begin{defn}\cite[Definition 3.14]{schmittvanzelm}
Let $\barH_{g,G,\xi}$ be a moduli space of admissible $G$-covers, with $\xi=(h_1,\ldots,h_b)$. An \textbf{admissible $G$-graph} $(\Gamma,G)$ of genus $g$ and monodromy data $\xi$ is a stable graph of genus $g$ with $G$-action, with the additional data of a distinguished leg $\ell_{i1}$ in each $G$-orbit of legs and an element $h_{\ell}\in G$ for all $\ell\in H(\Gamma)\sqcup L(\Gamma)$, satisfying the following conditions:
\begin{itemize}
\item (monodromy data generate stabilizers) the stabilizer $G_{\ell}\subset G$ of $\ell$ is cyclic with generator $h_{\ell}$,
\item (monodromy data agrees with $\xi$) $h_{\ell_{i1}}=h_i$ for all $i=1,2,\ldots,b$,
\item (monodromy data are compatible in $G$-orbits) $h_{t\cdot\ell}=th_{\ell}t^{-1}$ for all $\ell\in H(\Gamma)\sqcup L(\Gamma)$ and $t\in G$,
\item (quotient by $G$ doesn't collapse edges) if $(\ell,\ell')$ is a pair of half-edges comprising an edge, then $t\cdot\ell\neq\ell'$ for all $t\in G$, and
\item (balancing condition) if $(\ell,\ell')$ is a pair of half-edges comprising an edge, then $h_{\ell}=h_{\ell'}^{-1}$.
\end{itemize}
\end{defn}

As with admissible $G$-covers, the $G$-orbits of the legs of an admissible $G$-graph are assumed to be \textit{ordered}. The dual graph of the source curve of any cover parametrized by $\barH_{g,G,\xi}$ naturally acquires the structure of an admissible $G$-graph.

Schmitt-van Zelm distinguish \textit{pre-admissible} $G$-graphs from admissible $G$-graphs, where an admissible $G$-graph is required to correspond to a non-empty boundary stratum; we do not make this distinction here.

We now define the boundary stratum associated to the admissible $G$-graph $(\Gamma,G)$. We first choose a collection $V'(\Gamma)\subset V(\Gamma)$ of $G$-orbit representatives on the set of vertices of $\Gamma$. Then, for each $v\in V'(\Gamma)$, we choose distinguished orbit representatives for the $G_v$-actions on the sets of legs and half-edges incident at $v$, where $G_v\subset G$ is the stabilizer of $v$. Let $L'_{v},H'_{v}$ be the resulting sets of distinguished legs and half-edges, respectively. Let 
\begin{equation*}
\xi'_{v}=\left(h_{\ell}:\ell\in L'_{v}\coprod H'_{v}\right).
\end{equation*}

\begin{defn}\label{boundary_stratum_hurwitz_defn}
With notation as above define the \textbf{boundary stratum}
\begin{equation*}
\barH_{(\Gamma,G)}=\prod_{v\in V'(\Gamma)}\barH_{g(v),G_v,\xi'_v}
\end{equation*}
\end{defn}

Suppose $[C_v]\in\barH_{g(v),G_v,\xi'_v}$, where $C_v$ is a marked stable curve carrying a $G_v$ action. Let
\begin{equation*}
\wt{C}_v=\coprod_{g\in G/G_v}C_v.
\end{equation*}
Here, $G/G_v$ acts naturally on the components, which are in natural bijection with the $G$-orbit of $v\in V(\Gamma)$. The component corresponding to $1\in G/G_v$, in addition, carries the given action of $G_v$ on $C_v$. We then choose a $G$-action on $\wt{C}_v$ compatible with both of these actions; this amounts to the choice of coset representatives of $G_v$ in $G$, and will not affect the definition of boundary morphism.

Given such a $C_v$ for each $v\in V'(\Gamma)$, we then glue the curves $\wt{C}_v$ according to the graph $\Gamma$, to produce a $G$-curve $C$ with the required genus. Moreover, we also distinguish one point $G$-orbit of marked points of $C$ according to $\Gamma$.

\begin{defn}\label{boundary_morphism_hurwitz_defn}
We define the \textbf{boundary morphism}
\begin{equation*}
\xi_{(\Gamma,G)}:\barH_{(\Gamma,G)}\to\barH_{g,G,\xi}.
\end{equation*}
by the construction given above, namely:
\begin{equation*}
\xi_{(\Gamma,G)}\left(\{[C_v]\}_{v\in V'(\Gamma)(G)}\right)=[f:C\to C/G].
\end{equation*}
\end{defn}

\begin{rem}\label{boundary_relate_to_classical}
Boundary maps on spaces of admissible $G$-covers are related to the classical boundary maps by the following commutative diagram, see \cite[(16)]{schmittvanzelm}):
\begin{equation*}
\xymatrix{
\barH_{(\Gamma,G)} \ar[r]^{\xi_{(\Gamma,G)}} \ar[d]_{\phi_{(\Gamma,G)}} & \barH_{g,G,\xi} \ar[d]^{\phi} \\
\barM_{\Gamma} \ar[r]^{\xi_{\Gamma}} & \barM_{g,r}
}
\end{equation*}
Here, the map $\phi_{(\Gamma,G)}$ is defined by the composition 
\begin{equation*}
\xymatrix{
\barH_{(\Gamma,G)}=\displaystyle\prod_{v\in V'(\Gamma)}\barH_{g_v,G_v,\xi_v} \ar[r]^(0.59){\Delta} & \displaystyle\prod_{v\in V(\Gamma)}\barH_{g_v,G_v,\xi_v} \ar[r]^(0.45){\prod_v\phi} & \displaystyle\prod_{v\in V(\Gamma)}\barM_{g_v,n_v}=\barM_{\Gamma}
}.
\end{equation*}
\end{rem}

The image of $\xi_{(\Gamma,G)}$ is the closure of the locus parametrizing admissible $G$-covers whose source curve has underlying admissible $G$-graph $\Gamma$. As in the case of boundary morphisms on $\barM_{g,n}$, the morphism $\xi_{(\Gamma,G)}$ is finite and unramified of degree $\#\Aut((\Gamma,G))$ onto its image, according to the following natural notion of morphism of admissible $G$-graphs:

\begin{defn}\label{admissible_graph_morphism_def}
A \textbf{morphism} $f:(\Gamma,G)\to(A,G)$ between admissible $G$-graphs of genus $g$ and monodromy $\xi$ is a $G$-equivariant morphism of stable graphs $f:\Gamma\to A$ satisfying the obvious notions of compatibility of monodromy data at half-edges, see Definition \cite[Proposition 3.12]{schmittvanzelm}. 
\end{defn}
The morphism $f$ induces a map $\xi_{\Gamma\to A}:\barH_{(\Gamma,G)}\to\barH_{(A,G)}$ compatible with the boundary morphisms to $\barH_{g,G,\xi}$. The map $\xi_{\Gamma\to A}$ is itself a product of usual boundary morphisms into the components of the target.

The moduli space $\barH_{(\Gamma,G)}$ may also be interpreted as the moduli of admissible $G$-covers $f:X\to Y$ along with a $(\Gamma,G)$-structure on the underlying admissible $G$-graph of $X$, see \cite[Proposition 4.6]{schmittvanzelm}.

Again in analogy with boundary morphisms on $\barM_{g,n}$, the normal bundle of $\xi_{(\Gamma,G)}$ is given by a direct sum of line bundle contributions from the $G$-orbits of edges of $\Gamma$. More precisely:
\begin{lem}\label{normal_bundle_hurwitz}
We have
\begin{equation*}
N_{\xi_{(\Gamma,G)}}=\bigoplus_{(\ell,\ell')\in E'(\Gamma)}T_{\ell}\otimes T_{\ell'}
\end{equation*}
and thus
\begin{equation*}
c(N_{\xi_{(\Gamma,G)}})=\prod_{(\ell,\ell')\in E'(\Gamma)}(1-\psi_{\ell}-\psi_{\ell'}).
\end{equation*}
Here $(\ell,\ell')$ denotes a pair of half-edges comprising an edge, taken over any subset $E'(\Gamma)\subset E(\Gamma)$ of orbit representatives for the $G$-action on $E(\Gamma)$, and $T_h,T_{h'}$ are the corresponding tangent line bundles on $\barH_{(\Gamma,G)}$.
\end{lem}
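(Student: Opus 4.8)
The plan is to compute the normal bundle locally, reducing to the deformation theory of a single node of the source curve and then accounting for the $G$-action. As in the classical case (Lemma \ref{normal_bundle}), I would identify $N_{\xi_{(\Gamma,G)}}$ with the space of first-order deformations of an admissible $G$-cover $f:X\to Y$ with underlying $G$-graph $(\Gamma,G)$ that smooth the nodes of $X$, taken modulo equisingular deformations. The first step is to show that these smoothing deformations decompose as a direct sum of one-dimensional contributions, one for each $G$-orbit of nodes of $X$, equivalently one for each $G$-orbit of edges of $\Gamma$; this is where the orbit-representative set $E'(\Gamma)$ enters. The key point is that a $G$-equivariant deformation must smooth all nodes in a single $G$-orbit simultaneously and with a common parameter, so that the smoothing parameters attached to the edges in one orbit are identified under the $G$-action, leaving exactly $|E'(\Gamma)|$ independent directions.

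Next I would analyze a single node $p$ in local analytic coordinates, using the balanced local model from the definition of an admissible $G$-cover. There the source is $\Spec(R[x',y']/(x'y'-r))$ with stabilizer $G_p\cong\bZ/e\bZ$ acting by $g_p\cdot(x',y')=(\mu x',\mu^{-1}y')$. The smoothing parameter is $s=x'y'$, and the crucial consequence of the balancing condition is that $g_p$ fixes $s$; hence the node admits a $G$-equivariant smoothing, and the smoothing parameter is a genuine coordinate on $\barH_{g,G,\xi}$ transverse to the boundary. As in the classical computation, $s$ spans the line $T_\ell\otimes T_{\ell'}$, where $\ell,\ell'$ are the two half-edges (branches) at $p$ and $T_\ell,T_{\ell'}$ their tangent lines. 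It is important to contrast the source and target here: the relation $\delta^{*}(t)=s^{e}$ recorded in \S\ref{galois_covers_def_section} shows that $\delta$ is ramified of order $e$ along this boundary divisor, so it is the source parameter $s$ --- not the target parameter $t=s^{e}$ --- that trivializes the normal direction on $\barH_{g,G,\xi}$, which is precisely why the contribution is $T_\ell\otimes T_{\ell'}$ with no extra factor of $e$.

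To transfer the classical identification cleanly, I would invoke that the source map $\phi:\barH_{g,G,\xi}\to\barM_{g,r}$ is unramified (Theorem \ref{source_target_properties}) and use the commutative square of Remark \ref{boundary_relate_to_classical}. Because $\phi$ is unramified, the smoothing deformations of the cover are identified with the $G$-equivariant smoothing deformations of the source curve $X$, and the tangent line bundles $T_\ell,T_{\ell'}$ pull back from the corresponding factors $\barM_{g(v),n(v)}$ to the factors $\barH_{g(v),G_v,\xi'_v}$ of $\barH_{(\Gamma,G)}$, where they are by definition the duals of the $\psi$ classes $\psi_\ell=\phi^{*}\psi_\ell$. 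Summing the one-dimensional contributions over the orbit representatives $E'(\Gamma)$ then yields
\begin{equation*}
N_{\xi_{(\Gamma,G)}}=\bigoplus_{(\ell,\ell')\in E'(\Gamma)}T_{\ell}\otimes T_{\ell'},
\end{equation*}
and the total Chern class formula follows immediately from $c_1(T_\ell)=-\psi_\ell$.

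The main obstacle I anticipate is the bookkeeping of the $G$-action in the first step: one must verify carefully that equivariance identifies the smoothing parameters exactly along $G$-orbits of edges, and not, say, within the stabilizer of a single edge, which could in principle introduce a nontrivial character or an obstruction. This is controlled by the defining condition of an admissible $G$-graph that $t\cdot\ell\neq\ell'$ for a half-edge pair $(\ell,\ell')$ and all $t\in G$, which guarantees that no edge is folded onto itself with its two branches swapped, so that each node has an edge stabilizer $\langle h_\ell\rangle$ acting as in the balanced local model and the count of independent directions is honestly $|E'(\Gamma)|$.
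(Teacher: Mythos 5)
Your proposal is correct: the paper itself states this lemma without proof, presenting it as the direct $G$-equivariant analogue of the classical well-known fact (Lemma \ref{normal_bundle}) and deferring implicitly to Schmitt-van Zelm, and your deformation-theoretic argument is precisely the standard one that justifies it. In particular, you correctly isolate the two essential points --- that equivariance identifies smoothing parameters along $G$-orbits of edges, giving one normal direction per element of $E'(\Gamma)$, and that the balancing condition makes the edge stabilizer act trivially on the smoothing parameter $s=x'y'$, so the normal line is honestly $T_{\ell}\otimes T_{\ell'}$, with no character twist and no factor of $e$ (which instead shows up in the ramification of $\delta$, exactly as you note).
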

Note that the $\psi$ classes in the formula above are independent of the choice of $E'(\Gamma)$, as $\psi$ classes associated to half-edges in the same $G$-orbit are equal.

\subsection{Restriction-corestriction maps}

This is the main new feature of spaces of admissible $G$-covers not available in the classical case.

Suppose $X\to Y$ is an admissible $G$-cover, so that $Y$ is isomorphic to the scheme-theoretic quotient $X/G$. If $G_1\subset G$ is a subgroup, one can \textit{restrict} to the action of $G_1$, that is, remember the admissible $G_1$-cover $X\to X/G_1$. If $G_1\subset G$ is normal, then we may also \textit{corestrict} to the induced action of $G/G_1$ on $X/G_1$, remembering the admissible $G/G_1$-cover $X/G_1\to Y=X/G$.

One must also keep track of the monodromy data in applying these operations. This is essentially done in \cite[\S 2.2.2]{br}, but in our setting with distinguished points in marked $G$-orbits, the definitions require a bit more care.

First, consider the case of restriction to the action of an arbitrary subgroup $G_1\subset G$. Upon such a restriction, the $G$-orbits of marked points on $X$ break into $G_1$-orbits, and one must make a choice of distinguished elements $\{p_{ij}\}$ of the $G_1$-orbits. This amounts, for $i=1,2,\ldots,b$, to choosing representatives $\langle t_{ij}\rangle$ for the orbits of the $G_1$-action on $G/\langle h_i\rangle$.

The stabilizer $G_{ij}$ of the $G$-action at the point $p_{ij}=t_{ij}\cdot p_{i1}$ is the cyclic group generated by
\begin{equation*}
t_{ij}h_it_{ij}^{-1}\in G,
\end{equation*}
acting on $T_{p_{ij}}X\cong \bC$ by multiplication by $e^{2\pi\sqrt{-1}\cdot\# G_{ij}}$. (Note that this formula is independent of the choice of $t_{ij}$ within a left coset of $\langle h_i\rangle$.)

On the other hand, the stabilizer at $p_{ij}$ of the $G_1$-action is the cyclic group
\begin{equation*}
G'_{ij}=G_{ij}\cap G_1=\langle t_{ij}h_i^{r_{ij}}t_{ij}^{-1}\rangle,
\end{equation*} 
where 
\begin{equation*}
r_{ij}=\frac{\# G_{ij}}{\# G'_{ij}}.
\end{equation*}
Similarly to before, $t_{ij}h_i^{r_{ij}}t_{ij}^{-1}$ acts on $T_{p_{ij}}X\cong \bC$ by multiplication by $e^{2\pi\sqrt{-1}\cdot\# G'_{p_{ij}}}$.

Once we have chosen distinguished marked points $t_{ij}\cdot p_i$ for the cover $X\to X/G_1$, in order to obtain a point of the stack of admissible $G_1$-covers, one needs in addition to order the $G_1$-orbits of marked points, or equivalently, the marked points on the quotient $X/G_1$.

Therefore, we make the following definition:
\begin{defn}\label{restriction_hurwitz_defn}
Let $G_1\subset G$ be a subgroup, and, given a moduli space of admissible $G$-covers $\barH_{g,G,\xi}$, fix, for $i=1,2,\ldots,b$, a choice of representatives $\{t_{ij}\}$ of the orbits of the $G_1$-action on $G/\langle h_i\rangle$, and an ordering on the $t_{ij}$, ranging over all $i,j$. The \textbf{restriction morphism}
\begin{equation*}
\res^{G}_{G_1}:\barH_{g,G,\xi}\to\barH_{g,G_1,\xi'}
\end{equation*}
is defined by 
\begin{equation*}
\res^{G}_{G_1}([f:X\to X/G])=[f':X\to X/G_1],
\end{equation*}
with the new monodromy data $\xi'=\{h'_{ij}\}$ is given by the formula
\begin{equation*}
h'_{ij}=t_{ij}h_i^{r_{ij}}t_{ij}^{-1}
\end{equation*}
at the distinguished marked point $p_{ij}=t_{ij}\cdot p_{i1}$, where $r_{ij}$ is defined as above, and the marked points of $X/G_1$ (and correspondingly, the monodromy elements comprising $\xi'$) are ordered according to the ordering on the $t_{ij}$.
\end{defn}

We will refer to the ordered choice of $\{t_{ij}\}$ as \textbf{relabeling data}. As in the definition of the source maps $\phi$, we suppress this data from the notation, but we will need to keep track of it explicitly in many instances.

Taking $G_1=\{1\}$ recovers the source maps $\phi$. The composition of two restriction maps is again a restriction map, relative to appropriate relabeling data. In particular, the restriction maps $\res^{G}_{G_1}$ are compatible with source maps from both spaces to $\barM_{g,r}$, and the general restriction maps are also representable, finite, unramified, and local complete intersections.

Taking $G_1=G$, different choices of relabeling data give \textbf{relabeling isomorphisms}
\begin{equation*}
\res^{G}_{G}:\barH_{g,G,\xi}\to\barH_{g,G,\xi'}
\end{equation*}
sending an admissible $G$-cover $f:X\to Y$ to itself, but possibly changing the distinguished marked points in each $G$-orbit, conjugating the monodromy accordingly, and reordering the $G$-orbits. For general restriction maps, different relabeling data modify $\res^{G}_{G_1}$ by a relabeling isomorphism on the target.

We also define the operation of restriction on admissible $G$-graphs.

\begin{defn}\label{restriction_graphs}
Let $G_1\subset G$ be a subgroup, and let $(\Gamma,G)$ be an admissible $G$-graph associated to the space $\barH_{g,G,\xi}$. Fix relabeling data associated to $G_1\subset G$ and $\xi$. The \textbf{restriction} $((\Gamma,G),G_1)$ of $(\Gamma,G)$ to $G_1$ is defined by restricting the action of $G$ on $\Gamma$ to $G_1$, and relabeling the distinguished legs exactly as in Definition \S\ref{restriction_hurwitz_defn}, and recording the new monodromy data accordingly.
\end{defn}

We obtain an induced natural map of boundary strata $\res_{\Gamma}:\barH_{(\Gamma,G)}\to\barH_{((\Gamma,G),G_1))}$ defined by the product of the maps
\begin{equation*}
\xymatrix{
(\res_{\Gamma})_{v}:\barH_{g_v,G_v,\xi_v} \ar[r]^{\res^{G_v}_{G_v\cap G_1}} & \barH_{g_v,G_v\cap G_1,\xi'_v} \ar[r]^(0.36){\Delta} & \displaystyle\prod_{i=1}^{\frac{\#G/\#G_v}{\#G_1/(\#(G_1\cap G_v))}} \barH_{g_v,G_v\cap G_1,\xi'_v}
}
\end{equation*}
where the maps $\res^{G_v}_{G_v\cap G_1}$ are defined relative to relabeling data with that of $\res^{G}_{G_1}$ and $\Delta$ is the diagonal map. In particular, we get a commutative diagram
\begin{equation*}
\xymatrix@C+2pc{
\barH_{(\Gamma,G)} \ar[r]^{\xi_{(\Gamma,G)}} \ar[d]_{\res_{\Gamma}} & \barH_{g,G,\xi}  \ar[d]^{\res^{G}_{G_1}} \\
\barH_{((\Gamma,G),G_1)} \ar[r]^{\xi_{((\Gamma,G),G_1)}} & \barH_{g,G_1,\xi'}
}
\end{equation*}

In the case of corestriction, the situation is more straightforward. Suppose that $G_1\subset G$ is a normal subgroup. The fibers of $f:X\to X/G$ collapse into $G/G_1$-orbits upon passing to $f':X/G_1\to X/G$, and distinguished points of the fibers may be chosen to be the images of those in $X$. The monodromy data is given simply by taking the images of the $h_i\in G$ comprising $\xi$ in $G/G_1$. We summarize this in the following definition.

\begin{defn}
Let $G_1\subset G$ be a normal subgroup. The \textbf{corestriction} map
\begin{equation*}
\cores^{G}_{G/G_1}:\barH_{g,G,\xi}\to\barH_{g_1,G/G_1,\xi'}
\end{equation*}
is defined by 
\begin{equation*}
\cores^{G}_{G_1}([f:X\to X/G])=[f':X/G_1\to X/G].
\end{equation*}

Here, $g_1$ is the genus of $X/G_1$ (which may be computed by the Riemann-Hurwitz formula), the distinguished marked points of the fibers of $f'$ are given by the images of those in $f$, and $\xi'=(h'_1,\ldots,h'_b)$, where $h'_i$ is the image of $h_i$ under the quotient map $G\to G/G_1$.
\end{defn}

When $G_1=G$, we recover the target map $\delta$. The composition of two corestriction maps (induced by a composition of group surjections) is again a corestriction map. In particular, the corestriction maps $\cores^{G}_{G_1}$ are compatible with the target maps from both spaces to $\barM_{g',b}$, and the general corestriction maps are also flat, proper, and quasi-finite. Their degrees may be computed in terms of those of the two target maps $\delta$, which are in turn given by Hurwitz numbers.

As with restriction, we also define the notion of the corestriction of an admissible $G$-graph $(\Gamma,G)$.

\begin{defn}\label{corestriction_graphs}
Let $G_1\subset G$ be a normal subgroup and let $(\Gamma,G)$ be an admissible $G$-graph. We define the \textbf{corestriction} $(\Gamma/G_1,X/G_1)$ of $(\Gamma,G)$ to $G_1$ by the quotient graph $\Gamma/G_1$ with the induced $X/G_1$-action. The monodromy elements on the corestricted graph are the images of those from $\Gamma$ under the quotient quotient map.
\end{defn}

We obtain the natural map of boundary strata $\cores_{\Gamma}:\barH_{(\Gamma,G)}\to\barH_{(\Gamma/G_1,G/G_1)}$ defined by the product of
\begin{equation*}
\cores_{G_v\cap G_1}^{G_v}\barH_{g_v,G_v,\xi_v}\to\barH_{g_{1,v},G_v/(G_v\cap G_1),\xi'_v}
\end{equation*}
over $G$-orbit representatives $v\in V'(\Gamma)$, so that we have the commutative diagram
\begin{equation*}
\xymatrix@C+2pc{
\barH_{(\Gamma,G)} \ar[r]^{\xi_{(\Gamma,G)}} \ar[d]_{\cores_{\Gamma}}& \barH_{g,G,\xi} \ar[d]^{\cores^{G}_{G/G_1}}  \\
\barH_{(\Gamma/G_1,G/G_1)} \ar[r]^{\xi_{(\Gamma/G_1,G/G_1)}} & \barH_{g_1,G/G_1,\xi'}
}
\end{equation*}

We may combine restriction and corestriction morphisms in the following general setting. 

\begin{defn}\label{res_cores_defn}
Suppose that we have a nested pair of subgroups $G_1\subset G_2\subset G$, with $G_1$ normal in $G_2$. We define the \textbf{restriction-corestriction morphism}
\begin{equation*}
q_{G_1,G_2,G}:\barH_{g,G,\xi}\to\barH_{g_1,G_2/G_1,\xi'},
\end{equation*} 
by
\begin{equation*}
q_{G_1,G_2,G}=\cores^{G_2}_{G_2/G_1}\circ\res^{G}_{G_2},
\end{equation*}
where the restriction map depends on relabeling data.
\end{defn}
On the level of maps of curves, we have
\begin{equation*}
q_{G_1,G_2,G}([X\to X/G])=[X/G_1\to X/G_2].
\end{equation*}
The composition of two restriction-corestriction morphisms is a single such.

An important example for us will be the case $G=S_d$, $G_1=G_2=S_{d-1}$, which recovers the case of loci of Harris-Mumford admissible covers on $\barM_{g,n}$, see \S\ref{harris_mumford_section}.

\subsection{Diagonals and non-tautological cycles from $\barH_{g,G,\xi}$}\label{non-taut_section}

Our goal is to construct a framework including classical tautological classes that also incorporates the cycles obtained from source maps $\phi$, and from restriction-corestriction morphisms more generally. As we explain here, \textit{diagonal} morphisms also play an essential role in the theory. In fact, we have already seen diagonals appear naturally in Remark \ref{boundary_relate_to_classical} and Definition \ref{restriction_graphs}.

We now recall the example of Graber-Pandharipande \cite{gp} of a non-tautological cycle arising from admissible $G$-covers, further illustrating the role of diagonal maps.

Let $h\ge1$ be an integer and let $\barH=\barH_{2h,\bZ/2\bZ,(1)^2}$ denote the moduli space of admissible $\bZ/2\bZ$-covers $f:X\to Y$, where $X,Y$ have genus $2h,h$, respectively, and $f$ is ramified over two marked points. We have that $\barH$ is a smooth stack of dimension $3h-1$. We have, as usual, a source morphism 
\begin{equation*}
\phi:\barH\to\barM_{2h,2}.
\end{equation*}
We will abusively refer to the map 
\begin{equation*}
\phi:\barH\to\barM_{2h}
\end{equation*}
obtained by forgetting the two ramification points by the same name.

Let $\Gamma$ be the stable graph of genus $2h$ consisting of two genus $h$ vertices connected by a single edge, with associated boundary map
\begin{equation*}
\xi_{\Gamma}:\barM_{h,1}\times\barM_{h,1}\to\barM_{2h}.
\end{equation*}

\begin{lem}\cite[Lemma 1]{gp}
The class $\xi_{\Gamma}^{*}(\phi_{*}(1))$ is a non-zero multiple of the class of the diagonal
\begin{equation*}
\Delta:\barM_{h.1}\to\barM_{h,1}\times\barM_{h,1}.
\end{equation*}
\end{lem}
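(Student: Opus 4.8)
The plan is to realize $\xi_\Gamma^*(\phi_*(1))$ as a refined intersection and then to identify the admissible covers that contribute. Form the Cartesian square
\[
\xymatrix{
W \ar[r]^{q} \ar[d]_{p} & \barH \ar[d]^{\phi} \\
\barM_{h,1}\times\barM_{h,1} \ar[r]^(0.62){\xi_\Gamma} & \barM_{2h}
}
\]
with $W=(\barM_{h,1}\times\barM_{h,1})\times_{\barM_{2h}}\barH$. Since $\xi_\Gamma$ is a regular embedding of codimension $1$ (a boundary divisor, with normal bundle as in Lemma \ref{normal_bundle}) and $\phi$ is proper, compatibility of the refined Gysin homomorphism with proper pushforward gives $\xi_\Gamma^*(\phi_*(1))=p_*\bigl(\xi_\Gamma^{!}[\barH]\bigr)$, where $\xi_\Gamma^{!}[\barH]\in A_{3h-2}(W)$ is represented by the Cartier divisor $\phi^{*}\Delta_\Gamma$ supported on $W=\phi^{-1}(\Delta_\Gamma)$. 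The task thus reduces to describing the components of $W$, the map $p$, and the multiplicity of $\phi^{*}\Delta_\Gamma$ along each component. As $\Delta\cong\barM_{h,1}$ and $\xi_\Gamma^*(\phi_*(1))$ both have dimension $3h-2$, any component of $W$ dominating the diagonal contributes a multiple of $[\Delta]$, and the content of the lemma is that this multiple is nonzero.

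First I would classify the admissible $\bZ/2\bZ$-covers $f\colon X\to Y$ whose stabilized source (of genus $2h$, after forgetting the two ramification points) is $C_1\cup_p C_2$ with $g(C_i)=h$. The key point is that an involution $\sigma$ genuinely interchanging the two genus-$h$ halves cannot simply fix the connecting node: branch-swapping at a node is not of the balanced form required in the definition of an admissible $\bZ/2\bZ$-cover, and its quotient would send the node to a \emph{smooth} point of $Y$, violating the condition that nodes map to nodes. The correct way for $\sigma$ to interchange the halves is to insert a rational bridge: $Y$ degenerates to $\overline Y\cup_n E$ with $E\cong\bP^1$ carrying the two branch points $q_1,q_2$; over $\overline Y$ one takes the disconnected (trivial) étale double cover, glued along the connected double cover $\widetilde E\to E$ branched at $q_1,q_2$; and $\sigma$ swaps the two copies of $\overline Y$ while acting as the deck transformation on $\widetilde E$. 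Forgetting $q_1,q_2$ and contracting the now-unstable $\widetilde E\cong\bP^1$ glues the two copies of $\overline Y$ at a node, so the stabilized source is $\overline Y\cup_p\overline Y$, a point of the diagonal. Conversely, a generic $[C,p]\in\barM_{h,1}$ produces such a cover (take $\overline Y=C$, $n=p$, the trivial double cover, and the unique double cover of $\bP^1$ branched at two points). Hence this locus $W_{\mathrm{swap}}\subset W$ is a $(3h-2)$-dimensional component with $p(W_{\mathrm{swap}})=\Delta$ and $p|_{W_{\mathrm{swap}}}$ generically finite of some positive degree $N$.

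The main obstacle is the multiplicity computation, which is exactly the non-reducedness flagged in the introduction: boundary strata lie in the branch locus of the forget-and-stabilize map. Although the source map is unramified (Theorem \ref{source_target_properties}), the composite $\barH\to\barM_{2h,2}\to\barM_{2h}$ that contracts the rational bridge is \emph{not}. If $u$ is the smoothing parameter of the node $n$ of $Y$ (equivalently of the single $\bZ/2\bZ$-orbit $\{n_1,n_2\}$ of nodes of $X$, as the cover is étale over $n$), then contracting $\widetilde E$ makes the smoothing parameter $t$ of the resulting node of $\overline Y\cup\overline Y$ equal to $u^{2}$ up to a unit, by the standard product rule for contracting a rational bridge. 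Therefore $\phi^{*}\Delta_\Gamma=\operatorname{div}(\phi^{*}t)$ carries $W_{\mathrm{swap}}$ with multiplicity $2$, so $\xi_\Gamma^{!}[\barH]$ restricts to $2\,[W_{\mathrm{swap}}]$ along this component and $p_*\bigl(2[W_{\mathrm{swap}}]\bigr)=2N\,[\Delta]$, a positive multiple of the diagonal.

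It remains to check that no other component of $W$ cancels this coefficient. Any cover with more rational components sits in a deeper boundary stratum and contributes in dimension $<3h-2$. The only other top-dimensional possibility is that $\sigma$ preserves each half, forcing $Y$ itself to be reducible, $Y=D_1\cup D_2$, and each $C_i\to D_i$ a double cover branched at $F_i$ points with $F_i=2h+2-4\,g(D_i)$; since $F_i$ is even and $F_1+F_2=4$ this forces $F_1=F_2=2$ and $g(D_i)=h/2$, which requires $h$ \emph{even} and produces a locus mapping to an \emph{off-diagonal} subvariety of $\barM_{h,1}\times\barM_{h,1}$ not meeting the generic point of $\Delta$. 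Thus it cannot contribute to, or cancel, the coefficient of $[\Delta]$. In particular, for $h$ odd the fiber product $W$ is supported on $W_{\mathrm{swap}}$ and $\xi_\Gamma^*(\phi_*(1))$ is supported exactly on the diagonal; in general the diagonal class occurs with the positive coefficient $2N$. This proves that $\xi_\Gamma^*(\phi_*(1))$ is a nonzero multiple of the class of the diagonal, as claimed.
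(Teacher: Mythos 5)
Your strategy is the same as the paper's (which simply cites \cite{gp} and asserts the answer): classify the admissible covers whose stabilized source lies in $\barM_{\Gamma}$, observe that the swap-type cover with a rational bridge forces the two halves to be isomorphic, and push forward to get the diagonal. Your justification that a component-swapping involution cannot fix the node directly (nodes must map to nodes, and branch-swapping is not balanced) is correct, and your multiplicity-$2$ computation — the bridge contraction turns the boundary equation $t$ into $u^{2}$ because the two nodes of $X$ lie in one $\bZ/2\bZ$-orbit and are smoothed by the single parameter $u$ — is a genuine addition, since the paper gives no local analysis. (Minor slip: $\xi_{\Gamma}$ is not a regular embedding; the graph automorphism exchanging the two vertices makes it $2$-to-$1$ onto its image. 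It is finite, unramified and lci, so your Gysin setup still applies.)

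The gap is in your final paragraph, exactly where your analysis diverges from the paper's claim that the swap cover is the \emph{only} admissible cover with source in $\barM_{\Gamma}$. For $h$ even, the ``preserve''-type covers you exhibit are honest points of $\barH$: the balanced condition holds at the fixed node because the involution acts by $-1$ on both branches, and such covers smooth to double covers of smooth genus-$h$ curves, so they lie in the same space. Their locus has dimension $2\dim\barM_{h/2,2}=3h-2$, hence is a \emph{top-dimensional} component of the fiber product $W$; it therefore enters $\xi_{\Gamma}^{!}[\barH]$ with strictly positive multiplicity, and its pushforward is a positive multiple of $[B_{1}\times B_{2}]$, where $B_{i}\subset\barM_{h,1}$ is the locus of pointed curves admitting such an involution. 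That class is not proportional to $[\Delta]$: intersecting with $[\{(C,p)\}\times\barM_{h,1}]$ for $(C,p)\notin B_{1}$ gives a nonzero number against $[\Delta]$ and zero against $[B_{1}\times B_{2}]$. So for even $h$ what you have actually proved is $\xi_{\Gamma}^{*}(\phi_{*}(1))=2N[\Delta]+b[B_{1}\times B_{2}]$ with $b>0$, and your closing sentence is a non sequitur: ``cannot cancel the coefficient of $[\Delta]$'' is strictly weaker than ``is a multiple of $[\Delta]$.'' Your argument is a complete proof only for $h$ odd, where your parity count rules the second family out; to prove the statement as written for all $h$ you would have to show the preserve-type component contributes zero, which it does not. (For the intended application — non-tautologicality — the extra term is harmless, since $[B_{1}]\times[B_{2}]$ has an algebraic K\"{u}nneth decomposition; but that is a different, weaker statement than the lemma you were asked to prove, and the honest move is to flag the parity restriction rather than to assert the lemma ``in general.'')
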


More explicitly, the only admissible $\bZ/2$-cover $f:X\to Y$ whose source lies in the boundary stratum $\barM_{\Gamma}$ has $Y=C\cup\bP^1$, where $C$ is a genus $h$ curve, and $X=C\cup\bP^1\cup C$, where the two copies of $C$ in $X$ map isomorphically to that in $Y$, and the rational component of $X$ has degree 2 over that of $Y$.

The existence of odd singular cohomology in $\barM_{h,1}$ for sufficiently large $h$, as shown by Pikaart \cite{pikaart}, implies that $\phi_{*}(1)\in H^{*}(\barM_{2h})$ cannot have a tautological K\"{u}nneth composition upon pullback by $\xi_{\Gamma}$. One may then conclude:

\begin{prop}\cite[\S 2]{gp}
For all sufficiently large $h$, we have $\phi_{*}(1)\notin RH^{*}(\barM_{2h})$.
\end{prop}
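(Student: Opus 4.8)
The plan is to prove the contrapositive via the restriction map $\xi_\Gamma$ to the boundary stratum $\barM_{h,1}\times\barM_{h,1}$. The key structural input, already established in the preceding Lemma (\cite[Lemma 1]{gp}), is that $\xi_\Gamma^*(\phi_*(1))$ equals a nonzero rational multiple of the class of the diagonal $\Delta_*(1)$, where $\Delta\colon\barM_{h,1}\to\barM_{h,1}\times\barM_{h,1}$. Since pullback by the boundary morphism $\xi_\Gamma$ preserves tautological classes (the tautological ring is closed under bivariant pullback by boundary morphisms, as noted in \S\ref{boundary_strata_hurwitz_section} for the classical case), it suffices to show that the diagonal class $\Delta_*(1)$ is \emph{not} tautological in $H^*(\barM_{h,1}\times\barM_{h,1})$ for $h$ large. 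Then $\xi_\Gamma^*(\phi_*(1))$ is non-tautological, and because $\phi_*(1)$ being tautological would force its pullback to be tautological, we conclude $\phi_*(1)\notin RH^*(\barM_{2h})$.

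So the heart of the argument reduces to a statement purely about $\barM_{h,1}$. I would invoke the Künneth decomposition of the diagonal: in $H^*(\barM_{h,1}\times\barM_{h,1})$ one has
\begin{equation*}
[\Delta]=\sum_{k}(-1)^{k}\sum_{\alpha}e_{\alpha}^{(k)}\times \check{e}_{\alpha}^{(k)},
\end{equation*}
where for each cohomological degree $k$ the classes $e_\alpha^{(k)}$ run over a basis of $H^k(\barM_{h,1})$ and $\check{e}_\alpha^{(k)}$ the Poincaré-dual basis of the complementary degree. The tautological subring $RH^*(\barM_{h,1})$ is supported entirely in even degree, so if $[\Delta]$ were tautological in the product, its Künneth components would all be tautological, and in particular $H^*(\barM_{h,1})$ would be generated in each degree by tautological classes—forcing $H^{\mathrm{odd}}(\barM_{h,1})=0$. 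By Pikaart's theorem \cite{pikaart}, $\barM_{h,1}$ carries nonzero odd cohomology for all sufficiently large $h$, a contradiction. Hence $[\Delta]$ is non-tautological in the product for such $h$.

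The main obstacle—or at least the step requiring the most care—is making precise the assertion that a tautological class on a product of moduli spaces has tautological Künneth components, and that the tautological ring lives in even degree so that an odd Künneth component can never be matched. For the product $\barM_{h,1}\times\barM_{h,1}$, the tautological ring is generated (additively) by external products of decorated boundary classes, each of even cohomological degree on each factor; thus every Künneth component of a tautological class lies in the even-degree tautological subring of each factor. An odd class $\omega\in H^{\mathrm{odd}}(\barM_{h,1})$ appears genuinely in the Künneth expansion of $[\Delta]$ (pairing $\omega$ against its Poincaré dual), and this component cannot be accounted for by any even tautological class. I would phrase this cleanly by pairing $\xi_\Gamma^*(\phi_*(1))$ against $\omega\times\omega'$ for suitable odd classes and showing the result is forced to be nonzero while vanishing for all tautological test classes.
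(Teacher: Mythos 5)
Your proposal is correct and takes essentially the same route as the paper (which is itself reviewing Graber--Pandharipande \cite{gp}): pull back by $\xi_\Gamma$ to get a nonzero multiple of the diagonal, note that a tautological class on $\barM_{2h}$ would have a tautological K\"unneth decomposition upon boundary pullback, and rule this out using Pikaart's odd cohomology of $\barM_{h,1}$ by pairing against $\omega\times\omega'$ for odd classes $\omega,\omega'$, where Poincar\'e duality forces a nonzero pairing with the diagonal while every even-by-even (in particular tautological) K\"unneth term pairs to zero.
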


Similar constructions of non-tautological cycles, including those which already fail to be tautological in Chow, use the existence of odd middle cohomology in $\barM_{1,11}$, see \cite[Theorem 2]{gp} and \cite{vanzelm}.

In our framework, the computation of Graber-Pandharipande shows that, in order for $\phi_{*}(1)$ to be included in a $\bQ$-subalgebra of $\barM_{2h}$ containing all tautological classes, and more generally, in a system of such $\bQ$-subalgebras for all spaces of admissible $G$-covers, this system of $\bQ$-subalgebras should also include classes coming from general diagonal morphisms
\begin{equation*}
\Delta:\barH_{g,G,\xi}\to\barH_{g,G,\xi}\times\barH_{g,G,\xi}.
\end{equation*}
The target of this morphism is a product of isomorphic admissible $G$-cover spaces, and it will therefore be convenient for us to work with \textit{arbitrary} products of such spaces.

\subsection{The $\cH$-tautological ring}

We are now ready to make the main definition of this paper.

\begin{defn}\label{main_defn}
The \textbf{$\cH$-tautological ring} is the smallest system of $\bQ$-subalgebras  
\begin{equation*}
R^{*}_{\cH}\left(\prod_{j=1}^{m}\barH_{g_j,G_j,\xi_j}\right)\subset A^{*}\left(\prod_{j=1}^{m}\barH_{g_j,G_j,\xi_j}\right),
\end{equation*}
ranging over all finite products of moduli spaces of admissible $G$-covers, that contains all $\psi$ and $\kappa$ classes and is closed under pushfoward by all maps of the form
\begin{equation*}
\gamma_1\times\cdots\times\gamma _m:\mathcal{H}_1\times\cdots\times \mathcal{H}_m\to\mathcal{H}'_1\times\cdots\times \mathcal{H}'_m
\end{equation*}
where each $\mathcal{H}_i,\mathcal{H}'_i$ is itself a product of moduli spaces of admissible $G$-covers and each map $\gamma_i:\mathcal{H}_i\to \mathcal{H}'_i$ is either a boundary, forgetful, restriction-corestriction, or diagonal map, as defined earlier.
\end{defn}

\section{Bivariant pullbacks of $\cH$-tautological classes}\label{int_theory_algos}

We now build up the theory of intersections in the $\cH$-tautological ring by explaining the algorithmic computation of bivariant pullbacks of the basic $\cH$-tautological classes.

In all of the following results (except those of \S\ref{pullback_psi_kappa_section}), we will consider the intersection of two tautological maps $\gamma_j:\cH_{j}\to\cH$, and give a commutative (not always Cartesian) diagram
\begin{equation*}
\xymatrix{
\cH_0 \ar[r]^{\gamma'_1} \ar[d]^{\gamma'_2}& \cH_{1} \ar[d]^{\gamma_1} \\
\cH_2 \ar[r]^{\gamma_2} & \cH
}
\end{equation*}
such that the maps $\gamma'_j$ are compositions of tautological morphisms and the classes $(\gamma_2)^{*}(\gamma_1)_{*}(1)$ and $(\gamma_1)^{*}(\gamma_2)_{*}(1)$ may both be expressed as the pushforward of an $\cH$-tautological class on $\cH_0$.

\subsection{Pullbacks by forgetful maps}
Here, we record diagrams expressing the pullbacks of certain $\cH$-tautological (bivariant) classes by a forgetful morphism.

\subsubsection{Forgetful maps}
The following is a straightforward analog of Lemma \ref{double_forgetful_mgn}.
\begin{prop}
The commutative diagram
\begin{equation*}
\xymatrix{
\barH_{g,G,\xi+\{1\}+\{1'\}} \ar[r]^(0.56){\pi_{1'}} \ar[d]^{\pi_1} & \barH_{g,G,\xi+\{1\}}\ar[d]^{\pi}\\
\barH_{g,G,\xi+\{1'\}} \ar[r]^(0.53){\pi} & \barH_{g,G,\xi}
}
\end{equation*}
is Cartesian up to birational equivalence. In particular, the bivariant pullback of a forgetful map by itself is again a forgetful map.
\end{prop}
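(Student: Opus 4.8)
The plan is to mirror the proof of the classical Lemma \ref{double_forgetful_mgn}, transporting it across the source maps by means of Lemma \ref{universal_family_hurwitz_comparison}. Write
\begin{equation*}
P=\barH_{g,G,\xi+\{1\}}\times_{\barH_{g,G,\xi}}\barH_{g,G,\xi+\{1'\}}
\end{equation*}
for the fiber product formed from the two forgetful maps $\pi$. The commutativity of the square in the statement produces a canonical morphism $\Theta\colon\barH_{g,G,\xi+\{1\}+\{1'\}}\lra P$, and the assertion ``Cartesian up to birational equivalence'' is precisely the claim that $\Theta$ is proper and birational. Properness is immediate: the source is a proper Deligne--Mumford stack by Theorem \ref{source_target_properties}, $P$ is separated, and $\Theta$ commutes with the proper forgetful maps down to $\barH_{g,G,\xi}$, so $\Theta$ is proper.

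For birationality I would first record the modular meaning of both sides. A point of $\barH_{g,G,\xi+\{1\}+\{1'\}}$ is an admissible $G$-cover $f\colon X\to Y$ together with two extra unramified $G$-orbits of marked points, which are necessarily disjoint as marked points; a point of $P$ is a cover $f$ together with two extra orbits that are \emph{allowed} to collide, in which case stabilization on the $\barH_{g,G,\xi}$ side has contracted a rational bridge. Over the dense open locus $U\subset\barH_{g,G,\xi+\{1\}+\{1'\}}$ where the two added $G$-orbits are disjoint from one another and from the ramification points, nodes, and remaining markings, both spaces parametrize literally the same data, so $\Theta|_U$ is an isomorphism. The only locus where $\Theta$ fails to be an isomorphism is the ``collision'' locus where a point of the $\{1\}$-orbit meets a point of the $\{1'\}$-orbit, exactly as in the classical situation; since this locus has positive codimension and $U$ is dense, $\Theta$ is birational.

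To organize this uniformly and to confirm the structure of the modification, I would invoke Lemma \ref{universal_family_hurwitz_comparison}: applied to the orbit $\{1'\}$ it exhibits $\pi_{1'}$ as a base change, along $\phi\colon\barH_{g,G,\xi+\{1\}}\to\barM_{g,r+1}$, of the classical forgetful map $\pi\colon\barM_{g,r+2}\to\barM_{g,r+1}$, and applied again to $\{1\}$ it identifies the remaining forgetful maps similarly. Chasing these identifications, $\Theta$ sits over the classical comparison map $\barM_{g,r+2}\to\barM_{g,r+1}\times_{\barM_{g,r}}\barM_{g,r+1}$, which is proper and birational by Lemma \ref{double_forgetful_mgn}. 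This identifies the collision locus with (the preimage of) the classical diagonal and confirms that $U$ above is indeed dense. The ``in particular'' clause then follows formally: the forgetful map $\pi$ defines a bivariant orientation class, a Cartesian-up-to-birational square allows this class to be pulled back along the opposite $\pi$ with the result represented by the top horizontal map, and because a proper birational morphism of integral normal stacks pushes the fundamental class forward to the fundamental class, the modification $\Theta$ does not alter the resulting bivariant class, which is therefore that of the forgetful map $\pi_1$ (equivalently $\pi_{1'}$).

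The step I expect to be the main obstacle is making the transfer across the finite unramified source map $\phi$ fully rigorous, since base change does not in general preserve birationality. I would sidestep this by arguing birationality \emph{intrinsically} on the dense open $U$ of the second paragraph, where $\Theta$ is a genuine isomorphism, and use Lemma \ref{universal_family_hurwitz_comparison} together with Lemma \ref{double_forgetful_mgn} only to pin down the non-isomorphism locus and verify density of $U$, rather than to deduce birationality by descent.
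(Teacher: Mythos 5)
Your proposal is correct and is essentially the argument the paper intends: the paper offers no written proof, stating only that the proposition is "a straightforward analog of Lemma \ref{double_forgetful_mgn}," and your reduction via Lemma \ref{universal_family_hurwitz_comparison} (which exhibits the whole square as the base change along $\phi$ of the classical square), together with the intrinsic identification of the dense open locus where the two added $G$-orbits are disjoint, is exactly that analogy made precise. Your care about base change not preserving birationality in general, resolved by checking the isomorphism locus and its density directly, is a sound way to close the only real gap in the "straightforward analog" claim.
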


\subsubsection{Boundary maps} 

Suppose $(A,G)$ is an admissible $G$-graph indexing a boundary stratum of $\barH_{g,G,\xi}$ and let $v\in V(A)$ be a vertex. Let $A+\ell_{v}$ be the admissible $G$-graph obtained from $A$ by adding a distinguished marked leg with trivial monodromy to $v$, as well as a corresponding $G$-orbit of legs to the vertices in the $G$-orbit of $v$. This operation corresponds to adding a distinguished marked orbit to a stable $G$-curve with underlying admissible $G$-graph $A$.

\begin{prop}\label{boundary_forgetful}
With notation as above, the commutative diagram
\begin{equation*}
\xymatrix{
\displaystyle\coprod_{v\in V(A)}\barH_{(A+\ell_{v},G)} \ar[r]^{\xi_{(A+\ell_{v},G)}} \ar[d]_{\pi_A} & \barH_{g,G,\xi+\{1\}} \ar[d]^{\pi}\\
\barH_{(A,G)} \ar[r]^{\xi_{(A,G)}} & \barH_{g,G,\xi}
}
\end{equation*}
is Cartesian up to birational equivalence, and expresses the bivariant pullback of a forgetful map by a boundary map, and vice versa.
\end{prop}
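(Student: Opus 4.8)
The plan is to establish the birational-Cartesian property on the level of the moduli functor, following the classical template of Lemma~\ref{double_forgetful_mgn} but carrying the $G$-action along throughout. First I would identify the fiber product $\barH_{(A,G)}\times_{\barH_{g,G,\xi}}\barH_{g,G,\xi+\{1\}}$ moduli-theoretically. Using the interpretation of $\barH_{(A,G)}$ as the moduli of admissible $G$-covers equipped with an $(A,G)$-structure (\cite[Proposition 4.6]{schmittvanzelm}), a point is an admissible $G$-cover $f\colon X\to X/G$ together with an $(A,G)$-structure on the dual graph of $X$ and an extra unramified $G$-orbit of marked points with a distinguished member $p$. Since the orbit is unramified, $p$ has trivial $G$-stabilizer, and away from a proper closed locus it lies on the smooth part of a single component of $X$, corresponding to a vertex $v\in V(A)$. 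Remembering $v$ upgrades the $(A,G)$-structure to an $(A+\ell_v,G)$-structure: on the factor $\barH_{g(v),G_v,\xi'_v}$ indexed by $v$, the distinguished point together with its $G_v$-orbit of $\#G_v$ unramified points is precisely the datum added in passing to $\barH_{g(v),G_v,\xi'_v+\{1\}}$, while the remaining points of the $G$-orbit of $p$ are forced by equivariance.

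Next I would assemble the comparison morphism. The construction above produces, for each $v\in V(A)$, a morphism $\barH_{(A+\ell_v,G)}\to\barH_{(A,G)}\times_{\barH_{g,G,\xi}}\barH_{g,G,\xi+\{1\}}$ that is an isomorphism over the dense open locus where the new orbit meets neither the nodes nor the pre-existing marked orbits; under it $\pi_A$ becomes the Hurwitz forgetful map of \S\ref{universal_family_hurwitz} on the $v$-factor (identified with the restriction of $\pi$ via Lemma~\ref{universal_family_hurwitz_comparison}) and $\xi_{(A+\ell_v,G)}$ the structure map to $\barH_{g,G,\xi+\{1\}}$. Summing over $v$ gives a morphism from $\coprod_{v\in V(A)}\barH_{(A+\ell_v,G)}$ to the fiber product. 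Care is needed to match the indexing with the components of $X$ (equivalently, with the choice of which component carries the distinguished point of the new orbit), keeping track of the distinguished-point data and the automorphisms it rigidifies, exactly as the classical count is indexed by $V(\Gamma)$. I expect the cleanest verification is to pull the classical Lemma~\ref{double_forgetful_mgn} back along the finite unramified source morphism $\phi$, using the compatibilities of $\pi$ and of the boundary maps with $\phi$ supplied by Lemma~\ref{universal_family_hurwitz_comparison} and Remark~\ref{boundary_relate_to_classical}, and then checking that the stabilizing contraction is $G$-equivariant.

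The main obstacle, and the reason only birational (not strict) equivalence holds, is the behavior when $p$ degenerates onto a node or onto a point of a pre-existing marked orbit of $X$: stability then forces a rational bubble to sprout and the $G$-cover to be re-balanced over the new node, so the honest fiber product acquires boundary contributions absent from $\coprod_v\barH_{(A+\ell_v,G)}$. I would show that the comparison morphism is nonetheless proper, via the valuative criterion together with properness of $\pi$ and of the boundary maps: any family over a punctured disc extends uniquely after the $G$-equivariant stabilizing contraction. Properness plus the generic isomorphism yields the claimed proper birational map onto the fiber product. Finally, once the square is birationally Cartesian, the bivariant statements are formal: because the forgetful map $\pi$ is flat there is no excess bundle, and flat base change, together with the refined Gysin formalism for the lci boundary maps (which are unramified morphisms of smooth Deligne-Mumford stacks by Theorem~\ref{source_target_properties}), expresses both bivariant pullbacks as the pushforward to $\coprod_v\barH_{(A+\ell_v,G)}$ of the fundamental class; the birational modification is irrelevant since it is supported in the higher-codimension collision locus.
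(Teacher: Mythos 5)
Your generic analysis coincides with the paper's own proof: the paper likewise observes that the extra unramified orbit with its distinguished point is the datum of a single unramified point, which, on the dense open locus $\cH_{(A,G)}\subset\barH_{(A,G)}$ of covers whose dual graph is actually $A$, lies on a component corresponding to a unique vertex $v\in V(A)$; this yields the inverse on a dense open locus and hence birationality. Up to that point your argument is correct, and your supplementary remarks on properness and on the bivariant consequence (flatness of $\pi$, birational invariance of the fundamental class) are reasonable.

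However, your description of the degenerate locus contains a genuine error that makes the write-up internally inconsistent. You claim that when $p$ collides with a node or a marked point, the fiber product ``acquires boundary contributions absent from $\coprod_v\barH_{(A+\ell_v,G)}$.'' This is false: each $\barH_{(A+\ell_v,G)}$ is a compactified boundary stratum and parametrizes \emph{all} covers admitting an $(A+\ell_v,G)$-structure, including precisely these bubbled configurations. If the new orbit bubbles off at a marked point, or at a node whose edge is contracted by the $A$-structure, contracting the new edge of the bubbled dual graph exhibits an $(A+\ell_v,G)$-structure for a unique $v$; if it bubbles at a node lying over an edge of $A$ with endpoints $v_1\neq v_2$, it admits such a structure for both $v_1$ and $v_2$. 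So the comparison map is surjective on points, and the genuine reason the square is only birationally Cartesian is failure of \emph{injectivity} (indeed, of isomorphism) over the collision locus: in the last case the same point of the fiber product has preimages in both $\barH_{(A+\ell_{v_1},G)}$ and $\barH_{(A+\ell_{v_2},G)}$ --- the comparison map is essentially a normalization along the nodal locus of the universal curve. This matters beyond bookkeeping: if the fiber product really contained points outside the image of the coproduct, then your own conclusion (a proper birational map \emph{onto} the fiber product) and the resulting bivariant formula, a sum over $v\in V(A)$ with multiplicity one, would both fail. The repair is to verify surjectivity as above, or to identify the fiber product with the universal source curve over $\barH_{(A,G)}$ via Lemma \ref{universal_family_hurwitz_comparison}, whose irreducible components are indexed by $V(A)$. (A smaller slip: the classical statement one would pull back along $\phi$ is the boundary-versus-forgetful comparison on $\barM_{g,n}$, not Lemma \ref{double_forgetful_mgn}, which intersects two forgetful maps.)
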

\begin{proof}
The data of an extra orbit of unramified points with a distinguished marked point on a $G$-curve $X$ is equivalent to the data of the choice of a single unramified point. When $X$ has an $A$-structure, this amounts to the choice of a single unramified point on one of the components of $X$, which corresponds to the addition of a leg at a unique vertex of A if the dual graph of $X$ is in fact isomorphic to $A$. Thus, we get an inverse to $(\xi_{(A+\ell_{v},G)},\pi_A)$ defined on the dense open locus $\cH_{(A,G)}\subset\barH_{(A,G)}$ of $G$-curves with dual graph isomorphic to $A$.
\end{proof}

\subsubsection{Diagonal maps} 
\begin{prop}
We have the following Cartesian diagram, expressing the bivariant pullback of a diagonal map by a forgetful map, and vice versa:
\begin{equation*}
\xymatrix{
\barH_{g,G,\xi+\{1\}} \ar[r]^(0.38){(\id,\pi)} \ar[d]^{\pi} & \barH_{g,G,\xi+\{1\}}\times\barH_{g,G,\xi} \ar[d]^{\pi\times\id}\\
\barH_{g,G,\xi} \ar[r]^(0.38){\Delta} & \barH_{g,G,\xi}\times\barH_{g,G,\xi}
}
\end{equation*}
\end{prop}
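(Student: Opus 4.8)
The plan is to recognize this square as the standard categorical identity expressing the graph of a morphism as the pullback of the diagonal, and to verify it directly from the universal property of the (2-)fiber product; no geometric input about admissible covers is needed. Writing $\cH:=\barH_{g,G,\xi}$ and $\cH^{+}:=\barH_{g,G,\xi+\{1\}}$, with forgetful map $\pi:\cH^{+}\to\cH$, the diagram in question is exactly
\begin{equation*}
\xymatrix{
\cH^{+} \ar[r]^(0.38){(\id,\pi)} \ar[d]^{\pi} & \cH^{+}\times\cH \ar[d]^{\pi\times\id}\\
\cH \ar[r]^{\Delta} & \cH\times\cH
}
\end{equation*}
and the content of the claim is purely formal: for any $1$-morphism $\pi$ the graph $(\id,\pi)$ exhibits the source as the pullback of the diagonal $\Delta$.

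First I would unwind the universal property at the level of points, which by the conventions of \S\ref{conventions} suffices once upgraded to families in the evident way. Given a test object $T$ with a morphism $a=(a_1,a_2):T\to\cH^{+}\times\cH$ and a morphism $b:T\to\cH$, together with a $2$-isomorphism $(\pi\times\id)\circ a\cong\Delta\circ b$, the latter unpacks as the pair of isomorphisms $\pi\circ a_1\cong b$ and $a_2\cong b$ in $\cH$. Composing these identifies $a_2\cong\pi\circ a_1$, so the datum $(a,b)$ is determined, up to unique isomorphism, by $a_1:T\to\cH^{+}$ alone; conversely any $a_1$ produces such a datum by setting $a_2=b=\pi\circ a_1$. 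This is precisely the universal property realizing $\cH^{+}$, with its maps $(\id,\pi)$ to $\cH^{+}\times\cH$ and $\pi$ to $\cH$, as the $2$-fiber product, so the square is $2$-Cartesian.

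The only point requiring care is the stacky ($2$-categorical) bookkeeping: the matching conditions are $2$-isomorphisms rather than equalities, so one must check that the induced isomorphism $a_2\cong\pi\circ a_1$ is canonical and that the comparison morphism from $\cH^{+}$ to the fiber product is an equivalence, not merely essentially surjective. This is exactly the specialization of the standard graph-of-a-morphism fact to $\pi$, so it presents no genuine obstacle. Finally, I would emphasize the contrast with the two preceding propositions: because this construction involves no contraction of unstable components, there is no discrepancy between the fiber product and its normalization, and the square is genuinely Cartesian rather than Cartesian only up to birational equivalence.
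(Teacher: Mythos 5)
Your proof is correct and takes essentially the same approach as the paper: the paper dismisses the square as ``clear,'' since it is precisely the formal fact that the graph of a morphism realizes its source as the pullback of the diagonal, which you have simply spelled out at the $2$-categorical level. The only additional content in the paper's proof is the observation that $(\id,\pi)=(\id\times\pi)\circ\Delta$, so that the top arrow is itself a composition of tautological morphisms --- a point worth recording, since the framework needs the comparison maps in these squares to be tautological in order to use the diagram in the $\cH$-tautological calculus.
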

\begin{proof}
Clear. Note in addition that
\begin{equation*}
(\id,\pi)=(\id\times\pi)\circ\Delta,
\end{equation*}
so in particular the top arrow is the composition of tautological morphisms.
\end{proof}

\subsubsection{Restriction-corestriction maps} 
\begin{prop}
We have the following Cartesian diagram, expressing the bivariant pullback of a restriction map by a forgetful map, and vice versa:
\begin{equation*}
\xymatrix{
\barH_{g,G,\xi+\{1\}} \ar[d]^{\pi_G}  \ar[r]^{\wt{\res}^{G}_{G_1}} & \barH_{g,G_1,\xi'+\{1\}} \ar[d]^{\pi_{G_1}}\\
\barH_{g,G,\xi} \ar[r]^{\res^{G}_{G_1}} & \barH_{g,G_1,\xi'}
}
\end{equation*}
The map $\wt{\res}^{G}_{G_1}$ is defined by restricting to $G_1$ (with relabeling data compatible with that of the bottom arrow), then forgetting all marked points in the new $G$-orbit except those in the $G_1$-orbit of the distinguished one.
\end{prop}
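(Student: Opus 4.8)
The plan is to avoid checking the fiber product of moduli functors head-on, and instead to recognize both vertical maps as universal source curves and deduce the Cartesian property from a base-change (pullback of family) argument.

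First I would assemble three inputs. By Lemma~\ref{universal_family_hurwitz_comparison}, applied to $G$ and to $G_1$ separately, the forgetful maps $\pi_G$ and $\pi_{G_1}$ are the universal families of source curves over $\barH_{g,G,\xi}$ and $\barH_{g,G_1,\xi'}$: a point of $\barH_{g,G,\xi+\{1\}}$ over $[f\colon X\to X/G]$ is the datum of a single unramified point $p\in X$, the distinguished point of the new orbit, with the remaining points $G\cdot p$ determined by the action, and likewise for $\pi_{G_1}$ with orbit $G_1\cdot p$. Next, I would observe that $\res^{G}_{G_1}$ leaves the marked source curve unchanged: it is compatible with the source maps from $\barH_{g,G,\xi}$ and $\barH_{g,G_1,\xi'}$ to the common space $\barM_{g,r}$. (The equality $r=r'$ of the two marking counts holds because, for each $i$, the $G_1$-orbits into which $G/\langle h_i\rangle$ breaks have sizes summing to $\#G/\ord_G(h_i)$, so $\sum_{i,j}\#G_1/\ord_{G_1}(h'_{ij})=\sum_i \#G/\ord_G(h_i)=r$.) Thus $\res^{G}_{G_1}$ sends $[X\to X/G]$ to $[X\to X/G_1]$ with the same underlying marked curve $X$, up to the reordering recorded by the relabeling data.

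Then I would run the base change. Since the universal source curve is a genuine family of marked curves pulled back from $\barM_{g,r}$ along the source map, and since $\phi^{G_1}\circ\res^{G}_{G_1}=\phi^{G}$ up to relabeling, the pullback of $\pi_{G_1}$ along $\res^{G}_{G_1}$ is canonically the universal source curve over $\barH_{g,G,\xi}$, i.e.\ $\pi_G$; this is exactly the assertion that the square is Cartesian. Tracing the identification, the induced top arrow carries a cover-with-distinguished-point $p$ to its $G_1$-restriction retaining only the single point $p$; equivalently it restricts to $G_1$ and forgets every $G_1$-orbit of $G\cdot p$ except the distinguished $G_1\cdot p$, which is precisely $\wt{\res}^{G}_{G_1}$. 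Choosing relabeling data for $\wt{\res}^{G}_{G_1}$ compatible with that of $\res^{G}_{G_1}$ makes the square commute strictly.

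The point to get right---and the reason the square is genuinely Cartesian, unlike the merely birational square of Lemma~\ref{double_forgetful_mgn}---is that a single extra point $p$ is in play throughout: restriction never modifies $X$, so there is no ``two points forgotten in two orders'' phenomenon introducing spurious boundary strata into the fiber product. The only real bookkeeping is that $G\cdot p$ splits into $[G:G_1]$ orbits for $G_1$ while the target space remembers one, which is accounted for by the ``forget all but the distinguished $G_1$-orbit'' clause defining $\wt{\res}^{G}_{G_1}$. I would close by noting that the symmetry of a Cartesian square yields both bivariant pullbacks at once, with the square itself computing them since there is no excess bundle.
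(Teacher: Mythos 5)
Your proof is correct, and it takes a genuinely different route from the paper's. The paper argues directly on the moduli functors: it exhibits the inverse to the natural map from $\barH_{g,G,\xi+\{1\}}$ to the fiber product by observing that a point of the fiber product consists of a $G$-cover plus an extra distinguished unramified point recorded on the $G_1$-side, and one simply marks that same point on the $G$-cover. You instead factor the claim through Lemma~\ref{universal_family_hurwitz_comparison}: both vertical arrows are pullbacks of the universal curve $\pi\colon\barM_{g,r+1}\to\barM_{g,r}$ along the respective source maps, the restriction map satisfies $\phi^{G_1}\circ\res^{G}_{G_1}=\phi^{G}$ (with compatible orderings, and with your correct count showing $r=r'$), and the pasting law for Cartesian squares then forces the left square to be Cartesian, after which you identify the induced top arrow with $\wt{\res}^{G}_{G_1}$. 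What your approach buys is that all moduli-theoretic content is concentrated in the already-proved universal-family lemma and the stated compatibility of restriction with source maps, so the new verification is purely formal; what the paper's approach buys is brevity and independence from that lemma, since the one-line inverse functor is immediate. Your closing remarks are also sound: the contrast with the merely birationally Cartesian square of Lemma~\ref{double_forgetful_mgn} (only one extra point is ever in play here, so no spurious boundary strata arise), and the observation that flatness of $\pi_{G_1}$ together with the Cartesian property gives both bivariant pullbacks with no excess term, which is all the proposition's ``and vice versa'' requires.
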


\begin{proof}
The inverse functor from the fiber product to $\barH_{g,G,\xi+\{1\}}$ may be described as follows. Given a $G$-cover of $\barH_{g,G,\xi}$ and an additional distinguished marked point as dictated by $\barH_{g,G_1,\xi'+\{1\}}$, one takes the same distinguished marked point on the $G$-cover to get a point of $\barH_{g,G,\xi+\{1\}}$.
\end{proof}

We now consider the fiber product of a corestriction map with a forgetful map. Here, we find a simple example of a phenomenon that will occur in several other intersections later: we do not express the fiber product directly in terms of our spaces of admissible $G$-covers, but only up to a finite cover.

\begin{lem}\label{cores_forgetful_dominan_diagram}
We have a commutative diagram
\begin{equation*}
\xymatrix{
\barH_{g,G,\xi+\{1\}} \ar[d]^{\pi_G}  \ar[r]^(0.38){\wt{\cores}_{G/G_1}^{G}} & \barH_{g_1,G/G_1,\xi'+\{1\}} \ar[d]^{\pi_{G/G_1}}\\
\barH_{g,G,\xi} \ar[r]^(0.45){\cores_{G/G_1}^{G}} & \barH_{g',G/G_1,\xi'}
}
\end{equation*}
for which the induced map
\begin{equation*}
(\pi_G,\cores^{G}_{G/G_1}):\barH_{g,G,\xi+\{1\}}\to\barH_{g,G,\xi}\times_{\barH_{g',G/G_1,\xi'}}\barH_{g_1,G/G_1,\xi'+\{1\}}
\end{equation*}
is proper and quasi-finite of degree $\#G_1$.
\end{lem}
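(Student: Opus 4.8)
The plan is to analyze the induced map on geometric points and to identify it, relative to the base $\barH_{g,G,\xi}$, with the quotient morphism from the universal source curve to its partial quotient by $G_1$. Properness is the easy part: by Theorem \ref{source_target_properties} the stack $\barH_{g,G,\xi+\{1\}}$ is a proper Deligne--Mumford stack, while the target fiber product is separated, being a fiber product of separated stacks. Since any morphism from a proper stack to a separated stack is proper, the induced map $(\pi_G,\cores^{G}_{G/G_1})$ is proper.

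For quasi-finiteness and the degree I would work over $\barH_{g,G,\xi}$: both $\barH_{g,G,\xi+\{1\}}$ and the fiber product carry natural maps to $\barH_{g,G,\xi}$ (via $\pi_G$ and via the first projection, respectively), and $(\pi_G,\cores^{G}_{G/G_1})$ is a morphism over $\barH_{g,G,\xi}$. Fix a geometric point $[f\colon X\to Y]\in\barH_{g,G,\xi}$ with corestriction $[f'\colon Z\to Y]$, where $Z=X/G_1$. By Lemma \ref{universal_family_hurwitz_comparison}, over the smooth unramified locus the fiber of $\pi_G$ is the choice of a distinguished unramified point $p$ of $X$, while the fiber of the first projection is the corresponding fiber of $\pi_{G/G_1}$ over $[f']$, namely the choice of a distinguished unramified point $\bar p$ of $Z$. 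Under this identification the induced map sends $p$ to its image $\bar p$ in $Z=X/G_1$; that is, on fibers over $\barH_{g,G,\xi}$ it is the quotient morphism $X\to X/G_1$ restricted to the unramified loci. Since the $G$-action, and hence the $G_1$-action, is free over the unramified locus, this quotient is \'etale of degree $\#G_1$: the preimage of $\bar p$ is precisely the free $G_1$-orbit of any lift, which has $\#G_1$ elements. As every pair $(f,\bar p)$ with $\bar p$ smooth and unramified lies in the image, the map is dominant of generic degree $\#G_1$.

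The step requiring the most care is the behavior over the boundary, where the extra point specializes to a node or to an existing ramification point. There the identification of $\barH_{g,G,\xi+\{1\}}$ with the universal source curve holds only after bubbling off rational components (the ``Cartesian up to birational equivalence'' phenomenon appearing in the forgetful diagrams above), and the fiber product is itself described only up to the finite cover under study. Since the degree of a proper, quasi-finite morphism is determined on a dense open substack, it suffices to verify the degree over the locus where the extra marked orbit is unramified and supported on the smooth locus, as above, and to check separately that the fibers remain finite everywhere. The latter is immediate, since the fiber over any point of the fiber product injects into the preimage of the distinguished point of $Z$ under the finite morphism $X\to Z$, which is always finite. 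Together these give quasi-finiteness throughout and degree $\#G_1$, as claimed.
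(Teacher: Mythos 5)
Your proof is correct and takes essentially the same approach as the paper: properness follows from the properness of all the stacks involved, and the degree is computed by identifying the fiber over a point $(f\colon X\to X/G,\ \bar p\in X/G_1)$ with the $\#G_1$ pre-images of the unramified point $\bar p$ under $X\to X/G_1$. Your extra care at the boundary (reducing the degree count to the dense open locus where the extra point is smooth and unramified, and checking finiteness of fibers separately) addresses a degenerate case that the paper's proof passes over silently.
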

\begin{proof}
The commutativity is clear. All spaces in question are proper, hence so is the induced map to the fiber product. 

A point of $\barH_{g,G,\xi}\times_{\barH_{g',G/G_1,\xi'}}\barH_{g_1,G/G_1,\xi'+\{1\}}$ consists of a pointed $G$-cover $f:X\to X/G$ along with a distinguished unramified point of the intermediate quotient $x\in X/G_1$. Then, the $\#G_1$ pre-images of $x$ in $X$ correspond to the $\#G_1$ points of $\barH_{g,G,\xi+\{1\}}$ lying over our target point.
\end{proof}

It follows immediately that:

\begin{prop}
We have:
\begin{equation*}
(\cores_{G/G_1}^{G})^{*}([\pi_{G/G_1}])=\frac{1}{\#G_1}\cdot[\pi_G]
\end{equation*}
and
\begin{equation*}
\pi_{G/G_1}^{*}((\cores_{G/G_1}^{G})_{*}(1))=\frac{1}{\#G_1}\cdot(\cores_{G/G_1}^{G})_{*}(1)
\end{equation*}
where we use brackets to denote bivariant classes.
\end{prop}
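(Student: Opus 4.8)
The plan is to derive both formulas as formal consequences of Lemma~\ref{cores_forgetful_dominan_diagram}, using only the compatibilities of bivariant orientation classes with base change and proper pushforward, together with the projection formula. Abbreviate $B=\barH_{g',G/G_1,\xi'}$ and let $P=\barH_{g,G,\xi}\times_{B}\barH_{g_1,G/G_1,\xi'+\{1\}}$ be the fiber product, with projections $\pr_1\colon P\to\barH_{g,G,\xi}$ and $\pr_2\colon P\to\barH_{g_1,G/G_1,\xi'+\{1\}}$. The induced map of Lemma~\ref{cores_forgetful_dominan_diagram} is $\rho=(\pi_G,\wt{\cores}_{G/G_1}^{G})\colon\barH_{g,G,\xi+\{1\}}\to P$, which is proper and quasi-finite of degree $\#G_1$ and satisfies $\pr_1\circ\rho=\pi_G$ and $\pr_2\circ\rho=\wt{\cores}_{G/G_1}^{G}$. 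Since $\pi_{G/G_1}$ is flat of relative dimension $1$ and the square defining $P$ is Cartesian, the projection $\pr_1$ is the flat base change of $\pi_{G/G_1}$ along $\cores_{G/G_1}^{G}$; in particular $P$ is a flat family of nodal curves over $\barH_{g,G,\xi}$, hence reduced and equidimensional, so $\pr_1^{*}(1)=[P]$ and the degree statement for $\rho$ upgrades to the cycle identity $\rho_{*}(1)=\#G_1\cdot[P]$, stably under base change.

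For the first identity I would invoke the stability of the canonical orientation class of a flat morphism under base change, which gives $(\cores_{G/G_1}^{G})^{*}[\pi_{G/G_1}]=[\pr_1]$. To compare $[\pr_1]$ with $[\pi_G]$, I would push the latter forward along the proper map $\rho$: for any class $\alpha$ coming from a base change of $\barH_{g,G,\xi}$ one has $\rho_{*}\big([\pi_G]\cap\alpha\big)=\rho_{*}(\pi_G^{*}\alpha)=\#G_1\cdot\pr_1^{*}\alpha=\#G_1\cdot\big([\pr_1]\cap\alpha\big)$, where the middle equality is exactly the cycle relation $\rho_{*}(1)=\#G_1\,[P]$ applied after base change (using $\pi_G^{*}\alpha=\pr_1^{*}\alpha$ restricted to the relevant family). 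Hence $\rho_{*}[\pi_G]=\#G_1\cdot[\pr_1]$ as bivariant classes over $\barH_{g,G,\xi}$, and under the identification along $\rho_{*}$ implicit in the bracket notation this is precisely $(\cores_{G/G_1}^{G})^{*}[\pi_{G/G_1}]=\tfrac{1}{\#G_1}[\pi_G]$.

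For the second identity I would instead apply proper base change in the Cartesian square defining $P$, using that $\cores_{G/G_1}^{G}$ is proper and $\pi_{G/G_1}$ is flat: compatibility of flat pullback with proper pushforward gives $\pi_{G/G_1}^{*}\big((\cores_{G/G_1}^{G})_{*}(1)\big)=(\pr_2)_{*}\big(\pr_1^{*}(1)\big)=(\pr_2)_{*}[P]$. Substituting $[P]=\tfrac{1}{\#G_1}\rho_{*}(1)$ and using $\pr_2\circ\rho=\wt{\cores}_{G/G_1}^{G}$ yields $(\pr_2)_{*}[P]=\tfrac{1}{\#G_1}(\wt{\cores}_{G/G_1}^{G})_{*}(1)$, which is the asserted formula; here the corestriction on the right-hand side is understood as the lifted map $\wt{\cores}_{G/G_1}^{G}$ on the spaces carrying the extra marked orbit.

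The bookkeeping in both steps is routine once these inputs are in place; the one point requiring genuine care — and the reason the factor is exactly $\tfrac{1}{\#G_1}$ — is verifying that $\pr_1$ is truly flat, so that $\pr_1^{*}(1)=[P]$ with multiplicity one, equivalently that $P$ is reduced. This is what converts the set-theoretic degree $\#G_1$ of $\rho$ from Lemma~\ref{cores_forgetful_dominan_diagram} into the cycle identity $\rho_{*}(1)=\#G_1\,[P]$; everything else is a formal consequence of the bivariant formalism, which is what justifies the claim that the Proposition follows immediately.
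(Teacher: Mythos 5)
Your proof is correct and follows essentially the same route as the paper: the paper deduces both formulas immediately from Lemma \ref{cores_forgetful_dominan_diagram}, exactly as you do, with the degree-$\#G_1$ statement of that lemma supplying the factor $\tfrac{1}{\#G_1}$ after identifying classes on the fiber product with classes pushed forward from $\barH_{g,G,\xi+\{1\}}$. The only difference is that you spell out the bookkeeping the paper leaves implicit (flatness of $\pr_1$, reducedness of the fiber product, stability under base change, and the reading of the abusive notation on the right-hand sides), which is a faithful expansion rather than a new argument.
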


\begin{rem}
Because the corestriction maps are flat, proper, and quasi-finite, the classes 
\begin{equation*}
(\cores_{G/G_1}^{G})_{*}(1),(\cores_{G/G_1}^{G})_{*}(1)
\end{equation*}
are equal to multiples of the fundamental class, so the second formula can be viewed simply as expressing an equality of numbers. However, in order later to be able to carry out intersections of compositions of basic tautological morphisms, we need to express the intersection of the morphisms $\cH_1\to\cH$ and $\cH_2\to\cH$ in terms of a class coming from a space $\cH_0$ admitting tautological maps to \textit{both} $\cH_j$, as we have here; hence, it is not enough simply to remember the correct multiple of the fundamental class. This issue will arise in all of our computations of this section involving corestriction maps.
\end{rem}

\subsection{Intersections of two boundary strata}\label{boundary_strata_intersection_section}

The intersection of boundary classes on spaces of admissible $G$-covers is entirely analogous to the classical situation, see \S\ref{boundary_intersection_classical_section}.

\begin{lem}\label{boundary_intersection_hurwitz_cartesian}
Let $(A,G),(B,G)$ be admissible $G$-graphs indexing boundary strata on $\barH_{g,G,\xi}$. We have a Cartesian diagram
\begin{equation*}
\xymatrix{
\displaystyle\coprod\barH_{(\Gamma,G)} \ar[d]^{\xi_{\Gamma\to A}} \ar[r]^{\xi_{\Gamma\to B}} & \barH_{(B,G)} \ar[d]^{\xi_{(B,G)}}\\
\barH_{(A,G)} \ar[r]^{\xi_{(A,G)}} & \barH_{g,G,\xi}
}
\end{equation*}
where the disjoint union is over admissible $G$-graphs $(\Gamma,G)$ with maps $(\Gamma,G)\to (A,G),(B,G)$ with the genericity condition that the induced map on edges
\begin{equation*}
E(A)\sqcup E(B)\to E(\Gamma)
\end{equation*}
is surjective. 
\end{lem}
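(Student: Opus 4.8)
The plan is to mirror the proof of the classical statement, Lemma \ref{boundary_intersection_classical_cartesian}, tracking the additional data of the $G$-action throughout. I would establish the Cartesian property at the level of the moduli functors (equivalently, on families of admissible $G$-covers), rather than merely on coarse spaces, exploiting the modular interpretation of the boundary strata $\barH_{(A,G)}$ recorded earlier: by \cite[Proposition 4.6]{schmittvanzelm}, $\barH_{(A,G)}$ parametrizes admissible $G$-covers $f:X\to X/G$ equipped with an $(A,G)$-structure on the underlying admissible $G$-graph of $X$.

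The key step is to identify the fiber product. A point of $\barH_{(A,G)}\times_{\barH_{g,G,\xi}}\barH_{(B,G)}$ is an admissible $G$-cover $f:X\to X/G$ together with both an $(A,G)$-structure and a $(B,G)$-structure on the admissible $G$-graph $\Gamma_X$ of $X$; that is, with $G$-equivariant morphisms of admissible $G$-graphs $\Gamma_X\to A$ and $\Gamma_X\to B$. The genericity condition that $E(A)\sqcup E(B)\to E(\Gamma)$ be surjective is exactly the condition that every node of $X$ be forced by one of the two structures, so that $\Gamma_X$ ranges precisely over the admissible $G$-graphs $(\Gamma,G)$ admitting the stated pair of maps. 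First I would verify that the natural functor from $\coprod_{(\Gamma,G)}\barH_{(\Gamma,G)}$ (sum over such generic $(\Gamma,G)$) to the fiber product is fully faithful and essentially surjective: given an object of the fiber product with graph $\Gamma_X$, one reads off the generic $(\Gamma,G)$ as $\Gamma_X$ itself together with its induced maps to $A$ and $B$, and conversely the maps $\xi_{\Gamma\to A}$, $\xi_{\Gamma\to B}$ equip a point of $\barH_{(\Gamma,G)}$ with compatible structures. The commutativity of the square is immediate since both composites glue the same $G$-curve $X$.

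The main obstacle I expect is the bookkeeping of monodromy and relabeling data, together with ensuring the isomorphism of stacks (not just coarse spaces) accounts correctly for automorphisms. Concretely, one must check that the $G$-equivariant gluing respects the distinguished-leg and monodromy conditions of Definition \cite[Definition 3.14]{schmittvanzelm} at each node, that the balancing condition $h_\ell = h_{\ell'}^{-1}$ is inherited compatibly from both $(A,G)$ and $(B,G)$, and that the $G$-orbit structure on edges matches under both morphisms. I would argue this by reducing to the classical Cartesian diagram of Lemma \ref{boundary_intersection_classical_cartesian} via the source map $\phi$ and Remark \ref{boundary_relate_to_classical}: the underlying stable graphs and their gluings are governed by the classical result, and the $G$-action, monodromy data, and relabeling choices are simply transported along, each $G$-orbit of nodes contributing coherently. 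The disjointness of the summands over distinct $(\Gamma,G)$ follows because the isomorphism class of the pair of structures determines $(\Gamma,G)$ uniquely, completing the identification of the fiber product as the asserted disjoint union.
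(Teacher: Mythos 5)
Your overall strategy --- identifying the fiber product functorially, via the modular interpretation of $\barH_{(A,G)}$ from \cite[Proposition 4.6]{schmittvanzelm} --- is the same as the paper's, but there is a genuine gap in your essential surjectivity step. You claim that, given an object of the fiber product, ``one reads off the generic $(\Gamma,G)$ as $\Gamma_X$ itself together with its induced maps to $A$ and $B$.'' This fails: the dual graph $\Gamma_X$ of the cover need not satisfy the genericity condition. A point of the fiber product is a cover $f:X\to X/G$ equipped with both an $(A,G)$-structure and a $(B,G)$-structure, and $X$ may well have additional $G$-orbits of nodes imposed by neither structure (such points lie in the closure of each stratum and certainly belong to the fiber product). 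For such a point, $\Gamma_X$ admits maps to $A$ and $B$, but $E(A)\sqcup E(B)\to E(\Gamma_X)$ is not surjective, so $\barH_{(\Gamma_X,G)}$ is not among the summands of the coproduct and your recipe produces nothing; likewise your assertion that ``every node of $X$ is forced by one of the two structures'' is simply not true of an arbitrary point of the fiber product.

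The missing idea, which is the heart of the paper's proof, is a contraction step: given a point of the fiber product with dual graph $\Gamma_f$, one defines $(\Gamma,G)$ to be the admissible $G$-graph obtained from $\Gamma_f$ by $G$-equivariantly contracting all edges \emph{not} in the image of $E(A)\sqcup E(B)$, modifying the genus function accordingly. This $\Gamma$ satisfies the genericity condition by construction, the maps $f_A,f_B$ factor through $(\Gamma,G)$, and --- crucially using that $\barH_{(\Gamma,G)}$ parametrizes covers \emph{with a $(\Gamma,G)$-structure}, not covers whose dual graph equals $\Gamma$ --- the cover $f$ together with the induced map $(\Gamma_f,G)\to(\Gamma,G)$ defines a point of $\barH_{(\Gamma,G)}$. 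With this correction, the rest of what you outline (commutativity, compatibility of monodromy and balancing data along $G$-orbits of edges, and uniqueness of the summand, since the contraction is determined by the pair of structures) goes through as the paper's proof does.
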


\begin{proof}
The commutativity is immediate. We describe an inverse functor
\begin{equation*}
\barH_{(A,G)}\times_{\barH_{g,G,\xi}}\barH_{(B,G)}\to\coprod\barH_{(\Gamma,G)}.
\end{equation*}
The data of a point on the source is equivalent to that of a pointed $G$-cover $f:X\to Y$ with the required monodromy data along with maps $f_A:(\Gamma_f,G)\to(A,G)$ and $f_B:(\Gamma_f,G)\to(B,G)$, where $(\Gamma_f,G)$ is the canonical admissible $G$-graph structure on the dual graph of $X$.

We define $(\Gamma,G)$ to be the admissible $G$-graph obtained from $\Gamma_f$ contracting all edges not coming from $E(A)$ or $E(B)$, and modifying the genus function accordingly. Then, the maps $f_A,f_B$ factor respectively as
\begin{align*}
(\Gamma_f,G)&\to(\Gamma,G)\to(A,G),\\
(\Gamma_f,G)&\to(\Gamma,G)\to(B,G)
\end{align*}
where the natural map $(\Gamma_f,G)\to(\Gamma,G)$ defines, along with the data of $f:X\to Y$, a point of $\barH_{(\Gamma,G)}$. This furnishes the desired inverse.
\end{proof}

\begin{lem}\label{boundary_excess_intersection_hurwitz}
We have the following formula for the excess bundle on $\barH_{(H,G)}$:
\begin{equation*}
\xi^{*}_{\Gamma\to A}N_{\barH_{(A,G)}/\barH_{g,G,\xi}}/N_{\barH_{(\Gamma,G)}/\barH_{(B,G)}}\cong\bigoplus_{(\ell,\ell')\in (E(A)\cap E(B))'}T_{\ell}\otimes T_{\ell'},
\end{equation*}
where $(\ell,\ell')$ denotes the pair of half-edges comprising an edge and $(E(A)\cap E(B))'$ is any sets of $G$-orbit representatives in $E(A)\cap E(B)$
\end{lem}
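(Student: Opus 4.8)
The plan is to run the classical excess-bundle computation of \S\ref{boundary_intersection_classical_section} $G$-equivariantly, replacing the classical normal-bundle formula (Lemma \ref{normal_bundle}) with its equivariant counterpart (Lemma \ref{normal_bundle_hurwitz}). Starting from the Cartesian diagram of Lemma \ref{boundary_intersection_hurwitz_cartesian}, I regard $\xi_{\Gamma\to B}$ as the base change of the local embedding $\xi_{(A,G)}$ along $\xi_{(B,G)}$; exactly as in the classical case, the claimed excess bundle $E$ is then governed by the standard exact sequence
\begin{equation*}
0 \to N_{\barH_{(\Gamma,G)}/\barH_{(B,G)}} \to \xi_{\Gamma\to A}^{*} N_{\barH_{(A,G)}/\barH_{g,G,\xi}} \to E \to 0,
\end{equation*}
so it suffices to identify the left two terms and the natural inclusion between them.

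Next I would make both normal bundles explicit. By Lemma \ref{normal_bundle_hurwitz}, $N_{\barH_{(A,G)}/\barH_{g,G,\xi}}=\bigoplus_{(\ell,\ell')\in E'(A)} T_{\ell}\otimes T_{\ell'}$, summed over a set $E'(A)$ of $G$-orbit representatives of $E(A)$. For the other term, I use that $\xi_{\Gamma\to B}$ is itself a product of ordinary boundary morphisms into the components of $\barH_{(B,G)}$ (as noted after Definition \ref{admissible_graph_morphism_def}); hence its normal bundle is a sum of $T_{\ell}\otimes T_{\ell'}$ over $G$-orbit representatives of those edges of $\Gamma$ contracted by the morphism of $G$-graphs $\Gamma\to B$, that is, the edges of $\Gamma$ not lying in the image of $E(B)$.

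The combinatorial heart of the argument is the genericity condition of Lemma \ref{boundary_intersection_hurwitz_cartesian}: surjectivity of $E(A)\sqcup E(B)\to E(\Gamma)$ lets me identify $E(\Gamma)$ with $E(A)\cup E(B)$, so the edges of $\Gamma$ contracted by $\Gamma\to B$ are precisely those coming from $E(A)\setminus E(B)$. Each such edge is the image of an edge of $A$ under $\Gamma\to A$, and $\xi_{\Gamma\to A}^{*}$ identifies the corresponding tangent line bundles, since the half-edge $\psi$ classes agree on persisting nodes and are independent of the choice of orbit representative. This produces the natural inclusion of the exact sequence, whose quotient is the sum over the complementary orbit representatives — namely those of $E(A)\cap E(B)$ — giving $E\cong\bigoplus_{(\ell,\ell')\in (E(A)\cap E(B))'} T_{\ell}\otimes T_{\ell'}$; a dimension count of $G$-orbits confirms the ranks are consistent.

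I expect the main obstacle to be the $G$-orbit bookkeeping rather than any new geometry: one must verify that the inclusion of normal bundles respects the decomposition into orbit representatives, i.e. that a $G$-orbit of contracted edges of $\Gamma$ maps to a single $G$-orbit of edges of $A$ (which holds because the morphisms of $G$-graphs are $G$-equivariant, so orbits map to orbits) and that the identification of tangent lines is compatible with the implicit orbit-averaging of Lemma \ref{normal_bundle_hurwitz}. Once this compatibility is checked, the quotient computation is formal and identical to the classical one.
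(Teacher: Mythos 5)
Your proof is correct and follows the same route as the paper, whose entire proof is the observation that the claim follows from Lemma \ref{normal_bundle_hurwitz}: both $N_{\barH_{(A,G)}/\barH_{g,G,\xi}}$ and $N_{\barH_{(\Gamma,G)}/\barH_{(B,G)}}$ decompose as sums of $T_{\ell}\otimes T_{\ell'}$ over $G$-orbit representatives of edges, and the genericity condition identifies the contracted edges of $\Gamma\to B$ with $E(A)\setminus E(B)$, leaving the orbit representatives of $E(A)\cap E(B)$ in the quotient. You have simply written out the $G$-orbit bookkeeping (orbits not splitting between $E(A)\cap E(B)$ and $E(A)\setminus E(B)$, equivariance of the edge correspondence) that the paper leaves implicit.
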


\begin{proof}
This follows from Lemma \ref{normal_bundle_hurwitz}.
\end{proof}

Applying the excess intersection formula as in the classical case yields:

\begin{prop}\label{boundary_intersection_formula_hurwitz}
Let $(A,G),(B,G)$ be stable graphs associated to $\barH_{g,G,\xi}$. We have:
\begin{equation*}
\xi^{*}_{(A,G)}(\xi_{(B,G)*}(1))=\sum_{(\Gamma,G)}(\xi_{(\Gamma,G)\to (A,G)})_{*}\left(\prod_{(\ell,\ell')\in (E(A)\cap E(B))'}(-\psi_{h}-\psi_{h'})\right),
\end{equation*}
where the sum is over $(\Gamma,G)$ as above.
\end{prop}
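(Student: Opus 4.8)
The plan is to apply the excess intersection formula of Fulton to the Cartesian diagram of Lemma \ref{boundary_intersection_hurwitz_cartesian}, in exact parallel with the classical derivation recalled in \S\ref{boundary_intersection_classical_section}. First I would record that the boundary morphism $\xi_{(B,G)}$ is a finite unramified local complete intersection morphism of Deligne--Mumford stacks whose normal bundle is computed in Lemma \ref{normal_bundle_hurwitz}; consequently it carries a refined Gysin homomorphism $\xi_{(B,G)}^{!}$, and since all Chow groups are taken with $\bQ$-coefficients the stacky intersection theory of Vistoli--Kresch applies without modification. Each induced map $\xi_{\Gamma\to A}$, $\xi_{\Gamma\to B}$ is itself a product of boundary morphisms, hence also lci, so the relevant normal bundles all exist.

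Next, since the square is Cartesian, the class $\xi_{(A,G)}^{*}(\xi_{(B,G)*}(1))$ is computed by the refined Gysin class $\xi_{(A,G)}^{!}([\barH_{(B,G)}])$, supported on the fiber product $\coprod_{(\Gamma,G)}\barH_{(\Gamma,G)}$. The intersection is not dimensionally transverse --- the components $\barH_{(\Gamma,G)}$ generically carry excess dimension equal to the number of $G$-orbits of common edges --- so the excess intersection formula yields
\begin{equation*}
\xi_{(A,G)}^{*}(\xi_{(B,G)*}(1))=\sum_{(\Gamma,G)}(\xi_{\Gamma\to A})_{*}\bigl(c_{\topchern}(E_\Gamma)\bigr),
\end{equation*}
where $E_\Gamma$ is the excess bundle on $\barH_{(\Gamma,G)}$. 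Lemma \ref{boundary_excess_intersection_hurwitz} identifies $E_\Gamma$ with $\bigoplus_{(\ell,\ell')\in(E(A)\cap E(B))'}T_\ell\otimes T_{\ell'}$, so its rank equals the number of common edge-orbits and $c_{\topchern}$ is its top Chern class.

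Finally I would compute that top Chern class. Since $\psi_\ell=-c_1(T_\ell)$ by definition, we have $c_1(T_\ell\otimes T_{\ell'})=-\psi_\ell-\psi_{\ell'}$, and the total Chern class of a direct sum is the product, whence
\begin{equation*}
c_{\topchern}(E_\Gamma)=\prod_{(\ell,\ell')\in(E(A)\cap E(B))'}(-\psi_\ell-\psi_{\ell'}).
\end{equation*}
Substituting into the previous display gives exactly the stated formula.

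The only point requiring genuine care --- and the step I expect to be the main obstacle --- is verifying that the excess intersection formula may be invoked verbatim in the Deligne--Mumford stack setting: namely that the decomposition of the fiber product supplied by Lemma \ref{boundary_intersection_hurwitz_cartesian} matches the refined Gysin decomposition with the excess bundle of Lemma \ref{boundary_excess_intersection_hurwitz}, with no spurious multiplicities attached to the individual $\barH_{(\Gamma,G)}$. Because those two lemmas have already isolated the Cartesian structure and computed the excess bundle, this reduces to checking compatibility of the two identifications, and the remainder is formal and identical to the classical argument.
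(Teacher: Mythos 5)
Your proposal is correct and takes essentially the same route as the paper: the paper's proof consists precisely of applying the excess intersection formula to the Cartesian diagram of Lemma \ref{boundary_intersection_hurwitz_cartesian}, with the excess bundle identified by Lemma \ref{boundary_excess_intersection_hurwitz}, exactly as in the classical case of \S\ref{boundary_intersection_classical_section}. The details you supply (the boundary maps being finite, unramified lci morphisms of smooth Deligne--Mumford stacks, the identity $c_1(T_\ell\otimes T_{\ell'})=-\psi_\ell-\psi_{\ell'}$, and the absence of spurious multiplicities because the fiber product is literally the disjoint union of the smooth stacks $\barH_{(\Gamma,G)}$) are exactly what is implicit in the paper's one-line invocation of the classical argument.
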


\subsection{Intersections of restriction-corestriction maps with boundary strata}\label{res-cores_boundary}

\subsubsection{Restriction maps}\label{restriction_boundary_intersection_section}

We consider the intersection of a restriction map $\res^{G}_{G_1}:\barH_{g,G,\xi}\to\barH_{g,G_1,\xi'}$ with a boundary class on $\barH_{g,G_1,\xi'}$. The case of $\res^{G}_{\{1\}}=\phi$ is the main technical result of Schmitt-van Zelm, \cite[Theorem 4.9]{schmittvanzelm}; the situation here is similar.

\begin{lem}\label{restriction_boundary_cartesian}
Suppose that $(A,G_1)$ is an admissible $G_1$-graph. Then, we have a Cartesian diagram
\begin{equation*}
\xymatrix{
\displaystyle\coprod\barH_{(\Gamma,G)} \ar[d]_{\res_\alpha} \ar[r]^(0.52){\xi_{(\Gamma,G)}} & \barH_{g,G,\xi} \ar[d]^{\res_{G_1}^{G}}\\
\barH_{(A,G_1)} \ar[r]^{\xi_{(A,G_1)}} & \barH_{g,G_1,\xi'}
}
\end{equation*}
Here, the coproduct is over admissible $G$-graphs $(\Gamma,G)$ equipped with a map of admissible $G_1$-graphs $\alpha:((\Gamma,G),G_1)\to (A,G_1)$, satisfying the genericity condition that the map
\begin{equation*}
\alpha_E:E(A)\to E(\Gamma)
\end{equation*}
is surjective.

The map $\res_\alpha$ is defined by the composition 
\begin{equation*}
\xymatrix{
\barH_{(\Gamma,G)} \ar[r]^(0.45){\res_{\Gamma}} & \barH_{(\Gamma,G),G_1} \ar[r]^{\xi_{\Gamma\to A}} & \barH_{(A,G_1)}.
}
\end{equation*}
where $((\Gamma,G),G_1)$ is the admissible $G_1$-graph obtained by restricting the $G$-action to $G_1$, as in Definition \ref{restriction_graphs}, and the relabeling data defining the restriction map is the same as that of the given map $\res^G_{G_1}$.
\end{lem}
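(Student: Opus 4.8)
The plan is to prove the diagram is Cartesian by constructing an inverse to the induced morphism $\coprod \barH_{(\Gamma,G)} \to \barH_{(A,G_1)} \times_{\barH_{g,G_1,\xi'}} \barH_{g,G,\xi}$, in direct analogy with the proofs of Lemmas \ref{boundary_intersection_classical_cartesian} and \ref{boundary_intersection_hurwitz_cartesian}. Commutativity of the square is not quite automatic but follows by unwinding definitions: since $\res_\alpha = \xi_{\Gamma\to A}\circ\res_\Gamma$, the commutative square displayed after Definition \ref{restriction_graphs} (relating $\res_\Gamma$, $\res^G_{G_1}$, and the two boundary maps) together with the compatibility of $\xi_{\Gamma\to A}$ with boundary morphisms into $\barH_{g,G_1,\xi'}$ yields $\xi_{(A,G_1)}\circ\res_\alpha = \res^G_{G_1}\circ\xi_{(\Gamma,G)}$, as required. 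The main content is therefore the inverse functor, and as indicated in the introduction, because restriction maps are finite, unramified, and local complete intersections, the fiber product is reduced, so that a genuine equivalence of functors may be checked on points as in the classical setting.

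For the inverse functor, I would describe a point of the fiber product as the data of a pointed admissible $G$-cover $f\colon X\to X/G$ with monodromy $\xi$, together with an $(A,G_1)$-structure on the restriction $\res^G_{G_1}([f])$. Writing $(\Gamma_f,G)$ for the canonical admissible $G$-graph on the dual graph of $X$, the restriction $\res^G_{G_1}([f])$ has underlying admissible $G_1$-graph $((\Gamma_f,G),G_1)$ (computed using the same relabeling data as $\res^G_{G_1}$), so an $(A,G_1)$-structure is precisely a morphism of admissible $G_1$-graphs $\beta\colon((\Gamma_f,G),G_1)\to(A,G_1)$. From $\beta$ I would produce $(\Gamma,G)$ by contracting every edge of $\Gamma_f$ outside the $G$-orbit of the non-contracted edges $\beta(E(A))\subset E(\Gamma_f)$, modifying the genus function accordingly; the resulting graph is $G$-stable by construction and inherits an admissible $G$-graph structure with monodromy labels induced from those of $\Gamma_f$. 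The natural contraction $(\Gamma_f,G)\to(\Gamma,G)$, together with $f$, then defines a point of $\barH_{(\Gamma,G)}$, while $\beta$ factors through a map $\alpha\colon((\Gamma,G),G_1)\to(A,G_1)$ whose edge map is surjective onto the $G$-orbits of $E(\Gamma)$, furnishing the required genericity.

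Finally, I would verify that this assignment is inverse to $(\res_\alpha,\xi_{(\Gamma,G)})$ on both sides, which is formal once the construction is set up: starting from $((\Gamma,G),\alpha)$ and a point of $\barH_{(\Gamma,G)}$, the genericity condition guarantees that re-contracting recovers exactly $(\Gamma,G)$ and that $\alpha$ is recovered from the resulting $\beta$. The step I expect to be the main obstacle is the group-theoretic bookkeeping in defining $(\Gamma,G)$: because $A$ carries only a $G_1$-action and $\beta$ is merely $G_1$-equivariant, the subset $\beta(E(A))$ need not be $G$-stable, so one must pass to its $G$-saturation to obtain a genuine $G$-graph, and then check both that the retained monodromy labels remain consistent with admissibility (in particular the balancing condition at the kept edges) and that the genericity condition is correctly read as surjectivity onto $G$-orbits of edges rather than onto individual edges. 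Keeping this saturation compatible with the fixed relabeling data --- so that the $\res_\Gamma$ appearing in $\res_\alpha$ uses labels matching those of $\res^G_{G_1}$ --- is the delicate point, but it is forced at every stage by the requirement that $\beta$ factor through $\alpha$.
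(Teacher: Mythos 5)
Your proposal is correct and follows essentially the same route as the paper's proof: the paper also proves the square Cartesian by constructing the inverse functor on a point $(f:X\to X/G,\ (A,G_1)\text{-structure})$ of the fiber product, defining $E(\Gamma)$ as the $G$-saturation of the nodes of $X$ singled out by the edges of $A$ (equivalently, contracting all other edges of the dual graph) and checking that the induced map $\alpha$ satisfies genericity, with the construction asserted to work in families. The ``delicate point'' you flag is resolved exactly as you suggest: the paper's own later use of the genericity condition (in showing $k_{(\ell,\ell')}\ge 1$) reads it as surjectivity of $\alpha_E$ onto $G$-orbits of $E(\Gamma)$, consistent with the $G$-saturation step in the construction.
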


\begin{proof}
We describe the inverse functor
\begin{equation*}
\barH_{(A,G_1)}\times_{\barH_{g,G_1,\xi'}}\barH_{g,G,\xi}\to \coprod\barH_{(\Gamma,G)}.
\end{equation*}
The construction is exactly analogous to that in the proof of \cite[Proposition 4.3]{schmittvanzelm}. For brevity's sake, we will describe the functor on the level of points without making explicit reference to families of objects, but the entire construction works in this setting.

On the source, we are given a $G$-cover $f:X\to X/G$ along with an $(A,G_1)$-structure on $X$, with appropriate marked points distinguished. 

We construct an admissible $G$-graph $(\Gamma,G)$ as follows. We define $E(\Gamma)$ to be the set of nodes of $X$ in the $G$-orbit of some node corresponding to an edge of $A$. Similarly, $H(\Gamma)$ is defined to be the set of pre-images of such nodes in the normalization $\wt{X}$ of $X$. The vertex set $V(\Gamma)$ then corresponds to the connected components of $X-E(\Gamma)$. The set of legs $L(A)$ naturally has a $G$-action coming from $X$. 

It is now straightforward to check that $V(\Gamma),H(\Gamma),L(\Gamma)$ taken together with the data $g,\iota,a,\zeta$ induced in the obvious way defines the desired graph $(\Gamma,G)$ with a map $\alpha:((\Gamma,G),G_1)\to (A,G_1)$ satisfying the needed genericity condition. Moreover, $f:X\to X/G$ with the $(\Gamma,G)$-structure on its dual graph defines a point of $\barH_{(\Gamma,G)}$, furnishing the needed inverse functor.
\end{proof}

The excess bundle on each $\barH_{(\Gamma,G)}$ also admits a similar description as in \cite[\S 4.2]{schmittvanzelm}. 

\begin{lem}
The excess bundle on $\barH_{(\Gamma,G)}$ is
\begin{equation*}
\res_{\alpha}^{*}N_{\barH_{(A,G_1)}/\barH_{g,G_1,\xi'}}/N_{\barH_{(\Gamma,G)}/\barH_{g,G,\xi}}\cong\bigoplus_{(\ell,\ell')}T_{\ell}\otimes T_{\ell'}
\end{equation*}
where the sum is over edges $(\ell,\ell')$ ranging over a set of $G_1$-orbit representatives in the image of $\alpha_E:E(A)\to E(\Gamma)$, excluding a choice of $G$-orbit representatives among the chosen edges.
\end{lem}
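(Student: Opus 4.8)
The plan is to read off the two normal bundles from Lemma \ref{normal_bundle_hurwitz} and then identify the excess as the cokernel of the natural inclusion of $N_{\barH_{(\Gamma,G)}/\barH_{g,G,\xi}}$ into the pullback of $N_{\barH_{(A,G_1)}/\barH_{g,G_1,\xi'}}$, the crucial point being that this inclusion is ``diagonal'' along each $G$-orbit of edges. First I would note that both $\xi_{(A,G_1)}$ and $\xi_{(\Gamma,G)}$ are unramified local complete intersection morphisms, so the excess is genuinely the quotient of vector bundles written in the statement (with locally free cokernel, in contrast to the corestriction case). By Lemma \ref{normal_bundle_hurwitz}, $N_{\barH_{(A,G_1)}/\barH_{g,G_1,\xi'}}$ is $\bigoplus T_\ell\otimes T_{\ell'}$ over a set of $G_1$-orbit representatives of $E(A)$, and $N_{\barH_{(\Gamma,G)}/\barH_{g,G,\xi}}$ is $\bigoplus T_\ell\otimes T_{\ell'}$ over a set of $G$-orbit representatives of $E(\Gamma)$.

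Next I would match up the edges. Since $\alpha$ is a morphism of $G_1$-graphs, its half-edge map $\alpha_H\colon H(A)\to H(\Gamma)$ is injective and compatible with the involutions, whence $\alpha_E\colon E(A)\to E(\Gamma)$ is injective; combined with the genericity hypothesis that $\alpha_E$ is surjective, it is a $G_1$-equivariant bijection. Because $\res_\alpha=\xi_{\Gamma\to A}\circ\res_\Gamma$ remembers the underlying source curve $X$, it carries the tangent line at a node to the tangent line at the corresponding node, so $\res_\alpha^{*}N_{\barH_{(A,G_1)}/\barH_{g,G_1,\xi'}}$ is identified with $\bigoplus T_\ell\otimes T_{\ell'}$ over a set of $G_1$-orbit representatives of $E(\Gamma)$ (equivalently, of the image of $\alpha_E$).

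Then I would analyze the inclusion $N_{\barH_{(\Gamma,G)}/\barH_{g,G,\xi}}\hookrightarrow\res_\alpha^{*}N_{\barH_{(A,G_1)}/\barH_{g,G_1,\xi'}}$. The key deformation-theoretic fact is that the smoothing of a node of $X$ is governed by the associated twisted curve, so in $\barH_{g,G,\xi}$ an entire $G$-orbit of nodes is smoothed by a single parameter $T_\ell\otimes T_{\ell'}$, whereas in $\barH_{g,G_1,\xi'}$ each $G_1$-orbit contained in it carries an independent smoothing parameter. As $\res^{G}_{G_1}$ is unramified and preserves $X$, smoothing the $G$-orbit upstairs corresponds to smoothing all of its constituent $G_1$-orbits simultaneously at the same rate; this diagonal behavior is precisely reflected by the diagonal maps $\Delta$ appearing in the definition of $\res_\Gamma$ in Definition \ref{restriction_graphs}. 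Hence, along a $G$-orbit of edges containing exactly $k$ distinct $G_1$-orbits, the inclusion is the diagonal $T_\ell\otimes T_{\ell'}\hookrightarrow(T_\ell\otimes T_{\ell'})^{\oplus k}$ of $G$-isomorphic line bundles, with cokernel $(T_\ell\otimes T_{\ell'})^{\oplus(k-1)}$.

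Finally I would sum over $G$-orbits: the quotient is $\bigoplus T_\ell\otimes T_{\ell'}$ over all $G_1$-orbits of edges in the image of $\alpha_E$, with one representative discarded per $G$-orbit, which is exactly the stated indexing set. The main obstacle is the third step: making rigorous the diagonal form of the inclusion requires controlling the local deformation theory of the restriction morphism at the boundary, and one must argue carefully that the unramifiedness of restriction forces a genuine subbundle inclusion with locally free cokernel, rather than the non-reduced phenomena that arise once corestriction (hence ramification over nodes) is involved.
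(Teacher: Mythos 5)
Your proposal is correct and follows essentially the same route as the paper: both identify the two normal bundles via Lemma \ref{normal_bundle_hurwitz} (as sums of $T_{\ell}\otimes T_{\ell'}$ over $G_1$-orbits of $E(A)$ and $G$-orbits of $E(\Gamma)$, respectively) and obtain the excess bundle by removing the latter contributions from the former. The paper's proof is just a two-sentence version of this; your additional care about the bijectivity of $\alpha_E$ and the diagonal form of the sub-bundle inclusion along each $G$-orbit is left implicit there but is exactly the right justification.
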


\begin{proof}
The normal bundle of $\barH_{(A,G_1)}$ in $\barH_{g,G_1,\xi'}$ is the direct sum of line bundle contributions from the $G_1$-orbits of edges of $A$, and the normal bundle of $\barH_{(\Gamma,G)}$ in $\barH_{g,G,\xi}$ is the direct sum of line bundle contributions of $G$-orbits of edges of $\Gamma$. Removing a choice of the latter set from the former yields the desired formula.
\end{proof}

Because the maps $\xi_{(A,G_1)}$ and $\res_{G_1}^{G}$ are both unramified, we get the same excess bundle upon pushing forward and pulling back in either order. We conclude the following.
\begin{prop}
We have:
\begin{equation*} 
\xi^{*}_{(A,G_1)}((\res_{G_1}^{G})_{*}(1))=\sum_{(\Gamma,G),\alpha}(\res_{\alpha})_{*}\left(\prod_{(\ell,\ell')}(-\psi_{\ell}-\psi_{\ell'})\right),
\end{equation*}
where the product is over a set of edges $(\ell,\ell')$ of $\Gamma$ chosen as described above, as well as
\begin{equation*} 
(\res_{G_1}^{G})^{*}((\xi_{(A,G_2)})_{*}(1))=\sum_{(\Gamma,G),\alpha}(\xi_{(\Gamma,G)})_{*}\left(\prod_{(\ell,\ell')}(-\psi_{\ell}-\psi_{\ell'})\right).
\end{equation*}
\end{prop}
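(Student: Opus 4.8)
The plan is to deduce both identities from a single application of the excess intersection formula to the Cartesian diagram of Lemma~\ref{restriction_boundary_cartesian}, feeding in the excess bundle computed in the preceding lemma. The key structural facts that make this routine are already in hand: all four corners are smooth Deligne--Mumford stacks, and both $\xi_{(A,G_1)}$ and $\res^{G}_{G_1}$ are unramified, hence local complete intersections admitting refined Gysin pullbacks. Crucially, because $\res^{G}_{G_1}$ is unramified, the square is Cartesian with \emph{reduced}, smooth fiber product $\coprod\barH_{(\Gamma,G)}$, so the intersection is clean and the excess contribution is the Euler class of an honest vector bundle. (This is exactly the feature that fails in the corestriction computations of \S\ref{res-cores_int}, where non-reduced structure intervenes.)

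For the first identity I would treat $\xi_{(A,G_1)}$ as the regular embedding defining the Gysin homomorphism $\xi^{!}_{(A,G_1)}$, apply it to $(\res^{G}_{G_1})_{*}(1)=(\res^{G}_{G_1})_{*}[\barH_{g,G,\xi}]$, and use the compatibility of refined Gysin pullback with proper pushforward along the Cartesian square (Fulton, Theorem 6.2, in the stack-theoretic formalism of Vistoli and Kresch) to move the Gysin operation inside the pushforward:
\begin{equation*}
\xi^{*}_{(A,G_1)}\bigl((\res^{G}_{G_1})_{*}(1)\bigr)=\sum_{(\Gamma,G),\alpha}(\res_{\alpha})_{*}\bigl(\xi^{!}_{(A,G_1)}[\barH_{(\Gamma,G)}]\bigr).
\end{equation*}
The excess intersection formula then identifies each $\xi^{!}_{(A,G_1)}[\barH_{(\Gamma,G)}]$ with the top Chern class of the excess bundle capped with $[\barH_{(\Gamma,G)}]$. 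Since that bundle is $\bigoplus_{(\ell,\ell')}T_{\ell}\otimes T_{\ell'}$, its Euler class is $\prod_{(\ell,\ell')}c_1(T_{\ell}\otimes T_{\ell'})=\prod_{(\ell,\ell')}(-\psi_{\ell}-\psi_{\ell'})$, using $c_1(T_{\ell})=-\psi_{\ell}$ as in Lemma~\ref{normal_bundle_hurwitz}; this yields the first formula.

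The second identity follows by running the identical argument with the roles of the two maps exchanged, now taking $(\res^{G}_{G_1})^{!}$ as the Gysin operation and pushing forward along $\xi_{(\Gamma,G)}$ rather than $\res_{\alpha}$. The only point needing justification is that the excess bundle is unchanged, which is the symmetry remarked just before the statement: for a Cartesian square of smooth stacks the excess bundle $E$ is the cokernel in the single exact sequence
\begin{equation*}
0\to T_{\coprod\barH_{(\Gamma,G)}}\to\xi_{(\Gamma,G)}^{*}T_{\barH_{g,G,\xi}}\oplus\res_{\alpha}^{*}T_{\barH_{(A,G_1)}}\to(\res^{G}_{G_1}\circ\xi_{(\Gamma,G)})^{*}T_{\barH_{g,G_1,\xi'}}\to E\to0,
\end{equation*}
which is manifestly symmetric in the two maps and equals $\bigoplus_{(\ell,\ell')}T_{\ell}\otimes T_{\ell'}$ either way. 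The one genuinely delicate step is therefore not the intersection-theoretic formalism but the bookkeeping of \emph{which} edges $(\ell,\ell')$ enter the product and why the excess bundle takes this tensor form on the base-changed square; this I would simply cite from the preceding excess-bundle lemma (the $(\ell,\ell')$ range over $G_1$-orbit representatives in $\alpha_E(E(A))$, minus a choice of $G$-orbit representatives), as it has already been settled there.
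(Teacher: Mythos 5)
Your proposal is correct and takes essentially the same route as the paper: the paper likewise applies the excess intersection formula to the Cartesian diagram of Lemma \ref{restriction_boundary_cartesian} together with the excess bundle computed in the preceding lemma, and obtains both identities simultaneously by remarking that, since $\xi_{(A,G_1)}$ and $\res_{G_1}^{G}$ are both unramified, the excess bundle is the same whether one pushes forward and pulls back in either order. Your exact-sequence justification of that symmetry is simply a more explicit form of the paper's one-line remark.
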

Taking $G_1=1$ recovers \cite[Theorem 4.9]{schmittvanzelm}. The $\psi$ classes are invariant under the $G$-action on the marked points of $\barH_{g,G,\xi}$, so the formulas above are independent of the choices of orbit representatives. In fact, we may write
\begin{equation*}
\prod_{(\ell,\ell')}(-\psi_{\ell}-\psi_{\ell'})=\prod_{(\ell,\ell')\in E'(\Gamma)}(-\psi_{\ell}-\psi_{\ell'})^{k_{(\ell,\ell')}-1},
\end{equation*}
where the product on right hand side is over any choice of $G$-orbit representatives of $E(\Gamma)$, and
\begin{equation*}
k_{(\ell,\ell')}=\frac{\#[\{G\cdot {(\ell,\ell')}\}\cap\im(\alpha_{E})]}{\# G_1}.
\end{equation*}
The surjectivity of $\alpha_E$ onto $G$-orbits guarantees that $k_{(\ell,\ell')}\ge1$.

\subsubsection{Corestriction maps}\label{corestriction_boundary_intersection_section}

We now consider the case of corestriction maps $\cores_{G/G_1}^{G}$, where $G_1\subset G$ is a normal subgroup. Suppose that $(A,G/G_1)$ is an admissible $G/G_1$-graph. Consider the set of admissible $G$-graphs $(\Gamma,G)$ together with an \textbf{isomorphism} of admissible $G/G_1$-graphs $\alpha:(\Gamma/G_1,G/G_1)\to(A,G/G_1)$. For such an $\alpha$, let $\Aut(\alpha)$ be the group of automorphisms of $(\Gamma,G)$ inducing the identity automorphism on $\Gamma/G_1\cong A$, that is, the kernel of map $\Aut((\Gamma,G))\to\Aut((A,G/G_1))$.

\begin{lem}\label{cores_boundary_diagram}
We have a commutative diagram 
\begin{equation*}
\xymatrix{
\displaystyle\coprod\barH_{(\Gamma,G)} \ar[d]_{\cores_{\alpha}} \ar[r]^{\xi_{(\Gamma,G)}} & \barH_{g,G,\xi} \ar[d]^{\cores_{G/G_1}^{G}}\\
\barH_{(A,G/G_1)} \ar[r]^{\xi_{A}} & \barH_{g_1,G/G_1,\xi'}
}
\end{equation*}
where the disjoint union is over admissible $G$-graphs $(\Gamma,G)$ together with isomorphisms $\alpha:(\Gamma/G_1,G/G_1)\to(A,G/G_1)$ as above.

The induced map of groupoids (\textbf{on the level of closed points})
\begin{equation*}
\displaystyle\coprod\barH_{(\Gamma,G)}(\Spec(\bC))\to\barH_{g,G,\xi} \times_{\barH_{g_1,G/G_1,\xi'}} \barH_{(A,G/G_1)}(\Spec(\bC))
\end{equation*}
is surjective, and the fiber over any point of the target consists of $\#\Aut(\alpha)$ points of a unique component $\barH_{(\Gamma,G)}$ coming from a pair $((\Gamma,G),\alpha)$.
\end{lem}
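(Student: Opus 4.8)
The plan is to mimic the inverse-functor construction used for the restriction case in Lemma~\ref{restriction_boundary_cartesian}, while accounting for the fact that $\cores^{G}_{G/G_1}$ is branched along the boundary, so that the scheme-theoretic fiber product is non-reduced and one can only describe its reduction, i.e.\ make a statement on $\bC$-points. Commutativity of the square is essentially definitional: it is the commuting square preceding Definition~\ref{res_cores_defn}, in which the bottom horizontal map $\xi_{(\Gamma/G_1,G/G_1)}$ is post-composed with the isomorphism $\barH_{(\Gamma/G_1,G/G_1)}\cong\barH_{(A,G/G_1)}$ induced by $\alpha$ (so that $\cores_\alpha$ is $\cores_\Gamma$ followed by this isomorphism). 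I would first fix relabeling conventions so that monodromy data are tracked coherently throughout; thereafter the content is purely combinatorial.

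First I would construct the map on $\bC$-points. A point of the target groupoid is a pointed admissible $G$-cover $f\colon X\to X/G$ with monodromy $\xi$, together with an $(A,G/G_1)$-structure on the admissible $G/G_1$-graph of the intermediate quotient $X/G_1$; concretely this is a morphism $\beta\colon(\Gamma_f/G_1,G/G_1)\to(A,G/G_1)$, where $\Gamma_f$ denotes the canonical admissible $G$-graph of $X$. (I use that the axiom that the quotient by $G$ does not collapse edges identifies $G_1$-orbits of edges of $\Gamma_f$ with edges of $\Gamma_f/G_1$, so that $\Gamma_f/G_1$ really is the admissible $G/G_1$-graph of $X/G_1$.) Let $S\subset E(\Gamma_f)$ be the set of edges whose image in $\Gamma_f/G_1$ is contracted by $\beta$; since $\beta$ is $G/G_1$-equivariant, $S$ is $G$-invariant. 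I would then let $(\Gamma,G)$ be the admissible $G$-graph obtained from $\Gamma_f$ by contracting $S$ and adjusting the genus function. The induced $(\Gamma_f,G)\to(\Gamma,G)$, together with $f$, determines a point of $\barH_{(\Gamma,G)}$, and $\beta$ descends to an isomorphism $\alpha\colon(\Gamma/G_1,G/G_1)\to(A,G/G_1)$. Checking that monodromy data and the balancing condition are inherited correctly is routine, exactly as in the proof of Lemma~\ref{restriction_boundary_cartesian}; this produces a preimage of $([f],\beta)$, hence surjectivity.

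Next I would pin down the fiber. The isomorphism class of the pair $((\Gamma,G),\alpha)$ is forced by $([f],\beta)$, since the contracted set $S$, and hence $\Gamma$ and $\alpha$, are determined; thus every preimage lies in the single component indexed by that pair. To count the points, I would identify the preimages of $([f],\beta)$ in $\barH_{(\Gamma,G)}$ with the set of $(\Gamma,G)$-structures $\rho\colon(\Gamma_f,G)\to(\Gamma,G)$ satisfying $\alpha\circ(\rho/G_1)=\beta$. The underlying contraction of any such $\rho$ is prescribed by $\beta$, so any two differ by post-composition with some $\sigma\in\Aut((\Gamma,G))$; the relation $\alpha\circ(\rho/G_1)=\beta$, together with the surjectivity of $\rho/G_1$ and the fact that $\alpha$ is an isomorphism, forces $\sigma/G_1=\id$, i.e.\ $\sigma\in\Aut(\alpha)$, and conversely every such $\sigma$ preserves the condition. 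Hence the fiber is an $\Aut(\alpha)$-torsor, nonempty by the construction above, and so consists of $\#\Aut(\alpha)$ points.

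I expect the principal subtlety to lie in this last step rather than in any deep geometric input: one must correctly isolate $\Aut(\alpha)$, the automorphisms of $(\Gamma,G)$ trivial on the $G_1$-quotient, as the symmetry acting simply transitively on the fiber, and carry this out groupoid-theoretically, since the objects in play carry automorphisms. Relatedly, I would emphasize that, unlike the restriction case, the diagram is genuinely only commutative: the corestriction is ramified of order $e$ over nodes where the cover has nontrivial inertia (as recorded by $\delta^{*}(t)=s^{e}$ in \S\ref{galois_covers_def_section}), so the natural scheme structure on the fiber product is non-reduced along such strata. This is exactly why the statement is phrased on $\bC$-points, and the suppressed ramification contributions are what the subsequent intersection-theoretic formula must reinstate.
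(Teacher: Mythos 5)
Your proposal is correct and follows essentially the same route as the paper: commutativity is definitional, surjectivity is proved by building the admissible $G$-graph $(\Gamma,G)$ and the isomorphism $\alpha$ from the curve data (your contraction of $\Gamma_f$ along the $\beta$-contracted edges is the same construction as the paper's direct description via preimages of the nodes indexed by $E(A)$), and the fiber is identified as an $\Aut(\alpha)$-torsor. Your phrasing of the fiber as the set of $(\Gamma,G)$-structures $\rho$ with $\alpha\circ(\rho/G_1)=\beta$ is just the structure-theoretic restatement of the paper's count via choices of distinguished marked points in half-edge orbits, and your closing remark on ramification and non-reducedness matches the discussion the paper gives immediately after the lemma.
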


Recall from Definition \ref{corestriction_graphs} that $(\Gamma/G_1,G/G_1)$ denotes the corestriction of $\Gamma$ to $G/G_1$. The map $\cores_{\alpha}$ is induced as the composition
\begin{equation*}
\xymatrix{
\barH_{(\Gamma,G)} \ar[r]^(0.38){\cores_{\Gamma}} & \barH_{(\Gamma/G_1,G/G_1)} \ar[r]^(0.54){\xi_{\Gamma\to A}} & \barH_{(A,G/G_1)}
}.
\end{equation*}

Above, we refer to $\barH_{(\Gamma,G)}$ as a ``component'' without claiming its connectedness, but rather to specify a single boundary stratum equipped with an isomorphism of $G/G_1$-graphs $\alpha$.

We remark that the map $\Aut((\Gamma,G))\to\Aut((A,G/G_1))$ induced by $\alpha$ is not always surjective. For example, suppose $G_1=G=\bZ/6\bZ$, the graph $A$ has a single edge connecting two vertices of genus 1, and $(\Gamma,G)$ is the bipartite graph $K_{2,3}$. The $G$-action on $\Gamma$ is such that $E(\Gamma)$ forms a $G$-torsor, and a vertex of valence $r$ is stabilized by $r\bZ/6\bZ$. Then, $\cH_{(\Gamma,G)}$ is a boundary divisor of $\barH_{7,\bZ/6\bZ,\emptyset}$ dominating the boundary divisor $\barM_A$ of $\barM_2$ via the target map. Now, we have $\Aut((A,G/G_1))\cong\bZ/2\bZ$, but the map $\Aut((\Gamma,G))\to\Aut((A,G/G_1))$ induced by $\alpha$ is trivial.

\begin{proof}
The commutativity of the diagram is clear. Consider a fiber of the map
\begin{equation*}
\displaystyle\coprod\barH_{(\Gamma,G)}(\Spec(\bC)) \to \barH_{g,G,\xi} \times_{\barH_{g_1,G/G_1,\xi'}} \barH_{(A,G/G_1)}(\Spec(\bC)).
\end{equation*}

A point of the target consists of a stable $G$-curve $X$ along with an $A$-marking on its partial quotient $X/G_1$. One then constructs the graph $\Gamma$ by taking $E(\Gamma)$ to be the set of pre-images of the nodes corresponding to $E(A)$ under the map $X\to X/G_1$, with half-edges given by the pre-images of the individual branches at such nodes. The vertex set $V(\Gamma)$ is then the set of connected components of $X-E(\Gamma)$, and the legs of $\Gamma$ corresponding to the marked points on $X$ are attached accordingly. One easily checks that $\Gamma$ has the natural structure of an admissible $G$-graph along with an isomorphism $\alpha:(\Gamma/G_1,G/G_1)\to(A,G/G_1)$.

On the other hand, we have multiple choices of a point of $\barH_{(\Gamma,G)}(\Spec(\bC))$ lying over our chosen point of $\barH_{g,G,\xi} \times_{\barH_{g_1,G/G_1,\xi'}} \barH_{(A,G/G_1)}(\Spec(\bC))$, owing to different choices of distinguished marked points in the half-edge orbits. Such choices naturally form a torsor under the action of $\Aut(\alpha)$, yielding the last claim.
\end{proof}

However, we note that the above local inverse construction cannot work in families as it does, for instance, in Lemmas \ref{boundary_intersection_hurwitz_cartesian} and \ref{restriction_boundary_cartesian}, where the morphisms being intersected are unramified.

Indeed, suppose for example that $A$ contains exactly one orbit of edges, and that the monodromy is non-trivial, so that $\barH_{(A,G/G_1)}$ parametrizes $G/G_1$-covers whose target generically has one node, and with non-trivial ramification over that node. Then, the map $\cores^{G}_{G/G_1}$ is ramified over the image of $\barH_{(A,G/G_1)}$, so the fiber product must be non-reduced. 

In particular, let $S=\Spec(\bC[\epsilon]/\epsilon^2)$. There exists a family of admissible $G$-covers $f:X\to Y$ over $S$ smoothing to first order, such that the induced deformation of $X/G_1$ has an $A$-structure over all of $S$ (that is, the nodes of $X/G_1$ corresponding to the edge of $A$ are not smoothed over $S$). Correspondingly, we have a non-constant map $S\to\barH_{g,G,\xi}$ along with a constant map $S\to\barH_{(A,G/G_1)}$ agreeing on the constant map $S\to\barH_{g_1,G/G_1,\xi'}$. However, because $A$ has only one edge, we have that $\cores_{\alpha}$ is unramified, so we see that there will be no simultaneous lift $S\to\barH_{(\Gamma,G)}$.

On the other hand, we have the following:

\begin{lem}\label{cores_boundary_multiplicities}
The multiplicity of a component in the image of the map
\begin{equation*}
(\xi_{(\Gamma,G)},\cores_{\alpha}):\barH_{(\Gamma,G)}\to\barH_{g,G,\xi} \times_{\barH_{g_1,G/G_1,\xi'}} \barH_{(A,G/G_1)}
\end{equation*}
is
\begin{equation*}
\prod_{(\ell,\ell')\in E'(A)}\frac{\ord_{G}(h_{\wt{\ell}})}{\ord_{G/G_1}(h_{\ell})}.
\end{equation*}
where $E'(A)\subset E(A)$ is a set of $G/G_1$-orbit representatives, and $\wt{\ell}$ is any half-edge of $\Gamma$ lying over $\alpha_L(\ell)\in L(\Gamma/G_1)$.
\end{lem}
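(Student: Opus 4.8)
The plan is to reduce the statement to a local computation in smoothing parameters near the generic point of the image component, where the corestriction map becomes an explicit monomial map and the multiplicity can be read off directly. Throughout I would work étale- (or analytically) locally, so that the finite unramified boundary maps become closed immersions and the fiber product becomes the vanishing of a pulled-back ideal.

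First I would set up coordinates. Since $\xi_A=\xi_{(A,G/G_1)}$ is finite and unramified, étale-locally on $\barH_{g_1,G/G_1,\xi'}$ it is a closed immersion cut out by the smoothing parameters of the nodes of $X/G_1$; by Lemma \ref{normal_bundle_hurwitz} (applied to the $G/G_1$-graph $A$) there is one such parameter per $G/G_1$-orbit of edges, giving normal coordinates $u_{e_A}$ for $e_A\in E'(A)$, and $\barH_{(A,G/G_1)}=V(I)$ with $I=(u_{e_A})$. Thus étale-locally the fiber product is $V(\cores^{*}I)$. On the source $\barH_{g,G,\xi}$, Lemma \ref{normal_bundle_hurwitz} provides normal coordinates $v_{e_\Gamma}$ to $\barH_{(\Gamma,G)}$, one per $G$-orbit of edges of $\Gamma$. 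Since $A=\Gamma/G_1$ carries the residual $G/G_1$-action, the $G/G_1$-orbits of $E(A)=E(\Gamma)/G_1$ are canonically the $G$-orbits of $E(\Gamma)$, so $E'(A)$ and $E'(\Gamma)$ are in natural bijection and the two families of normal coordinates match up.

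The key step is to pull back a target normal coordinate under $\cores$. Fix $(\ell,\ell')\in E'(A)$ and a half-edge $\wt{\ell}$ of $\Gamma$ lying over $\ell$, and set $e=\ord_{G}(h_{\wt{\ell}})$, $\bar e=\ord_{G/G_1}(h_{\ell})$. Analyzing the tower $X\to X/G_1\to X/G$ through the balanced-node local model, near the node of $X$ the smoothing parameter is $s=v_{e_\Gamma}$, the stabilizer $G_p$ of the node in $G$ is cyclic of order $e$, and its image in $G/G_1$ has order $\bar e$, so the stabilizer of the node under the $G_1$-action, namely $G_p\cap G_1$, has order $e/\bar e$. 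Applying the local description $\Spec R[x',y']/(x'y'-r)\to\Spec R[x,y]/(xy-r^{m})$ to the intermediate quotient $X\to X/G_1$ with ramification index $m=e/\bar e$ shows that the smoothing parameter $u_{e_A}$ of the corresponding node of $X/G_1$ satisfies $\cores^{*}(u_{e_A})=v_{e_\Gamma}^{\,e/\bar e}$ up to a unit. This is exactly the computation giving $\delta^{*}(t)=s^{e}$ in the discussion following Theorem \ref{source_target_properties}, applied one layer down in the tower.

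Finally I would assemble the multiplicity. Transverse to $\barH_{(\Gamma,G)}$, the fiber product is cut out by $\cores^{*}I=(v_{e_\Gamma}^{\,e/\bar e}:e_\Gamma\in E'(\Gamma))$, so its local ring in the normal directions is $\bigotimes_{E'(\Gamma)}\bC[v_{e_\Gamma}]/(v_{e_\Gamma}^{\,e/\bar e})$, an Artinian ring of length $\prod_{(\ell,\ell')\in E'(A)}\ord_{G}(h_{\wt{\ell}})/\ord_{G/G_1}(h_{\ell})$, which is the asserted multiplicity. I expect the main obstacle to be pinning down the exponent in the third step, that is, verifying through the balanced-node model that the intermediate quotient contributes ramification of order exactly $e/\bar e=\#(G_p\cap G_1)$ rather than $e$ or $\bar e$, together with the orbit bookkeeping ensuring a single factor per $G$-orbit of $E(\Gamma)$ matches a single factor per $G/G_1$-orbit of $E(A)$. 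A secondary point is that $\cores$ need not be flat, so the multiplicity must be extracted as the length of the local ring of the scheme-theoretic fiber product at the generic point of the component; the explicit monomial form of $\cores^{*}I$ makes this direct and avoids any appeal to a flatness-based degree count.
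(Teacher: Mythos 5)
Your proof is correct and follows essentially the same approach as the paper's: both reduce to a computation in (complete or \'{e}tale-)local smoothing parameters, showing that transverse to $\barH_{(\Gamma,G)}$ the fiber product is cut out by the $\bigl(\ord_{G}(h_{\wt{\ell}})/\ord_{G/G_1}(h_{\ell})\bigr)$-th powers of the normal coordinates, one per $G$-orbit of edges, so the multiplicity is the product of these exponents. The only cosmetic difference is how the exponent is derived: you apply the balanced-node local model directly to the intermediate quotient $X\to X/G_1$, whose ramification index at the node is $\#(G_p\cap G_1)=\ord_{G}(h_{\wt{\ell}})/\ord_{G/G_1}(h_{\ell})$, whereas the paper relates both smoothing parameters to the one on $X/G$ via $t_j^{e_j}={t'_j}^{e'_j}$ (up to units) and then extracts the root.
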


We remark also that the terms in the denominators depend only on $(A,\Gamma/G_1)$, but the terms in the numerator depend on $(\Gamma,G)$.

In fact, we will obtain more precise information, that the multiplicity in the tangent direction on $\barH_{(\Gamma,G)}$ in the direction corresponding to the edge $(\ell,\ell')$ is 
\begin{equation*}
\frac{\ord_{G}(h_{\wt{\ell}})}{\ord_{G/G_1}(h_{\ell})}=\frac{\ord_{G}(h_{\wt{\ell'}})}{\ord_{G/G_1}(h_{\ell'})}.
\end{equation*}

\begin{proof}
We compute the local picture of the intersection of $\xi_{(\Gamma,G)}$ and $\cores_{\alpha}$. 

The complete local ring of $\barH_{g,G,\xi}$ at $[X\to X/G]$ is isomorphic to
\begin{equation*}
\bC[[t_1,\ldots,t_m,t_{m+1},\ldots,t_{3g'-3+b}]],
\end{equation*}
where we may take $t_1,\ldots,t_m$ to be smoothing parameters for the $G$-orbits of nodes of $X$ coming from edges of $A$. The map on deformation spaces induced by $\xi_{(\Gamma,G)}$ is then the quotient by the ideal $(t_1,\ldots,t_m)$, by the surjectivity of $\alpha_E$.

Similarly, the complete local ring of $\barH_{g_1,G/G_1,\xi'}$ at $[X/G_1\to X/G]$ is isomorphic to 
\begin{equation*}
\bC[[t'_1,\ldots,t'_{m},t'_{m+1},\ldots,t'_{3g'-3+b}]],
\end{equation*}
where $t'_1,\ldots,t'_m$ are smoothing parameters for the $G/G_1$-orbits of nodes of $X/G_1$, and the map on deformation spaces induced by $\xi_{(A,G)}$ is given by the quotient by the ideal $(t'_1,\ldots,t'_m)$.

For $j=1,2,\ldots,m$, we have that $t_j^{e_j}$ is a smoothing parameter for the corresponding node $y_j\in X/G$, where $e_j$ is the common ramification index of $X\to X/G$ above $y_j$. On the other hand, ${t'_{j}}^{e'_{j}}$ is also a smoothing parameter at $y_j$, where $e'_j$ is the common ramification index of $X/G_1\to X/G$ above $y_j$. Therefore, up to units, we have
\begin{equation*}
{t'_j}^{e'_j}={t_j}^{e_j}.
\end{equation*}

Thus, the map 
\begin{equation*}
\bC[[t'_1,\ldots,t'_{m},t_{m+1},\ldots,t_{3g'-3+b}]]\to\bC[[t_1,\ldots,t_m,t'_{m+1},\ldots,t'_{3g'-3+b}]]
\end{equation*}
induced by $\cores_{G/G_1}^{G}$ decomposes into the map
\begin{equation*}
\bC[[t'_1,\ldots,t'_{m}]]\to\bC[[t_1,\ldots,t_m]]
\end{equation*}
taking (up to units) ${t'_j}\mapsto {t_j}^{e_j/e'_j}$, and the map
\begin{equation*}
\bC[[t_{m+1},\ldots,t_{3g'-3+b}]]\to\bC[[t'_{m+1},\ldots,t'_{3g'-3+b}]]
\end{equation*}
induced on complete local rings by $\cores_{\alpha}:\barH_{(\Gamma,G)}\to\barH_{(A,G/G_1)}$.

It follows that, on the level of complete local rings, the map $(\xi_{(\Gamma,G)},\cores_{\alpha})$ is of the form
\begin{equation*}
\bC[[t_1,\ldots,t_m]]/(t_1^{e_1/e'_1},\ldots,t_m^{e_m/e'_m})\otimes_{\bC}\bC[[t_{m+1},\ldots,t_{3g'-3+b}]]\to\bC[[t_{m+1},\ldots,t_{3g'-3+b}]],
\end{equation*}
where the variables $t_1,\ldots,t_m$ map to 0 and $t_{m+1},\ldots,t_{3g'-3+b}$ to themselves. The desired conclusion follows.
\end{proof}

We have therefore proven the following:
\begin{prop}
With notation as above, we have
\begin{equation*} 
\xi_{A}^{*}((\cores_{G/G_1}^{G})_{*}(1))=\sum_{(\Gamma,G),\alpha}\#\Aut(\alpha)^{-1}\cdot\left(\prod_{(\ell,\ell')\in E'(A)}\frac{\ord_{G}(h_{\wt{\ell}})}{\ord_{G/G_1}(h_{\ell})}\cdot(\cores_{\alpha})_{*}(1)\right)
\end{equation*}
and
\begin{equation*} 
(\cores_{G/G_1}^{G})^{*}((\xi_{A})_{*}(1))=\sum_{(\Gamma,G),\alpha}\#\Aut(\alpha)^{-1}\cdot\left(\prod_{(\ell,\ell')\in E'(A)}\frac{\ord_{G}(h_{\wt{\ell}})}{\ord_{G/G_1}(h_{\ell})}.\cdot(\xi_{(\Gamma,G)})_{*}(1)\right)
\end{equation*}
\end{prop}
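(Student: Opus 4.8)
The plan is to assemble the final Proposition directly from the three results already established: the commutative square of Lemma \ref{cores_boundary_diagram}, the groupoid-theoretic fiber description in the same lemma (surjectivity plus $\#\Aut(\alpha)$-to-one fibers), and the local multiplicity computation of Lemma \ref{cores_boundary_multiplicities}. Since $\cores_{G/G_1}^{G}$ is flat, proper, and quasi-finite, and $\xi_A$ is a closed embedding up to the automorphism factor, both bivariant classes $\xi_A^{*}((\cores_{G/G_1}^{G})_{*}(1))$ and $(\cores_{G/G_1}^{G})^{*}((\xi_A)_{*}(1))$ are represented by the fiber product cycle, so I only need to identify that cycle and account for its multiplicities.

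\textbf{First} I would note that, by Lemma \ref{cores_boundary_diagram}, the reduced fiber product $\barH_{g,G,\xi}\times_{\barH_{g_1,G/G_1,\xi'}}\barH_{(A,G/G_1)}$ is covered, on closed points, by the disjoint union $\coprod\barH_{(\Gamma,G)}$, with each component $((\Gamma,G),\alpha)$ mapping $\#\Aut(\alpha)$-to-one onto its image. This is why each term must carry the factor $\#\Aut(\alpha)^{-1}$: the pushforward $(\cores_\alpha)_*(1)$ (respectively $(\xi_{(\Gamma,G)})_*(1)$) overcounts the image cycle by exactly this degree, precisely as in the classical case where boundary pushforwards carry $\#\Aut(\Gamma)^{-1}$ corrections. \textbf{Next}, the key non-reducedness, already flagged in the discussion following Lemma \ref{cores_boundary_diagram}, enters through Lemma \ref{cores_boundary_multiplicities}: each component appears in the actual (scheme-theoretic) fiber product with multiplicity $\prod_{(\ell,\ell')\in E'(A)}\ord_G(h_{\wt\ell})/\ord_{G/G_1}(h_\ell)$. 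I would cite the local-ring computation showing the fiber product has complete local ring $\bC[[t_1,\ldots,t_m]]/(t_1^{e_1/e_1'},\ldots,t_m^{e_m/e_m'})\otimes_{\bC}\bC[[t_{m+1},\ldots]]$, whose length in each smoothing direction is exactly $e_j/e_j' = \ord_G(h_{\wt\ell})/\ord_{G/G_1}(h_\ell)$, giving the stated product as the total local multiplicity.

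\textbf{Then} I would invoke the standard compatibility of bivariant (refined Gysin) pullback with flat pushforward: for the flat quasi-finite morphism $\cores_{G/G_1}^{G}$ and the regular embedding $\xi_A$ (whose refined intersection product is computed on the fiber product), the class $\xi_A^{*}((\cores_{G/G_1}^{G})_*(1))$ equals the pushforward, along $\cores_\alpha$, of the fiber-product cycle with its multiplicities; symmetrically, $(\cores_{G/G_1}^{G})^{*}((\xi_A)_*(1))$ is the pushforward along $\xi_{(\Gamma,G)}$ of the same cycle. Summing over all pairs $((\Gamma,G),\alpha)$, dividing by $\#\Aut(\alpha)$ to correct the overcount, and weighting by the multiplicity from Lemma \ref{cores_boundary_multiplicities}, yields both displayed formulas at once.

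\textbf{The main obstacle} is the careful bookkeeping reconciling the $\#\Aut(\alpha)^{-1}$ stacky correction with the length multiplicities: one must verify that the refined Gysin pullback genuinely sees the non-reduced structure with the claimed lengths in each edge-direction (not merely the reduced image), and that the automorphism overcounting is not already partially absorbed into the multiplicity. Because the components $\barH_{(\Gamma,G)}$ are stacks that may themselves be non-reduced images and need not map with surjective automorphism groups (as the $K_{2,3}$ example following Lemma \ref{cores_boundary_diagram} illustrates), I would be careful to keep the two corrections logically separate: $\#\Aut(\alpha)$ governs the \emph{degree} of $\coprod\barH_{(\Gamma,G)}\to$ fiber product on points, while the product of ramification ratios governs the \emph{scheme-theoretic length} transverse to each edge. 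Once both are in hand, the excess-intersection/refined-pullback formalism assembles them with no further input, so this proposition is essentially a formal consequence of the two preceding lemmas.
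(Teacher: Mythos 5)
Your proposal is correct and follows essentially the same route as the paper: the paper deduces the proposition directly by combining Lemma \ref{cores_boundary_diagram} (the set-theoretic decomposition of the fiber product into images of the $\barH_{(\Gamma,G)}$, each hit with degree $\#\Aut(\alpha)$) with Lemma \ref{cores_boundary_multiplicities} (the complete-local-ring computation giving the length $\prod_{(\ell,\ell')\in E'(A)}\ord_{G}(h_{\wt{\ell}})/\ord_{G/G_1}(h_{\ell})$ transverse to the edge directions), exactly as you assemble them. Your added care in separating the stacky $\#\Aut(\alpha)^{-1}$ correction from the scheme-theoretic multiplicity, and in noting that smoothness of $\barH_{g,G,\xi}$ lets the refined Gysin class be computed by these lengths, is precisely the (implicit) content of the paper's ``we have therefore proven the following.''
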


\subsection{Pullbacks of $\psi$ and $\kappa$ classes}\label{pullback_psi_kappa_section}

We consider the pullbacks of $\psi$ and $\kappa$ classes by the four types of tautological morphisms.

\subsubsection{Forgetful maps}

\begin{prop}
Let $\pi_G:\barH_{g,G,\xi+\{1\}}\to\barH_{g,G,\xi}$ be a forgetful map. We have:
\begin{enumerate}
\item[(a)]
\begin{equation*}
\pi_G^{*}(\psi_{p_{ia}})=\psi_{p_{ia}}-\sum_{g\in G/\langle h_{ia}\rangle}(r_g\circ\sigma_{p_{ia}})_{*}(1),
\end{equation*}
for any marked point $p_{ia}$ parametrized by $\barH_{g,G,\xi}$, where $\sigma_{p_{ia}}$ denotes the universal section described in \S\ref{universal_family_hurwitz}, the sum is over coset representatives $g\in G/\langle h_{ia}\rangle$, where $\langle h_{ia}\rangle\subset G$ is the stabilizer of $p_{ia}$, and $r_g:\barH_{g,G,\xi+\{1\}}\to \barH_{g,G,\xi+\{1\}}$ denotes the relabeling isomorphism sending the distinguished additional ramification point to its image under multiplication by $g$. 

\item[(b)]
\begin{equation*}
\pi^{*}(\kappa_i)=\kappa_i-\#G\cdot\psi_{b+1}^i,
\end{equation*}
where $\psi_{b+1}$ denotes the $\psi$ class on $\barH_{g,G,\xi+\{1\}}$ associated to any point in the additional unramified $G$-orbit.
\end{enumerate}

In particular, the pullbacks of $\psi$ and $\kappa$ classes by $\pi_G$ are $\cH$-tautological.
\end{prop}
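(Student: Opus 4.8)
The plan is to reduce both formulas to the classical pullback identities of Proposition \ref{psi_kappa_forgetful_pullback_classical} by passing to the \emph{target} maps, where the factors $\#G$ and $\ord_G(h_i)$ enter transparently through the comparison of Proposition \ref{psi_kappa_comparison_forgetful}. Write $\delta:\barH_{g,G,\xi}\to\barM_{g',b}$ and $\delta':\barH_{g,G,\xi+\{1\}}\to\barM_{g',b+1}$ for the target maps, and let $\pi_Y:\barM_{g',b+1}\to\barM_{g',b}$ be the classical forgetful map dropping the extra branch point $q_{b+1}$. Forgetting the extra $G$-orbit upstairs corresponds to forgetting $q_{b+1}$ downstairs, so $\delta\circ\pi_G=\pi_Y\circ\delta'$; I would record this commutative square at the outset. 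Since all $\psi$ and $\kappa$ classes on the two Hurwitz spaces are, up to the scalars of Proposition \ref{psi_kappa_comparison_forgetful}, pulled back from the targets, every pullback along $\pi_G$ becomes a pullback along $\pi_Y$ precomposed with $(\delta')^{*}$.

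For part (b), this gives $\pi_G^{*}\kappa_i=\#G\cdot(\delta')^{*}\pi_Y^{*}\kappa_i$, and the classical identity $\pi_Y^{*}\kappa_i=\kappa_i-\psi_{q_{b+1}}^{i}$ then yields $\#G\cdot(\delta')^{*}\kappa_i-\#G\cdot(\delta')^{*}\psi_{q_{b+1}}^{i}$. The first summand is exactly $\kappa_i$ on $\barH_{g,G,\xi+\{1\}}$ by Proposition \ref{psi_kappa_comparison_forgetful}; the second simplifies because the extra orbit has trivial monodromy, so $\ord_G(1)=1$ and $(\delta')^{*}\psi_{q_{b+1}}=\psi_{b+1}$, leaving $-\#G\cdot\psi_{b+1}^{i}$. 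The factor $\#G$ is therefore inherited directly from the $\kappa$-comparison scalar and requires no further work.

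Part (a) is where the real content lies. The same reduction plus the classical identity $\pi_Y^{*}\psi_{q_i}=\psi_{q_i}-(\sigma_{q_i})_{*}(1)$ gives
\begin{equation*}
\pi_G^{*}\psi_{p_{ia}}=\frac{1}{\ord_G(h_i)}(\delta')^{*}\psi_{q_i}-\frac{1}{\ord_G(h_i)}(\delta')^{*}(\sigma_{q_i})_{*}(1),
\end{equation*}
whose first term is $\psi_{p_{ia}}$ by Proposition \ref{psi_kappa_comparison_forgetful}. The crux is to compute the flat pullback $(\delta')^{*}(\sigma_{q_i})_{*}(1)$ of the boundary divisor where $q_i$ and $q_{b+1}$ bubble off. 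I would argue that, set-theoretically, $(\delta')^{-1}$ of this divisor is the union, over the points of the orbit $G\cdot p_{ia}\cong G/\langle h_{ia}\rangle$, of the images of the universal sections $r_g\circ\sigma_{p_{ia}}$ described in \S\ref{universal_family_hurwitz} --- the distinguished extra point attaching to the rational tail over the corresponding orbit point, with the relabeling isomorphism $r_g$ recording which one. Along each such component the local computation $(\delta')^{*}(t)=s^{e}$ with $e=\ord_G(h_i)$ (the ramification index over the new node, as in the discussion following Theorem \ref{source_target_properties}) shows that $\delta'$ ramifies to order $\ord_G(h_i)$; since $\delta'$ is flat by Theorem \ref{source_target_properties} and each section is a closed immersion, the flat pullback of the divisor is $\ord_G(h_i)\cdot\sum_{g\in G/\langle h_{ia}\rangle}(r_g\circ\sigma_{p_{ia}})_{*}(1)$. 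Multiplying by $\frac{1}{\ord_G(h_i)}$ cancels this factor and produces the stated sum.

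The main obstacle is precisely this identification of $(\delta')^{-1}$ of the bubble divisor with the sections $r_g\circ\sigma_{p_{ia}}$ together with the bookkeeping of the orbit $G/\langle h_{ia}\rangle$: one must check that these sections exhaust the preimage, that distinct cosets give distinct components, and that the only contribution to the multiplicity is the ramification order $\ord_G(h_i)$ over the node. As a sanity check, the free case $h_{ia}=1$ gives $\#G$ sections each with coefficient $1$, while the totally ramified case recovers a single section, matching the Graber--Pandharipande picture of \S\ref{non-taut_section}. Finally, the $\cH$-tautological conclusion is immediate: each universal section $\sigma_{p_{ia}}$ is a boundary morphism (\S\ref{universal_family_hurwitz}) and each $r_g$ a relabeling isomorphism $\res^{G}_{G}$, so the classes $(r_g\circ\sigma_{p_{ia}})_{*}(1)$ and the powers $\psi_{b+1}^{i}$ all lie in the $\cH$-tautological ring.
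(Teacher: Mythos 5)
Your proof is correct, but it takes a genuinely different route from the paper's. The paper factors through the \emph{source} maps: it uses the square with $\phi':\barH_{g,G,\xi+\{1\}}\to\barM_{g,r+\#G}$ and the classical map forgetting all $\#G$ new points, applies the classical pullback formulas there, and then pulls the resulting boundary divisors $\Delta_{0,p_{ia}\cup B'}$ back along the \emph{unramified} map $\phi'$ via Lemma \ref{restriction_boundary_cartesian}, so the surviving divisors (those with $B'$ a $\langle h_{ia}\rangle$-orbit) appear with multiplicity one; for $\kappa_i$ it must additionally argue that the correction class $\gamma$, supported where several forgotten points collide, dies under $\phi'^{*}$ because points in a $G$-orbit stay distinct. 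Your target-side argument trades these ingredients for others: since only the single branch point $q_{b+1}$ is forgotten downstairs, no collision terms arise at all (your part (b) is, if anything, cleaner than the paper's), but because $\delta'$ is \emph{ramified} along the bubble divisor, the scheme-theoretic preimage is non-reduced and you must supply the multiplicity $\ord_G(h_i)$ from the local equation $(\delta')^{*}(t)=s^{e}$, which then exactly cancels the factor $1/\ord_G(h_i)$ from Proposition \ref{psi_kappa_comparison_forgetful} --- a pleasant consistency check between the two mechanisms. Both arguments ultimately hinge on the same geometric fact, which you correctly isolate as the crux: the components of the preimage of the bubble locus are precisely the images of $r_g\circ\sigma_{p_{ia}}$ for $g\in G/\langle h_{ia}\rangle$. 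You state this rather than prove it; to complete it one should note that moving the distinguished point within a single tail by an element of $\langle h_{ia}\rangle$ yields an isomorphic pointed cover (the deck transformation of that tail, extended $G$-equivariantly over the whole orbit of tails and by the identity on the main component, is an automorphism fixing all old distinguished points), while any isomorphism between bubbled covers preserves the coset labelling the tail carrying the distinguished point, so distinct cosets give distinct components and the index set is exactly $G/\langle h_{ia}\rangle$.
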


\begin{proof}
We have a commutative diagram
\begin{equation*}
\xymatrix{
\barH_{g,G,\xi+\{1\}} \ar[r]^{\phi'} \ar[d]^{\pi_G} & \barM_{g,r+\#G} \ar[d]^{\pi}\\
\barH_{g,G,\xi} \ar[r]^{\phi} & \barM_{g,r}
}
\end{equation*}
where we recall that the source map $\phi'$ remembers \textit{all} of the marked points in the additional unramified orbit, and $\pi$ denotes the forgetful map forgetting all $|G|$ of these points.

By definition, the $\psi$ and $\kappa$ classes on $\barH_{g,G,\xi}$ are pulled back from $\barM_{g,r}$, so we may compute the desired pullbacks by pulling back by $\pi$, then $\phi'$. Let $B$ be the subset of marked points forgotten under the map $\pi$. Propositions \ref{psi_kappa_forgetful_pullback_classical} and \ref{boundary_forgetful} imply

\begin{equation*}
\pi^{*}(\psi_{p_{ia}})=\psi_{p_{ia}}-\sum_{B'\subset B}\Delta_{0,p_{ia}\cup B'},
\end{equation*}
where the sum is over non-empty subsets $B'\subset B$, and $\Delta_{0,p_{ia}\cup B'}$ denotes the boundary divisor of curves with a rational tail containing the marked points of $p_{ia}\cup B'$. 

By Lemma \ref{restriction_boundary_cartesian}, the pullback of $\Delta_{0,p_{ia}\cup B'}$ by $\phi'$ is equal to the union of boundary divisors corresponding to admissible $G$-graphs $(\Gamma,G)$ for which the legs associated to $p_{ia}\cup B'$ are attached to a common genus 0 vertex, each appearing with multiplicity 1. This is possible exactly when $B'$ is an orbit of $B$ under $\langle h_{ia}\rangle\subset G$, in which case the boundary divisor $\barH_{(\Gamma,G)}$ is of the form $(r_g\circ\sigma_{p_{ia}})_{*}(1)$. The union of such divisors is obtained by varying over a set of coset representatives $g\in G/\langle h_{ia}\rangle$, yielding (a).

For (b), Propositions \ref{psi_kappa_forgetful_pullback_classical} and \ref{boundary_forgetful} show that 
\begin{equation*}
\pi^{*}(\kappa_i)=\kappa_i-\left(\sum_{1}^{\#G}\psi_{r+j}^i\right)+\gamma,
\end{equation*}
where $\psi_{r+j}$ denote the $\psi$ classes at the marked points forgotten under $\pi$, and $\gamma\in A^i(\barM_{g,r+\#G})$ is supported on the union of boundary divisors where at least two of these marked points are equal to each other. Note that $\phi'^{*}(\gamma)=0$, because these points are constrained to be pairwise distinct on $\barH_{g,G,\xi+\{1\}}$. Thus, pulling back by $\phi'$ yields (b), as the $\psi_{r+j}^i$ all become equal to $\psi_{b+i}^i$.

\end{proof}

\subsubsection{Diagonal maps}

\begin{prop}\label{diagonal_pullback}
Let $\Delta:\barH_{g,G,\xi}\to\barH_{g,G,\xi}\times\barH_{g,G,\xi}$ be a diagonal map. Then, 
\begin{equation*}
\Delta^{*}(\theta,1)=\theta
\end{equation*} 
for any class $\theta$; in particular, $\theta$ can be taken to be a $\psi$ or $\kappa$ class.
\end{prop}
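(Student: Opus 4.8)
The plan is to deduce the statement directly from the defining property of the diagonal together with functoriality of pullback, so that no geometry of the spaces $\barH_{g,G,\xi}$ enters at all. First I would unwind the meaning of the class $(\theta,1)$ on the product: writing $\pr_1,\pr_2$ for the two projections $\barH_{g,G,\xi}\times\barH_{g,G,\xi}\to\barH_{g,G,\xi}$, the pair $(\theta,1)$ is by definition the exterior product $\pr_1^{*}(\theta)\cdot\pr_2^{*}(1)$, and since $\pr_2^{*}(1)=1$ this is simply $\pr_1^{*}(\theta)$.

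Next I would invoke the elementary fact that $\pr_1\circ\Delta=\id_{\barH_{g,G,\xi}}$, which is immediate from the definition of the diagonal. Functoriality of pullback then yields
\begin{equation*}
\Delta^{*}(\theta,1)=\Delta^{*}\pr_1^{*}(\theta)=(\pr_1\circ\Delta)^{*}(\theta)=\theta.
\end{equation*}
Specializing $\theta$ to a $\psi$ or $\kappa$ class recovers the asserted special cases.

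There is no real obstacle in this argument; the only subtlety worth recording is that $\Delta$ is the diagonal of a smooth separated Deligne-Mumford stack, hence a regular closed immersion, so the Gysin pullback $\Delta^{*}$ is defined and composes with the flat pullback $\pr_1^{*}$ in the expected way. This is precisely the setting in which the functoriality step above is valid, and it is what makes the computation a one-line consequence of $\pr_1\circ\Delta=\id$.
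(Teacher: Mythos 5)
Your proof is correct and matches the paper's approach: the paper dismisses this statement with the single word ``Trivial,'' and your argument (that $(\theta,1)=\pr_1^{*}(\theta)$, combined with $\pr_1\circ\Delta=\id$ and functoriality of pullback) is exactly the spelled-out version of that triviality. Your remark that $\Delta$ is a regular closed immersion of a smooth Deligne--Mumford stack, so that the Gysin pullback composes correctly with the flat pullback $\pr_1^{*}$, is a reasonable precision the paper leaves implicit.
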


\begin{proof}
Trivial.
\end{proof}

\subsubsection{Boundary maps}

\begin{prop}
Let $\xi_{(\Gamma,G)}:\barH_{(\Gamma,G)}\to\barH_{g,G,\xi}$ be a boundary class. We have:
\begin{equation*}
\xi_{(\Gamma,G)}^{*}\psi_{p_{ia}}=\pr_{v}^{*}\psi_{p_{ia'}},
\end{equation*}
where $\pr_v$ is the projection to the component $\barH_{g_v,G_v,\xi_v}$ of $\barH_{(\Gamma,G)}$ corresponding to the component containing the $i$-th marked orbit and $p_{ia'}$ is any point belonging to this orbit. Moreover, 
\begin{equation*}
\xi_{(\Gamma,G)}^{*}\kappa_i=\sum_{v\in V'(\Gamma)}\frac{\#G}{\#G_v}\pr_{v}^{*}(\kappa_i)
\end{equation*}
\end{prop}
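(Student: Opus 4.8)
The plan is to deduce both formulas from their classical counterparts in Proposition \ref{psi_kappa_boundary_pullback_classical} by exploiting the commutative square relating $\xi_{(\Gamma,G)}$ to the classical boundary map $\xi_\Gamma$ recorded in Remark \ref{boundary_relate_to_classical}. The starting observation is that, by definition, the $\psi$ and $\kappa$ classes on $\barH_{g,G,\xi}$ are the $\phi$-pullbacks of the classes of the same name on $\barM_{g,r}$, and likewise on each factor $\barH_{g_v,G_v,\xi_v}$ of $\barH_{(\Gamma,G)}$. First I would use commutativity of that square to write
$$\xi_{(\Gamma,G)}^* \psi_{p_{ia}} = \phi_{(\Gamma,G)}^* \xi_\Gamma^* \psi_{p_{ia}}, \qquad \xi_{(\Gamma,G)}^* \kappa_i = \phi_{(\Gamma,G)}^* \xi_\Gamma^* \kappa_i,$$
reducing the problem to pulling back the classical formulas along $\phi_{(\Gamma,G)}$. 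I emphasize that this uses the \emph{source-map} definition of the Hurwitz $\psi$ and $\kappa$ classes, for which $\phi^*$ introduces no extra scalar, in contrast to the target-map comparison of Proposition \ref{psi_kappa_comparison_forgetful}.

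Next I would substitute the classical formulas $\xi_\Gamma^* \psi_{p_{ia}} = \pr_w^* \psi_{p_{ia}}$, with $w \in V(\Gamma)$ the vertex carrying the point, and $\xi_\Gamma^* \kappa_i = \sum_{w\in V(\Gamma)} \pr_w^* \kappa_i$, and analyze $\phi_{(\Gamma,G)}$ through its factorization $\phi_{(\Gamma,G)} = (\prod_{v\in V(\Gamma)}\phi)\circ\Delta$ from Remark \ref{boundary_relate_to_classical}. Here $\prod_v\phi$ converts each classical $\psi$ or $\kappa$ on a factor $\barM_{g_w,n_w}$ of $\barM_\Gamma$ into the corresponding $\cH$-tautological class on the factor $\barH_{g_w,G_w,\xi_w}$, again by the definition of the Hurwitz classes; and $\Delta$, which embeds the factor indexed by a representative $v\in V'(\Gamma)$ diagonally onto all vertices of its $G$-orbit, identifies the $w$-factor with the factor of its orbit representative $v(w)\in V'(\Gamma)$. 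For $\psi$ this already yields $\pr_v^*\psi_{p_{ia'}}$, where $p_{ia'}$ is the point on the $v$-component corresponding to $p_{ia}$; since $\psi$ classes agree across a $G$-orbit of marked points, $p_{ia'}$ may be taken to be any point of the orbit.

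For $\kappa$, after applying $\Delta^*$ the sum over $w\in V(\Gamma)$ becomes a sum of classes $\pr_{v(w)}^*\kappa_i$ indexed by all vertices, which I would regroup according to $G$-orbits: by orbit--stabilizer each representative $v\in V'(\Gamma)$ is hit by exactly $\#(G\cdot v)=\#G/\#G_v$ vertices $w$, producing the stated weight $\#G/\#G_v$. The step I expect to require the most care is the bookkeeping in this last identification: one must verify that the diagonal $\Delta$ matches up, compatibly with the source maps, the factor attached to $w$ with that attached to its representative $v(w)$ under the relabeling isomorphism conjugating $G_w$ to $G_{v(w)}$, and that the symmetric classes $\psi$ and $\kappa$ are preserved under this relabeling. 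Granting this compatibility, both displayed formulas follow immediately.
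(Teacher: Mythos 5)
Your proposal is correct and follows essentially the same route as the paper: both use the commutative square of Remark \ref{boundary_relate_to_classical}, the factorization $\phi_{(\Gamma,G)}=(\prod_v\phi)\circ\Delta$, the classical pullback formulas of Proposition \ref{psi_kappa_boundary_pullback_classical}, and the triviality of diagonal pullbacks (Proposition \ref{diagonal_pullback}). Your write-up merely makes explicit the orbit--stabilizer bookkeeping that produces the weights $\#G/\#G_v$, which the paper leaves implicit.
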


\begin{proof}
Recall from Remark \S\ref{boundary_relate_to_classical} that we have the commutative diagram
\begin{equation*}
\xymatrix{
\barH_{(\Gamma,G)} \ar[r]^{\xi_{(\Gamma,G)}} \ar[d]_{\phi_{(\Gamma,G)}} & \barH_{g,G,\xi} \ar[d]^{\phi} \\
\barM_{\Gamma} \ar[r]^{\xi_{\Gamma}} & \barM_{g,r}
}
\end{equation*}
where $\phi_{(\Gamma,G)}$ is a composition of the map $\phi$ on individual components followed by a product of diagonal maps. The claims now follow from Propositions \ref{psi_kappa_boundary_pullback_classical} and \ref{diagonal_pullback}.
\end{proof}

\subsubsection{Restriction-corestriction maps}

\begin{prop}
Let $\res_{G_1}^{G}:\barH_{g,G,\xi}\to\barH_{g,G_1,\xi'}$ be a restriction map. Then, 
\begin{equation*}
(\res_{G_1}^{G})^{*}(\psi_{p})=\psi_{p}
\end{equation*}
for any marked point $p$ on the source, and 
\begin{equation*}
(\res_{G_1}^{G})^{*}(\kappa_i)=\kappa_i
\end{equation*}
for all $i$.
\end{prop}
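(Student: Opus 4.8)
The plan is to reduce both claims to the compatibility of the restriction map $\res_{G_1}^{G}$ with the source maps $\phi$ from both $\barH_{g,G,\xi}$ and $\barH_{g,G_1,\xi'}$ down to a common moduli space of curves $\barM_{g,r}$. Recall that the $\psi$ and $\kappa$ classes on each space of admissible $G$-covers are \emph{by definition} pulled back from $\barM_{g,r}$ under the source map. Since taking $G_1 = \{1\}$ recovers $\phi$, and since the composition of restriction maps is again a restriction map, the source map $\phi_{G_1}\colon \barH_{g,G_1,\xi'}\to\barM_{g,r}$ factors as $\phi_{G_1} = \res_{\{1\}}^{G_1}$, while $\phi_G = \res_{\{1\}}^{G}$ on the source.

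First I would record the key commutativity: the diagram
\begin{equation*}
\xymatrix{
\barH_{g,G,\xi} \ar[r]^{\res_{G_1}^{G}} \ar[dr]_{\phi_G} & \barH_{g,G_1,\xi'} \ar[d]^{\phi_{G_1}} \\
& \barM_{g,r}
}
\end{equation*}
commutes. This is essentially the observation noted just after Definition \ref{restriction_hurwitz_defn} that the restriction maps are compatible with the source maps to $\barM_{g,r}$; indeed, both $\phi_G$ and $\phi_{G_1}\circ\res_{G_1}^{G}$ send an admissible $G$-cover $f\colon X\to X/G$ to the same marked curve $[X,\{p_{ia}\}]$, since restricting the $G$-action to $G_1$ does not alter the underlying source curve or its marked points (it only reorganizes the $G$-orbits of marked points into $G_1$-orbits, with the marking and the curve itself unchanged). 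One subtlety to address is the ordering ambiguity on marked points: the relabeling data in the definition of $\res_{G_1}^{G}$ must be chosen compatibly so that the induced orderings on the marked points of the two source curves agree, which one can always arrange.

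Once this diagram is established, both formulas follow immediately. For any marked point $p$ and any index $i$, we have $\psi_p = \phi_{G_1}^{*}(\psi_p)$ and $\kappa_i = \phi_{G_1}^{*}(\kappa_i)$ on $\barH_{g,G_1,\xi'}$ by definition, so
\begin{equation*}
(\res_{G_1}^{G})^{*}(\psi_p) = (\res_{G_1}^{G})^{*}\phi_{G_1}^{*}(\psi_p) = \phi_G^{*}(\psi_p) = \psi_p,
\end{equation*}
and identically for $\kappa_i$, where the middle equality uses the commutativity of the triangle and the final equality is the definition of the $\psi$ and $\kappa$ classes on $\barH_{g,G,\xi}$.

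The main obstacle I anticipate is purely bookkeeping rather than geometric: one must verify that the $\psi$ class $\psi_p$ on the source $\barH_{g,G,\xi}$ attached to a marked point $p$ genuinely matches the pullback of the $\psi$ class attached to the image marked point on $\barH_{g,G_1,\xi'}$, keeping careful track of the relabeling data $\{t_{ij}\}$ and the reordering of orbits. Because $\psi$ classes at points in a common $G$-orbit (equivalently $G_1$-orbit) are equal, this ambiguity is harmless, but it should be stated explicitly to ensure the formulas are well-defined independently of the chosen relabeling.
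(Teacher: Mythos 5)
Your proposal is correct and is exactly the paper's argument: the paper's proof reads ``Immediate from the compatibility of restriction maps with the source maps $\phi$,'' which is precisely the commutative triangle you establish, combined with the definitional fact that $\psi$ and $\kappa$ classes on spaces of admissible $G$-covers are pullbacks under the source map. Your explicit attention to the relabeling-data bookkeeping (harmless because $\psi$ classes are constant on $G$-orbits of marked points) is a reasonable elaboration of what the paper leaves implicit.
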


\begin{proof}
Immediate from the compatibility of restriction maps with the source maps $\phi$.
\end{proof}

\begin{prop}
Let $\cores_{G/G_1}^{G}:\barH_{g,G,\xi}\to\barH_{g_1,G/G_1,\xi'}$ be a corestriction map. Then,
\begin{equation*}
(\cores_{G/G_1}^{G})^{*}(\psi_{p})=\frac{\ord_{G/G_1}(h)}{\ord_{G}(h)}\cdot\psi_{\wt{p}},
\end{equation*} 
where $p$ is a marked point among the data of $\barH_{g_1,G/G_1,\xi'}$, $\wt{p}$ is any marked point among the data of $\barH_{g,G,\xi}$ living over $p$, and $h$ abusively denotes the corresponding monodromy element on both spaces. Moreover, 
\begin{equation*}
(\res_{G_1}^{G})^{*}(\kappa_i)=\frac{1}{\#G_1}\cdot \kappa_i
\end{equation*}
for all $i$.
\end{prop}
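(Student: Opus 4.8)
The plan is to reduce both formulas to the analogous statements for the source and target maps $\phi,\delta$, using the comparison results between $\psi,\kappa$ classes under $\phi$ and $\delta$ already established in Proposition \ref{psi_kappa_comparison_forgetful}. The key observation is that $\psi$ and $\kappa$ classes on every space of admissible $G$-covers are, by definition, pulled back from the corresponding $\barM_{g,n}$ via the source map $\phi$, and by Proposition \ref{psi_kappa_comparison_forgetful} they are also expressible (up to explicit constant factors) as pullbacks via the target map $\delta$. Since corestriction maps are compatible with the target maps $\delta$ from both spaces to the relevant $\barM_{g',b}$, the cleanest route is to pull back through the target maps rather than the source maps.

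For the $\psi$-class formula, first I would write the $\psi$ class $\psi_p$ on $\barH_{g_1,G/G_1,\xi'}$ via Proposition \ref{psi_kappa_comparison_forgetful} as $\psi_p=\frac{1}{\ord_{G/G_1}(h)}\cdot\delta^*(\psi_{q})$, where $q$ is the branch point on the common target and $\delta$ is the target map of $\barH_{g_1,G/G_1,\xi'}$. Because corestriction is compatible with target maps, $\delta\circ\cores^G_{G/G_1}$ equals the target map of $\barH_{g,G,\xi}$, so $(\cores^G_{G/G_1})^*\delta^*(\psi_q)=\delta^*(\psi_q)$, which in turn equals $\ord_G(h)\cdot\psi_{\wt p}$ on $\barH_{g,G,\xi}$, again by Proposition \ref{psi_kappa_comparison_forgetful}. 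Combining these gives
\begin{equation*}
(\cores^G_{G/G_1})^*(\psi_p)=\frac{1}{\ord_{G/G_1}(h)}\cdot\delta^*(\psi_q)=\frac{\ord_G(h)}{\ord_{G/G_1}(h)}\cdot\psi_{\wt p},
\end{equation*}
which is exactly the claimed formula.

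For the $\kappa$-class formula, the identical strategy applies: writing $\kappa_i=(\#(G/G_1))\cdot\delta^*(\kappa_i)$ on the target space via Proposition \ref{psi_kappa_comparison_forgetful}, pulling back by $\cores$, and using target-map compatibility together with $\kappa_i=(\#G)\cdot\delta^*(\kappa_i)$ on $\barH_{g,G,\xi}$ yields the factor $\frac{\#(G/G_1)}{\#G}=\frac{1}{\#G_1}$. (I note that the statement as printed writes $(\res^G_{G_1})^*$ in the $\kappa$ line, but this is a typo for $(\cores^G_{G/G_1})^*$, since the $\#G_1$ denominator is the corestriction factor; the restriction case gives $\kappa_i\mapsto\kappa_i$ as in the preceding proposition.) The main subtlety to verify carefully is the compatibility of corestriction with the target maps, namely that the intermediate target map $\delta:\barH_{g_1,G/G_1,\xi'}\to\barM_{g',b}$ precomposed with $\cores^G_{G/G_1}$ agrees with the target map of the original space; this holds because both send a cover $f:X\to X/G$ to the stable marked target curve $X/G$, which is unchanged under passing to the intermediate quotient $X/G_1$. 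Granting this, there are no delicate excess-intersection or ramification issues of the kind encountered in \S\ref{corestriction_boundary_intersection_section}, and the proof is a short formal computation.
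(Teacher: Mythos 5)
Your approach is exactly the paper's: the paper's entire proof reads ``Immediate from the compatibility of corestriction maps with the target maps $\delta$, along with Proposition \ref{psi_kappa_comparison_forgetful},'' and your write-up simply carries out that one-line argument explicitly. Your $\kappa$ computation is correct, as is your observation that the $(\res_{G_1}^{G})^{*}$ in the $\kappa$ line is a typo for $(\cores_{G/G_1}^{G})^{*}$.

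However, the last step of your $\psi$ computation contains a real slip: you derive
\begin{equation*}
(\cores^{G}_{G/G_1})^{*}(\psi_p)=\frac{\ord_{G}(h)}{\ord_{G/G_1}(h)}\cdot\psi_{\wt{p}}
\end{equation*}
and then assert that this ``is exactly the claimed formula.'' It is not: the proposition as printed has the reciprocal coefficient $\ord_{G/G_1}(h)/\ord_{G}(h)$. In fact your derivation gives the correct coefficient, and the printed statement is internally inconsistent with Proposition \ref{psi_kappa_comparison_forgetful}: taking $G_1=G$, the corestriction map is the target map $\delta$ itself, and the formula must reduce to $\delta^{*}(\psi_q)=\ord_{G}(h)\cdot\psi_{\wt{p}}$, which is what your formula gives, whereas the printed coefficient would give $\frac{1}{\ord_G(h)}\cdot\psi_{\wt{p}}$. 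Geometrically, $(\cores^{G}_{G/G_1})^{*}(\psi_p)$ is $e_1\cdot\psi_{\wt{p}}$ where $e_1=\#(\langle h\rangle\cap G_1)=\ord_{G}(h)/\ord_{G/G_1}(h)$ is the ramification index of $X\to X/G_1$ at $\wt{p}$ --- an integer at least $1$, so the coefficient cannot be the printed fraction, which is at most $1$. So the mathematical content of your proof is right, but you should have flagged the discrepancy --- identifying the inverted fraction in the statement as an error, exactly as you did for the $\res$/$\cores$ typo --- rather than asserting agreement between two expressions that are reciprocals of one another.
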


\begin{proof}
Immediate from the compatibility of corestriction maps with the target maps $\delta$, along with Proposition \ref{psi_kappa_comparison_forgetful}.
\end{proof}

\subsection{Interlude: Chern classes of the tangent bundle}\label{tangent_bundle_hurwitz}

In order to be able (in principle) to carry out further excess intersection calculations in the $\cH$-tautological ring, we will need to be able to compute the Chern classes of the tangent bundles of the spaces $\barH_{g,G,\xi}$. We do this by pullback from the proper and quasi-finite morphism $\delta:\barH_{g,G,\xi}\to\barM_{g',b}$, applying Bini's formula for the Chern classes of $\barM_{g',b}$.

Let $R$ be the ramification divisor of $\delta$. We have
\begin{equation*}
\cT_{\barH_{g,G,\xi}}\cong \delta^{*}(\cT_{\barM_{g',b}})\otimes\cO_{\barH_{g,G,\xi}}(-R).
\end{equation*}
Recall from \S\ref{galois_covers_def_section} that $\delta$ is ramified at the loci parametrizing admissible $G$-covers with ramification at nodes. More precisely, $R$ is a sum of divisors generically parametrizing admissible $G$-covers $f:X\to Y$ where $X$ has exactly one $G$-orbit of nodes with ramification index $e_i$ over the unique node of $Y$; such a divisor appears in $R$ with multiplicity $e_i-1$.

In the language of admissible $G$-graphs, we consider $(\Gamma,G)$ for which $G$ acts transitively on the non-empty set $E(\Gamma)$. Let $e_{\Gamma}=\ord_{G}(h_{\ell})$ for any half-edge $\ell\in H(\Gamma)$. Let $\barH^0_{(\Gamma,G)}$ be the image substack of $\barH_{(\Gamma,G)}$ in $\barH_{g,G,\xi}$, which has class
\begin{equation*}
[\barH^0_{(\Gamma,G)}]=\frac{1}{\#\Aut(\Gamma,G)}\cdot(\xi_{\Gamma})_{*}(1)\in A^{1}(\barH_{g,G,\xi}). 
\end{equation*}

Putting everything together, we conclude
\begin{prop}\label{chern_class_hurwitz}
We have
\begin{equation*}
\cT_{\barH_{g,G,\xi}}\cong \delta^{*}(\cT_{\barM_{g',b}})\left(-\sum_{(\Gamma,G)}(e_{\Gamma}-1)\cdot\barH^0_{(\Gamma,G)}\right),
\end{equation*}
where the sum is over admissible $G$-graphs $(\Gamma,G)$ with exactly one $G$-orbit of edges.

In particular, the Chern classes of $\cT_{\barH_{g,G,\xi}}$ may be computed explicitly by combining the formula of Bini \cite[Theorem 2]{bini} with the earlier results of this section.
\end{prop}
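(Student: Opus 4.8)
The plan is to deduce the proposition by combining the tangent bundle comparison
\[
\cT_{\barH_{g,G,\xi}}\cong\delta^{*}(\cT_{\barM_{g',b}})\otimes\cO_{\barH_{g,G,\xi}}(-R)
\]
recorded above with an explicit description of the ramification divisor $R$, and then verifying that the resulting Chern classes are computable. Concretely, I would proceed in two stages: first identify $R$ as the indicated sum of boundary divisors $\barH^{0}_{(\Gamma,G)}$ with multiplicities $e_{\Gamma}-1$, and then feed this into Bini's formula through the pullback $\delta^{*}$.

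For the first stage---which I expect to be the main obstacle---the content is the local deformation-theoretic analysis of $\delta$. Using that $\delta$ is flat, proper, and quasi-finite (Theorem \ref{source_target_properties}) between smooth stacks of equal dimension and generically \'{e}tale, the ramification is concentrated along the divisors parametrizing covers $f:X\to Y$ whose source has a single $G$-orbit of nodes mapping to a node of $Y$. Following the twisted-curve description of \cite{acv} recalled in \S\ref{galois_covers_def_section}, I would show that on versal deformation spaces $\delta$ is a product of power maps $\delta^{*}(t)=s^{e}$ in the smoothing direction of each ramified node-orbit (with $e=e_{\Gamma}=\ord_{G}(h_{\ell})$) and an isomorphism in the remaining directions; a one-variable Riemann--Hurwitz computation then gives the multiplicity $e_{\Gamma}-1$ along each such divisor. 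Translating into admissible $G$-graphs identifies these divisors with the $\barH^{0}_{(\Gamma,G)}$ for $(\Gamma,G)$ having a single $G$-orbit of edges, and the class $\tfrac{1}{\#\Aut(\Gamma,G)}(\xi_{\Gamma})_{*}(1)$ follows from the fact that $\xi_{(\Gamma,G)}$ is finite unramified of degree $\#\Aut(\Gamma,G)$ onto its image. Substituting this $R$ into the tangent bundle comparison yields the first displayed isomorphism.

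For the second stage, I would expand the total Chern class via the standard twist formula $c(E\otimes L)=\sum_{i}c_{i}(E)(1+c_{1}(L))^{n-i}$, where $n=3g'-3+b=\dim\barH_{g,G,\xi}$, $E=\delta^{*}\cT_{\barM_{g',b}}$, and $c_{1}(L)=-[R]$. By Bini's theorem \cite{bini} the Chern classes of $\cT_{\barM_{g',b}}$ are tautological, hence polynomials in $\psi$, $\kappa$, and boundary classes; their pullbacks under $\delta$ are $\cH$-tautological and explicit, using Proposition \ref{psi_kappa_comparison_forgetful} for $\delta^{*}\psi$ and $\delta^{*}\kappa$ and the $G_{1}=G$ case of Lemma \ref{cores_boundary_diagram} for pullbacks of boundary classes. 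Finally, the powers of $[R]=\sum_{(\Gamma,G)}(e_{\Gamma}-1)[\barH^{0}_{(\Gamma,G)}]$ and their products against these pullbacks are evaluated by the boundary-intersection calculus of \S\ref{boundary_strata_intersection_section}. This shows all Chern classes of $\cT_{\barH_{g,G,\xi}}$ lie in, and are algorithmically computable within, the $\cH$-tautological ring, completing the proof.
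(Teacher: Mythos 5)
Your proposal follows the paper's own argument essentially verbatim: the paper likewise starts from the comparison $\cT_{\barH_{g,G,\xi}}\cong\delta^{*}(\cT_{\barM_{g',b}})\otimes\cO_{\barH_{g,G,\xi}}(-R)$, identifies $R$ via the local picture $\delta^{*}(t)=s^{e}$ recalled in \S\ref{galois_covers_def_section} (giving multiplicity $e_{\Gamma}-1$ along each divisor $\barH^{0}_{(\Gamma,G)}$, whose class is $\tfrac{1}{\#\Aut(\Gamma,G)}(\xi_{\Gamma})_{*}(1)$ because $\xi_{(\Gamma,G)}$ is finite unramified of degree $\#\Aut(\Gamma,G)$ onto its image), and then deduces computability of the Chern classes from Bini's formula together with the pullback and intersection results of \S\ref{boundary_strata_intersection_section}, \S\ref{corestriction_boundary_intersection_section}, and \S\ref{pullback_psi_kappa_section}. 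Your only addition is spelling out the standard Chern-class formula for twisting by a line bundle, which the paper leaves implicit.
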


Bini's formula is expressed in terms of classical tautological classes, so it follows from \S\ref{boundary_strata_intersection_section}, \S\ref{corestriction_boundary_intersection_section}, and \S\ref{pullback_psi_kappa_section} that $\cT_{\barH_{g,G,\xi}}$ may be expressed in terms of decorated boundary strata, that is, pushforwards of polynomials in $\psi$ and $\kappa$ classes by boundary morphisms.

\subsection{Intersections of restriction-corestriction maps}\label{res-cores_int}

In this section, we show that the intersections of restriction-corestriction maps $\gamma_i:\cH_i\to\cH$ may be expressed in terms of $\cH$-tautological classes on a space mapping simultaneously to both $\cH_i$. These intersections may be computed in principle, in terms of a finite amount of data, but, as will be clear, the algorithmic complexity seems prohibitive in practice.

As usual, it suffices to consider restriction and corestriction maps separately; there are three cases.

\subsubsection{Intersection of two restriction maps}

Consider two inclusions of groups $G\subset H_1$ and $G\subset H_2$, with restriction maps $\res_{G}^{H_1}:\barH_{g,H_1,\xi_1}\to\barH_{g,G,\xi}$ and $\res_{G}^{H_2}:\barH_{g,H_2,\xi_2}\to\barH_{g,G,\xi}$. We first consider the fiber product of the $\res_{G}^{H_j}$.

Let us first explain the main idea of the construction. Suppose that $X$ is a stable curve admitting simultaneous actions by the two groups $H_1,H_2$, both of which are free away from the marked points of $X$, and which agree when restricted to the common subgroup $G$. Let $H$ be the subgroup of $\Aut(X)$ generated by $H_1$ and $H_2$. Then, $H$ acts on $X$, the actions of $H_1,H_2$ are both induced by that of $H$, and $G\subset H_1\cap H_2$.

However, the action of $H$ on $X$ may not be generically free; rather, for individual components $X_0\subset X$, the action of $H_0=\Stab_H(X_0)$ on $X_0$ may have a normal subgroup $N_0\subset H_0$ acting trivially.

\begin{ex}\cite[Proposition 5.15]{schmittvanzelm}
Consider the locus of stable genus 4 curves $X$ admitting both a hyperelliptic and bielliptic involution, that is, the intersection of the morphisms below:
\begin{equation*}
\xymatrix{
 & \barH_{4,\bZ/2\bZ,(1)^6} \ar[d]^{\phi_0}
 \\
\barH_{4,\bZ/2\bZ,(1)^2} \ar[r]^(0.59){\phi_1} & \barM_4
}
\end{equation*}
For simplicity, we have forgotten the marked points on $\barM_{4}$. We have also labelled the maps $\phi_i$ with a subscript keeping track of the target genus. Hyperelliptic and bielliptic involutions necessarily commute, so the two actions combine to give an action of $H=\bZ/2\bZ\times\bZ/2\bZ$ on $X$.

The intersection of these two morphisms consists of two strata:
\begin{itemize}
\item The locus $\cA$ of curves $X=X_1\cup X_2$, where $X_k$ is a curve of genus $k$ and the two components are attached at two nodes, such that $X_2$ admits a bielliptic involution interchanging the nodes, and
\item The locus $\cB$ of curves $X=X_1\cup X_2$, where $X_k$ is a curve of genus $k$ and the two components are attached at two nodes, such that $X_1$ admits a bielliptic involution interchanging the nodes.
\end{itemize}
In both cases, the hyperelliptic involution $(1,0)\in H$ acts by a hyperelliptic involution on both components, interchanging the nodes. The bielliptic involution $(0,1)\in H$ acts by the bielliptic involution on one of the two components, but the hyperelliptic one on the other. Thus, the diagonal element $(1,1)$ acts trivially on the non-bielliptic component.

In particular, a point in either locus cannot smooth to a non-singular genus 4 curve admitting both a hyperelliptic and bielliptic involution; in fact, no such non-singular curves exist for $g\ge4$.
\end{ex}

We thus make the following definition, generalizing the notion of an admissible $H$-graph to keep track of the possibility that the action of $H$ on a stable curve $X$ may have kernels when restricted to individual components, but that upon restriction to the subgroups $H_j\subset H$, gives an admissible $H_j$-action in the usual sense.

\begin{defn}\label{augmented_graph_defn}
Let $H$ be a finite group and let $H_1,H_2$ be subgroups with $\langle H_1,H_2\rangle=H$. An \textbf{augmented admissible $H$-graph relative to $H_1,H_2$}
\begin{equation*}
(\Gamma,H,\{N_v\}_{v\in V(\Gamma)})
\end{equation*}
consists of the following data:
\begin{itemize}
\item a stable graph $\Gamma$ with an $H$-action, 
\item an ordering on the set of $H$-orbits of $L(\Gamma)$, and a distinguished element $\ell_{i1}$ of each such orbit, so that all elements of $L(\Gamma)$ may be labelled in the usual way by left cosets of the stabilizer $H_{\ell_{i1}}$ of $\ell_{i1}$,
\item for each $v\in V(\Gamma)$, a normal subgroup $N_v$ of the stabilizer $H_v$ of $v$, that additionally acts trivially on the set $L_v\subset H(\Gamma)\cup L(\Gamma)$ of half-edges and legs incident at $v$, and
\item for all $v$ and $\ell\in L_v$, a monodromy datum $h_\ell\in H_v/N_v$.
\end{itemize}

We require the following conditions:
\begin{itemize}
\item if $(\ell,\ell')$ is a pair of half-edges comprising an edge, then $t\cdot\ell\neq\ell'$ for all $t\in H$,
\item $N_{t\cdot v}=tN_{v}t^{-1}$ for any $v\in V(\Gamma)$ and $t\in H$,
\item the stabilizer $H_{\ell}\subset H_v/N_v$ of $\ell$ is a cyclic group generated by $h_{\ell}$ for all $v\in V(\Gamma)$ and $\ell\in L_v$,
\item $h_{t\cdot\ell}=th_{\ell}t^{-1}$ in $H_{t\cdot v}/N_{t\cdot v}=tH_vt^{-1}/tN_{v}t^{-1}$ for all $\ell\in L_v$ and $t\in H$,
\item $\bigcap_{v\in V}N_v=\{1\}$,
\item $N_v\cap H_j=\{1\}$ for any $v\in V(\Gamma)$ and $j=1,2$, and
\item the $h_{\ell}$ satisfy the balancing condition when restricted, in the sense of Definition \ref{restriction_graphs}, to $H_1$ and $H_2$.
\end{itemize}
\end{defn}

Let us spell out the last condition in more detail. Suppose that $(\ell,\ell')$ is a pair of half-edges comprising an edge and let $v,v'$ be the vertices to which $\ell,\ell'$ are attached, respectively. Let $\Gamma_v$ be the (unpointed) admissible $H_v/N_v$-graph with the single vertex $v$ and leg set $L_v$ with monodromy data $\ell_v$.

Then, under the restriction of the $H_v/N_v$-action on $\Gamma_v$ to $H_j\cap H_v$ (which is a subgroup of $H_v/N_v$ because $N_v\cap H_j=\{1\}$), the monodromy datum $h_{\ell}\in H_{\ell}\subset H_v/N_v$ becomes $h_{\ell,j}=h_{\ell}^{r_{\ell,j}}$, where
\begin{equation*}
r_{\ell,j}=\frac{\#H_{\ell}}{\#H_{\ell}\cap H_j}.
\end{equation*}
Similarly, we define $h_{\ell',j}$. The balancing condition then requires that $h_{\ell,j}=h_{\ell',j}^{-1}\in H_j$ for $j=1,2$.

We may define a \textbf{morphism} (resp. automorphism) of augmented admissible $H$-graphs to be an $H$-equivariant morphism (resp. automorphism) of graphs compatible with the data of the leg orbit orderings, normal subgroups $N_v$, and monodromy data $h_{\ell}$.

The following explains how augmented admissible $H$-graphs arise when intersecting restriction morphisms.

\begin{lem}\label{get_aug_adm_graph}
Suppose that $X$ is a closed point of the functor $\barH_{g,H_1,\xi_1}\times_{\barH_{g,G,\xi}}\barH_{g,H_2,\xi_2}$. Let $H=\langle H_1,H_2\rangle\subset\Aut(X)$, where by $\Aut(X)$ we mean the finite group of automorphisms of $X$ that permute the marked points. Then, we have an augmented admissible $H$-graph
\begin{equation*}
(\Gamma_X,H,\{N_v\}),
\end{equation*}
where $\Gamma_X$ is the dual graph of $X$, $N_v$ is the kernel of the $H$-action on the component of $X$ corresponding to $v$, and we make arbitrary choices of distinguished legs $\ell_{i1}$ in the $H$-orbits of $L(\Gamma)$ and of an ordering of these $H$-orbits.
\end{lem}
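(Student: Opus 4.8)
The plan is to verify that the tuple $(\Gamma_X, H, \{N_v\})$ satisfies each of the defining conditions of Definition \ref{augmented_graph_defn}, most of which become formal once the data is correctly identified. First I would unwind the meaning of a closed point of $\barH_{g,H_1,\xi_1}\times_{\barH_{g,G,\xi}}\barH_{g,H_2,\xi_2}$: it is a single stable curve $X$ carrying two faithful actions of $H_1$ and $H_2$, each exhibiting $X$ as the source of an admissible cover of the stated monodromy type, whose restrictions to $G$ agree as pointed $G$-covers. Setting $H=\langle H_1,H_2\rangle\subset\Aut(X)$, the dual graph $\Gamma_X$ inherits an $H$-action permuting vertices, half-edges, and legs; for each vertex $v$ I define $N_v$ to be the kernel of the representation $H_v\to\Aut(X_v)$ of the stabilizer on the corresponding component, and I fix arbitrary distinguished legs and an ordering of the $H$-orbits of $L(\Gamma_X)$. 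The monodromy element $h_\ell$ at a half-edge or leg incident to $v$ is the distinguished generator of the stabilizer of the corresponding special point of $X_v$ under the faithful action of $H_v/N_v$.

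Next I would dispatch the formal conditions. That $N_v$ is normal in $H_v$ and acts trivially on the incident set $L_v$ is immediate, since an automorphism fixing $X_v$ pointwise fixes its special points and their tangent directions; the equivariance statements $N_{t\cdot v}=tN_vt^{-1}$ and $h_{t\cdot\ell}=th_\ell t^{-1}$ are transport of structure. Cyclicity of the stabilizer $H_\ell\subset H_v/N_v$ follows from the standard fact that a finite group acting faithfully on a smooth curve has cyclic point-stabilizers, acting faithfully on the one-dimensional tangent space. For $\bigcap_v N_v=\{1\}$, an element lying in every $N_v$ acts trivially on every component and hence on all of $X$, so it is trivial because $H\hookrightarrow\Aut(X)$ is faithful. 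The condition $N_v\cap H_j=\{1\}$ is where I first use that $H_1,H_2$ give \emph{honest} admissible covers: an element of $H_j$ acting trivially on $X_v$ would fix the dense locus of $X_v$ on which the principal-bundle condition forces the $H_j$-action to be free, so it must be the identity.

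For the balancing condition, I would invoke that each $H_j$-action is balanced at the nodes of $X$ because it underlies an admissible $H_j$-cover. Using $N_v\cap H_j=\{1\}$ from the previous step, the subgroup $H_j\cap H_v$ injects into $H_v/N_v$, and I would check that restricting the monodromy datum $h_\ell\in H_v/N_v$ along this inclusion, in the sense of Definition \ref{restriction_graphs}, recovers precisely the $H_j$-monodromy at the corresponding branch; the required identity $h_{\ell,j}=h_{\ell',j}^{-1}$ for $j=1,2$ then follows from the balancing of the genuine $H_j$-graph structure on $\Gamma_X$.

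The main obstacle is the first condition, that no $t\in H$ exchanges the two half-edges of an edge, i.e. that $H$ does not fold any node of $X$. This is the one axiom that is not formal, because $H$ is only \emph{generated} by $H_1$ and $H_2$, and a product of branch-preserving automorphisms can a priori swap the branches of a node it fixes. The plan is to argue locally: at a node $p$ with complete local ring $\bC[[x,y]]/(xy)$, an element $t\in H$ fixing $p$ either preserves or exchanges the two branches, and I would rule out exchange by combining the balancedness of both the $H_1$- and $H_2$-actions with the compatibility forced by the common $G$-structure at $p$, propagating the branch labels through a word in $H_1, H_2$ representing $t$. I expect this propagation of balancing from the generators $H_1, H_2$ to the generated stabilizer $H_p$ to be the delicate heart of the argument, with all remaining conditions following formally as above.
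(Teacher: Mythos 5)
Your handling of the formal conditions is correct and is, in essence, the same argument as the paper's own proof, which consists of exactly those verifications and nothing more: each component carries a faithful $H_v/N_v$-action with cyclic stabilizers and induced monodromy data, equivariance is transport of structure, $\bigcap_v N_v=\{1\}$ follows from $H\subset\Aut(X)$, and the conditions $N_v\cap H_j=\{1\}$ and balancing follow by restricting to the two genuine admissible $H_j$-structures. Up to the level of detail, your proposal and the paper agree on all of this.

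The gap is precisely the step you flag as the ``delicate heart,'' and it cannot be filled by the propagation strategy you outline, because the no-folding condition ($t\cdot\ell\neq\ell'$ for every edge $(\ell,\ell')$ and every $t\in H$) can genuinely fail: a word in $H_1,H_2$ can carry a node around a cycle of the dual graph and return it with its two branches exchanged, even though each individual generator maps nodes to nodes without folding. Concretely, let $C$ be an elliptic curve with inversion $\tau$, let $x\in C$ be a non-$2$-torsion point, $y=\tau(x)$, and let $X$ be the cycle of four copies $A_1,\ldots,A_4$ of $C$ glued by $y\in A_i\sim x\in A_{i+1}$ (indices mod $4$), with the $16$ points of $X$ over $C[2]$ marked. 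The rotation $\rho\colon A_i\to A_{i+1}$ (identity on $C$) generates a free $\bZ/4\bZ$-action, giving a point of $\barH_{5,\bZ/4\bZ,(1,1,1,1)}$; the involution $r$ acting by $\tau$ on every copy (preserving $A_1,A_3$, exchanging $A_2\leftrightarrow A_4$) gives a point of $\barH_{5,\bZ/2\bZ,\xi_2}$, ramified exactly at the $8$ marked points on $A_1\cup A_3$; and both structures restrict to the same point of $\barM_{5,16}$, so the pair is a closed point of the fiber product with $G=\{1\}$. Here $H=\langle\rho,r\rangle$ is dihedral of order $8$, all the $N_v$ are trivial, and the element $r\rho$ fixes the node $y_4\sim x_1$ while exchanging its branches: $r\rho(x_1)=r(x_2)=y_4$ and $r\rho(y_4)=r(y_1)=x_1$. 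So $(\Gamma_X,H,\{N_v\})$ violates the first condition of Definition \ref{augmented_graph_defn}, and no local argument at the node, nor any propagation of branch labels through the word $r\rho$, can rule this out; the condition holds for elements of $H_1$ and of $H_2$ separately (that much is formal, as you say) but not for general elements of $H$. Note that this is not only a defect of your proposal: the paper's own proof is silent on this condition, so what you have correctly isolated as the one non-formal requirement is in fact a gap in the published argument itself, and the lemma as stated needs either a weakening of the no-folding requirement in Definition \ref{augmented_graph_defn} or additional hypotheses on the point of the fiber product.
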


\begin{proof}
Each component $X_v\in X$ has the structure of an $H_v/N_v$-curve, and we obtain monodromy data $h_{\ell}\in H_v/N_v$ satisfying the needed compatibilities. Because $H\subset\Aut(X)$, the action $H$ on $X$ has no global kernel, so the $N_v$ have no common intersection. Finally, restricting to the action on $X$ recovers the underlying admissible $H_j$-structures, so we get the last two conditions.
\end{proof}

Note that the construction of $(\Gamma_X,H,\{N_v\})$ also makes sense for a family of curves, as we may replace the nodes of a single curve with the nodal sections of a family. Moreover, the restriction maps are unramified, so nodes are smoothed to any order on $\barH_{g,H_j,\xi_j}$ if and only if they are on $\barH_{g,G,\xi}$.

We have the following space of curves with a given underlying augmented admissible $H$-graph:

\begin{defn}
Given any augmented admissible $H$-graph $(\Gamma,H,\{N_v\})$, we define
\begin{equation*}
\barH_{(\Gamma,H,\{N_v\})}=\prod_{v\in V(\Gamma)/H}\barH_{g_v,H_v/N_v,\xi_v}.
\end{equation*}
where in the factors on the right hand side, we make arbitrary choices of distinguished marked points, as in the definition of boundary strata, Definition \ref{boundary_stratum_hurwitz_defn}.
\end{defn}

Using an analogous gluing procedure to Definition \ref{boundary_morphism_hurwitz_defn}, a point of $\barH_{(\Gamma,H,\{N_v\})}$ may be interpreted as a stable curve on $X$ with $H$-action. The condition $N_v\cap H_j=\{1\}$ and the balancing condition upon restriction to $H_j$ ensure that this $H$-curve may be restricted, relative to a choice of relabeling data for the subgroup $H_j\subset H$, to an admissible $H_j$-curve. Then, the data of a point of $\barH_{(\Gamma,H,\{N_v\})}$ is equivalent to that of a stable curve on $X$ with $H$-action and a marking by $(\Gamma,H,\{N_v\})$, that is, a morphism of augmented admissible $H$-graphs from the dual graph of $X$ to $\barH_{(\Gamma,H,\{N_v\})}$.

In the augmented setting, the monodromy data are elements of $H_v/N_v$, rather than $H$, but the new distinguished marked points are still chosen among $H_j$-orbits of the marked points of the $H$-curve; the relabeling data, as before, amounts to an ordered choice of the corresponding distinguished points.

\begin{defn}
Following the constructions of Definitions \ref{boundary_morphism_hurwitz_defn} and \ref{restriction_graphs}, we define the proper, quasi-finite, and unramified restriction maps
\begin{equation*}
\res_{\Gamma,j}:\barH_{(\Gamma,H,\{N_v\})}\to\barH_{g,H_j,\xi_j}.
\end{equation*}
given by sending a point in $\barH_{(\Gamma,H,\{N_v\})}$ first to the corresponding stable curve equipped with a $\Gamma$-structure and $H$-action, then restricting to $H_j$.
\end{defn}

The details of the construction are left to the reader. Note that the maps $\res_{\Gamma,j}$ are compositions of (products of) diagonal, boundary, and restriction maps.

In order to compute the fiber product of two restriction morphisms, we need one more notion that will provide the correct genericity condition.

\begin{defn}
Let $(\Gamma,H,\{N_v\})$ be an augmented admissible $H$-graph, and let $(\ell,\ell')$ be a pair of half-edges comprising an edge of $\Gamma$, attached to the (possibly equal) vertices $v,v'$, respectively. Let $\wt{H}_{\ell}=\wt{H}_{\ell'}\subset H$ denote the stabilizer of the edge $(\ell,\ell')$, or equivalently the stabilizer of either half-edge individually, so that $\wt{H}_{\ell}/N_v=H_\ell$ and $\wt{H}_{\ell'}/N_{v'}=H_{\ell'}$.

We say that the edge $(\ell,\ell')$ is \textbf{smoothable} if $N_v=N_{v'}$ and we have the balancing condition $h_{\ell}=h_{\ell'}^{-1}$ in $\wt{H}_{\ell}/N_v=\wt{H}_{\ell'}/N_{v'}$.

We call the augmented admissible $H$-graph \textbf{non-smoothable} if none of its edges are smoothable.
\end{defn}

The smoothability condition on the edges of $\Gamma$ corresponds to the smoothability of the nodes as a point of $\barH_{(\Gamma,H,\{N_v\})}$. Namely, $X$ is a specialization of another closed point $X_\eta$ in the fiber product, where a given node of $X$ is smoothed into $X_\eta$, if and only if the corresponding edge of $\Gamma$ is smoothable. Correspondingly, if $\Gamma$ fails to be smoothable, we obtain a \textit{non-smoothable} $(\Gamma',H,\{N_v\})$ by applying the $H$-equivariant operation of contracting all smoothable edges and modifying the genus function accordingly.

\begin{lem}\label{res_res_diagram_lemma}
We have a commutative diagram
\begin{equation*}
\xymatrix{
\displaystyle\coprod\barH_{(\Gamma,H,\{N_v\})} \ar[r]^(0.55){\res_{\Gamma,1}} \ar[d]^{\res_{\Gamma,2}} & \barH_{g,H_1,\xi_1} \ar[d]^{\res^{H_1}_{G}}\\
\barH_{g,H_2,\xi_2} \ar[r]^(0.55){\res^{H_2}_{G}} & \barH_{g,G,\xi}
}
\end{equation*}
where the coproduct is over all \textbf{non-smoothable} augmented admissible $H$-graphs 
\begin{equation*}
(\Gamma,H,\{N_v\})
\end{equation*}
relative to $H_1,H_2$, along with all possible relabeling data defining the maps $\res_{\Gamma_j}$ that are compatible with the monodromy data $\xi_j$ and the maps $\res^{H_j}_{G}$.

Moreover, the induced map
\begin{equation*}
\coprod(\res_{\Gamma,1},\res_{\Gamma,2}):\displaystyle\coprod\barH_{(\Gamma,H,\{N_v\})} \to \barH_{g,H_1,\xi_1}\times_{\barH_{g,G,\xi}}\barH_{g,H_2,\xi_2}
\end{equation*}
is proper, quasi-finite, \'{e}tale, and surjective. In particular, the fiber product is smooth. 
\end{lem}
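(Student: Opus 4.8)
The plan is to verify the four asserted properties in turn and then deduce smoothness of the fiber product, with essentially all of the content concentrated in the étale claim, which is a deformation-theoretic computation. Commutativity of the square is immediate from the constructions: a point of $\barH_{(\Gamma,H,\{N_v\})}$ is a stable curve $X$ with $H$-action and a $(\Gamma,H,\{N_v\})$-structure, and restricting this action to $H_1$ or $H_2$ and then further to $G$ yields the same underlying admissible $G$-cover, provided the relabeling data defining $\res_{\Gamma,j}$ is chosen compatibly with that of $\res^{H_j}_{G}$ --- which is exactly the condition imposed on the indexing set. Properness of the induced map follows because the spaces $\barH_{(\Gamma,H,\{N_v\})}$ and $\barH_{g,H_j,\xi_j}$, hence their fiber product, are all proper by Theorem \ref{source_target_properties}, so the induced map is a morphism from a proper stack to a separated one.

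For surjectivity and finiteness of fibers I would argue on points. Given a closed point of the fiber product, that is, a stable curve $X$ carrying actions of $H_1$ and $H_2$ that agree upon restriction to $G$, Lemma \ref{get_aug_adm_graph} produces an augmented admissible $H$-graph $(\Gamma_X,H,\{N_v\})$ with $H=\langle H_1,H_2\rangle\subset\Aut(X)$. Contracting all smoothable edges is an $H$-equivariant operation, since smoothability forces $N_v=N_{v'}$ across a contracted edge, so the merged vertex inherits a well-defined normal subgroup; this produces a non-smoothable $(\Gamma,H,\{N_v\})$, and $X$, now viewed as a possibly nodal curve over the merged vertices, is a point of $\barH_{(\Gamma,H,\{N_v\})}$ mapping to the chosen point. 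This gives surjectivity. The fiber over $X$ is governed by the finitely many choices of distinguished legs and relabeling data together with $\Aut((\Gamma,H,\{N_v\}))$, and is therefore finite.

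The crux is étale-ness, which I would establish by deformation theory at an arbitrary geometric point $X$ of the fiber product. Because the restriction maps are unramified (Theorem \ref{source_target_properties} and the discussion after Definition \ref{restriction_hurwitz_defn}), each tangent space $T_{[X]}\barH_{g,H_j,\xi_j}$ injects into the deformation space $\mathrm{Def}(X)$ of the marked curve, with image exactly the subspace of $H_j$-equivariant deformations, and similarly for $G$. Consequently the tangent space to the fiber product is identified with the set of $w\in\mathrm{Def}(X)$ that are simultaneously $H_1$- and $H_2$-equivariant, which, since $H=\langle H_1,H_2\rangle$, is precisely the space of $H$-equivariant deformations of $X$. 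On the other hand, $\barH_{(\Gamma,H,\{N_v\})}=\prod_v\barH_{g_v,H_v/N_v,\xi_v}$ has tangent space equal to the $H$-equivariant deformations of $X$ that do not smooth the edges of $\Gamma$ (as $N_v$ acts trivially on the component $X_v$, an $H_v/N_v$-equivariant deformation of $X_v$ is the same as an $H_v$-equivariant one). The non-smoothability hypothesis is exactly what forces these two spaces to coincide: a node may be smoothed by an $H$-equivariant deformation if and only if its edge is smoothable, so the non-smoothable edges of $\Gamma$ can never be smoothed equivariantly, while the smoothable ones have already been contracted. Crucially --- and in contrast to Lemma \ref{cores_boundary_multiplicities} --- since restriction maps are unramified, the smoothing parameters of corresponding nodes match with no $\ord$-power correction, so the tangent identification carries no multiplicities. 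The differential of $(\res_{\Gamma,1},\res_{\Gamma,2})$ is thus an isomorphism at every point; as the source is smooth, the map unramified, proper, and surjective, it is étale (and in particular quasi-finite), whence étale descent of smoothness along the surjection shows the fiber product is smooth.

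The main obstacle is the tangent-space computation of the third paragraph: one must carefully justify that the $H$-equivariant deformations of $X$ smooth precisely the smoothable nodes, reading the conditions $N_v\cap H_j=\{1\}$ and the $H_j$-balancing requirements of Definition \ref{augmented_graph_defn} at the infinitesimal level, and confirm that the smoothing-parameter matchings are genuinely unramified rather than ramified as in the corestriction case --- this is precisely what makes the present intersection reduced and étale. A subsidiary point needing care is keeping the relabeling and automorphism bookkeeping consistent, so that the indexing set of the coproduct is matched exactly by the fibers of the map, guaranteeing surjectivity without overcounting.
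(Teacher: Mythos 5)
Your proposal is sound in outline but takes a genuinely different route from the paper, and it leaves two specific points unfinished. The paper never computes tangent spaces: it proves that $(\res_{\Gamma,1},\res_{\Gamma,2})$ is an analytic local isomorphism by constructing a functorial inverse on families over a local base $S$. A point of the fiber product over $S$ is a family of $G$-curves with fiberwise $H_1$- and $H_2$-actions; these actions are embeddings of \emph{constant} group schemes into the relative automorphism group, hence generate a constant group scheme $H\times S$, and the family version of Lemma \ref{get_aug_adm_graph} (plus contraction of smoothable edges) produces an $S$-point of the coproduct mapping isomorphically to the given one, uniquely once relabeling data are fixed. This buys étaleness, surjectivity, and quasi-finiteness in one stroke, with no appeal to dimension theory. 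Your route instead verifies the first-order statement (the differential is an isomorphism) and then tries to conclude. Note that ``differential is an isomorphism at every point, source smooth'' does \emph{not} by itself yield étale, since the target is not yet known to be smooth (or even reduced); you must first run the dimension count you implicitly have available: by properness, finiteness of fibers, and surjectivity, the image of each $n$-dimensional component is $n$-dimensional, so $\dim_Q\le\dim T_Q=n\le\dim_Q$ at every point $Q$ of the fiber product, whence the fiber product is smooth and only then is the quasi-finite map étale. You state this implication in the reverse order, but all ingredients are present in your text.

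The more substantive gap is in the tangent-space identification itself, which you flag but do not carry out, and which as phrased is only correct at generic points. Your dictionary ``a node may be smoothed $H$-equivariantly iff its edge is smoothable'' presumes the dual graph of $X$ equals $\Gamma$. At a boundary point $P$ of $\barH_{(\Gamma,H,\{N_v\})}$ the curve $X_P$ has additional nodes internal to the pieces $C_v$, and two things must be checked: (a) these internal nodes are always $H$-equivariantly smoothable, so they create no discrepancy between the two tangent spaces — this follows from the balanced condition in the definition of admissible covers; and (b) the nodes corresponding to edges of $\Gamma$ remain non-smoothable at $P$. Point (b) is not formal: equivariant smoothability at $P$ is governed by the kernels of the $H$-action on the actual components of $X_P$ meeting at the node, not by the prescribed subgroups $N_v$, and one needs these kernels to equal $N_v$ and $N_{v'}$ exactly. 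This holds because the action of $H_v/N_v$ on any admissible cover parametrized by $\barH_{g_v,H_v/N_v,\xi_v}$ is free away from nodes and markings, so component stabilizers act faithfully; without this, an edge with $N_v\neq N_{v'}$ could conceivably become smoothable on a degenerate curve, the fiber-product tangent space would jump, and étaleness would fail. The same faithfulness statement also gives uniqueness of the non-smoothable graph attached to a point of the fiber product, which your surjectivity-and-counting paragraph needs in order for the fibers to be matched by the indexing set without overcounting. Supplying (a), (b), and the corrected order of deductions makes your argument a complete, legitimately different proof.
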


More precisely, the compatibility of the relabeling data entails the following two conditions:
\begin{itemize}
\item The resulting restriction maps $\res_{\Gamma,j}$ produce the correct monodromy data $\xi_j$ from that of $(\Gamma,H,\{N_v\})$, and
\item The relabeling data defining the composite morphisms $\res^{H_j}_{G}\circ\res^{\Gamma,j}$ agree, that is, produce the same ordered collection of distinguished marked points.
\end{itemize}

\begin{proof}
The commutativity is immediate from the requirements on compatibility; the common composition is simply the restriction to the subgroup $G\subset H$, with the induced relabeling data. All of the restriction maps are proper and quasi-finite so $(\res_{\Gamma,1},\res_{\Gamma,2})$ is as well.

We now show that $(\res_{\Gamma,1},\res_{\Gamma,2})$ is an analytic local isomorphism. Consider a point of $\barH_{g,H_1,\xi_1}\times_{\barH_{g,G,\xi}}\barH_{g,H_2,\xi_2}$ over a local base scheme $S$. This consists of the data of a family $\mathcal{C}\to S$ of $G$-curves equipped with fiberwise $H_j$-actions, along with discrete relabeling data. The data of the group actions are equivalent to embeddings of \textit{constant} group schemes $H_j\times S\hookrightarrow\Aut(\mathcal{X})$ over $S$. We then define $H$ to be the group for which the $H_j$ generate the constant group scheme $H\times S$ inside $\Aut(\mathcal{X})$.

Then, we obtain a non-smoothable augmented admissible $H$-graph $(\Gamma,H,\{N_v\})$ as in Lemma \ref{get_aug_adm_graph}. Any ordered choice of distinguished marked points yields an $S$-point of the smooth stack $\barH_{(\Gamma,H,\{N_v\})}$, which maps isomorphically to the original point of $\barH_{g,H_1,\xi_1}\times_{\barH_{g,G,\xi}}\barH_{g,H_2,\xi_2}$ relative to the unique compatible choice of relabeling data. Moreover, the genericity conditions of non-smoothability and $H=\langle H_1,H_2\rangle$ guarantee that these are the only points of $\displaystyle\coprod\barH_{(\Gamma,H,\{N_v\})}$ living over $\mathcal{C}$. 
\end{proof}

We now compute the degree of $\displaystyle\coprod\barH_{(\Gamma,H,\{N_v\})}$ over the fiber product in Lemma \ref{res_res_diagram_lemma}.

\begin{lem}
Given a point $[\cC]\in \barH_{g,H_1,\xi_1}\times_{\barH_{g,G,\xi}}\barH_{g,H_2,\xi_2}(\Spec(\bC))$, the number of isomorphism classes of augmented admissible $H$-graphs $(\Gamma,H,\{N_v\})$ for which $\barH_{(\Gamma,H,\{N_v\})}$ contains points mapping to of $[\cC]$ under $\coprod(\res_{\Gamma,1},\res_{\Gamma,2})$ is
\begin{equation*}
(\#L'(\Gamma))!\cdot\prod_{\ell\in\#L'(\Gamma)}\frac{\# H}{\#\wt{H}_{\ell}}=(\#L'(\Gamma))!\cdot\prod_{\ell\in\#L'(\Gamma)}\frac{\# H}{\ord_{H_\ell}(h_\ell)\cdot\# N_{v_{\ell}}}.
\end{equation*}
where $L'(\Gamma)\subset L(\Gamma)$ is any choice of $H$-orbit representatives. Furthermore, the degree of such a $\barH_{(\Gamma,H,\{N_v\})}$ over $\barH_{g,H_1,\xi_1}\times_{\barH_{g,G,\xi}}\barH_{g,H_2,\xi_2}$ is 
\begin{equation*}
\#\Aut((\Gamma,H,\{N_v\})).
\end{equation*}
\end{lem}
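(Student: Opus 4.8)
The plan is to study the finite, étale, surjective map $\coprod(\res_{\Gamma,1},\res_{\Gamma,2})$ of Lemma \ref{res_res_diagram_lemma} over a fixed geometric point $[\cC]$, since both assertions concern its fiber. Write $H=\langle H_1,H_2\rangle\subset\Aut(\cC)$ and let $(\Gamma,H,\{N_v\})$ be the (non-smoothable) augmented admissible $H$-graph attached to $\cC$ by Lemma \ref{get_aug_adm_graph}, so that $\Gamma$ is exactly the dual graph of $\cC$ and $[\cC]$ is generic in the image of the corresponding component.

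First I would establish the enumeration. Passing from the bare $H$-curve $\cC$ to an augmented admissible $H$-graph in the sense of Definition \ref{augmented_graph_defn} fixes $\Gamma$, the $H$-action, the subgroups $N_v$, and the monodromy at every half-edge and leg; the only remaining freedom is the two pieces of discrete data recorded in that definition, namely a total ordering of the $H$-orbits of legs and a distinguished leg $\ell_{i1}$ in each orbit. There are $(\#L'(\Gamma))!$ orderings, and the orbit of a representative $\ell$ has cardinality $\#H/\#\wt H_\ell$, giving that many distinguished legs; multiplying yields the first expression. The second equality is a purely group-theoretic rewriting: since $N_{v_\ell}$ fixes $\ell$ and $\wt H_\ell/N_{v_\ell}$ is the stabilizer $H_\ell=\langle h_\ell\rangle$ of $\ell$ in $H_{v_\ell}/N_{v_\ell}$, one has $\#\wt H_\ell=\#H_\ell\cdot\#N_{v_\ell}=\ord_{H_\ell}(h_\ell)\cdot\#N_{v_\ell}$.

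Next I would compute the degree of a single component over the fiber product. This is a torsor count identical in spirit to the Schmitt--van Zelm calculation that a boundary morphism has degree equal to the number of automorphisms of its indexing graph. Over the locus with dual graph exactly $\Gamma$, a preimage of $[\cC]$ in $\barH_{(\Gamma,H,\{N_v\})}$ is precisely a $(\Gamma,H,\{N_v\})$-marking of the fixed $H$-curve $\cC$ compatible with the component's relabeling data; any two such markings differ by a unique automorphism of $(\Gamma,H,\{N_v\})$, so these markings form a torsor under $\Aut((\Gamma,H,\{N_v\}))$, and the restricted map is étale of degree $\#\Aut((\Gamma,H,\{N_v\}))$ onto its image.

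The main obstacle is the bookkeeping tying the enumeration to genuine isomorphism classes in the first part, since $\cC$ may carry graph automorphisms compatible with $H$ that a priori identify some of the $(\#L'(\Gamma))!\prod_{\ell\in L'(\Gamma)}\#H/\#\wt H_\ell$ labelings. The resolution I would pursue is that each labeling is exactly the relabeling datum indexing a component of the coproduct in Lemma \ref{res_res_diagram_lemma}: because an isomorphism of augmented admissible $H$-graphs is required to respect both the ordering of leg-orbits and the distinguished legs, once the relabeling data is retained a labeling cannot be matched with a distinct one, so the labelings are in bijection with the contributing components. I would make this precise by feeding each labeling into the explicit inverse functor constructed in the proof of Lemma \ref{res_res_diagram_lemma}, and checking that distinct labelings land in distinct components and conversely exhaust them. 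This is the step requiring the most care, as it is where the interplay between the symmetries of $\Gamma$ and the rigidifying choice of distinguished points must be controlled, and where one must keep the combinatorial count of part one consistent with the per-component automorphism degree of part two.
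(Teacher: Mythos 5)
Your proposal is correct and follows essentially the same route as the paper's proof: the underlying $H$-graph and subgroups $N_v$ are forced by the construction of Lemma \ref{get_aug_adm_graph}, the count is the number of ordered choices of distinguished legs ($(\#L'(\Gamma))!$ orderings times the orbit sizes $\#H/\#\wt{H}_\ell$), the second expression is the group-theoretic identity $\#\wt{H}_\ell=\ord_{H_\ell}(h_\ell)\cdot\#N_{v_\ell}$ coming from $N_{v_\ell}$ acting trivially on the legs at $v_\ell$, and the degree claim is the same $\Aut((\Gamma,H,\{N_v\}))$-torsor argument (the paper phrases the torsor via choices of distinguished half-edge orbit representatives, you via markings of $\cC$, which amount to the same data). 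Your final paragraph spells out a point the paper leaves implicit, namely that each ordered labeling determines its unique compatible relabeling data and hence its own component of the coproduct; this is precisely the content of the paper's remark that ``the relabeling data for $\res_{\Gamma,j}$ are then determined.''
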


\begin{proof}
As we saw in the proof of Lemma \ref{res_res_diagram_lemma}, the genericity conditions require all pre-images of $[\cC]$ to come from the same $H$-graph with the same subgroups $N_v$, while the only choice available is for the ordered set of distinguished marked points on $(\Gamma,H,\{N_v\})$; the relabeling data for $\res_{\Gamma,j}$ are then determined. In the $H$-orbit of $\ell\in L(\Gamma)$, there are 
\begin{equation*}
\frac{\#H}{\#\wt{H}_{\ell}}=\frac{\# H}{\ord_{H_\ell}(h_\ell)\cdot\# N_{v_{\ell}}}
\end{equation*}
possible choices of a distinguished marked point, where $v_{\ell}$ is the vertex to which $\ell$ is attached, and $(\#L'(\Gamma))!$ ways to order these choices. 

Finally, similarly as in the proof of Lemma \ref{cores_boundary_diagram}, each $\barH_{(\Gamma,H,\{N_v\})}$ forms a $\Aut((\Gamma,H,\{N_v\}))$-torsor over $\barH_{g,H_1,\xi_1}\times_{\barH_{g,G,\xi}}\barH_{g,H_2,\xi_2}$, corresponding to possible choices of distinguished half-edge orbit representatives. This gives the last claim.
\end{proof}

\begin{rem}
The index set for the coproduct can in principle be enumerated by a finite computation. Indeed, $\#\Aut(X)$ is uniformly bounded in terms of $g$ and the number of marked points, so there are only finitely many possible $H\subset\Aut(X)$, and finitely many choices of the subsequent data. 
\end{rem}

It remains to compute the top Chern class of the excess bundle on the fiber product. We may do so on each component of the finite unramified cover
\begin{equation*}
\displaystyle\coprod\barH_{(\Gamma,H,\{N_v\})}
\end{equation*}
and then divide by the degree over the corresponding component of the fiber product.

We have no combinatorial description of the normal bundles of restriction maps as in the case of boundary maps, but we may compute explicit formulas for the Chern classes of the spaces involved, see \S\ref{tangent_bundle_hurwitz}.

In the end, we conclude the following.

\begin{prop}
We have
\begin{align*}
(\res_{G}^{H_2})^{*}((\res_{G}^{H_1})_{*}(1))=\sum_{(\Gamma,H,\{N_v\})}\left(\#\Aut((\Gamma,H,\{N_v\}))\cdot(\#L'(\Gamma))!\cdot\prod_{\ell\in\#L'(\Gamma)}\frac{\# H}{\ord_{H_\ell}(h_\ell)\cdot\# N_{v_{\ell}}}\right)^{-1}\\
\cdot \left(\res_{\Gamma,2}\right)_{*}\left(c_{\topchern}\left(\frac{\cT_{\barH_{(\Gamma,H,\{N_v\})}}\cdot\cT_{\barH_{g,G,\xi}}}{\cT_{\barH_{g,H_1,\xi_1}}\cdot\cT_{\barH_{g,H_2,\xi_2}}}\right)\right)
\end{align*}
where the sum is over augmented admissible $H$ graphs along with relabeling data as above, and all tangent bundles are pulled back to $\barH_{(\Gamma,H,\{N_v\})}$. 
\end{prop}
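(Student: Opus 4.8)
The plan is to realize the left-hand side as a single clean excess intersection on the fiber product $W:=\barH_{g,H_1,\xi_1}\times_{\barH_{g,G,\xi}}\barH_{g,H_2,\xi_2}$, compute the associated top Chern class there, and then transport the answer to the finite étale cover $\coprod\barH_{(\Gamma,H,\{N_v\})}$ using the degree bookkeeping already in hand. First I would record that both restriction maps $\res^{H_j}_G$ are finite, unramified, and local complete intersections between smooth Deligne--Mumford stacks (Theorem \ref{source_target_properties} and the discussion following Definition \ref{restriction_hurwitz_defn}), so that each carries a genuine normal bundle $N_{\res^{H_j}_G}=(\res^{H_j}_G)^{*}\cT_{\barH_{g,G,\xi}}/\cT_{\barH_{g,H_j,\xi_j}}$ and the pullback $(\res^{H_2}_G)^{*}$ is the refined Gysin homomorphism. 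By Lemma \ref{res_res_diagram_lemma} the fiber product $W$ is smooth, so the two maps meet cleanly along it; writing $g_1\colon W\to\barH_{g,H_1,\xi_1}$, $g_2\colon W\to\barH_{g,H_2,\xi_2}$ for the projections, the clean excess intersection formula for lci morphisms (all spaces being smooth) yields
\[
(\res^{H_2}_G)^{*}((\res^{H_1}_G)_{*}(1)) = (g_2)_{*}\,c_{\topchern}(E_W),\qquad E_W=g_2^{*}N_{\res^{H_2}_G}\big/N_{g_1},
\]
the excess bundle being the only new feature over the classical boundary calculus, since $W$ has strictly larger than expected dimension.

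Next I would identify $E_W$. Expanding both normal bundles in $K$-theory in terms of tangent bundles, the four contributions telescope: with $h\colon W\to\barH_{g,G,\xi}$ the common composite,
\[
E_W = h^{*}\cT_{\barH_{g,G,\xi}}-g_1^{*}\cT_{\barH_{g,H_1,\xi_1}}-g_2^{*}\cT_{\barH_{g,H_2,\xi_2}}+\cT_W
=\frac{\cT_W\cdot h^{*}\cT_{\barH_{g,G,\xi}}}{g_1^{*}\cT_{\barH_{g,H_1,\xi_1}}\cdot g_2^{*}\cT_{\barH_{g,H_2,\xi_2}}},
\]
which, pulled back along the cover, is exactly the $K$-theoretic combination of tangent bundles appearing in the statement. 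Here I would invoke Proposition \ref{chern_class_hurwitz} (Bini's formula transported through $\delta$) to guarantee that each of these tangent bundles, and hence $c_{\topchern}$ of their combination, is $\cH$-tautological; since $\res_{\Gamma,2}$ is a composition of diagonal, boundary, and restriction maps, the resulting expression is genuinely $\cH$-tautological.

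Finally I would descend the identity from $W$ to the cover $p\colon\coprod\barH_{(\Gamma,H,\{N_v\})}\to W$, which is finite, étale, and surjective by Lemma \ref{res_res_diagram_lemma}. Since tangent bundles pull back, $p^{*}c_{\topchern}(E_W)$ restricts on each component to the integrand $c_{\topchern}\!\big(\cT_{\barH_{(\Gamma,H,\{N_v\})}}\cdot\cT_{\barH_{g,G,\xi}}/(\cT_{\barH_{g,H_1,\xi_1}}\cdot\cT_{\barH_{g,H_2,\xi_2}})\big)$ of the statement. Writing $\res_{\Gamma,2}=g_2\circ p$ on each component and applying the projection formula to the map of degree $\#\Aut((\Gamma,H,\{N_v\}))$ onto its image gives $(g_2)_{*}c_{\topchern}(E_W)=\tfrac{1}{\#\Aut}(\res_{\Gamma,2})_{*}c_{\topchern}(E_\Gamma)$ over that image. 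Summing over the $(\#L'(\Gamma))!\cdot\prod_{\ell}\#H/(\ord_{H_\ell}(h_\ell)\cdot\#N_{v_\ell})$ relabeled isomorphism classes sharing a common image, counted in the preceding degree lemma, reproduces the single class on $W$ and produces the weighted sum asserted. I expect this last bookkeeping to be the main obstacle: one must verify that the total étale degree of $p$ over each component of $W$ is precisely $\#\Aut\cdot(\#L'(\Gamma))!\cdot\prod_{\ell}\#H/(\ord_{H_\ell}(h_\ell)\cdot\#N_{v_\ell})$, so that the identical contributions of the relabeled graphs aggregate with no residual factor, leaving exactly the coefficient $\big(\#\Aut((\Gamma,H,\{N_v\}))\cdot(\#L'(\Gamma))!\cdot\prod_{\ell}\#H/(\ord_{H_\ell}(h_\ell)\cdot\#N_{v_\ell})\big)^{-1}$.
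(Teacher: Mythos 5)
Your proposal is correct and follows essentially the same route as the paper: the paper likewise deduces the formula from the smoothness of the fiber product and the finite \'{e}tale cover $\coprod\barH_{(\Gamma,H,\{N_v\})}$ (Lemma \ref{res_res_diagram_lemma}), computes the excess bundle $K$-theoretically as $\cT_{W}\cdot\cT_{\barH_{g,G,\xi}}/(\cT_{\barH_{g,H_1,\xi_1}}\cdot\cT_{\barH_{g,H_2,\xi_2}})$ using the tangent-bundle formulas of \S\ref{tangent_bundle_hurwitz}, and divides by the degrees $\#\Aut((\Gamma,H,\{N_v\}))$ and $(\#L'(\Gamma))!\cdot\prod_{\ell}\#H/(\ord_{H_\ell}(h_\ell)\cdot\#N_{v_\ell})$ supplied by the preceding degree lemma. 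Your worry about the \'{e}tale-degree bookkeeping is exactly what that lemma resolves, so there is no gap.
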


Recall from \S\ref{tangent_bundle_hurwitz} that the Chern classes of the tangent bundles in question may be expressed in terms of decorated boundary classes, which, owing to \S\ref{restriction_boundary_intersection_section}, themselves pull back to decorated boundary classes by tautological restriction and boundary maps. However, in pulling back by the maps $\res_{\Gamma,i}$, we also have to pull such classes back by diagonals. To do this, we may apply Proposition \ref{diagonal_with_product} with $\gamma_i$ taken to be boundary maps, which is proven later but independently of this section. We find, by Proposition \ref{boundary_intersection_formula_hurwitz}, that we again get decorated boundary classes in this pullback. In particular, the bivariant pullback of one restriction morphism by another is $\cH$-tautological.

\subsubsection{Intersection of a restriction and corestriction map}

Let $p:H_1\to G$ be a surjection and let $i:G\to H_2$ be an inclusion of groups. We consider the intersection of the corestriction $\cores^{H_1}_{G}:\barH_{g_1,H_1,\xi_1}\to\barH_{g,G,\xi}$ and the restriction $\res^{H_2}_{G}:\barH_{g,H_2,\xi_2}\to\barH_{g,G,\xi}$. 

Consider a point in the fiber product, given by the $H_1$-cover $W\to Y$ corestricting to $X\to Y$ and the $H_2$-cover $X\to Z$ restricting also to $X\to Y$. Then, the data of all three of these maps is contained in the composition
\begin{equation*}
\xymatrix{
W \ar[r] \ar@/^2.0pc/[rr]^{H_1} & X \ar[r]^{G} \ar@/_2.0pc/[rr]_{H_2} & Y \ar[r] & Z
}
\end{equation*}
However, this composition may not be Galois. If we assume that the curves in question are smooth, we may pass to the Galois closure $\wt{W}\to Z$ over $W\to Z$, which is a $\wt{H}$-cover for some finite group $\wt{H}$. Let $\wt{p}:\wt{H}\to H_2$ be the natural surjection. Then, the Galois group of $\wt{W}$ over $Y$ is 
\begin{equation*}
\wt{H}_1:=\wt{p}^{-1}(G)\subset\wt{H};
\end{equation*}
let $\wt{i}:\wt{H}_1\to\wt{H}$ be the inclusion and $\wt{q}:\wt{H}\to H_1$ be the natural surjection. We then have a commutative diagram
\begin{equation*}
\xymatrix{
\wt{H}_1 \ar@{^{(}->}[r]^{\wt{i}} \ar@{->>}[d]^{\wt{q}} & \wt{H} \ar@{->>}[dd]^{\wt{p}} \\
H_1 \ar@{->>}[d]^p & \\
G \ar@{^{(}->}[r]^i & H_2
}
\end{equation*}
\begin{defn}
Given a surjection $p_1:H_1\to G$ and injection $i:G\to H_2$ of finite groups, we define a \textbf{GC-diagram} (\textit{Galois closure} diagram) to consist of the data of a surjection $\wt{p}:\wt{H}\to H_2$ of finite groups along with a surjection $\wt{q}$ from $\wt{H}_1:=\wt{p}^{-1}(G)\subset\wt{H}$ to $H_1$ making the diagram as above commute.

A GC-diagram is \textbf{minimal} if it does not factor through another GC-diagram via a non-trivial surjection $\wt{H}\to\wt{H'}$, as below:
\begin{equation*}
\xymatrix{
\wt{H}_1 \ar@{^{(}->}[r] \ar@{->>}[d] & \wt{H} \ar@{->>}[d] \\
\wt{H}'_1 \ar@{^{(}->}[r]^{\wt{i}} \ar@{->>}[d]^{\wt{q}} & \wt{H}' \ar@{->>}[dd]^{\wt{p}} \\
H_1 \ar@{->>}[d]^p & \\
G \ar@{^{(}->}[r]^i & H_2
}
\end{equation*}
\end{defn}

Fix a minimal GC diagram, and monodromy data $\wt{\xi}=(\wt{h}_1,\ldots,\wt{h}_b)\in\wt{H}^b$ which map to $\xi_2=(h^2_1,\ldots,h^2_b)\in H_2^b$ under the surjection $\wt{p}$. Thus, we have a corestriction map
\begin{equation*}
\cores^{\wt{H}}_{\wt{H_2}}:\barH_{\wt{g},\wt{H},\wt{\xi}}\to\barH_{g,H_2,\xi_2},
\end{equation*}
where the genus $\wt{g}$ is determined by the Riemann-Hurwitz formula.

On the other hand, suppose that we have a choice of relabeling data for the subgroup $\wt{H}_1\subset\wt{H}$ that the resulting restriction map
\begin{equation*}
\res^{\wt{H}}_{\wt{H}_1}:\barH_{\wt{g},\wt{H},\wt{\xi}}\to \barH_{g_1,\wt{H}_1,\wt{\xi}_1}
\end{equation*}
yields monodromy data $\wt{\xi}_1$ mapping to $\xi_1$ under the quotient map $\wt{q}:\wt{H}_1\to H_1$. Then, we have a restriction-corestriction map
\begin{equation*}
q_{\wt{K}_1,\wt{H}_1,\wt{H}}=\cores^{\wt{H}_1}_{H_1}\circ\res^{\wt{H}}_{\wt{H}_1}:\barH_{\wt{g},\wt{H},\wt{\xi}}\to \barH_{g_1,H_1,\xi'_1}
\end{equation*}
where $\wt{K}_1=\ker(\wt{q})$

Suppose, in addition, that the relabeling data defining $\res^{\wt{H}}_{\wt{H}_1}$ is compatible with that defining $\res^{H_2}_{G}$ in the sense that the ordered set of orbit representatives of $\wt{H}_1$ acting on the $\wt{H}/\langle \wt{h}_i\rangle$ maps to that of the orbit representatives of $G$ acting on $\wt{H}_2/\langle h^2_i\rangle$ under $\wt{p}$ (with the same indices $i$ everywhere). Then, by construction, we have a commutative diagram

\begin{equation*}
\xymatrix@C+2pc{
\barH_{\wt{g},\wt{H},\wt{\xi}} \ar[r]^{q_{\wt{K_1},\wt{H}_1,\wt{H}}} \ar[d]_{\cores^{\wt{H}}_{H_2}} & \barH_{g_1,H_1,\xi_1} \ar[d]^{\cores^{H_1}_{G}} \\
\barH_{g_2,H_2,\xi_2} \ar[r]^{\res^{H_2}_{G}} & \barH_{g,G,\xi}
}
\end{equation*}

The following is thus immediate:

\begin{lem}
Consider the commutative diagram
\begin{equation*}
\xymatrix@C+2pc{
\coprod \barH_{\wt{g},\wt{H},\wt{\xi}} \ar[r]^{q_{\wt{K_1},\wt{H}_1,\wt{H}}} \ar[d]_{\cores^{\wt{H}}_{H_2}} & \barH_{g_1,H_1,\xi_1} \ar[d]^{\cores^{H_1}_{G}} \\
\barH_{g_2,H_2,\xi_2} \ar[r]^{\res^{H_2}_{G}} & \barH_{g,G,\xi}
}
\end{equation*}
where the coproduct is taken over all possible minimal $GC$ diagrams with compatible choices of monodromy data $\wt{\xi}$ and relabeling data defining $q_{\wt{K_1},\wt{H}_1,\wt{H}}$ as detailed above. 

Then, the induced map
\begin{equation*}
\coprod(q_{\wt{K_1},\wt{H}_1,\wt{H}},\cores^{\wt{H}}_{H_2}):\coprod \barH_{\wt{g},\wt{H},\wt{\xi}} \to \barH_{g_1,H_1,\xi_1}\times_{\barH_{g,G,\xi}} \barH_{g_2,H_2,\xi_2}
\end{equation*}
is proper and quasi-finite.
\end{lem}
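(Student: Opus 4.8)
The plan is to deduce both properties of the induced map
$f=\coprod(q_{\wt{K_1},\wt{H}_1,\wt{H}},\cores^{\wt{H}}_{H_2})$ formally from the properties of its constituents, using the commutativity of the square together with the standard cancellation principle for morphisms of Deligne--Mumford stacks: if $g\circ f$ is proper (resp.\ quasi-finite) and $g$ is separated, then $f$ is proper (resp.\ quasi-finite). First I would verify that the source is a \emph{finite} disjoint union of proper stacks. By Theorem \ref{source_target_properties} each $\barH_{\wt{g},\wt{H},\wt{\xi}}$ is proper, so it suffices to bound the index set. Here I would argue as in the finiteness remark of \S\ref{res-cores_int}: a closed point of the fiber product underlies a degree $\#H_1\cdot\#H_2/\#G$ cover $W\to Z$, and its Galois closure $\wt{W}\to Z$ embeds $\wt{H}$ into the fixed symmetric group on $\#H_1\cdot\#H_2/\#G$ letters. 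Hence only finitely many isomorphism types of $\wt{H}$, and a fortiori of minimal GC-diagrams, can occur; for each there are finitely many admissible monodromy tuples $\wt{\xi}\in\wt{H}^b$ mapping to $\xi_2$ under $\wt{p}$, and finitely many choices of relabeling data. Thus the coproduct is finite and the source is proper.

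Next I would take $g$ to be the projection $\pi_1\colon\barH_{g_1,H_1,\xi_1}\times_{\barH_{g,G,\xi}}\barH_{g_2,H_2,\xi_2}\to\barH_{g_1,H_1,\xi_1}$. Since $\pi_1$ is the base change of the finite map $\res^{H_2}_{G}$ along $\cores^{H_1}_{G}$, it is itself finite, in particular separated. The composite is $g\circ f=\coprod q_{\wt{K_1},\wt{H}_1,\wt{H}}$, a finite disjoint union of restriction-corestriction maps. Each such map is a composition $\cores^{\wt{H}_1}_{H_1}\circ\res^{\wt{H}}_{\wt{H}_1}$ of a finite (hence proper and quasi-finite) restriction map with a proper and quasi-finite corestriction map, by the properties recorded in \S\ref{hurwitz_space_section}; therefore $\coprod q_{\wt{K_1},\wt{H}_1,\wt{H}}$ is proper and quasi-finite. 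Applying the cancellation principle twice, with $g=\pi_1$ separated, yields that $f$ is proper and quasi-finite. Symmetrically, one could instead project to the $H_2$-factor and use that $g\circ f=\coprod\cores^{\wt{H}}_{H_2}$ is a finite union of proper, quasi-finite corestriction maps, with the projection $\pi_2$ separated (being the base change of the proper map $\cores^{H_1}_{G}$).

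I expect the only point requiring genuine argument, rather than formal manipulation, to be the finiteness of the index set of minimal GC-diagrams together with their compatible monodromy and relabeling data; once this is in place the conclusion is immediate from the cancellation lemmas and the already-established properties of restriction and corestriction maps. A secondary, routine point is simply to note that the fiber product and both projections are separated, which holds because all the spaces $\barH_{*}$ are separated Deligne--Mumford stacks, so that the hypotheses of the cancellation principle are satisfied.
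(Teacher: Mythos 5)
Your proposal is correct and takes essentially the same route as the paper, whose entire proof is the one-line assertion that properness and quasi-finiteness ``follow from that of all of the morphisms involved,'' together with the remark that the index set is controlled because $\#\wt{H}$ is bounded by $((\#H_1\cdot\#H_2)/\#G)!$ (the paper says ``bounded below,'' evidently a typo for ``above,'' and your Galois-closure-in-the-symmetric-group argument is exactly the intended justification). Your cancellation-through-the-projection argument is just the standard way of making the paper's one-liner precise, so there is nothing genuinely different here.
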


Indeed, the properness and quasi-finiteness follows from that of all of the morphisms involved. We also remark that the index set of the coproduct can in principle be computed, as the order of $\wt{H}$ has is bounded below by $((\# H_1\cdot \# H_2)/\#G)!$.

It remains to compute the degree of $\coprod(q_{\wt{K_1},\wt{H}_1,\wt{H}},\cores^{\wt{H}}_{H_2})$.

\begin{lem}
For any point 
\begin{equation*}
([W\to Y],[X\to Z])\in \barH_{g_1,H_1,\xi_1}\times_{\barH_{g,G,\xi}} \barH_{g_2,H_2,\xi_2}
\end{equation*}
where $W,X,Y,Z$ are smooth curves, let $\wt{W}\to Z$ be the Galois closure of the composition $W\to Z$, and let 
\begin{equation*}
\wt{K}_1\subset\wt{K}_2\subset\wt{H}_1\subset\wt{H}
\end{equation*}
be the Galois groups of $\wt{W}$ over $W,X,Y,Z$, respectively. Let $\wt{h}_i$, for $i=1,\ldots,b$, be any monodromy elements in the $\wt{H}$-orbits of $\wt{W}\to Z$. Then, the degree of $\coprod(q_{\wt{K_1},\wt{H}_1,\wt{H}},\cores^{\wt{H}}_{H_2})$ over $([W\to Y],[X\to Z])$ is
\begin{equation*}
\prod_{i=1}^{b}\#(\wt{K}_2/(\langle\wt{h}_i\rangle\cap\wt{K}_2))\cdot(\#\wt{K}_1/(\langle \wt{h}_i\rangle\cap \wt{K}_1))^{\# H_2/\# G}
\end{equation*}
\end{lem}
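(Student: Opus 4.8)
The plan is to compute the fiber of the (quasi-finite) map over the fixed point $([W\to Y],[X\to Z])$ directly, reducing everything to an orbit-counting argument on the fibers of the Galois closure cover $\wt{W}\to Z$ above the branch points $q_1,\ldots,q_b\in Z$. Since all four curves are smooth there are no nodes, so the corestriction maps are unramified over this point (their ramification is supported on covers with ramified nodes); hence the map is \'{e}tale here and its degree is an honest count of geometric points. Because $W\to Z$ is a map of smooth curves it has a well-defined Galois closure $\wt{W}\to Z$, and minimality of the $GC$-diagram identifies its Galois group with $\wt{H}$, together with the tower $\wt{K}_1\subset\wt{K}_2\subset\wt{H}_1\subset\wt{H}$ as the Galois groups of $\wt{W}$ over $W,X,Y,Z$. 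A point of $\coprod\barH_{\wt{g},\wt{H},\wt{\xi}}$ over our point is then a pointed admissible $\wt{H}$-cover whose underlying cover is $\wt{W}\to Z$ (determined up to the identification of its Galois group with $\wt{H}$, which is recorded by the monodromy $\wt{\xi}$ and the relabeling data, i.e.\ by which component of the coproduct we are in), together with a distinguished marked point $\wt{p}_{i1}$ over each $q_i$, subject to the two conditions that $\cores^{\wt{H}}_{H_2}$ returns $[X\to Z]$ and that $q_{\wt{K}_1,\wt{H}_1,\wt{H}}$ returns $[W\to Y]$, with all distinguished points matching.

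First I would note that the count factors as a product over $i=1,\ldots,b$: the points $\wt{p}_{i1}$ above distinct branch points are chosen independently and the two matching conditions decouple across $i$. It therefore suffices to count, for each $i$, the admissible local data over $q_i$. Throughout I would use that $\wt{K}_2\trianglelefteq\wt{H}$ and $\wt{K}_1\trianglelefteq\wt{H}_1$ are normal, so that the $\wt{K}_2$- and $\wt{K}_1$-orbits on the transitive $\wt{H}$-set $F_i\cong\wt{H}/\langle\wt{h}_i\rangle$ have uniform size within a given $\wt{H}$- (resp.\ $\wt{H}_1$-) orbit; this is what makes the two factors in the statement well defined.

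Next I would split the per-$i$ count into the two asserted factors. The corestriction condition forces $\wt{p}_{i1}$ to lie over the fixed distinguished point $x_i\in X$, since corestriction sends a distinguished point to its image; the admissible choices (equivalently, summing over the components of the coproduct carrying conjugate monodromy labels $\wt{h}_i$) form exactly the $\wt{K}_2$-orbit over $x_i$, of size $\#\bigl(\wt{K}_2/(\langle\wt{h}_i\rangle\cap\wt{K}_2)\bigr)$, giving the first factor. The restriction-corestriction condition is imposed after restricting to $\wt{H}_1$ and corestricting to $H_1=\wt{H}_1/\wt{K}_1$: the relabeling data picks a distinguished point in each $\wt{H}_1$-orbit over $q_i$, and requiring its image in $W=\wt{W}/\wt{K}_1$ to equal the fixed distinguished point of $[W\to Y]$ contributes, across the $\#H_2/\#G=\deg(Y\to Z)$ sheets, a factor of $\#\wt{K}_1/(\langle\wt{h}_i\rangle\cap\wt{K}_1)$ each, producing the exponentiated second factor. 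Multiplying the two factors and taking the product over $i$ gives the claim.

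The main obstacle is precisely the bookkeeping of this second condition. One must disentangle three overlapping discrete choices — the identification of $\mathrm{Gal}(\wt{W}/Z)$ with $\wt{H}$ (recorded by $\wt{\xi}$), the relabeling data defining $\res^{\wt{H}}_{\wt{H}_1}$, and the distinguished-point choices — and show that, even when $Y\to Z$ is ramified over $q_i$ (so that the number of branch points of $Y$ over $q_i$, namely $\#(\wt{H}_1\backslash\wt{H}/\langle\wt{h}_i\rangle)$, is strictly smaller than $\#H_2/\#G$), the combined count nonetheless collapses to the clean exponent $\#H_2/\#G$; the extra labeling freedom inside $\wt{K}_1$ exactly compensates for the ramification. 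I expect the crux to be verifying this collapse, which reduces to an orbit-counting identity on the double cosets $\wt{H}_1\backslash\wt{H}/\langle\wt{h}_i\rangle$ together with the normality of $\wt{K}_1$ in $\wt{H}_1$; I would confirm it by a direct computation, sanity-checking against small cases such as $\wt{H}=S_3$ and $\wt{H}=(\bZ/2\bZ)^2$.
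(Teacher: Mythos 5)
Your strategy is the same as the paper's: minimality forces the GC-diagram to be the one arising from $\wt{W}\to Z$, and the fiber is then counted by choosing distinguished marked points of $\wt{W}\to Z$ lying over those of $X\to Z$ (this determines $\wt{\xi}$ and gives the $\wt{K}_2$-factor), together with distinguished marked points of $\wt{W}$ lying over those of $W\to Y$ (this determines the relabeling data and gives the $\wt{K}_1$-factor). Your first factor is correct, for exactly the reason you give: $\wt{K}_2=\ker(\wt{p})$ is normal in $\wt{H}$, so the fiber of $\wt{W}\to X$ over the distinguished point of $X$ above $q_i$ has the well-defined cardinality $\#(\wt{K}_2/(\langle\wt{h}_i\rangle\cap\wt{K}_2))$.

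The gap is the second factor, which you flag but do not close --- and the compensation you hope for is in fact false. The number of admissible relabeling choices over $q_i$ is the product, over the marked points $y$ of $Y$ above $q_i$, of $\#\wt{K}_1/\#(S_y\cap\wt{K}_1)$, where $S_y\subset\wt{H}$ is the stabilizer of a point of $\wt{W}$ above the given distinguished point of $W$ over $y$. Since $\wt{K}_1$ is normal in $\wt{H}_1$ but not in $\wt{H}$, these local factors can vary with $y$, and $\#(\langle\wt{h}_i\rangle\cap\wt{K}_1)$ is not even invariant under replacing $\wt{h}_i$ by an $\wt{H}$-conjugate, so the displayed formula is not well defined in general. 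Concretely, take $\wt{H}=S_4$, $\wt{K}_2=V_4$, $\wt{H}_1=D_4$, $\wt{K}_1=\langle(13)(24)\rangle$ (so $H_2\cong S_3$, $G\cong\bZ/2$, $\#H_2/\#G=3$, and the core of $\wt{K}_1$ in $\wt{H}$ is trivial, so $\wt{W}$ genuinely is a Galois closure), and let $q_i$ have monodromy in the class of double transpositions. Then $Y\to Z$ is unramified over $q_i$, so this is not even the ramified case you worried about; yet the three local factors are $1,2,2$, giving $4$ relabeling choices, whereas the uniform count gives $(2/2)^3=1$ for the representative $\wt{h}_i=(13)(24)$ and $(2/1)^3=8$ for the conjugate representative $(12)(34)$. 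So the product over orbits does not collapse to the stated power; it must be left as a product of local contributions (which reduces to the stated formula when, e.g., $\wt{K}_1\trianglelefteq\wt{H}$). For what it is worth, the paper's own proof asserts exactly the uniform count you flagged, with no further justification, so you have correctly located a genuine weak point of the published argument; but as a proof, your proposal (like the paper's) does not establish the stated formula.
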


\begin{proof}
By the minimality requirement, the GC diagram giving a point in the pre-image of $([W\to Y],[X\to Z])$ must be that arising from $\wt{W}\to Z$. The points in the fiber above $([W\to Y],[X\to Z])$ then correspond to possible choices of distinguished marked points of $\wt{W}\to Z$ (which determine $\wt{\xi}$), along with possible choices of distinguished marked points of the restriction $\wt{W}\to Y$ (which determine the relabeling data).

For the cover $\wt{W}\to Z$, we must choose distinguished marked points lying over those of $X\to Z$, and their order is also dictated by that on $X\to Z$. The number of possible distinguished marked points in the $i$-th $\wt{H}$-orbit of $\wt{W}$ is then $\#(\wt{K}_2/(\langle\wt{h}_i\rangle\cap\wt{K}_2))$.

Similarly, for the cover $\wt{W}\to Y$, we must choose distinguished marked points lying the given ones of $W\to Y$, in the same order. The $i$-th $\wt{H}$-orbit of $\wt{W}$ breaks into $\#H/\#H_1$ $\wt{H}_1$-orbits, each of which has $\#\wt{K}_1/(\langle \wt{h}_i\rangle\cap \wt{K}_1)$ choices for a distinguished marked point over $Y$.

Combining yields the conclusion.
\end{proof}

Because a general point of the fiber product parametrizes a pair of covers of smooth curves, we have proven the following.
\begin{prop}
With notation as above, we have:
\begin{align*}
(\res^{H_2}_{G})^{*}&((\cores^{H_1}_{G})_{*}(1))\\
=&\sum\left(\prod_{i=1}^{b}\#(\wt{K}_2/(\langle\wt{h}_i\rangle\cap\wt{K}_2))\cdot(\#\wt{K}_1/(\langle \wt{h}_i\rangle\cap \wt{K}_1))^{\# H_2/\# G}\right)^{-1}(\cores^{\wt{H}}_{H_2})_{*}(1)
\end{align*}
and
\begin{align*}
(\cores^{H_1}_{G})^{*}&((\res^{H_2}_{G})_{*}(1))\\
=&\sum\left(\prod_{i=1}^{b}\#(\wt{K}_2/(\langle\wt{h}_i\rangle\cap\wt{K}_2))\cdot(\#\wt{K}_1/(\langle \wt{h}_i\rangle\cap \wt{K}_1))^{\# H_2/\# G}\right)^{-1}(q_{\wt{K_1},\wt{H}_1,\wt{H}})_{*}(1),
\end{align*}
where the sums are over choices of minimal GC-diagrams, monodromy data $\wt{\xi}$, and relabeling data as above. In particular, the intersections in question are $\cH$-tautological.
\end{prop}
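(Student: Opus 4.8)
The plan is to deduce both identities from the commutative square of the preceding Lemma together with its degree computation, by first identifying each pullback with a pushforward of the fundamental cycle $[P]$ of the fiber product
\[
P := \barH_{g_1,H_1,\xi_1}\times_{\barH_{g,G,\xi}}\barH_{g_2,H_2,\xi_2},
\]
and then rewriting $[P]$ in terms of $\Phi := \coprod(q_{\wt{K_1},\wt{H}_1,\wt{H}},\cores^{\wt{H}}_{H_2})$. The cornerstone is the pair of base-change identities $(\cores^{H_1}_G)^{*}(\res^{H_2}_G)_{*}(1) = (\pr_1)_{*}[P]$ and $(\res^{H_2}_G)^{*}(\cores^{H_1}_G)_{*}(1) = (\pr_2)_{*}[P]$. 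The first is the cleanest: since corestriction maps are flat (Theorem \ref{source_target_properties}), the square with $\cores^{H_1}_G$ along the bottom and the proper map $\res^{H_2}_G$ on the right is precisely the setting of the base-change formula for a flat pullback against a proper pushforward, so $(\cores^{H_1}_G)^{*}(\res^{H_2}_G)_{*}(1) = (\pr_1)_{*}(\pr_2)^{*}(1) = (\pr_1)_{*}[P]$, where $(\pr_2)^{*}(1)=[P]$ because $\pr_2$, as the base change of the flat $\cores^{H_1}_G$, is flat. For the second identity $\res^{H_2}_G$ is the (non-flat) map we pull back by, so I instead use that it is a local complete intersection morphism, whence its bivariant pullback is a refined Gysin homomorphism; the identity then follows once I verify that the fiber product carries no excess.

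The needed dimension count is the key geometric input. Because $\cores^{H_1}_G$ is flat and dimension-preserving while $\res^{H_2}_G$ is finite, the projection $\pr_2$ is flat and quasi-finite, so $P$ is pure of dimension $\dim\barH_{g,H_2,\xi_2}$, which is exactly the expected dimension of the fiber product. Thus the Gysin pullback for the second identity returns $[P]$ with no correction, and $(\res^{H_2}_G)^{*}(\cores^{H_1}_G)_{*}(1)=(\pr_2)_{*}[P]$ as classes of codimension $0$ on $\barH_{g,H_2,\xi_2}$.

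The heart of the argument is to relate $[P]$ to $\Phi_{*}(1)$; here I would invoke the genericity flagged before the statement. A general point of each component of $P$ maps under the flat $\pr_2$ to a smooth $H_2$-cover $X\to Z$, which forces $X$, hence $Y=X/G$ and (being an admissible cover of the smooth $X$) also $W$, to be smooth; so every component of $P$ generically parametrizes covers $W,X,Y,Z$ of smooth curves. Over the smooth-curve locus all corestriction and restriction maps are unramified — the ramification of corestrictions is concentrated on the boundary, cf.\ \S\ref{galois_covers_def_section} — so $P$ is generically reduced along each component and $\Phi$ is generically \'{e}tale. By the preceding degree Lemma, the generic degree of $\Phi$ over the component cut out by a minimal GC-diagram equals the displayed product of group orders, so $\Phi_{*}(1)$ equals that degree times $[P]$ there. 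Both $[P]$ and $\Phi_{*}(1)$ being top-dimensional cycles on $P$, this determines the identity
\[
[P] = \sum \left(\prod_{i=1}^{b}\#(\wt{K}_2/(\langle\wt{h}_i\rangle\cap\wt{K}_2))\cdot(\#\wt{K}_1/(\langle \wt{h}_i\rangle\cap \wt{K}_1))^{\# H_2/\# G}\right)^{-1}\Phi_{*}(1),
\]
the sum over minimal GC-diagrams, monodromy data $\wt{\xi}$, and relabeling data. Pushing forward by $\pr_1$ (using $\pr_1\circ\Phi = q_{\wt{K_1},\wt{H}_1,\wt{H}}$) gives the second formula, and by $\pr_2$ (using $\pr_2\circ\Phi = \cores^{\wt{H}}_{H_2}$) gives the first. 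Each summand is the pushforward of the fundamental class along a composition of restriction-corestriction maps, hence $\cH$-tautological by Definition \ref{main_defn}.

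The step I expect to be most delicate is the control of multiplicities: the possible non-reducedness of $P$, and the Tor-independence required for the refined Gysin pullback in the second identity to return $[P]$ rather than an excess class. Both are resolved by the smooth-curve genericity. Since all the cycles in play are determined by their multiplicities along components of $P$ (respectively $\barH_{g,H_2,\xi_2}$), and those multiplicities are read off on the \'{e}tale smooth-cover locus where the fiber product is reduced and the relevant squares are dimensionally transverse, the boundary contributions — exactly where corestrictions ramify and these subtleties genuinely occur — do not affect the stated cycle-class identities. I would therefore phrase the argument so that it is the generic behavior, quantified by the preceding degree Lemma, that pins down the global classes.
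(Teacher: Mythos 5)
Your proof is correct and takes essentially the same route as the paper: the paper's own proof consists of the two preceding lemmas plus the single observation that a general point of each component of the fiber product parametrizes covers of smooth curves, and your argument supplies exactly the intersection-theoretic bookkeeping (flat base change for one pullback, the Gysin pullback returning $[P]$ for the other, and multiplicities pinned down over the smooth-cover locus where the corestriction is \'{e}tale) that the paper leaves implicit.
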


\subsubsection{Intersection of two corestriction maps}

Let $H_1\to G,H_2\to G$ be surjections of finite groups with kernels $K_1\subset H_1,K_2\subset H_2$. We consider the intersection of the corestriction maps $\cores^{H_1}_{G}:\barH_{g_1,H_1,\xi_1}\to\barH_{g,G,\xi}$ and $\cores^{H_2}_{G}:\barH_{g_2,H_2,\xi_2}\to\barH_{g,G,\xi}$.

We first explain the idea of the construction. Consider admissible $H_j$-covers $f_j:X_j\to Y$ equipped with an isomorphism $X_1/K_1\cong X_2/K_2$ over $Y$; we identify both curves with the $G$-curve $X$. Suppose further that the $X_j$ and $Y$ are smooth. Then, away from the marked points of $Y$, the fiber product $f'_\bullet:X'_\bullet=X_1\times_X X_2\to Y$ is a principal $H_\bullet=H_1\times_{G}H_2$-bundle over $Y$. 

After normalizing $X'_\bullet$, we then get a $H_\bullet$-cover of smooth curves $f_\bullet:X_\bullet\to Y$. However, $X_\bullet$ may be disconnected. On the other hand, let $H_0$ be the stabilizer of a connected component $X_0\subset X_\bullet$; alternatively, $H_0$ is the image of the monodromy representation (obtained after choosing base-points appropriately) $\pi_1(Y,\{q_k\})\to H_1\times_G H_2$ induced by $f_1$ and $f_2$. Then, we can essentially recover the data of the original $H_j$-covers by corestricting from $H_0$ to $H_j$.

The issue remains that distinguished marked points must be chosen on $X_0$, and that these may not map to those on the $X_j$, so we will need to post-compose with relabeling isomorphisms on $\barH_{g_j,H_j,\xi_j}$.

%
%

\begin{defn}\label{cores_prime_defn}
As above, define $H_\bullet=H_1\times_{G}H_2$, and let $H_0\subset H_\bullet$ be a subgroup such that the compositions
\begin{equation*}
\xymatrix{
\pi_j:H_0 \ar[r] & H_\bullet=H_1\times_{G}H_2 \ar[r]^(0.71){\pr_j} & H_j
}
\end{equation*}
are both surjective. Moreover, fix $t_1,\ldots,t_b\in H_\bullet$. 

We define, for $j=1,2$, the map
\begin{equation*}
\cores'^{H_0}_{H_j}:\barH_{h_0,H_0,\xi_0}\to\barH_{h_j,H_j,\xi_j}
\end{equation*}
by the composition of the usual corestriction map
\begin{equation*}
\cores^{H_0}_{H_j}:\barH_{h_0,H_0,\xi_0}\to\barH_{h_j,H_j,\xi'_j}
\end{equation*}
and the relabeling isomorphism
\begin{equation*}
\res^{H_j}_{H_j}:\barH_{h_0,H_j,\xi'_j}\to\barH_{h_j,H_j,\xi_j}
\end{equation*}
replacing the distinguished marked point $p_{i1}$ with $\pr_j(t_i)\cdot p_{i1}$.
\end{defn}

Write $\xi_j=(h^j_1,\ldots,h^j_b)$ for $j=0,1,2$. Then, for the maps $\cores'^{H_0}_{H_j}$ to make sense, we need
\begin{equation*}
h^j_i=\pr_j(t_i)\cdot\pi_j(h^0_i)\cdot\pr(t_i^{-1})
\end{equation*}
for $j=1,2$.

\begin{lem}
For any choice of $H_0\subset H_\bullet$ and $t_1,\ldots,t_b\in H_\bullet$ as in Definition \ref{cores_prime_defn}, set, for $i=1,2,\ldots,b$,
\begin{equation*}
h^0_i=t_i^{-1}h^\bullet_i t_i\in H_\bullet,
\end{equation*}
where $h^\bullet_i=(h^1_i,h^2_i)$.
Then, we have a commutative diagram
\begin{equation*}
\xymatrix{
\coprod \barH_{h_0,H_0,\xi_0} \ar[r]^{\cores'^{H_0}_{H_1}} \ar[d]_{\cores'^{H_0}_{H_2}} & \barH_{g_1,H_1,\xi_1} \ar[d]^{\cores^{H_1}_{G}}\\
\barH_{g_2,H_2,\xi_2} \ar[r]^{\cores^{H_2}_{G}} & \barH_{g,G,\xi}
}
\end{equation*}
where the coproduct is taken over all possible choices above.

Moreover, the induced map
\begin{equation*}
\coprod(\cores'^{H_0}_{H_1},\cores'^{H_0}_{H_2}): \coprod\barH_{h_0,H_0,\xi_0} \to \barH_{g_1,H_1,\xi_1}\times_{\barH_{g,G,\xi}} \barH_{g_2,H_2,\xi_2}
\end{equation*}
is proper and quasi-finite.
\end{lem}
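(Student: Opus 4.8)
The plan is to establish the two assertions of the lemma separately: first that the square commutes, and then that the induced map to the fiber product is proper and quasi-finite. Commutativity is the substantive point, and I would verify it functorially, though for brevity I describe the check on closed points (the family version being identical after replacing marked points with sections). The two composites to compare are $\cores^{H_1}_{G}\circ\cores'^{H_0}_{H_1}$ and $\cores^{H_2}_{G}\circ\cores'^{H_0}_{H_2}$, and I claim both agree with the single corestriction $\cores^{H_0}_{G}$ attached to the surjection $H_0\to H_\bullet\to G$, which factors through both $H_j$ precisely because $H_\bullet=H_1\times_{G}H_2$. Since a composition of corestriction maps is again a corestriction map, the only thing left to reconcile is the effect of the two relabeling isomorphisms on the distinguished marked points.

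First I would track the monodromy data. Starting from a distinguished point $p^0_{i1}$ of the $H_0$-cover with monodromy $h^0_i=t_i^{-1}h^\bullet_i t_i$, its image under $\cores^{H_0}_{H_j}$ carries monodromy $\pi_j(h^0_i)=\pr_j(t_i)^{-1}h^j_i\,\pr_j(t_i)$, and the subsequent relabeling isomorphism conjugates by $\pr_j(t_i)$, returning exactly $h^j_i$; this is the compatibility condition $h^j_i=\pr_j(t_i)\cdot\pi_j(h^0_i)\cdot\pr_j(t_i)^{-1}$ recorded before the lemma, so the landing point indeed lies in $\barH_{g_j,H_j,\xi_j}$. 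Corestricting further to $G$ then replaces $h^j_i$ by its image $h_i\in G$, independently of $j$, so the monodromy data match.

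The crucial step is matching the distinguished marked points of the two resulting $G$-covers. Here I would observe that the composite $X_0\to X_j\to X$ is nothing but the corestriction $X_0\to X$ for $H_0\to G$, so before relabeling the distinguished point of $X$ produced through either $j$ is the common $G$-image $\overline{p}^0_{i1}$ of $p^0_{i1}$. The relabeling by $\pr_j(t_i)$ on $X_j$ translates this image by the class of $\pr_j(t_i)$ in $G$, giving $\overline{\pr_1(t_i)}\cdot\overline{p}^0_{i1}$ through $j=1$ and $\overline{\pr_2(t_i)}\cdot\overline{p}^0_{i1}$ through $j=2$. These coincide exactly because $t_i\in H_\bullet=H_1\times_{G}H_2$ forces $\pr_1(t_i)$ and $\pr_2(t_i)$ to have the same image in $G$. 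This is the one place where the fiber-product constraint on the $t_i$ is used, and I expect this marked-point bookkeeping to be the main obstacle: the monodromy and underlying-curve data match transparently, but verifying that the \emph{ordered} tuples of distinguished points are literally identified requires care.

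Finally, properness and quasi-finiteness follow formally. Each $\cores'^{H_0}_{H_j}$ is the composition of a corestriction map (proper and quasi-finite, by the properties of corestriction maps established earlier) with a relabeling isomorphism, hence itself proper and quasi-finite. Writing $f$ for the induced map to $B:=\barH_{g_1,H_1,\xi_1}\times_{\barH_{g,G,\xi}}\barH_{g_2,H_2,\xi_2}$ and $\rho_1\colon B\to\barH_{g_1,H_1,\xi_1}$ for the first projection, the composite $\rho_1\circ f=\cores'^{H_0}_{H_1}$ is proper and quasi-finite, while $\rho_1$, being a base change of the proper (hence separated) map $\cores^{H_2}_{G}$, is separated. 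The cancellation property for proper morphisms then yields properness of $f$, and finiteness of the fibers of $\rho_1\circ f$ forces finiteness of the fibers of $f$, giving quasi-finiteness.
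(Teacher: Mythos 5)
Your proposal is correct and follows essentially the same route as the paper: both composites are identified with the corestriction from $H_0$ to $G$ followed by relabeling of the distinguished points by $t'_i$, the common image of $t_i$ in $G$ (well-defined precisely because $t_i\in H_1\times_G H_2$), and properness and quasi-finiteness are then inherited from the corestriction maps. Your write-up merely makes explicit two points the paper leaves terse — the monodromy check that the choice $h^0_i=t_i^{-1}h^\bullet_i t_i$ makes the maps $\cores'^{H_0}_{H_j}$ well-defined, and the cancellation argument for properness of the induced map to the fiber product.
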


\begin{proof}
The choice of $h^0_i$ ensures that the maps $\cores'^{H_0}_{H_j}$ are well-defined. Then, the composition in both directions is the corestriction from $H_0$ to $G$, but where the distinguished marked points $p'_{i1}$ on the resulting $G$-cover are replaced by $t'_{i}\cdot p'_{i1}$, where $t'_{i}$ is the image of $t_i$ in $G$.

The properness and quasi-finiteness are then immediate from the same properties for the corestriction maps.
\end{proof}

%
%

\begin{lem}
For $j=1,2$, let $f_j:X_j\to Y$ be an $H_j$-cover, agreeing upon corestriction on the $G$-cover $f:X\to Y$. Assume further that the $X_j$ and $Y$ are smooth. Let $f_{\bullet}:X_\bullet\to Y$ be the normalized fiber product of the $f_j$ over $f$, and let $H'_0\subset H_\bullet$ be the stabilizer of any component of $X_\bullet$. Then, over $(f_1,f_2)$, the map
\begin{equation*}
\coprod(\cores'^{H_0}_{H_1},\cores'^{H_0}_{H_2}): \coprod\barH_{h_0,H_0,\xi_0} \to \barH_{g_1,H_1,\xi_1}\times_{\barH_{g,G,\xi}} \barH_{g_2,H_2,\xi_2}
\end{equation*}
has degree
\begin{equation*}
\#H_\bullet\cdot (\#H'_0)^{b-1}
\end{equation*}
\end{lem}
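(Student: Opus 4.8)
The plan is to compute the generic fibre of the finite map $\coprod(\cores'^{H_0}_{H_1},\cores'^{H_0}_{H_2})$ over the point $([f_1\colon X_1\to Y],[f_2\colon X_2\to Y])$ by a two-step count: first a structural reduction showing that every preimage is a connected component of the normalized fibre product $X_\bullet$, and then an enumeration of the distinguished-marked-point and relabeling data carried by each component. Throughout I work with smooth curves, so Riemann Existence and the étale/principal-bundle picture over $Y^\circ=Y\setminus\{q_1,\dots,q_b\}$ are available.

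First I would carry out the reduction. Given a point $[W\to Y]\in\barH_{h_0,H_0,\xi_0}$ lying over $(f_1,f_2)$, the two corestriction conditions identify $W/\ker(\pi_1)\cong X_1$ and $W/\ker(\pi_2)\cong X_2$ over the common $G$-cover $X\to Y$; since $\ker(\pi_j)\subset\ker(H_0\to G)$, the universal property of the fibre product yields a $Y$-morphism $W\to X_1\times_X X_2$, and as $W$ is smooth and connected this factors through the normalization, landing in a single component $X_0\subset X_\bullet$. Comparing degrees over $Y$ — namely $\#H_0=\#H_0'$, using that $W$ connected forces the monodromy $\pi_1(Y^\circ)\to H_0$ to be surjective and that both $\pi_j$ are surjective — shows $W\xrightarrow{\sim}X_0$ with $H_0=\Stab_{H_\bullet}(X_0)$. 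Thus every preimage arises from one of the $\#H_\bullet/\#H_0'$ components of $X_\bullet$ (these being indexed by $H_0'\backslash H_\bullet$), each a connected $H_0'$-cover of $Y$; distinct components have distinct stabilizer subgroups of $H_\bullet$ and so contribute to distinct index terms of the coproduct.

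Next I would count the data attached to a fixed component $X_0$, i.e. the distinguished marked points $p^0_{i1}$ over each $q_i$ (which fix $\xi_0$ and hence the target $\barH_{h_0,H_0,\xi_0}$) together with relabeling elements $t_i\in H_\bullet$ satisfying $h^0_i=t_i^{-1}h^\bullet_i t_i$ and such that relabeling the corestricted distinguished point by $\pr_j(t_i)$ recovers $p^j_{i1}$ on $X_j$. The curve $X_0$ has $\#H_0'/\ord(h^\bullet_i)$ points over each $q_i$, and once the marked point is fixed the distinguished points on $X_1,X_2$ are prescribed, pinning $\pr_j(t_i)$ down modulo inertia; combined with the conjugation constraint one finds exactly $\ord(h^\bullet_i)$ admissible $t_i$. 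Hence the choices for each $i$ number
\[
\frac{\#H_0'}{\ord(h^\bullet_i)}\cdot\ord(h^\bullet_i)=\#H_0',
\]
the inertia orders cancelling, so that each component contributes $(\#H_0')^b$. Multiplying by the $\#H_\bullet/\#H_0'$ components gives the asserted degree $\#H_\bullet\cdot(\#H_0')^{b-1}$.

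The main obstacle is the verification, at the ramified marked points, that the relabeling freedom is exactly $\ord(h^\bullet_i)$ per point. This is where a local analysis is unavoidable, entirely analogous to that of Lemma \ref{cores_boundary_multiplicities}: the scheme-theoretic fibre product $X_1\times_X X_2$ is non-normal over the $q_i$, its normalization $X_\bullet$ may have several points above a single point of $X_1\times_X X_2$, and a naive count of relabelings compatible with both corestrictions would give $\ord(h^1_i)\ord(h^2_i)/\ord_G(h_i)$. The content of the argument is that the conjugation constraint $h^0_i=t_i^{-1}h^\bullet_i t_i$, together with $t_i\in H_\bullet=H_1\times_G H_2$, cuts this down to $\lcm(\ord(h^1_i),\ord(h^2_i))=\ord(h^\bullet_i)$. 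I would isolate this as a lemma-level local computation; the consistency check in the degenerate case $K_1=K_2=\{1\}$ (where all corestriction maps are relabeling isomorphisms and the coproduct has degree $(\#G)^b$) confirms the bookkeeping. Finally, since a general point parametrizes covers of smooth curves with no extra automorphisms beyond those already recorded, the groupoid-cardinality count reduces to this enumeration of closed points, as in the preceding lemmas.
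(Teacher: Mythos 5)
Your overall architecture is the same as the paper's: factor any preimage through the normalized fiber product via the universal property, identify it with a component $X_0\subset X_\bullet$ (your degree-comparison argument for $W\xrightarrow{\sim}X_0$ is a fine substitute for the paper's automorphism argument), and then count components, distinguished marked points, and relabeling elements $t_i$. The first two counts ($\#H_\bullet/\#H_0'$ components, $\#H_0'/\ord_{H_\bullet}(h^\bullet_i)$ marked points per orbit) agree with the paper. The gap is in the step you yourself single out as the crux: the claim that the conjugation constraint $h^0_i=t_i^{-1}h^\bullet_it_i$ cuts the naive count $\ord(h^1_i)\ord(h^2_i)/\ord_G(h_i)$ of relabelings down to $\ord_{H_\bullet}(h^\bullet_i)=\lcm(\ord(h^1_i),\ord(h^2_i))$. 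That reduction does not occur. Given the marked point $p^0_{i1}$, admissibility of $t_i$ means (i) $t_i^{-1}h^\bullet_it_i$ equals the canonical generator $h^0_i=(g_1,g_2)$ of $\Stab_{H_\bullet}(p^0_{i1})$, and (ii) $\pr_j(t_i)$ carries the image of $p^0_{i1}$ in $X_j$ to $p^j_{i1}$ for $j=1,2$. If $t_i$ is admissible and $s=(g_1^a,g_2^c)\in H_\bullet$ (any exponents compatible over $G$), then $t_is$ is again admissible: condition (ii) is unchanged because $\pr_j(s)\in\langle g_j\rangle=\Stab_{H_j}(\mathrm{im}_j(p^0_{i1}))$, and condition (i) persists because such $s$ \emph{automatically} centralizes $h^0_i$, each coordinate being a power of the corresponding coordinate of $h^0_i$. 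So the conjugation constraint is implied by, not transverse to, the relabeling constraints, and the set of admissible $t_i$ is a torsor under a group of order $\ord(h^1_i)\ord(h^2_i)/\ord_G(h_i)$, which strictly exceeds $\ord_{H_\bullet}(h^\bullet_i)$ whenever $\gcd(\ord(h^1_i),\ord(h^2_i))>\ord_G(h_i)$.

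The failure is visible in the simplest example: $G=\{1\}$, $H_1=H_2=\bZ/2\bZ$, $Y=\bP^1$ with $b=4$ marked points, and $f_1=f_2$ the unique double cover $E\to\bP^1$ branched at the $q_i$. Here $H_\bullet=(\bZ/2\bZ)^2$, $h^\bullet_i=(1,1)$, $X_\bullet=\Delta\sqcup\Delta'$ (diagonal and antidiagonal copies of $E$), and $H_0'$ is the diagonal $\bZ/2\bZ$. For \emph{every} tuple $(t_1,\dots,t_b)\in H_\bullet^b$ the pair $(H_0',\{t_i\})$ is an admissible index of the coproduct (since $t_i^{-1}(1,1)t_i=(1,1)\in H_0'$ always), and the point $[\Delta\to\bP^1]$ with its unique marking maps to $(f_1,f_2)$, the relabelings acting trivially on the one-point fibers. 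The fiber over $(f_1,f_2)$ therefore meets $4^b$ distinct components of the coproduct, far more than the asserted $\#H_\bullet\cdot(\#H_0')^{b-1}=2^{b+1}$; one checks that the corrected degree, namely $\#H_\bullet\cdot(\#H_0')^{b-1}\cdot\prod_{i=1}^b\gcd(\ord(h^1_i),\ord(h^2_i))/\ord_G(h_i)$, is also what the subsequent pullback formula forces if one compares both sides against $\deg(\cores^{H_1}_G)$ in this example. Two further remarks: your consistency check $K_1=K_2=\{1\}$ cannot detect this, because the correction factor is $1$ there; and you have genuinely located the weak point of the argument, since the paper's own proof disposes of this step with the parenthetical assertion that the admissible $t_i$ form a coset of $\Stab_{H_\bullet}(p^0_{i1})$ --- which is exactly the assertion that fails. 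So the local computation you defer to a lemma would, if carried out, refute rather than confirm the count, and your proof as proposed does not close.
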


\begin{proof}
Suppose that we have a point of $\coprod\barH_{h_0,H_0,\xi_0}$ lying over $(f_1,f_2)$, given by a pointed $H_0$-cover $f_0:X_0\to Y$ along with $t_1,\ldots,t_b\in H_\bullet$ defining relabeling isomorphisms, such that the corestrictions of $f_0$ to $H_j$ yield the covers $f_j$. Then, by the universal property of fiber product, $f_0$ factors through $X_\bullet$; more specifically, it surjects onto a unique component of $X'_0\subset X_\bullet$. 

We claim, however, that the induced map $X_0\to X'_0$ is an isomorphism. If not, then we have a non-trivial automorphism of $X_0$ over both $X_1$ and $X_2$, but $H_0$ is a subgroup $H_1\times_G H_2$, so we have a contradiction. Therefore, we may canonically identify $H_0,X_0$ with $H'_0,X'_0$, respectively.

Now, in identifying points in the fiber over $(f_1,f_2)$, we have the following choices: a component $X'_0\subset X_\bullet$, which determines $H_0$, a collection of distinguished marked points of $X'_0$, which determines $\xi_0$, and the $t_i\in H_\bullet$. The number of choices of a component of $X_\bullet$ is $\#H/\#H'_0$, the number of choices of distinguished marked points of $X'_0$ in the $i$-th orbit is $\#H'_0/\ord_{H_\bullet}(h^\bullet_i)$, and the number of possible choices of $t_i$ is $\ord_{H_\bullet}(h^\bullet_i)$ (as the set of possible choices is a left coset of the stabilizer of the distinguished marked point in $H_\bullet$). Combining gives the desired degree.
\end{proof}

Because a general point on any component of $\barH_{g_1,H_1,\xi_1}\times_{\barH_{g,G,\xi}} \barH_{g_2,H_2,\xi_2}$ parametrizes a pair of covers of smooth curves, we have proven the following.

\begin{prop}
We have
\begin{equation*}
(\cores^{H_2}_G)^{*}((\cores^{H_1}_G)_{*}(1))=\sum_{H_0,\{t_i\}} \frac{1}{\#H_\bullet\cdot (\#H_0)^{b-1}}\cdot(\cores'^{H_0}_{H_2})_{*}(1),
\end{equation*}
where the sum, as before, is over subgroups $H_0\subset H_\bullet=H_1\times_{G}H_2$ surjecting onto the $H_i$ and choices of $t_i\in H_\bullet$.
\end{prop}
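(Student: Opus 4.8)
The final statement computes the bivariant pullback $(\cores^{H_2}_G)^{*}((\cores^{H_1}_G)_{*}(1))$ as a sum over pairs $(H_0,\{t_i\})$ of pushforwards $(\cores'^{H_0}_{H_2})_{*}(1)$ weighted by $1/(\#H_\bullet\cdot(\#H_0)^{b-1})$. The plan is to assemble this from the three preceding lemmas by a standard degree-normalization argument. First I would invoke the commutative diagram from the first lemma, which exhibits $\coprod\barH_{h_0,H_0,\xi_0}$ mapping to the fiber product $\barH_{g_1,H_1,\xi_1}\times_{\barH_{g,G,\xi}}\barH_{g_2,H_2,\xi_2}$ via the proper, quasi-finite map $\coprod(\cores'^{H_0}_{H_1},\cores'^{H_0}_{H_2})$. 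This places us in exactly the setting described at the start of \S\ref{int_theory_algos}: a commutative (here not Cartesian) square through which we pass in order to write the intersection as a pushforward from a common space.

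\textbf{Key steps.} The essential point is that corestriction maps are flat, proper, and quasi-finite, so $(\cores^{H_1}_{G})_{*}(1)$ is simply a multiple of the fundamental class, and its pullback by $\cores^{H_2}_{G}$ is again such a multiple; the content is to identify that multiple as a tautological pushforward from $\coprod\barH_{h_0,H_0,\xi_0}$. Because a general point of each component of the fiber product parametrizes a pair of covers of \emph{smooth} curves (where the normalized-fiber-product construction of the third lemma applies verbatim), I would compute the relevant multiplicity generically. The third lemma supplies the degree of $\coprod(\cores'^{H_0}_{H_1},\cores'^{H_0}_{H_2})$ over such a generic point as $\#H_\bullet\cdot(\#H'_0)^{b-1}$, where $H'_0\cong H_0$ is the stabilizer of a component of the normalized fiber product. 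By the projection formula applied to this proper quasi-finite map, pushing forward the fundamental class $1$ from $\coprod\barH_{h_0,H_0,\xi_0}$ and dividing by the generic degree recovers precisely the multiple of the fundamental class computed by the pullback; summing the contributions over all $(H_0,\{t_i\})$ and reading off the normalization $1/(\#H_\bullet\cdot(\#H_0)^{b-1})$ yields the stated formula. Finally, since each $\cores'^{H_0}_{H_2}$ is a composition of a corestriction and a relabeling isomorphism (Definition \ref{cores_prime_defn}), hence a tautological morphism, the right-hand side is manifestly $\cH$-tautological, establishing the last assertion.

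\textbf{Main obstacle.} The delicate point is justifying that the generic degree computed in the third lemma is the correct \emph{global} multiplicity governing the bivariant pullback, rather than a quantity that could jump along the boundary of the fiber product. Here one must use that the corestriction maps are flat and that both sides are multiples of the fundamental class of each component: flatness of $\cores^{H_1}_{G}$ guarantees that $(\cores^{H_2}_G)^{*}((\cores^{H_1}_G)_{*}(1))$ is a well-defined multiple of $[\barH_{g_2,H_2,\xi_2}]$ on each component, and equality of two such multiples of fundamental classes may be checked at the generic point, where the smooth-curve picture of the third lemma is valid. Thus the genericity argument suffices precisely because no higher-codimension boundary corrections can contribute to a zero-codimension bivariant class. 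This is the same mechanism flagged in the remark following Lemma \ref{cores_forgetful_dominan_diagram}: the formula should be read as an equality of fundamental-class multiples, but expressed through a space $\coprod\barH_{h_0,H_0,\xi_0}$ admitting tautological maps to both factors, which is what permits its later use in intersecting compositions of tautological morphisms.
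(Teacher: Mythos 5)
Your proposal is correct and follows essentially the same route as the paper: the paper likewise combines the commutative-diagram lemma with the generic degree computation $\#H_\bullet\cdot(\#H_0')^{b-1}$ over points parametrizing covers of smooth curves, and concludes exactly by the observation that a general point of any component of the fiber product is of this form, so the fundamental-class multiple may be read off generically. Your discussion of why genericity suffices (flatness plus both sides being multiples of fundamental classes) is precisely the implicit content of the paper's brief concluding sentence.
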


\subsection{Diagonals}\label{diagonal_intersection_section}

\begin{prop}\label{diagonal_with_product}
Let $\barH_{g,G,\xi}$ be any stack of $G$-covers, and let $\gamma_i:\cH_{i}\to\barH_{g,G,\xi}$ for $i=1,2$ be forgetful, boundary, corestriction, or restriction maps. Then, the intersections of $\gamma_1\times\gamma_2$ and the diagonal class $\Delta$ (in either order) are given by $\cH$-tautological classes computed earlier in this section.
\end{prop}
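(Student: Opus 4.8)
The plan is to reduce the intersection of $\Delta$ with $\gamma_1\times\gamma_2$ to the pairwise intersection of $\gamma_1$ and $\gamma_2$ over $\barH_{g,G,\xi}$, which was computed for every combination of the four basic morphism types in the preceding subsections (forgetful pairs in \S\ref{int_theory_algos}, boundary--boundary in \S\ref{boundary_strata_intersection_section}, the boundary cases in \S\ref{res-cores_boundary}, and the three restriction--corestriction cases in \S\ref{res-cores_int}). Write $\barH=\barH_{g,G,\xi}$. As recorded in the introduction to this section, each such computation produced a commutative diagram
\begin{equation*}
\xymatrix{
\cH_0 \ar[r]^{\gamma'_1} \ar[d]_{\gamma'_2} & \cH_1 \ar[d]^{\gamma_1}\\
\cH_2 \ar[r]^{\gamma_2} & \barH
}
\end{equation*}
together with an $\cH$-tautological class $\theta_0$ on $\cH_0$ (a top Chern class of an excess bundle, weighted by the degrees and ramification multiplicities arising in the corestriction cases) such that $\gamma_2^{*}((\gamma_1)_{*}(1))=(\gamma'_2)_{*}(\theta_0)$ and $\gamma_1^{*}((\gamma_2)_{*}(1))=(\gamma'_1)_{*}(\theta_0)$. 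The claim will be that the very same data computes both diagonal intersections, via the composite square
\begin{equation*}
\xymatrix@C+1pc{
\cH_0 \ar[r]^(0.4){(\gamma'_1,\gamma'_2)} \ar[d]_{c} & \cH_1\times\cH_2 \ar[d]^{\gamma_1\times\gamma_2}\\
\barH \ar[r]^{\Delta} & \barH\times\barH
}
\end{equation*}
where $c=\gamma_1\circ\gamma'_1=\gamma_2\circ\gamma'_2$ is the common composition and $(\gamma'_1,\gamma'_2)=(\gamma'_1\times\gamma'_2)\circ\Delta_{\cH_0}$ is a composition of tautological morphisms.

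The class $\Delta^{*}((\gamma_1\times\gamma_2)_{*}(1))$ is handled by an elementary manipulation. All four types of morphism are proper (the relevant stacks are proper by Theorem \ref{source_target_properties}), so proper pushforward commutes with exterior product and $(\gamma_1\times\gamma_2)_{*}(1)=(\gamma_1)_{*}(1)\times(\gamma_2)_{*}(1)$. Since $\Delta^{*}(\alpha\times\beta)=\alpha\cdot\beta$ for the regular embedding $\Delta$, we obtain $\Delta^{*}((\gamma_1\times\gamma_2)_{*}(1))=(\gamma_1)_{*}(1)\cdot(\gamma_2)_{*}(1)$. The projection formula for $\gamma_2$ then gives $(\gamma_2)_{*}\bigl(\gamma_2^{*}((\gamma_1)_{*}(1))\bigr)=(\gamma_2\circ\gamma'_2)_{*}(\theta_0)=c_{*}(\theta_0)$, which is $\cH$-tautological because $\theta_0$ is and $c$ is a composition of tautological maps.

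For the class $(\gamma_1\times\gamma_2)^{*}(\Delta_{*}(1))$ I would exploit that $\barH$ is smooth (Theorem \ref{source_target_properties}), so that $\Delta$ is a regular embedding with normal bundle $\cT_{\barH}$, and that the set-theoretic preimage of the diagonal fits into the Cartesian square
\begin{equation*}
\xymatrix@C+1pc{
\cH_1\times_{\barH}\cH_2 \ar[r] \ar[d] & \cH_1\times\cH_2 \ar[d]^{\gamma_1\times\gamma_2}\\
\barH \ar[r]^{\Delta} & \barH\times\barH
}
\end{equation*}
whose upper-left corner is exactly the fiber product of $\gamma_1$ and $\gamma_2$ over $\barH$ already analyzed. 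By the compatibility of refined Gysin homomorphisms for a smooth target, the refined class $\Delta^{!}[\cH_1\times\cH_2]$ supported on $\cH_1\times_{\barH}\cH_2$ coincides with the refined intersection class computed in the preceding subsections; concretely it is the pushforward of $\theta_0$ along $\cH_0\to\cH_1\times_{\barH}\cH_2$. Pushing forward further to $\cH_1\times\cH_2$ then yields $(\gamma_1\times\gamma_2)^{*}(\Delta_{*}(1))=(\gamma'_1,\gamma'_2)_{*}(\theta_0)$, again $\cH$-tautological.

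The main obstacle is the verification in the last paragraph that $\Delta^{!}$ reproduces the excess bundles \emph{and the multiplicities} of the earlier computations, and not merely the reduced fiber product. For the unramified cases (forgetful, boundary, restriction) the squares are genuinely Cartesian and this is the standard compatibility of excess intersection. The delicate point is the corestriction cases, where the fiber product is non-reduced and $\theta_0$ already absorbs the ramification multiplicities of Lemma \ref{cores_boundary_multiplicities}; there one must check that the length of $\Delta^{!}[\cH_1\times\cH_2]$ in each normal direction equals the local index $\ord_{G}/\ord_{G/G_1}$ computed there. This holds because that local computation is intrinsic to the corestriction morphism — it is read off from the comparison of smoothing parameters on the complete local rings — and is therefore insensitive to whether the intersection is formed against the diagonal $\Delta$ or against $\gamma_1$ directly.
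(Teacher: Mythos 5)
Your proposal is correct and takes essentially the same approach as the paper: the paper's proof consists of the single observation that the intersection of $\gamma_1\times\gamma_2$ with $\Delta$ is, essentially by definition (citing Fulton, Chapter 8), the intersection of $\gamma_1$ with $\gamma_2$, which was already computed earlier in the section as the pushforward of an $\cH$-tautological class from a space $\cH_0$ mapping by tautological morphisms to both $\cH_j$, hence to $\barH_{g,G,\xi}$ and to $\cH_1\times\cH_2$. Your write-up is a detailed unpacking of that one-line reduction (projection formula in one direction; refined Gysin compatibility, including the multiplicity check in the non-reduced corestriction case, in the other), and all of its steps are sound.
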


\begin{proof}
The intersection of $\gamma_1\times\gamma_2$ and $\Delta$ is given by that of $\gamma_1$ and $\gamma_2$, essentially by definition (see \cite[Chapter 8]{fulton}). We have already computed all of these intersections, in terms of an $\cH$-tautological class on a space $\cH_0$ mapping (via a composition of tautological morphisms) to both $\cH_j$, so in particular to $\barH_{g,G,\xi}$ and to $\cH_{1}\times\cH_2$.
\end{proof}

The only remaining case to consider is that of the self-intersection of the diagonal of $\barH_{g,G,\xi}$; this is given simply by the top Chern class of the tangent bundle, which we have already explained how to compute.

\begin{prop}\label{diagonal_squared}
We have $\Delta^{*}(\Delta_{*}(1))=c_{\topchern}(\cT_{\barH_{g,G,\xi}})$, which may be expressed in terms of decorated boundary classes, and is in particular $\cH$-tautological.
\end{prop}

\begin{proof}
See Proposition \ref{chern_class_hurwitz}.
\end{proof}

\section{Additive structure}

\subsection{Generators for the $\cH$-tautological ring}

Our main theorem is now immediate from the results of the previous section:

\begin{thm}\label{main_thm}
Let
\begin{equation*}
\cH=\prod_{i=1}^{m}\barH_{g_i,G_i,\xi_i}
\end{equation*}
be an arbitrary product of spaces of admissible $G$-covers. Consider all classes of the form $\gamma_{*}(\theta)\in A^{*}(\cH)$, where $\gamma:\cH'\to\cH$ is a composition of forgetful, boundary, diagonal, and restriction-corestriction morphisms from another product of spaces of admissible $G$ covers, and $\theta$ is a monomial in the $\psi$ and $\kappa$ classes on each of the components of $\cH'$. Then, the classes $\gamma_{*}(\theta)$ form an additive set of generators for $R^{*}_{\cH}(\cH)$. Moreover, the $\cH$-tautological ring is closed under bivariant pullback by all of the aforementionoed tautological morphisms.
\end{thm}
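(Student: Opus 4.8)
The plan is to prove the two assertions together by showing that the $\bQ$-linear span $V$ of the classes $\gamma_*(\theta)$ equals $R^*_\cH(\cH)$ and that $V$ is closed under bivariant pullback. One inclusion is free: every $\gamma_*(\theta)$ lies in $R^*_\cH(\cH)$, since $\theta$ is a polynomial in the $\psi,\kappa$ classes (which are $\cH$-tautological by definition) and $\gamma_*$ is a composition of pushforwards by tautological morphisms, under which $R^*_\cH$ is closed. For the reverse inclusion I would invoke minimality: since $R^*_\cH$ is the smallest system of $\bQ$-subalgebras containing the $\psi,\kappa$ classes and closed under the four kinds of pushforward, it suffices to check that $V$ is such a system. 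That $V$ contains the $\psi,\kappa$ classes is clear (take $\gamma=\mathrm{id}$), and that $V$ is closed under pushforward by a tautological morphism $\beta$ is immediate from $\beta_*\gamma_*(\theta)=(\beta\circ\gamma)_*(\theta)$. The whole weight therefore falls on showing that $V$ is closed under multiplication.

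For multiplication I would use the projection formula,
\begin{equation*}
\gamma_*(\theta)\cdot\gamma'_*(\theta')=\gamma_*\bigl(\theta\cdot\gamma^*\gamma'_*(\theta')\bigr),
\end{equation*}
which reduces the problem to the second assertion: that the bivariant pullback $\gamma^*\gamma'_*(\theta')$ of a generator is again a sum of generators. Granting this, writing $\gamma^*\gamma'_*(\theta')=\sum_i\epsilon_{i*}(\eta_i)$ and applying the projection formula once more turns the right-hand side into $\sum_i(\gamma\circ\epsilon_i)_*\bigl(\epsilon_i^*(\theta)\cdot\eta_i\bigr)$; here $\epsilon_i^*(\theta)$ is a pullback of a $\psi,\kappa$-monomial along a tautological morphism, which by the formulas of \S\ref{pullback_psi_kappa_section} is itself a sum of generators, so the product $\epsilon_i^*(\theta)\cdot\eta_i$ is resolved into generators by iterating the same reduction — exactly the bookkeeping carried out in the classical case in \S\ref{boundary_intersection_classical_section} and \S\ref{psi_kappa_pullback_classical_section}.

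Thus everything rests on the second assertion, and here the work is precisely what \S\ref{int_theory_algos} provides. The key input is that for every pair of \emph{elementary} tautological morphisms $\gamma_1,\gamma_2$ into a common target, \S\ref{boundary_strata_intersection_section}--\S\ref{diagonal_intersection_section} supply a commutative square whose remaining edges are compositions of tautological morphisms, expressing $\gamma_2^*\gamma_{1*}(1)$ (and, by projection formula along the top edge, $\gamma_2^*\gamma_{1*}(\theta_1)$) as a pushforward $\delta_*(c)$ of an $\cH$-tautological class $c$ from an intermediate space $\cH_0$. In each case $c$ is manifestly a sum of generators: a product of $\psi$-classes (boundary--boundary, Proposition \ref{boundary_intersection_formula_hurwitz}; restriction--boundary, \S\ref{restriction_boundary_intersection_section}), such a product scaled by the multiplicity factors $\ord_{G}(h_{\wt\ell})/\ord_{G/G_1}(h_\ell)$ (corestriction--boundary, Lemma \ref{cores_boundary_multiplicities}), or a top Chern class of an excess bundle which by Proposition \ref{chern_class_hurwitz} is a decorated boundary class (restriction--corestriction, \S\ref{res-cores_int}; diagonals, Propositions \ref{diagonal_with_product} and \ref{diagonal_squared}). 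Since $\delta_*$ of a generator is again a generator, closure under elementary bivariant pullback follows, and closure under pullback by an arbitrary composition follows by induction on the number of elementary factors, peeling off one morphism at a time and chaining the squares.

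The main obstacle I anticipate is exactly this chaining. Several of the squares in \S\ref{res-cores_boundary} and \S\ref{res-cores_int} are commutative but \emph{not} Cartesian — boundary strata lie in the branch locus of corestriction morphisms, so the functorial fibre products are non-reduced — and a bivariant pullback known only as a class on the target cannot be fed into the next step of the induction. This is why \S\ref{int_theory_algos} is careful, in every case, to record not merely the correct multiple of the fundamental class but the full factorization through a space $\cH_0$ admitting tautological morphisms to \emph{both} factors, with the multiplicities and excess Chern classes made explicit. Verifying that these factorizations compose coherently, so that the induction terminates with a finite $\bQ$-linear combination of genuine generators, is the crux; once it is in place the theorem is a formal consequence.
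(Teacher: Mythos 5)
Your proposal is correct and follows essentially the same route as the paper: one inclusion is definitional, the other follows by minimality once the span of the classes $\gamma_*(\theta)$ is shown closed under multiplication, which reduces via the projection formula to computing bivariant pullbacks one elementary morphism at a time using the case-by-case results of \S\ref{int_theory_algos}. You also correctly identify the paper's own key point (made explicit in the remark following the corestriction--forgetful computation): since several of the squares are merely commutative rather than Cartesian, each intersection must be recorded as a class on an intermediate space $\cH_0$ mapping tautologically to \emph{both} factors, so that the induction on the number of elementary factors can proceed.
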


\begin{proof}
All such classes are $\cH$-tautological by definition. The results of the previous section allow one to compute the bivariant pullback of such a composition by another one, one tautological morphism at a time. At each step, we intersect basic $\cH$-tautological morphisms $\gamma_i:\cH_i\to\cH$, giving the answer in terms of an $\cH$-tautological class in terms of a space $\cH_0$ mapping simultaneously to the $\cH_j$, so that we can proceed to the next step. We may also pull $\psi$ and $\kappa$ classes back by any of these morphisms.

In particular, the intersection of any two classes of the form $\gamma_{*}(\theta)$ is a linear combination of such, so the theorem is proven.
\end{proof}

In fact, one can give a smaller set of additive generators for the $\cH$-tautological ring:

\begin{prop}\label{additive_gens_efficient}
The $\cH$-tautological ring is generated additively by classes $\gamma_{*}(\theta)$ as above, where
\begin{equation*}
\gamma=\xi\circ\pi\circ(\cores\circ\res)\circ\Delta
\end{equation*}
is taken to be the composition of diagonal, restriction-corestriction, forgetful, and boundary morphisms, \textbf{in that order}. 
\end{prop}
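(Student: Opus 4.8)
The plan is to start from Theorem \ref{main_thm}, which already exhibits the classes $\gamma_{*}(\theta)$ for an \emph{arbitrary} composition $\gamma$ of the four basic morphism types as additive generators, and then to show that any such $\gamma$ may be refactored into the canonical order $\xi\circ\pi\circ(\cores\circ\res)\circ\Delta$. The essential observation is that this refactoring requires none of the excess-intersection computations of Section \ref{int_theory_algos}, but only the \emph{clean commutative squares} (genuine equalities of morphisms) extracted from the same diagrams. Since every rewriting step is an equality $a\circ b=a'\circ b'$ of maps with the \emph{same} source and target, the decoration $\theta$ is never transported: it stays on the unchanged source of $\gamma$, and by functoriality of proper pushforward $\gamma_{*}(\theta)$ is literally unchanged. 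The problem thus reduces to a purely combinatorial normal-form question for words in the alphabet $\{\Delta,\ \res\text{-}\cores,\ \pi,\ \xi\}$, read from innermost to outermost, whose canonical form is the sorted word; assign ranks $\Delta=1<\res\text{-}\cores=2<\pi=3<\xi=4$, and call an adjacent pair an \emph{inversion} if its inner map has strictly larger rank.

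Next I would record one commutation relation per inversion type. Forgetful past boundary, $\pi\circ\xi=\xi\circ\pi$, is the per-vertex commutative square underlying Proposition \ref{boundary_forgetful}. Restriction and corestriction past forgetful, $\res\circ\pi=\pi\circ\wt{\res}$ and $\cores\circ\pi=\pi\circ\wt{\cores}$, are the squares of Section \ref{int_theory_algos} (the restriction--forgetful Cartesian diagram and Lemma \ref{cores_forgetful_dominan_diagram}). Restriction and corestriction past boundary, $\res^{G}_{G_1}\circ\xi_{(\Gamma,G)}=\xi_{((\Gamma,G),G_1)}\circ\res_{\Gamma}$ and its corestriction analogue, are the commutative squares following Definitions \ref{restriction_graphs} and \ref{corestriction_graphs}. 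Finally, the diagonal commutes with everything by naturality, $\Delta\circ\beta=(\beta\times\beta)\circ\Delta$. The first two families preserve the multiset of letters, and the diagonal family only duplicates a factor inside a single product-morphism. The one genuinely expanding relation is restriction/corestriction past boundary: since $\res_{\Gamma}$ and $\cores_{\Gamma}$ are by Definitions \ref{restriction_graphs}--\ref{corestriction_graphs} built out of \emph{diagonals} together with restriction/corestriction on the vertex factors, each such move reintroduces a diagonal that must then be bubbled back to the innermost layer. I would also use that a composition of restriction-corestriction morphisms is again a single one (Definition \ref{res_cores_defn}) to collapse the res-cores layer after each step.

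I would then organize the sort into phases governed by a lexicographic measure. The primary statistic $I_{RB}$ counts the (res-cores)-outer-to-(boundary) pairs. A restriction/corestriction-past-boundary move lowers $I_{RB}$ by exactly one and creates no new such pair (it only inserts a diagonal), while every other move, including the subsequent diagonal-absorption steps, leaves $I_{RB}$ unchanged; hence finitely many moves bring $I_{RB}$ to zero. With $I_{RB}=0$ fixed, the secondary statistic --- the number of forgetful-after-boundary and restriction-after-forgetful inversions --- is reduced by the remaining multiset-preserving transpositions $\pi\circ\xi=\xi\circ\pi$ and $\res\circ\pi=\pi\circ\wt{\res}$ without recreating any res-cores-boundary inversion, and finally all diagonals are transported to the innermost layer via $\Delta\circ\beta=(\beta\times\beta)\circ\Delta$. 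This produces the canonical word $\Delta\,\ \res\text{-}\cores\,\ \pi\,\ \xi$, and the corresponding $\gamma_{*}(\theta)$ is of the asserted form.

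The main obstacle is exactly the termination argument in the presence of diagonal reintroduction: one must check that absorbing the diagonals generated by the res-cores-past-boundary moves cannot cascade or regenerate res-cores-boundary inversions, so that the well-founded measure truly decreases at every stage. The careful bookkeeping --- that diagonal absorption only duplicates factors \emph{within} a product-morphism and preserves the relative inner/outer ordering of the res-cores, forgetful, and boundary layers --- is the crux that makes the lexicographic descent valid. Everything else is routine once the clean commutative squares are isolated.
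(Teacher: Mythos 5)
Your proposal is correct and follows essentially the same route as the paper: the paper's proof consists precisely of the four commutation claims you isolate (diagonals slide innermost by naturality, $\pi\circ\xi=\xi\circ\pi$, res-cores past forgetful, and res-cores past boundary at the cost of reintroducing diagonals via $\res_{\Gamma}$ and $\cores_{\Gamma}$), with all verifications left to the reader. Your lexicographic termination argument handling the diagonal-reintroduction cascade is exactly the bookkeeping the paper omits, so it is a welcome elaboration rather than a departure.
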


\begin{proof}
This follows from the following claims, whose verifications we leave to the reader.
\begin{itemize}
\item the composition of a boundary, forgetful, or restriction-corestriction map followed by a diagonal map is equal to a composition of a diagonal map followed by maps of the original type,
\item the composition of a boundary map followed by a forgetful map is equal to the composition of forgetful maps followed by a boundary map,
\item the composition of a boundary map followed by a restriction-corestriction map is equal to the composition of restriction-corestriction maps, diagonal maps, and a boundary map, in that order, and
\item the composition of a forgetful map followed by a restriction-corestriction map is equal to a restriction-corestriction map followed by forgetful maps.
\end{itemize}
\end{proof}

\subsection{Integration of zero-cycles}

We remark that, given our results, one can also compute intersection \textit{numbers} between $\cH$-tautological classes of complementary dimension. Indeed, by the results of this section, one only needs to integrate top-degree monomials in $\psi$ and $\kappa$ classes on $\barH_{g,G,\xi}$. However, such classes, up to constant factors, are pulled back from target maps $\delta:\barH_{g,G,\xi}\to\barM_{g',b}$. Therefore, we need only the information of the degrees of target maps $\delta$, given by Hurwitz numbers (see \cite[Theorem 3.19]{schmittvanzelm}), and integrals of monomials in $\psi$ and $\kappa$ classes on $\barM_{g',b}$, given by KdV hierarchy, see \cite{kontsevich}.

%
%
%
%
%

\section{Classical Hurwitz loci}\label{harris_mumford_section}

\subsection{The Harris-Mumford space and its normalization}

Recall from \S\ref{intro} that our motivation is to study cycles on moduli spaces of curves coming from classical Hurwitz spaces, that is, cycles $(\phi_{g/h,d})_{*}(1)$ where $\phi:\barH_{g/h,d}\to\barM_{g,r}$ is the source map. 

The first compactification $\barH_{g/h,d}$ was given by Harris-Mumford \cite{hm} in terms of admissible covers, whose definition we now recall.

\begin{defn}
Let $X,Y$ be curves. Let $b=(2p_a(X)-2)-d(2p_a(Y)-2)$, and let $y_1,\ldots,y_b\in Y$ be such that $(Y,y_1,\ldots,y_b)$ is stable. Then, an \textbf{admissible cover} consists of the data of the stable marked curve $(Y,y_1,\ldots,y_b)$ and a finite morphism $f:X\to Y$ such that:
\begin{itemize}
\item $f(x)$ is a smooth point of $Y$ if and only if $x$ is a smooth point of $X$,
\item $f$ is simply branched over the $y_i$ and \'{e}tale over the rest of the smooth locus of $Y$, and
\item at each node of $X$, the ramification indices of $f$ restricted to the two branches are equal.
\end{itemize}
\end{defn}

The definition can also be generalized in an obvious way to compactify spaces of covers whose ramification is not assumed to be simple. As in the case of Galois covers, spaces of admissible covers admit source and target maps to moduli spaces of curves.

In general, the spaces $\barH_{g/h,d}$ are Cohen-Macaulay, but fail to be smooth. We recall from \cite{hm} the explicit description of their complete local rings. Let $[f:X\to Y]$ be a point of $\barH_{g/h,d}$. Let $y'_1,\ldots,y'_n$ be the nodes of $Y$, and let $y_{1},\ldots,y_{b}\in Y$ be the branch points. Let $\mathbb{C}[[t_1,\ldots,t_{3h-3+b}]]$ be the deformation space of $(Y,y_1,\ldots,y_b)$, so that $t_1,\ldots,t_n$ are smoothing parameters for the nodes $y'_1,\ldots,y'_n$. Let $x_{i,1},\ldots,x_{i,r_i}$ be the nodes of $X$ mapping to $y'_i$, and denote the ramification index of $f$ at $x_{i,j}$ by $a_{i,j}$. 

\begin{prop}[\cite{hm}]\label{adm_defos}
The complete local ring of $\Adm_{g/h,d}$ at $[f]$ is
\begin{equation*}
\mathbb{C}\left[\left[t_1,\ldots,t_{3h-3+b},\{t_{i,j}\}^{1\le i\le n}_{1\le j\le r_i}\right]\right]/\left(t_1=t_{1,1}^{a_{1,1}}=\cdots=t_{1,r_1}^{a_{1,r_1}},\ldots,t_n=t_{n,1}^{a_{n,1}}=\cdots=t_{n,r_n}^{a_{n,r_n}}\right).
\end{equation*} 
\end{prop}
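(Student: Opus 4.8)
The plan is to compute the complete local ring by deformation theory of the cover $f:X\to Y$, localizing the analysis at the three types of special loci---the \'etale locus, the simple branch points, and the nodes of $X$ over the nodes of $Y$---and then gluing. Write $\cO$ for the complete local ring of $\barM_{h,b}$ at $[(Y,y_1,\ldots,y_b)]$; since $\barM_{h,b}$ is smooth, $\cO\cong\bC[[t_1,\ldots,t_{3h-3+b}]]$, where by convention $t_1,\ldots,t_n$ are the smoothing parameters of the nodes $y'_1,\ldots,y'_n$ and $t_{n+1},\ldots,t_{3h-3+b}$ are equisingular directions (including the motions of the branch points $y_k$). First I would check that deformations of the cover over the complement of the nodes of $X$ are unobstructed and uniquely determined by the corresponding deformation of the pointed target: away from the nodes and the ramification locus $f$ is \'etale, so any deformation of $Y$ lifts uniquely, while near a simple branch point the local model $x'\mapsto(x')^2$ over a marked point $y_k$ carries no extra modulus since the ramification point is pinned over $y_k$. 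Hence these loci contribute no new parameters and no relations, and account precisely for the dependence on $t_{n+1},\ldots,t_{3h-3+b}$.

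The heart of the argument is the local model at a node $x_{i,j}$ of $X$ lying over a node $y'_i$ of $Y$. In analytic local coordinates the admissible cover is
\begin{equation*}
\Spec\bC[[x',y']]/(x'y'-t_{i,j})\to\Spec\bC[[x,y]]/(xy-t_i),\qquad f^{*}x=(x')^{a_{i,j}},\quad f^{*}y=(y')^{a_{i,j}},
\end{equation*}
so that the source smoothing parameter $t_{i,j}$ and the target smoothing parameter $t_i$ are constrained by $t_i=t_{i,j}^{a_{i,j}}$. Each node $x_{i,j}$ thus contributes a free parameter $t_{i,j}$, and smoothing it forces the node $y'_i$ of $Y$ to be smoothed with parameter $t_{i,j}^{a_{i,j}}$. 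Since all of the nodes $x_{i,1},\ldots,x_{i,r_i}$ lie over the \emph{single} node $y'_i$, they must induce one and the same deformation of $y'_i$, which yields exactly the coincidence relations
\begin{equation*}
t_i=t_{i,1}^{a_{i,1}}=\cdots=t_{i,r_i}^{a_{i,r_i}}.
\end{equation*}

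To conclude, I would assemble these local computations into a global statement: the deformation functor of $f$ is pro-represented by the power series ring on the parameters $t_{n+1},\ldots,t_{3h-3+b}$ together with all source-node parameters $t_{i,j}$, subject only to the relations above (which in particular express each $t_i$, $1\le i\le n$, in terms of the $t_{i,j}$). This is precisely the stated quotient of $\bC[[t_1,\ldots,t_{3h-3+b},\{t_{i,j}\}]]$. The main obstacle is this last gluing step: one must verify that the node-by-node relations are the \emph{only} constraints, i.e.\ that there are no higher-order obstructions to simultaneously deforming the cover over the \'etale part and smoothing the nodes of $X$ independently, and that the local parameters $t_{i,j}$ glue to honest functions on the deformation space. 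This is where the finiteness and flatness of $f$, together with the fact that the obstructions to deforming $f$ are concentrated at the nodes and vanish there (the local models being manifestly unobstructed), must be used; it is exactly the content of the Harris-Mumford analysis in \cite{hm}, and follows a posteriori from the smoothness of the normalization $\wt{H}_{g/h,d}$ by $S_d$-covers together with the order-$a_{i,j}$ base change at each node.
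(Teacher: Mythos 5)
The paper offers no proof of this proposition at all---it is quoted directly from Harris--Mumford---and your deformation-theoretic reconstruction (unique lifting of deformations of the pointed target away from the nodes of $X$, the local model $xy=t_i$, $x'y'=t_{i,j}$, $f^{*}x=(x')^{a_{i,j}}$ forcing $t_i=t_{i,j}^{a_{i,j}}$ at each node $x_{i,j}$, and the coincidence relations coming from the $r_i$ nodes lying over the single node $y'_i$) is exactly the argument underlying the cited result, so your proposal is correct and follows essentially the same route as the source the paper invokes. One small caveat: your closing claim that the gluing step ``follows a posteriori from the smoothness of the normalization $\wt{H}_{g/h,d}$'' is not a substitute for that step, since a non-normal complete local ring is not determined by its normalization together with the map; ruling out further relations or obstructions genuinely requires the Harris--Mumford deformation analysis (or its modern log/twisted-curve reformulation) that you correctly defer to.
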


It follows that $[f:X\to Y]\in \barH_{g/h,d}$ is a singular point if and only if there exists a node of $Y$ over which $f$ is ramified at more than one point.

Abramovich-Corti-Vistoli \cite{acv} give a normalization of $\barH_{g/h,d}$ by way of \textit{twisted $S_d$-covers.} The precise definitions will not be important for us, but we recall the idea. 

The space $\wt{\cH}_{g/h,d}^{ACV}$ is defined to be the space of \textit{balanced twisted $S_d$-covers} $f:\wt{X}\to\cY$, where $\cY$ is a marked nodal curve with certain stacky structure at its marked and singular points, and $f$ is a principal $S_d$-bundle. One can then recover an admissible cover of degree $d$ by replacing $\wt{X}$ by $X=\wt{X}/S_{d-1}$ and $\cY$ by its coarse space. 

We then get a map (after restricting to the appropriate components of $\wt{H}_{g/h,d}^{ACV}$)
\begin{equation*}
\nu:\wt{\cH}_{g/h,d}^{ACV}\to \barH_{g/h,d}
\end{equation*}
which is a normalization, see \cite[Proposition 4.2.2]{acv}. If $f:X\to Y$ is a degree $d$ cover of smooth curves, then the unique balanced twisted $S_d$-cover lying over $[f]$ in the normalization is given over the \'{e}tale locus $Y_0\subset Y$ of $f$ by
\begin{equation*}
\wt{X}_0=(X_0\times_{Y_0}\cdots\times_{Y_0} X_0)-\Delta.
\end{equation*}
where the product is $d$-fold and $\Delta$ is the locus on which the points in any two factors are equal. The induced map $\wt{X}_0\to Y_0$ then extends uniquely to a principal $S_d$-bundle $f:\wt{X}\to\cY$.

However, one can just as well understand the normalization $\nu$ in terms of the \textit{scheme-theoretic} $S_d$-covers $\wt{f}':\wt{X}\to \wt{X}/S_{d-1}=Y$, where the target is now identified with the coarse space of $\cY$. Now, $\wt{f}'$ is no longer a principal $S_d$-bundle over the nodes and marked points of $Y$, but rather is an admissible $S_d$-cover in the sense of this paper. We will therefore work with the normalization
\begin{equation*}
\nu:\wt{\cH}_{g/h,d}=\coprod\barH'_{g',S_d,\xi}\to \barH_{g/h,d}
\end{equation*}
which is isomorphic to the normalization by twisted $S_d$-covers, see \cite[Remark 3.6]{schmittvanzelm} for a detailed discussion.

Here, $\barH'_{g',S_d,\xi}$ a moduli space of \textit{possibly disconnected} admissible $S_d$-covers $\wt{X}\to Y$, which, upon applying the restriction-corestriction map remembering $\wt{X}/S_{d-1}$, yield an admissible cover of the form parametrized by $\barH_{g/h,d}$. We will see that the intersection-theoretic tools of \S\ref{int_theory_algos}, where we have dealt with spaces of connected covers, will carry over to this setting.

\subsection{Intersecting Harris-Mumford loci with boundary classes}\label{admissible_plan}

In this section, we will outline an approach to computing intersections of normalized Harris-Mumford loci 
\begin{equation*}
\xymatrix{
\wt{\cH}_{g/h,d} \ar[r]^{\nu} & \barH_{g/h,d} \ar[r]^{\phi} &  \barM_{g,r}
}
\end{equation*}
with boundary classes $\xi_{\Gamma}:\barM_{\Gamma}\to\barM_{g,r}$.

For simplicity, we will assume that $\xi_{\Gamma}$ and $\phi\circ\nu$ define classes of complementary dimension in $\barM_{g,r}$ and ask only for the intersection numbers; a more general discussion is developed in \cite[\S 3.2]{lian_nontaut}.

The computation can be carried out applying the results of \S\ref{res-cores_boundary} directly (after mild changes to allow for disconnected source curves), as we have
\begin{equation*}
\phi\circ\nu=\delta\circ\res^{S_d}_{S_{d-1}}.
\end{equation*}
However, we will opt for a more streamlined approach working with the Harris-Mumford admissible covers directly and invoking the results \S\ref{res-cores_boundary} implicitly; the advantage will be that we will need to compute only a minimal amount of data for the $S_d$-covers parametrized by $\wt{\cH}_{g/h,d}$, which are in general much more complicated than the Harris-Mumford covers.

Let
\begin{equation*}
\xymatrix{
\wt{\cB} \ar[r] \ar[d]^{\nu_{\Gamma}} & \wt{\cH}_{g/h,d} \ar[d]^{\nu} \\
\cB \ar[r] \ar[d] & \barH_{g/h,d} \ar[d]^{\phi} \\
\barM_{\Gamma} \ar[r]^{\xi_\Gamma} & \barM_{g,r} \\
}
\end{equation*}
be the fiber diagram resulting from the intersection of the desired classes.

Note, by the results of \S\ref{res-cores_boundary}, that $\wt{B}$ has underlying reduced space given by the disjoint union of boundary classes indexed by $S_d$-admissible graphs, which is in particular smooth.

We will compute the intersection number of $\xi_\Gamma$ and $\phi\circ\nu$ as follows.

\begin{enumerate}
\item Compute $\cB$ set-theoretically as a disjoint union of boundary classes on $\barH_{g/h,d}$, expressed in terms of dual graphs and covers of curves of lower genus.
\item Compute the \textit{reduced} degree (degree of the induced map of reduced spaces) of $\nu_{\Gamma}$ over any point of $\cB$; this is given by the number of branches in the normalization of its complete local ring, as given by Proposition \ref{adm_defos}.
\item Express the normal bundles of the underlying reduced components of the boundary strata comprising $\wt{\cB}^{\red}$ in terms of $\psi$ classes on the dual graphs of the \textit{target curves} parametrized by $\wt{\cB}$, and thus in terms of the target curves parametrized by $\cB$.
\item Compute the multiplicities of the components appearing in $\wt{\cB}$ in the tangent directions parametrized by the edges of the dual graphs of target curves parametrized by $\cB$.
\item Combine the previous two steps to compute the Segre class of $\wt{\cB}$ in $\wt{\cH}_{g/h,d}$, and apply the excess intersection formula to express the desired intersection number in terms of $\psi$ classes on $\wt{\cB}^{\red}$.
\item Integrate the resulting classes on $\wt{\cB}^{\red}$ by using the fact that they are pulled back from appropriate target maps $\delta:\wt{\cB}\to\barM_{h',b'}$, which factor through $\cB$. Because we have already computed the reduced degree of $\nu_{\Gamma}$, it remains to compute the reduced degrees of target maps out of $\cB$, which are given by Hurwitz numbers.
\end{enumerate}

We emphasize that all of these steps are carried out without computing $\wt{\cB}$ explicitly. Rather, we will express all of the needed data in terms of the combinatorics of the covers appearing in $\cB$. We will see that the only data of the $S_d$-covers that we will need to access is \textit{local}; more specifically, we will only need to know the ramification indices that appear.

For zero-dimensional strata of $\cB$ and $\wt{B}$, the last four steps simplify, as the intersection appears in the correct dimension. Once we have the reduced degree of $\nu_{\Gamma}$ over each zero-dimensional component of $\cB$ and the multiplicity of the corresponding points of $\wt{\cB}$, we immediately have their contributions to the intersection.

\subsection{$d$-elliptic loci in genus 2}

We now give an explicit calculation of the classes of Hurwitz loci in the case $g=2,h=1$ using the method described in the previous section.

Let $\barH_{2/1,d}$ be the Harris-Mumford space parametrizing simply ramified admissible covers $f:X\to Y$, where $X,Y$ are stable curves of genus 2, 1, respectively. By definition, we will mark all $2d-2$ pre-images of the two branch points of $f$.

Let $\phi_d:\barH_{2/1,d}\to\barM_{2,2d-2}$ denote the source map. Let $\phi'_d:\barH_{2/1,d}\to\barM_{2}$ be the post-composition of $\phi$ with the map forgetting all marked points (and contracting non-stable components).

The class of $\phi'_d\in A^1(\barM_2)$ is determined by its intersections with the boundary classes $\Delta_{00},\Delta_{01}\in A^2(\barM_2)$ parametrizing curves of the form shown in Figure \ref{Fig:classes_M2_dim1}, see \cite[Part III]{mumford} or \cite[\S 2.2.3]{lian_qmod}.

\begin{figure}[!htb]
     \includegraphics[width=.75\linewidth]{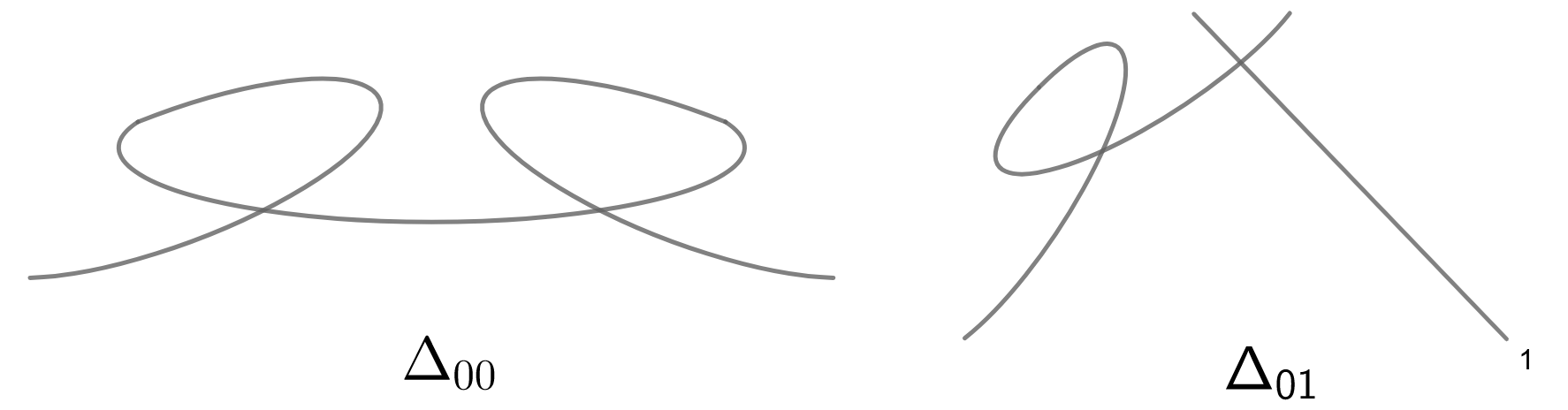}
     \caption{Boundary classes in $A^2(\barM_2)$}\label{Fig:classes_M2_dim1}
\end{figure}

Let $\Gamma_{00},\Gamma_{01}$ denote the corresponding dual graphs, so that $\Delta_{0i}=(\xi_{\Gamma_{0i}})_{*}(1)$.

\begin{thm}\cite[Theorem 1.3]{lian_qmod}\label{d-elliptic_recover}
We have:
\begin{align*}
\int_{\barM_2}(\phi'_{d})_{*}(1)\cdot\Delta_{00}&=4(d-2)!^2(d-1) \sigma_1(d)\\
\int_{\barM_2}(\phi'_{d})_{*}(1)\cdot\Delta_{01}&=2(d-2)!^2\sum_{d_1+d_2=d}\sigma_1(d_1)\sigma_1(d_2),
\end{align*}
where $\sigma_1(m)$ denotes the sum of the positive integer divisors of $m$.

In particular,
\begin{equation*}
\sum_{d\ge1}\frac{1}{(d-2)!^2}\cdot[(\phi'_{d})_{*}(1)]q^d\in A^1(\barM_2)\otimes\bQ[[q]]
\end{equation*}
is a cycle-valued quasimodular form.
\end{thm}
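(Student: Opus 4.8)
The plan is to apply the six-step procedure of \S\ref{admissible_plan} to each of the two boundary classes $\Delta_{00}$ and $\Delta_{01}$ in turn, exploiting the factorization $\phi\circ\nu=\delta\circ\res^{S_d}_{S_{d-1}}$ so that the results of \S\ref{res-cores_boundary} apply after the mild modification allowing disconnected source covers. Since $\dim\barH_{2/1,d}=3\cdot 1-3+2=2$ while each $\Delta_{0i}$ has codimension $2$ in the $3$-dimensional $\barM_2$, both intersections are zero-dimensional and appear in the expected dimension; by the last remark of \S\ref{admissible_plan} the later steps collapse, and the computation reduces to a finite weighted count. Concretely, for each zero-dimensional component of the fiber product $\cB$ one multiplies the reduced degree of $\nu_\Gamma$ (the number of branches in the normalization of the local ring of Proposition \ref{adm_defos}) by the tangent multiplicity of the corresponding point of $\wt{\cB}$ given by Lemma \ref{cores_boundary_multiplicities}, weighting the enumeration by the degrees of the relevant target maps, which are Hurwitz numbers.

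First I would execute Step 1: enumerate the admissible covers $f\colon X\to Y$ with $Y$ of genus $1$, simply branched over two points, whose source $X$ has dual graph $\Gamma_{00}$ or $\Gamma_{01}$. Admissibility forces $Y$ to degenerate compatibly with $X$. For $\Gamma_{00}$ the target degenerates to a single nodal rational curve, and the cover is governed by the ramification profile over the unique node; summing over the admissible ramification indices $e\mid d$ produces the divisor sum $\sigma_1(d)$, while the factor $4(d-1)$ records the monodromy count together with the placement of the two branch points. For $\Gamma_{01}$ the target degenerates into two components meeting at a node, so the cover splits into pieces of degrees $d_1,d_2$ with $d_1+d_2=d$; the two independent ramification choices yield $\sigma_1(d_1)\sigma_1(d_2)$, summed over all splittings. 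The uniform prefactor $(d-2)!^2$ reflects the symmetric-group relabeling data over the two unramified portions of the branch fibers that enters when passing from Harris-Mumford covers to $S_d$-covers.

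The main obstacle is the precise bookkeeping in Steps 2 and 4. Because every boundary stratum lies in the branch locus of the corestriction $\cores^{S_{d-1}}_{\{1\}}$ (equivalently, of $\delta$), the fiber product $\wt{\cB}$ is non-reduced, so one must simultaneously track the reduced degree of $\nu_\Gamma$ and the tangent multiplicities $e/e'$ at each node. Assembling these local contributions into exactly the rational constants $4(d-2)!^2(d-1)\sigma_1(d)$ and $2(d-2)!^2\sum_{d_1+d_2=d}\sigma_1(d_1)\sigma_1(d_2)$ is the delicate core of the argument; I would guard against combinatorial slips by matching the output against the values computed in \cite{lian_qmod} for small $d$.

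Finally, for the quasimodularity statement I would recall that $\sum_{d\ge1}\sigma_1(d)q^d=\tfrac{1}{24}\bigl(1-E_2(q)\bigr)$ is quasimodular of weight $2$, where $E_2$ is the weight-$2$ Eisenstein series. Hence $\sum_{d}(d-1)\sigma_1(d)q^d$ is quasimodular, since the operator $q\,d/dq$ preserves quasimodularity, and $\sum_{d}\bigl(\sum_{d_1+d_2=d}\sigma_1(d_1)\sigma_1(d_2)\bigr)q^d=\bigl(\tfrac{1}{24}(1-E_2)\bigr)^2$ is quasimodular as a product of such. Dividing by $(d-2)!^2$, both intersection generating series lie in $\Qmod$. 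Since $\Delta_{00},\Delta_{01}$ form a basis of $A^2(\barM_2)$ and the intersection pairing $A^1(\barM_2)\times A^2(\barM_2)\to\bQ$ is perfect by Mumford's computation \cite{mumford}, the class $(\phi'_d)_*(1)\in A^1(\barM_2)$ is determined by these two numbers; therefore the cycle-valued series $\sum_{d\ge1}\tfrac{1}{(d-2)!^2}[(\phi'_d)_*(1)]q^d$ has quasimodular coordinates in any basis of $A^1(\barM_2)$, which is the asserted conclusion.
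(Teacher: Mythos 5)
Your plan goes wrong at the dimension count, and this error hides the technical core of the argument. Expected dimension is not actual dimension: while the virtual dimension of the intersection is indeed zero, the set-theoretic intersection of the $d$-elliptic locus with $\Delta_{00}$ is \emph{not} zero-dimensional. The fiber product $\cB$ over $\xi_{00}$ consists of four families of components, of topological types $(\Delta_{00},\Delta_{01})$, $(\Delta_{000},\Delta_{00})$, $(\Delta_{00},\Delta_{0})$, and $(\Delta_{000},\Delta_{0})$, and the last two are \emph{one-dimensional}: whole curves of admissible covers (e.g.\ chains of rational curves over a one-nodal target of type $\Delta_0$, with the source lying in $\Delta_{00}$) sit inside the intersection. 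For these one must run the full excess-intersection program of \S\ref{admissible_plan} --- normal bundles expressed via $\psi$ classes pulled back from target maps (Proposition \ref{psi_kappa_comparison_forgetful}), tangent multiplicities of the non-reduced $\wt{\cB}$, Segre classes of the non-reduced components, and integration against Hurwitz numbers such as the degree $2\max(a,b)$ of Lemma \ref{(a,b),(a,b),2,2}. Your proposal, by asserting that ``the later steps collapse,'' discards precisely this work. Moreover, your heuristic that the $\Delta_{00}$ number comes from ``summing over ramification indices $e\mid d$'' implicitly assumes only the type $(\Delta_{00},\Delta_{01})$ contributes; in the paper that type does produce exactly $4(d-2)!^2(d-1)\sigma_1(d)$, but the theorem only follows because the contributions of the other three types cancel, and this cancellation is a genuinely non-trivial combinatorial identity (Lemma \ref{david_lemma}, the involution $(a,b,m,n)\mapsto(n,n+m,b-a,a)$), not something that follows from bookkeeping.

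For $\Delta_{01}$ your simplified picture is essentially right (the intersection there is a finite set of covers of type $(\Delta_{001},\Delta_{01})$, and the answer is the weighted point count times the multiplicities $\lcm(a,b)/a$ from Lemma \ref{cores_boundary_multiplicities}), and your closing quasimodularity argument --- $\sum\sigma_1(d)q^d$ quasimodular, closure of $\Qmod$ under $q\,d/dq$ and products, plus the fact that $\Delta_{00},\Delta_{01}$ detect $A^1(\barM_2)$ --- matches the paper. But as written the proposal would not prove the $\Delta_{00}$ formula: it has no mechanism to see, let alone cancel, the excess contributions, which is where nearly all of the difficulty lies.
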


The factors of $(d-2)!^2$ appearing here but not in \cite{lian_qmod} come from our marking of the additional $d-2$ points in each of the fibers over the branch points of a $d$-elliptic cover.

In fact, while we do not give the details, we have been able to check, using the same computation, the following mild generalization:

\begin{thm}\label{d-elliptic_recover_general}
Let $\phi_{d,N}:\barH_{2/1,d}\to\barM_{2,N}$ be a system of maps obtained by forgetting all but a fixed subset of $N$ marked points for all $d$. Then, 
\begin{equation*}
\sum_{d\ge1}\frac{1}{\gamma_d}\left(\int_{\barM_{2,N}}(\phi_{d,N})_{*}(1)\cdot\alpha\right) q^d\in\bQ[[q]]
\end{equation*}
is a quasimodular form for any \textit{tautological} class $\alpha\in R^2(\barM_{2,N})$, where $\gamma_d$ is a combinatorial factor counting relabelings of the forgotten points.
\end{thm}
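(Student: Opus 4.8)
The plan is to reduce the general statement to the case already established in Theorem \ref{d-elliptic_recover}, leveraging both the tautological structure of $\alpha$ and the intersection-theoretic reduction of \S\ref{admissible_plan}. First I would use that $\alpha\in R^2(\barM_{2,N})$ is, by definition of the classical tautological ring, a finite $\bQ$-linear combination of decorated boundary strata, i.e.\ pushforwards $\gamma_*(\theta)$ of monomials $\theta$ in $\psi$ and $\kappa$ classes under boundary morphisms (for codimension $2$ these are simply $\psi_i\psi_j$, $\psi_i\kappa_1$, $\kappa_1^2$, $\kappa_2$, and boundary classes decorated by $\psi$ or $\kappa$). By linearity of the integral it then suffices to prove that
\begin{equation*}
\sum_{d\ge 1}\frac{1}{\gamma_d}\left(\int_{\barM_{2,N}}(\phi_{d,N})_*(1)\cdot\gamma_*(\theta)\right)q^d\in\Qmod
\end{equation*}
for each such decorated stratum separately.

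The core of the argument is then to run the six-step procedure of \S\ref{admissible_plan} for each fixed decorated stratum. Since $\phi_{d,N}$ factors through $\phi'_d$ followed by the forgetful map to $\barM_{2,N}$, and since $\phi'_d\circ\nu=\delta\circ\res^{S_d}_{S_{d-1}}$, the projection formula together with the excess-intersection computations of \S\ref{res-cores_boundary} expresses each intersection number as a finite sum, indexed by the boundary strata of $\barH_{2/1,d}$ (equivalently by the Harris--Mumford dual graphs), of contributions of the form: a Hurwitz number giving the reduced degree of a target map $\delta$ onto a boundary stratum; an integral of a monomial in $\psi$ and $\kappa$ classes on the associated lower-genus target moduli $\barM_{1,\bullet}$ or $\barM_{0,\bullet}$; and local ramification and automorphism factors coming from Proposition \ref{adm_defos} and Lemma \ref{cores_boundary_multiplicities}. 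The essential observation is that, after dividing by the combinatorial relabeling factor $\gamma_d$, the only quantities depending on $d$ are the Hurwitz numbers and the ramification multiplicities; the $\psi$--$\kappa$ integrals live on \emph{fixed} moduli spaces of the targets and are $d$-independent rational numbers.

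I would then conclude by invoking quasimodularity of the relevant generating functions. The Hurwitz numbers enumerate (weighted) degree-$d$ covers of a genus-$1$ target, or of its stable degenerations, and their generating series $\sum_d(\cdots)q^d$ are quasimodular by the classical theory of covers of the torus (Dijkgraaf, Kaneko--Zagier, Bloch--Okounkov, Eskin--Okounkov); in the present low-genus situation these are exactly the $\sigma_1$-type series appearing in Theorem \ref{d-elliptic_recover}, whose generating functions are $E_2$ and $E_2^2$ up to normalization. Since $\Qmod$ is a ring and each contribution is a product of such a quasimodular series with $d$-independent scalars coming from the $\psi$--$\kappa$ integrals, every term lies in $\Qmod$, and hence so does the finite sum.

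The hard part will be the bookkeeping in the second step: one must verify that the normalization $\gamma_d$ counting relabelings of the forgotten marked points, together with the reduced-degree and ramification-multiplicity data of Proposition \ref{adm_defos} and the multiplicity computation underlying Lemma \ref{cores_boundary_multiplicities}, combine to isolate the $d$-dependence \emph{exactly} into a quasimodular Hurwitz series, rather than leaving an extraneous $d$-dependent factor that would destroy quasimodularity. This is precisely the phenomenon already visible in Theorem \ref{d-elliptic_recover}, where the factors of $(d-2)!^2$ produced by marking the $d-2$ additional points in each branch fiber are absorbed into $\gamma_d$; the content of the generalization is that this absorption persists uniformly for an arbitrary tautological $\alpha$, which holds because the combinatorics governing the forgotten points is independent of the geometry recorded by $\alpha$.
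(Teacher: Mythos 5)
Your overall reduction is the one the paper intends: Theorem \ref{d-elliptic_recover_general} is stated with no independent proof, only the remark that it follows from ``the same computation'' as Theorem \ref{d-elliptic_recover}, i.e.\ one decomposes $\alpha$ into decorated boundary strata and runs the six-step program of \S\ref{admissible_plan} for each stratum. Up to that point your proposal matches the paper.

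However, your final step contains a genuine gap. You claim that, after dividing by $\gamma_d$, each individual stratum contribution is a quasimodular series times $d$-independent $\psi$--$\kappa$ integrals, so that term-by-term membership in $\Qmod$ gives the result. The paper's own computation of Theorem \ref{d-elliptic_recover} shows this is false as stated: the contribution of covers of type $(\Delta_{00},\Delta_0)$ is $-8(d-2)!^2\sum_{am+bn=d}\max(a,b)\,mn$, the types $(\Delta_{000},\Delta_{00})$ and $(\Delta_{000},\Delta_0)$ contribute sums weighted by $\gcd$, $\lcm$, $\max$, $\min$ of the ramification indices, and the generating series of such arithmetically-weighted sums are not individually quasimodular (for instance, series with divisor-counting weights such as $\sum_{d}d\,\tau(d)q^d$ fail quasimodularity). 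Quasimodularity of the total emerges only after these non-quasimodular pieces cancel against each other, and that cancellation is exactly the content of Lemma \ref{david_lemma} (the identity supplied by David Yang), which is the real crux of the genus-$2$ argument. Your appeal to Eskin--Okounkov/Bloch--Okounkov does not close this gap, because the sums produced by the excess-intersection procedure are not Hurwitz counts of covers of a smooth elliptic curve: they are counts of admissible covers of \emph{nodal} targets weighted by excess Segre-class and multiplicity factors, and no general quasimodularity theorem applies to them individually. A correct proof must either exhibit the analogous cancellations for every tautological $\alpha\in R^2(\barM_{2,N})$ (which is what the paper asserts it has checked), or find a structural reason why the non-quasimodular parts always cancel; your proposal assumes this away.
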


To prove Theorem \ref{d-elliptic_recover}, we follow the method outlined in \S\ref{admissible_plan}, after pulling the boundary strata of $\barM_2$ back to $\barM_{2,2d-2}$. To carry out step (i), we will need a complete description of all boundary strata of $\barH_{2/1,d}$. This can be computed algorithmically, by considering all possible topological types of the target curve, then all possible ways in which the components can be covered, then glued.

We do not give full classification here, but rather show in Figures \ref{Fig:Delta00_Delta01}-\ref{Fig:Delta000_Delta0} the strata of covers that will appear in our calculations. To label these strata, we follow the notational convention for admissible covers of \cite{lian_qmod}, recording the boundary strata in which the source and target, respectively, lie.

\begin{figure}[!htb]
     \includegraphics[width=.85\linewidth]{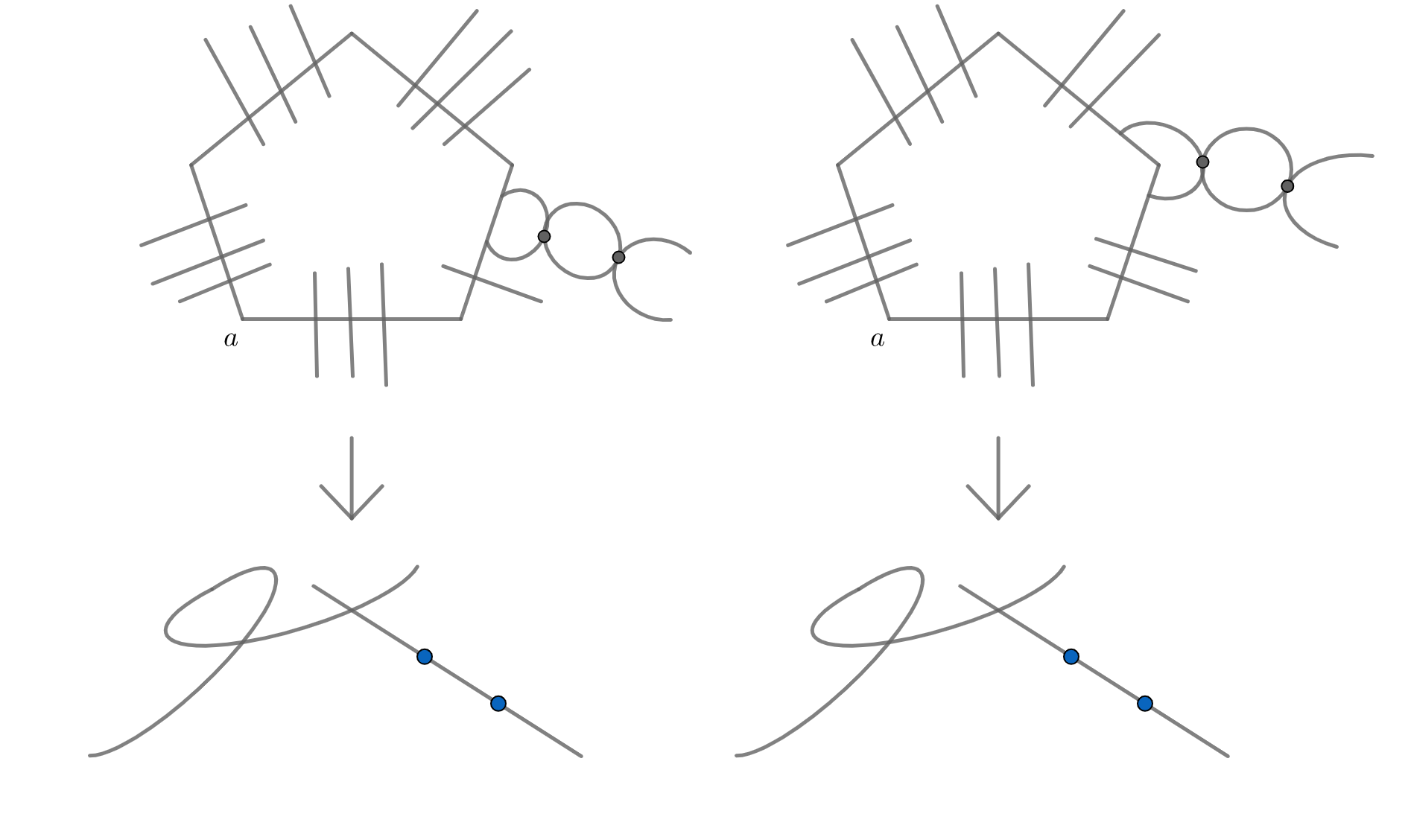}
     \caption{Covers of type $(\Delta_{00},\Delta_{01})$}\label{Fig:Delta00_Delta01}
\end{figure}

\begin{figure}[!htb]
   \begin{minipage}{0.48\textwidth}
     \centering
     \includegraphics[width=.95\linewidth]{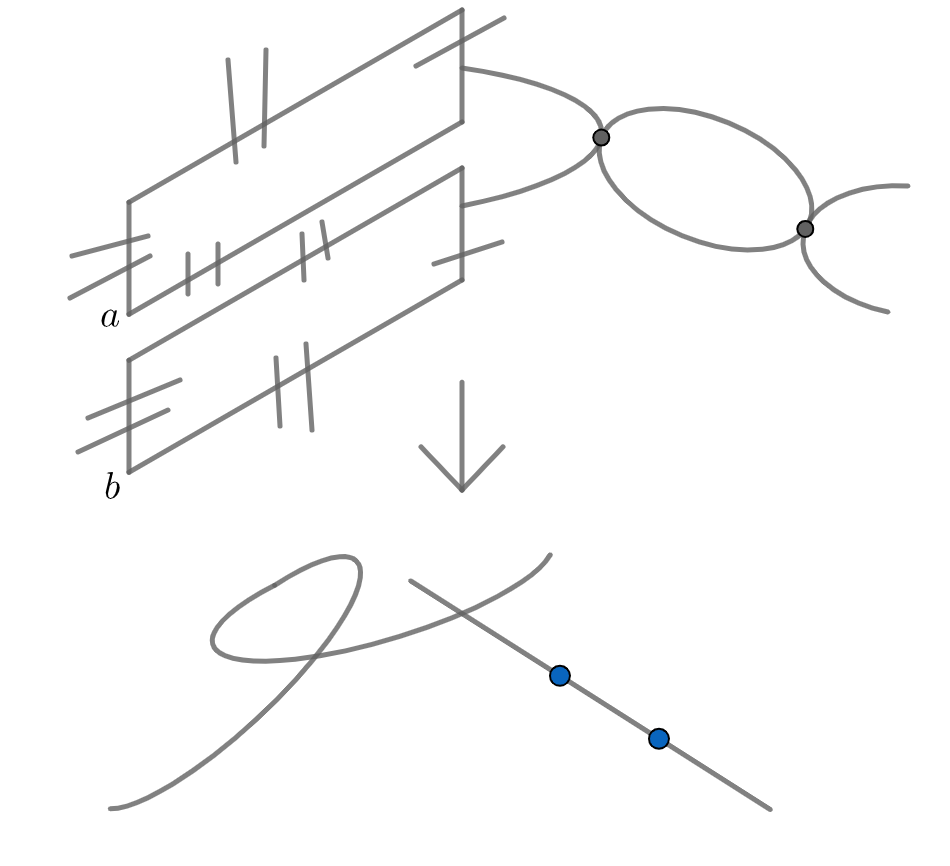}
     \caption{Cover of type $(\Delta_{001},\Delta_{01})$}\label{Fig:Delta001_Delta01}
   \end{minipage}\hfill
   \begin{minipage}{0.48\textwidth}
     \centering
     \includegraphics[width=.8\linewidth]{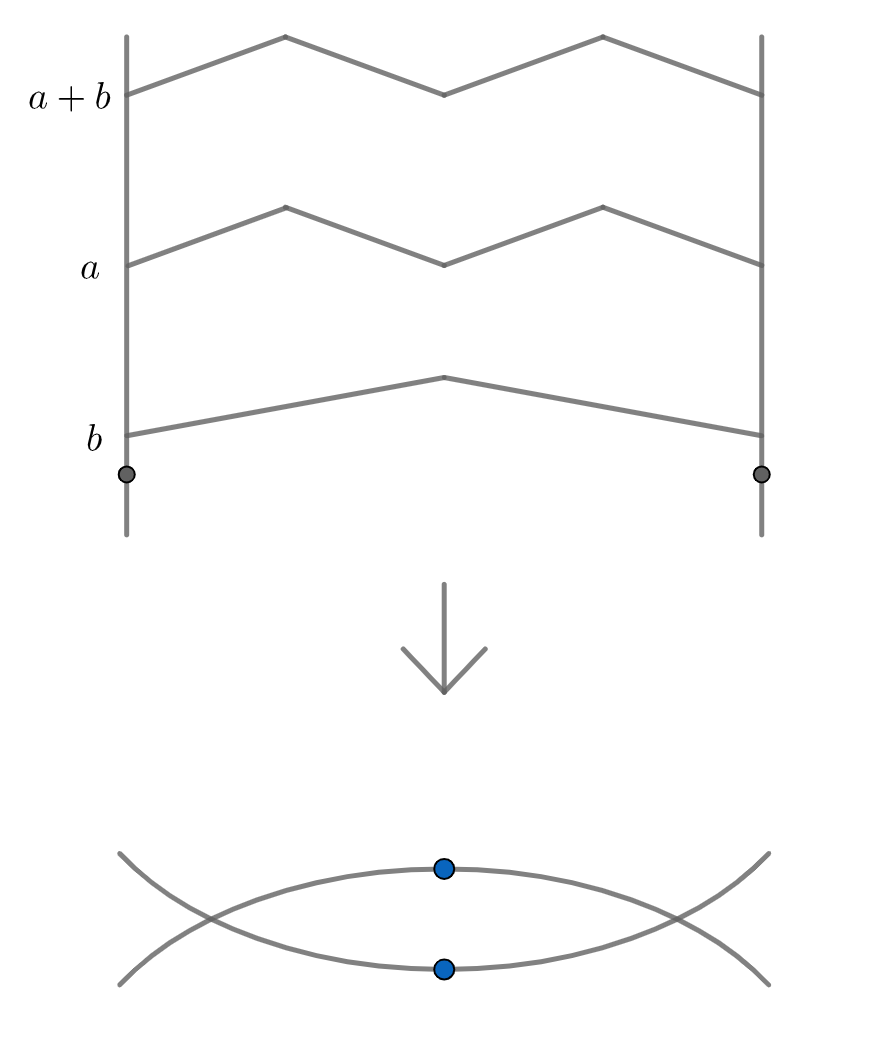}
     \caption{Cover of type $(\Delta_{000},\Delta_{00})$}\label{Fig:Delta000_Delta00}
   \end{minipage}
\end{figure}

\begin{figure}[!htb]
   \begin{minipage}{0.48\textwidth}
     \centering
     \includegraphics[width=.9\linewidth]{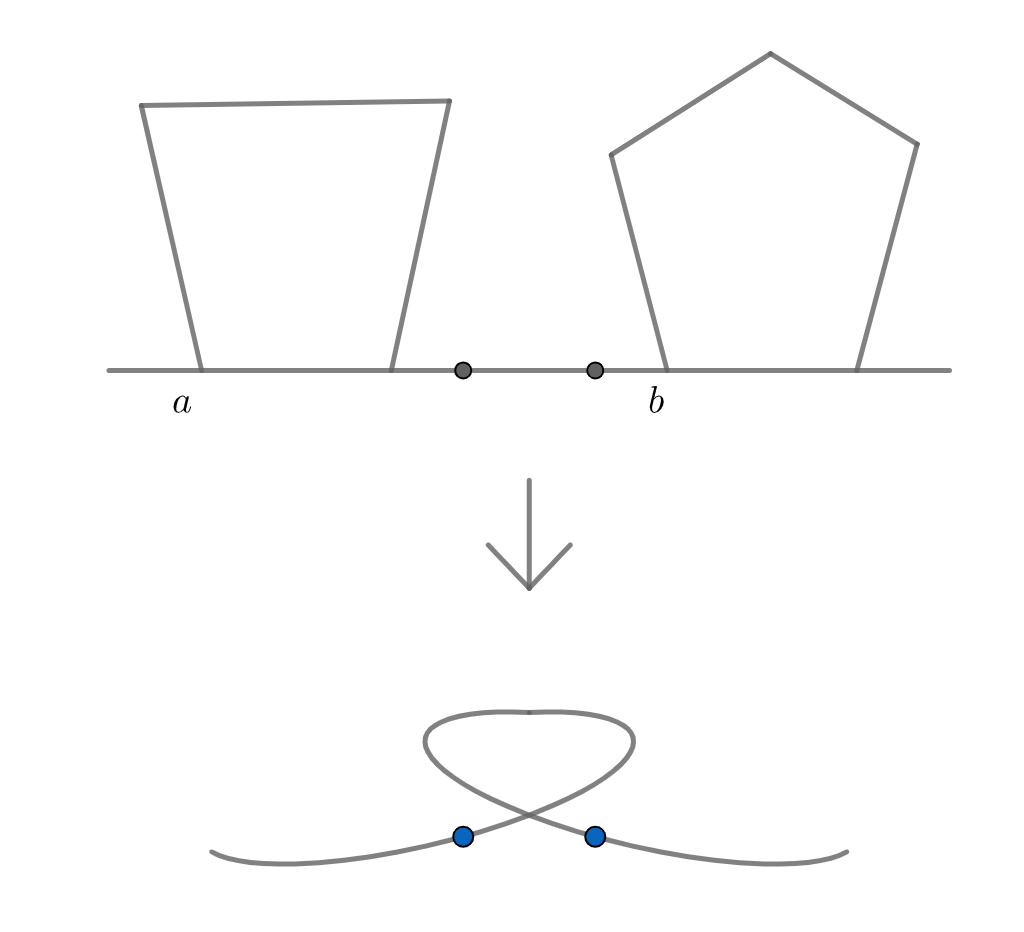}
     \caption{Cover of type $(\Delta_{00},\Delta_0)$}\label{Fig:Delta00_Delta0}
   \end{minipage}\hfill
   \begin{minipage}{0.48\textwidth}
     \centering
     \includegraphics[width=.9\linewidth]{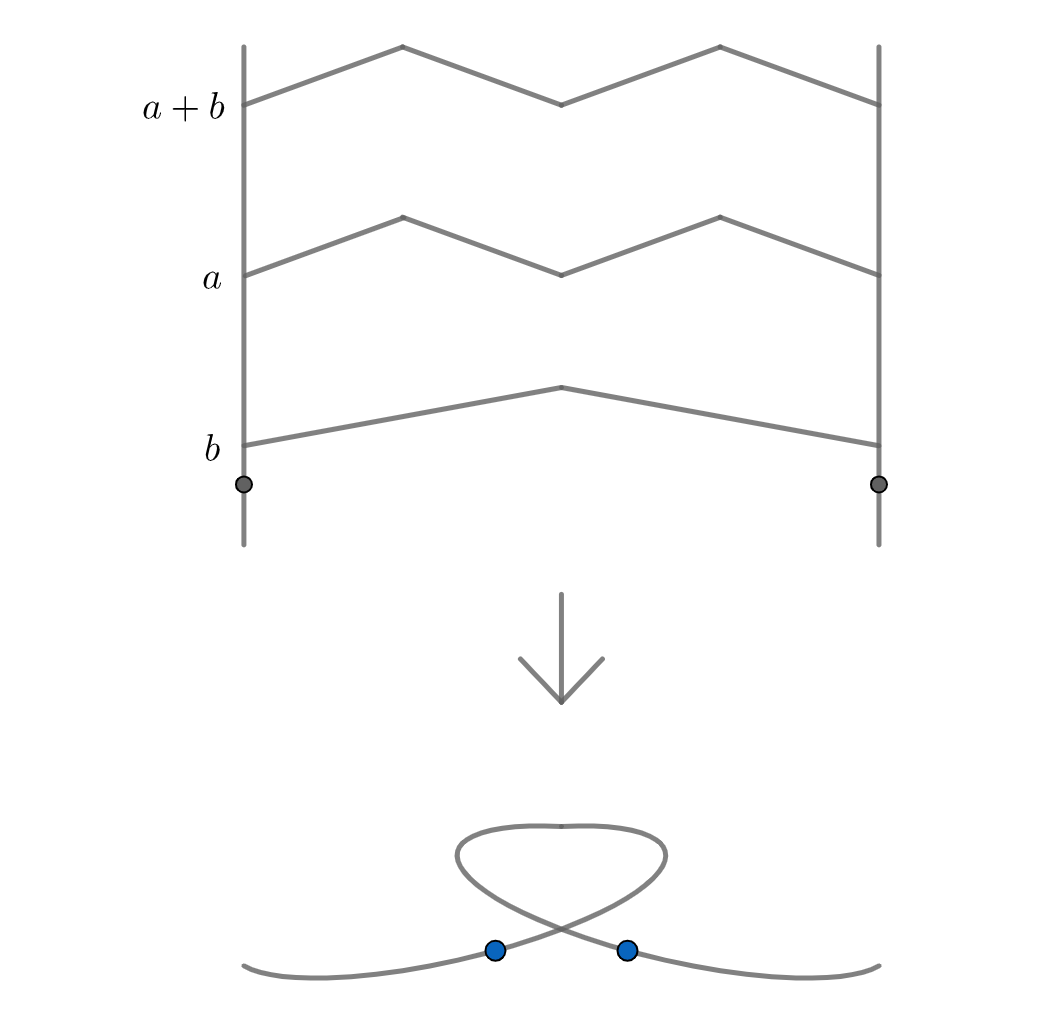}
     \caption{Cover of type $(\Delta_{000},\Delta_{0})$}\label{Fig:Delta000_Delta0}
   \end{minipage}
\end{figure}

\subsubsection{Intersection with $\Delta_{01}$}\label{delta01_intersection} See Figure \ref{Fig:Delta001_Delta01}. We consider the fiber diagram
\begin{equation*}
\xymatrix{
\wt{\cB} \ar[r] \ar[d]^{\nu_{01}}& \wt{\cH}_{2/1,d} \ar[d] \\
\cB \ar[r] \ar[d] & \barH_{2/1,d} \ar[d]^{\phi_d} \\
\coprod \overline{M}_{0,3+r} \times \barM_{1,2d-1-r} \ar[d] \ar[r] & \barM_{2,2d-2} \ar[d] \\
M_{0,3}\times\barM_{1,1} \ar[r]^(0.58){\xi_{01}} & \barM_2
}
\end{equation*}
The coproduct is over all possible ways of distributing the $2d-2$ marked points between the two components.

Following the method of  \S\ref{admissible_plan}, we begin by enumerating the points of $\cB$, which are all given by covers of type $(\Delta_{001},\Delta_{01})$.

Over the nodal component of $Y$, we have an $m$-gon of rational curves whose components each have degree $a$ over $Y$ and an $n$-gon of rational curves whose components each have degree $b$ over $Y$; both components are ramified only at the nodes. (Note that we allow one or both of $m,n$ to be equal to 1, corresponding to a rational nodal component $Y$.) Over the smooth component of $Y$, we have a rational bridge of degree 2, connecting the two polygons, along with $d-2$ components mapping isomorphically.

We distinguish the two cycles of rational curves mapping to $Y$ by declaring that the non-separating node corresponding to the self-edge of $\Gamma_{01}$ is to come from the $m$-gon. We have $m$ distinct choices for the node of $x\in X$ corresponding to the self-edge $(h_0,h'_0)$ of $\Gamma_{01}$ -- these are distinguished by the distance from edge along the dual graph of $X$ to that connecting to the bridge of $X$, as measured in the direction of $h$.

We also have 2 choices to identify the other edge $(h_1,h'_1)$ of $\Gamma_{01}$ with a node of $X$. We must then label the marked points of $X$; this can be done in $(d-2)!^2$ ways, but we also have actions by $a$-th and $b$-th roots of unity, respectively, on the non-distinguished components of the cycles. Therefore, we find
\begin{equation*}
\#\cB(\bC)=\sum_{am+bn=d}2m\cdot\frac{(d-2)!^2}{a^{m-1}b^{n-1}}
\end{equation*}
where we have weighted the count by automorphisms.

Next, we consider the degree of the map $\nu_{01}$ over a point of $\cB$ as above. The complete local ring of $[f:X\to Y]$ is isomorphic to
\begin{equation*}
\bC[[s_1,\ldots,s_m,t_1,\ldots,t_n,u]]/(s_i^a=t_j^b),
\end{equation*}
where the $s_i,t_j$ are smoothing parameters of the vertices of the $m$-gon and $n$-gon, respectively, and $u$ is a smoothing parameter for the separating nodes. The normalization will thus have $a^{m-1}b^{n-1}\gcd(a,b)$ branches. It follows that
\begin{equation*}
\#\wt{\cB}(\bC)=\sum_{am+bn=d}2 m(d-2)!^2\gcd(a,b).
\end{equation*}

Finally, we consider the multiplicities of the points of $\wt{\cB}$. Let $\wt{f}:\wt{X}\to Y$ be any point of $\wt{H}_{2/1,d}$ lying over $[f:X\to Y]\in\barH_{2/1,d}$. Then, the common ramification index of the (possibly disconnected) $S_d$-cover $\wt{f}$ over the non-separating node of $Y$ is $\lcm(a,b)$. On the other hand, the ramification index of $f$ at the chosen node $x\in X$ corresponding to the self-edge of $\Gamma_{01}$ is $a$. Therefore, the common ramification index over $x$ in the $S_{d-1}$-cover $\wt{X}\to X$ is equal to $\lcm(a,b)/a$.

The local computation of Lemma \ref{cores_boundary_multiplicities} shows, then, that $[\wt{f}]\in\wt{\cB}$ appears with this multiplicity. Putting everything together, we obtain 
\begin{align*}
\int_{\barM_2}(\phi'_{d})_{*}(1)\cdot\Delta_{01}&=\sum_{am+bn=d}2 m(d-2)!^2\gcd(a,b)\cdot\frac{\lcm(a,b)}{a}\\
&=2(d-2)!^2\sum_{am+bn=d}mb\\
&=2(d-2)!^2\sum_{d_1+d_2=d}\sigma_1(d_1)\sigma_1(d_2).
\end{align*}

\subsubsection{Intersection with $\Delta_{00}$}

We now consider the fiber diagram
\begin{equation*}
\xymatrix{
\wt{\cB} \ar[r] \ar[d]^{\nu_{00}} & \wt{\cH}_{2/1,d} \ar[d] \\
\cB \ar[r] \ar[d] & \barH_{2/1,d} \ar[d]^{\phi_d} \\
\overline{M}_{0,2d+2} \ar[d] \ar[r]^{\xi'_{00}} & \barM_{2,2d-2} \ar[d] \\
\overline{M}_{0,4}\ar[r]^{\xi_{00}} & \barM_2
}
\end{equation*}

The space $\cB$, and thus also $\wt{\cB}$, consists of four families of components, corresponding to the topological types $(\Delta_{00},\Delta_{01})$, $(\Delta_{000},\Delta_{00})$, $(\Delta_{00},\Delta_0)$, and $(\Delta_{000},\Delta_0)$. Note that, while the remaining topological type, $(\Delta_{001},\Delta_{01})$, studied in the previous section, has source curve in the locus $\Delta_{00}$, the chosen non-separating nodes of $X$ map to the same node of $Y$, so upon pullback to $\wt{\cB}$, the genericity condition of Lemma \ref{restriction_boundary_cartesian} will be violated. More concretely, such covers are specializations of those of type $(\Delta_{00},\Delta_0)$, so already appear on another of the enumerated coomponents.

We will label the families of components of $\cB$ and $\wt{\cB}$ with subscripts according to their topological type (for example, $\cB_{(\Delta_{00},\Delta_{01})}$ is the union of components of $\cB$ corresponding to covers of type $(\Delta_{00},\Delta_{01})$). We first consider zero-dimensional components of the intersection.\\

\underline{Type $(\Delta_{00},\Delta_{01})$.} See Figure \ref{Fig:Delta00_Delta01}. We have a cycle of $m$ rational curves of degree $a$ over the nodal component of $Y$, a rational bridge of degree 2 over the smooth component, and all other components mapping isomorphically to the smooth component. 

We first compute the number of points in $\cB_{(\Delta_{00},\Delta_{01})}$. Consider first the identification of nodes of $X$ with edges of $\Gamma_{00}$. Note that the genericity condition of Lemma \ref{restriction_boundary_cartesian} requires us to choose nodes over both of those of $Y$, so one must come from the cycle of rational curves, and the other from one of two attachment points of the rational bridge. We have two choices for which of the edges corresponds to which type of node.

We begin with an $m$-gon of rational curves with a distinguished node corresponding to the prescribed edge of $\Gamma_{00}$, and a fixed orientation, determining an identification of half-edges with branches.

Next, consider the node of $X$ identified with the other edge of $\Gamma_{00}$; we have $m$ distinct choices for the component of the $m$-gon on which it lies, and two choices for the identification of half-edges with branches. If the rational bridge connects two different components of the $m$-gon, then there are $m-1$ choices for the other component, while if it connects the chosen component to itself, there are $a-1$ choices for the second attachment point.

We next have $(d-2)!^2$ labellings of the unramified marked points, but we must correct for the cyclic actions on the rational components with only two nodes, of which there are $m-2$ if the rational bridge connects two distinct components and $m-1$ otherwise. We find:
\begin{equation*}
\#\cB_{(\Delta_{00},\Delta_{01})}(\bC)=\sum_{am=d}4(d-2)!^2\left[\frac{m(m-1)}{a^{m-2}}+\frac{(a-1)m}{a^{m-1}}\right]
\end{equation*}
points, where as usual we weight the count by automorphisms.

A cover $f:X\to Y$ of type $(\Delta_{00},\Delta_{01})$ has complete local ring
\begin{equation*}
\bC[[s_1,\ldots,s_m,t]]/(s_1^a=\cdots s_m^a),
\end{equation*}
so
\begin{equation*}
\#\wt{\cB}_{(\Delta_{00},\Delta_{01})}(\bC)=\sum_{am=d}4(d-2)!^2\left[\frac{m(m-1)}{a^{m-2}}+\frac{(a-1)m}{a^{m-1}}\right]a^{m-1}
\end{equation*}

Finally, note that if $[\wt{f}:\wt{X}\to Y]$ is a point of $\wt{\cB}$ over $[f]$, then $f$ and $\wt{f}$ will have the same ramification indices of $a$ and 1 over the nodes of $Y$. Thus, $\wt{X}\to X$ is unramified over the nodes, and in particular, $\wt{\cB}_{(\Delta_{00},\Delta_{01})}$ is reduced. We therefore get a contribution from $\#\wt{\cB}_{(\Delta_{00},\Delta_{01})}$ to the desired intersection number of 
\begin{align*}
&\quad\sum_{am=d} 4(d-2)!^2\left[\frac{m(m-1)}{a^{m-2}}+\frac{(a-1)m}{a^{m-1}}\right] a^{m-1}\\
&=\sum_{am=d}4(d-2)!^2[am(m-1)+(a-1)m]\\
&=4(d-2)!^2\sum_{am=d}m(am-1)\\
&=4(d-2)!^2(d-1)\sigma_1(d).
\end{align*}

\underline{Type $(\Delta_{000},\Delta_{00})$.} See Figure \ref{Fig:Delta000_Delta00}. 

\begin{lem}\label{hurwitz_(a+b)_(a,b)_2}
Given $a,b\ge1$, there is a unique degree $a+b$ cover $g:\bP^1\to\bP^1$ that is totally ramified over 0, simply ramified over $\infty$, and has ramification profile $(a,b)$ over 1. Here, we consider the two points in $f^{-1}(1)$ to be distinguished if $a=b$.
\end{lem}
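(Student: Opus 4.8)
The plan is to prove existence and uniqueness simultaneously by writing $g$ out as an explicit rational function, after rigidifying with the automorphisms of the source $\bP^1$. First I would use $\mathrm{PGL}_2$ acting on the source to place the three distinguished points: the unique point over $0$ (total ramification) at $z=\infty$, and the two points over $1$ at $z=0$ and $z=1$, the first with ramification index $a$ and the second with index $b$. When $a\neq b$ these two points are intrinsically distinguished by their ramification indices, so this normalization is canonical; when $a=b$ it is exactly the ordering of $g^{-1}(1)$ posited in the statement that makes it canonical. Fixing three source points exhausts $\mathrm{PGL}_2$, so after this normalization a cover is rigid and counting covers becomes counting rational functions $g$ of the prescribed shape.

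Next I would read off the form of $g$ from the fibre conditions. Total ramification over $0$ concentrated at $z=\infty$ forces $g=c_0/Q(z)$ with $Q$ a polynomial of degree $d=a+b$ and $c_0\neq0$; the fibre over $1$ is the zero locus of $Q-c_0$, so the profile $(a,b)$ there forces $Q-c_0=\lambda z^a(z-1)^b$ with $\lambda$ the leading coefficient of $Q$. Thus $Q=\lambda z^a(z-1)^b+c_0$, and the entire problem reduces to choosing $c_0$ so that the fibre over $\infty$, namely the zero locus of $Q$, has profile $(2,1^{d-2})$, i.e.\ $Q$ has exactly one double root and otherwise simple roots. Since $Q'=\lambda\,(z^a(z-1)^b)'$ vanishes precisely at $z=0$, $z=1$, and the interior critical point $z_0=a/(a+b)$, a multiple root of $Q$ can only occur at one of these three points; evaluating shows that $z=0,1$ are roots of $Q$ only when $c_0=0$ (which degenerates $g$ and in any case produces the wrong profile $(a,b)$), while $z_0$ is a root exactly when $c_0=-\lambda\,z_0^a(z_0-1)^b$. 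For this single value of $c_0$ the point $z_0$ is a genuine double root (it is a simple zero of $Q'$ and $z_0\neq0,1$), and $Q$ has no other multiple root, giving precisely the profile $(2,1^{d-2})$; any other $c_0$ leaves all roots of $Q$ simple. Hence $c_0$ is uniquely determined, and since the common factor $\lambda$ cancels in $g=c_0/Q$, the cover $g(z)=-z_0^a(z_0-1)^b\big/\big(z^a(z-1)^b-z_0^a(z_0-1)^b\big)$ is the unique one, which also settles existence once one checks Riemann--Hurwitz (the contributions $(d-1)+(a-1)+(b-1)+1=2d-2$ balance exactly, forcing a single simple branch point over $\infty$).

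The one point requiring genuine care is the case $a=b$, which is the reason the statement distinguishes the two points over $1$. Here the substitution $z\mapsto 1-z$ preserves $z^a(z-1)^a$, hence is a nontrivial deck transformation of $g$ interchanging $z=0$ and $z=1$; so the abstract cover carries an automorphism of order $2$, and it is only after marking an order on $g^{-1}(1)$ that the object is rigid and counted once. I expect this bookkeeping around the $a=b$ automorphism, together with the verification that $c_0=-\lambda z_0^a(z_0-1)^b$ is the \emph{only} choice yielding the profile $(2,1^{d-2})$ over $\infty$ (in particular ruling out the degenerate $c_0=0$), to be the main obstacle; the remainder is a direct computation.
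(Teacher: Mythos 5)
Your proof is correct, but it follows a genuinely different route from the paper's. The paper invokes the Riemann Existence Theorem and counts monodromy data: pairs $(\sigma_0,\sigma_\infty)\in(S_{a+b})^2$ with $\sigma_0$ an $(a+b)$-cycle, $\sigma_\infty$ a transposition, and $\sigma_\infty\sigma_0$ of cycle type $(a,b)$, taken up to simultaneous conjugation; one checks that every such pair is conjugate to $((12\cdots(a+b)),(1\,(a+1)))$, so the count is $1$. You instead rigidify with $\mathrm{PGL}_2$ acting on the source $\bP^1$ and solve for the cover as an explicit rational function $g=c_0/Q$ with $Q=\lambda z^a(z-1)^b+c_0$, where the simple-branching requirement over $\infty$ forces the single value $c_0=-\lambda z_0^a(z_0-1)^b$ with $z_0=a/(a+b)$. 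Both arguments are complete, and each buys something. The paper's method is shorter (granting Riemann existence) and is the one that scales: it is the same technique underlying Lemma \ref{(a,b),(a,b),2,2} and the Hurwitz-number degrees of target maps used throughout the paper. Yours is more elementary and constructive: it produces the actual equation of the cover, and it makes the $a=b$ subtlety fully transparent, since the deck transformation $z\mapsto 1-z$ interchanging the two points of $g^{-1}(1)$ is visible in coordinates --- exactly the order-$2$ automorphism that the marking convention in the statement is designed to kill, and which the paper's proof leaves implicit. Two cosmetic points only: $Q'$ vanishes precisely at $\{0,1,z_0\}$ only when $a,b\ge2$ (for $a=1$ or $b=1$ one has just a containment, which is all your argument uses), and the closing Riemann--Hurwitz check is redundant, since you have already verified the three fibers directly and identified all critical points as $\infty,0,1,z_0$.
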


\begin{proof}
The number of such covers is given by the number of pairs $(\sigma_0,\sigma_\infty)\in (S_{a+b})^2$ such that $\sigma_0$ is an $(a+b)$-cycle, $\sigma_\infty$ is a transposition, and $\sigma_\infty\sigma_0$ has cycle type $(a,b)$, up to simultaneous conjugation. It is straightforward to check that the unique such choice is
\begin{equation*}
(\sigma_0,\sigma_\infty)=((12\cdots(a+b)),(1(a+1)),
\end{equation*}
\end{proof}

We consider pointed admissible covers $f:X\to Y$, where $Y$ is a union of rational curves $Y_1,Y_2$ intersecting at two points $y_{+},y_{-}$, and $X$ contains two rational curves $X_1,X_2$ mapping to $Y_1,Y_2$ in a unique way as in Lemma \ref{hurwitz_(a+b)_(a,b)_2}. We take $y_{+}$ to be the node of $Y$ over which the $X_j$ are totally ramified, and $Y_j$ to be the component containing the $j$-th branch point of $f$. The components $X_1$ and $X_2$ are then joined by three chains of rational curves totally ramified over the nodes of $Y$. Note that intersecting components of $X$ must map to opposite components of $Y$, and different nodes on components other than $X_1,X_2$ must map to different nodes of $Y$.

Suppose that the chains of rational curves with degrees $a+b,a,b$ over one of the $Y_j$ contains $k,m,n$ pre-images, respectively, of $y_{+}$. Then, these chains contain $k-1,m+1,n+1$ pre-images, respectively, of $y_{-}$. Here, we must have $k\ge 1$ and $m,n\ge0$. In order for the degree of $f$ to be equal to $d$, we require $(a+b)k+am+bn=d$.

We first choose nodes of $X$ which will correspond to the edges of $\Gamma_{00}$. By the usual genericity condition, one node each must be chosen above the nodes $y_{\pm}$. First, suppose that the two nodes chosen have ramification indices $a$ and $b$ over $Y$; we will distinguish the chains of these ramification indices by identifying them with the two edges of $\Gamma_{00}$. 

Suppose first that the node of ramification index $a$ is chosen over $y_{+}$, and that of ramification index $b$ is chosen over $y_{-}$. Then, there are $m(n+1)$ ways to choose the two nodes and $2\cdot2=4$ ways to identify their branches with half-edges. There are $(d-2)!^2$ ways to label the unramified marked points, and $f$ has an automorphism group of order $(a+b)^{2k-2}a^{2m}b^{2n}$.

We compute the reduced degree of $\nu_{00}$ over $[f]$ to be
\begin{equation*}
\gcd(a,b)^4(a+b)^{2k-3}a^{2m-1}b^{2n-1}.
\end{equation*}
and the intersection multiplicity at any point $[\wt{f}]$ over $[f]$ to be
\begin{equation*}
\frac{\lcm(a+b,a,b)}{a}\cdot\frac{\lcm(a+b,a,b)}{b}=\frac{(a+b)^2a^2b^2}{ab\gcd(a,b)^4}.
\end{equation*}

Combining, we get a total contribution of
\begin{align*}
&(d-2)!^2\sum_{k(a+b)+ma+nb=d}4 m(n+1)\frac{\gcd(a,b)^4(a+b)^{2k-3}a^{2m-1}b^{2n-1}}{(a+b)^{2k-2} a^{2m} b^{2n}}\frac{(a+b)^2a^2b^2}{ab\gcd(a,b)^4}\\
=&(d-2)!^2\sum_{(a+b)k+am+bn=d}4m(n+1)(a+b).
\end{align*}
to the desired intersection number.

Similarly, if the nodes with ramification indices $a,b$ live over $y_{-},y_{+}$, respectively, we get a contribution of
\begin{equation*}
\sum_{(a+b)k+am+bn=d}4 (m+1)n(a+b).
\end{equation*}

On the other hand, we may also choose one of our nodes to have ramification index $a+b$. In this case, to distinguish $a$ and $b$, we declare that the other chosen node has ramification index $a$. We then have two ways to identify these nodes with the edges of $\Gamma_{00}$. Analogous computations to the above show that we get a total contribution from these pointed covers of
\begin{equation*}
(d-2)!^2\sum_{(a+b)k+am+bn=d}8 [k(m+1)+(k-1)m] b.
\end{equation*}

Combining everything, covers of type $(\Delta_{000},\Delta_{00})$ contribute
\begin{equation*}
(d-2)!^2\sum_{\substack{(a+b)k+am+bn=d \\ k\ge1, m,n\ge0}}[(8mn+4m+4n)(a+b)+(16km+8k-8m)b]
\end{equation*}
to the intersection of $\phi_d$ and $\xi_{00}$.\\

Now, we consider one-dimensional components of the intersection; here will need to implement the full program of \S\ref{admissible_plan}.\\

\underline{Type $(\Delta_{00},\Delta_{0})$.} See Figure \ref{Fig:Delta00_Delta0}. 

\begin{lem}\label{(a,b),(a,b),2,2}
Given $a,b\ge1$, the number of degree $a+b$ covers $g:\bP^1\to\bP^1$ simply ramified over two fixed points and with ramification profile $(a,b)$ over another two fixed points is equal to $2\max(a,b)$. Here, we consider the points in the latter two fibers distinguished if $a=b$.

Equivalently, if $\barH_{(a,b)}$ denotes the Harris-Mumford space of admissible covers of this type, the degree of the target map $\delta_{(a,b)}:\barH_{(a,b)}\to\overline{M}_{0,4}$ is $2\max(a,b)$.
\end{lem}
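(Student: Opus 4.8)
The plan is to identify $\deg\delta_{(a,b)}$ with a Hurwitz number via the Riemann Existence Theorem and then to evaluate it by a direct monodromy count. Since $\delta_{(a,b)}$ is finite and $\overline{M}_{0,4}$ is connected, its degree may be computed over any configuration of four distinct points on the target $\bP^1$, two of which (say $0,\infty$) carry the profile $(a,b)$ and two of which carry a simple branch point; set $n=a+b$. By the Riemann Existence Theorem the covers in such a fibre correspond to tuples $(\alpha,\beta,\tau_1,\tau_2)\in S_n^4$ with $\alpha,\beta$ of cycle type $(a,b)$, with $\tau_1,\tau_2$ transpositions, with $\alpha\beta\tau_1\tau_2=1$, and with $\langle\alpha,\beta,\tau_1,\tau_2\rangle$ transitive, taken up to simultaneous conjugation (Riemann--Hurwitz forces the source genus to be $0$ automatically). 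Because the points of $\barH_{(a,b)}$ carry a marking of the two fibres over the $(a,b)$-branch points, the relevant covers are rigidified, and bookkeeping the $n!$ labellings of a base fibre against this marking gives
\begin{equation*}
\deg\delta_{(a,b)}=\frac{m(a,b)}{z(a,b)}\cdot c(a,b),
\end{equation*}
where $c(a,b)$ is the number of ordered pairs of transpositions $(\tau_1,\tau_2)$ for which $\beta:=\alpha_0^{-1}\tau_2\tau_1$ has type $(a,b)$ and $\langle\alpha_0,\tau_1,\tau_2\rangle$ is transitive, for the fixed permutation $\alpha_0=(1\,2\cdots a)(a{+}1\cdots a{+}b)$; here $z(a,b)=|Z(\alpha_0)|$ is the order of its centralizer (equal to $ab$ if $a\ne b$ and to $2a^2$ if $a=b$) and $m(a,b)$ is the marking factor, equal to $1$ if $a\ne b$ and to $4$ if $a=b$ (the two orderings each of the cycles of $\alpha_0$ and of $\beta$). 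The whole problem thus reduces to showing $c(a,b)=2ab\max(a,b)$ for $a\ne b$ and $c(a,a)=a^3$.

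To count $c(a,b)$ I would track how the number of cycles of $\mu:=\alpha_0^{-1}$, which has two cycles $C_1,C_2$ of lengths $a,b$, changes under right multiplication by a transposition: multiplying by $(x\,y)$ joins two cycles when $x,y$ lie in different cycles and splits one cycle otherwise. Requiring $\beta=\mu\tau_2\tau_1$ to again have two cycles leaves exactly two patterns, $2\to1\to2$ and $2\to3\to2$. In the first, $\tau_2$ joins $C_1,C_2$ into an $n$-cycle and $\tau_1$ splits that cycle into parts of sizes $a$ and $b$; transitivity is automatic here, since already $\langle\alpha_0,\tau_2\rangle$ is transitive. There are $ab$ choices of $\tau_2$, and for each resulting $n$-cycle the number of splitting transpositions $\tau_1$ of prescribed part-sizes $(a,b)$ is $a+b$ when $a\ne b$ and $a$ when $a=b$ (the cyclic-distance count for an $N$-cycle gives $N$ if the two parts differ and $N/2$ if they coincide); this pattern contributes $ab(a+b)$, respectively $a^3$.

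In the second pattern $\tau_2$ splits one cycle and $\tau_1$ rejoins to type $(a,b)$. Now $\langle\alpha_0,\tau_2\rangle$ preserves the partition $\{C_1,C_2\}$, so transitivity forces $\tau_1$ to be a cross-transposition joining the two blocks. Assuming $a\ge b$, a short check shows the only admissible configuration is that $\tau_2$ splits the longer cycle $C_1$ into pieces of sizes $a-b$ and $b$ and that $\tau_1$ then joins the size-$(a-b)$ piece to $C_2$; counting the $a$ splitting transpositions $\tau_2$ against the $(a-b)b$ joining transpositions $\tau_1$ gives $ab(a-b)=ab|a-b|$, and this pattern contributes nothing when $a=b$. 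Summing the two patterns yields $c(a,b)=ab(a+b)+ab|a-b|=2ab\max(a,b)$ for $a\ne b$ and $c(a,a)=a^3$, and substituting into the displayed identity gives $\deg\delta_{(a,b)}=2\max(a,b)$ in both cases.

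The step I expect to be the main obstacle is the transitivity bookkeeping in the second pattern together with the degenerate equal-split subcases (most visibly $a=2b$, where $\tau_2$ splits $C_1$ into two equal halves): one must verify that the halving of the number of splitting transpositions $\tau_2$ is exactly offset by the doubling of admissible joining transpositions $\tau_1$, so that the clean value $ab|a-b|$ persists. The cyclic-distance count above is precisely what makes this balance work, and the $a=b$ boundary between the two formulas for $c$ must be cross-checked against the marking factor $m(a,a)=4$ and the larger centralizer $z(a,a)=2a^2$. As an independent sanity check one can instead write the cover explicitly as $g(z)=c\,z^{a}/\bigl((z-1)^{e_1}(z-\lambda)^{e_2}\bigr)$ with $\{e_1,e_2\}=\{a,b\}$ and impose that its two finite critical values coincide with the prescribed simple branch points, recovering the same count $2\max(a,b)$.
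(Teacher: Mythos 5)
Your proof is correct, but it takes a genuinely different route from the paper's. You work over an interior point of $\overline{M}_{0,4}$ and compute the degree as a Hurwitz number via the Riemann Existence Theorem, reducing everything to the combinatorial identity $c(a,b)=2ab\max(a,b)$ (resp.\ $c(a,a)=a^3$) for pairs of transpositions, established by the join/split analysis; I checked the two patterns, the transitivity constraints, the equal-split balance at $a=2b$, and the centralizer/marking normalization $\deg\delta_{(a,b)}=\frac{m}{z}\cdot c$ (e.g.\ $(a,b)=(2,1)$ gives $c=6+2=8$, $z=2$, $m=1$, hence degree $4=2\max(2,1)$; and $(a,b)=(1,1)$ gives $c=1$, $z=2$, $m=4$, degree $2$), and all of this is sound. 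The paper instead degenerates the target to the boundary point of $\overline{M}_{0,4}$ in which each component carries one simple and one $(a,b)$ branch point, and counts admissible covers over this degenerate target with multiplicities: one cover glues two copies of the cover of Lemma~\ref{hurwitz_(a+b)_(a,b)_2} along the points of total ramification $a+b$ and contributes $a+b$; the other (present only for $a>b$) glues two covers of type $(a-b,b)$ along the order-$(a-b)$ points, together with cyclic degree-$b$ tails over each component, and contributes $b^2(a-b)$ corrected by $1/b^2$ for the automorphisms of the tails, i.e.\ $a-b$; the total is $2a=2\max(a,b)$. The trade-off is clear: the paper's argument is short because it reuses Lemma~\ref{hurwitz_(a+b)_(a,b)_2} and the boundary-multiplicity calculus of Proposition~\ref{adm_defos} that the paper develops anyway, but it must track stacky automorphism corrections at the boundary; your argument is self-contained and purely group-theoretic, avoiding boundary multiplicities entirely, at the cost of the careful centralizer, marking, and degenerate-case bookkeeping you yourself flagged. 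Your closing ``sanity check'' via an explicit rational function is the one sketchy part (controlling the critical values of that family is not straightforward), but nothing in your main argument depends on it.
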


\begin{proof}
We degenerate the target to a union of two copies of $\bP^1$ attached at a node, so that each component contains one branch point of each type. Without loss of generality, assume that $a\ge b$. There are two admissible covers of the desired type over this target:
\begin{itemize}
\item two covers of the type in Lemma \ref{hurwitz_(a+b)_(a,b)_2}, glued together at the ramification and branch points of order $a+b$,
\item two covers of the type in Lemma \ref{hurwitz_(a+b)_(a,b)_2}, where the pair $(a,b)$ is replaced by $(a-b,b)$, glued together at the ramification and branch points of order $a-b$, along with one additional component covering each component of the target via the map $x\mapsto x^b$.
\end{itemize}
On the Harris-Mumford stack $\barH_{(a,b)}$, the first cover appears with multiplicity $a+b$ over the desired target. The second appears with multiplicity $b^2(a-b)$, but the two components of the target with the action by a cyclic group of order $b$ introduce a correction factor of $1/b^2$. Summing these contributions yields the claim.
\end{proof}

\begin{enumerate}
\item Suppose we have integers $a,m,b,n\ge1$ with $am+bn=d$. Then, $\barH_{(a,b)}$ gives rise to components of $\cB_{(\Delta_{00},\Delta_{0})}$ in the following way. Let $g:X_0\to Y_0$ be a cover as in Lemma \ref{(a,b),(a,b),2,2}, where $X_0$ and $Y_0$ are allowed to be singular. We attach chains of rational curves of lengths $m-1$ and $n-1$, where $m,n\ge1$, totally ramified over $Y_0$ of degrees $a$ and $b$, to the two points each of $X_0$. Then, we glue the images of these nodes on $Y_0$ to get the curve $Y$.

Suppose that we have an identification of the half-edges of $\Gamma_{00}$ with branches of nodes of $X_0$; the pairs $(a,m)$ and $(b,n)$ may be distinguished by this identification, and we get orientations on both chains. The $\Gamma_{00}$-structure on $X$ gives us a map $\barH_{(a,b)}\to \overline{M}_{0,2d+2}$ and identifies $\barH_{(a,b)}$ with a component of $\cB^{\red}$. 

For a given choice of $a,m,b,n$, the number of components of $\cB_{(\Delta_{00},\Delta_{0})}$ associated to $\barH_{(a,b)}$ in the way we have described is
\begin{equation*}
2mn\cdot\frac{(d-2)!^2}{a^{m-1}b^{m-1}}.
\end{equation*}

Here, the factor of 2 comes from a choice of identification of half-edges and branches at one of the two edges of $\Gamma_{00}$; the two choices give distinct components, because both nodes map to the same node of $Y$, so their branches may be distinguished in pairs. The factor of $mn$ comes from choices of nodes along each chain of rational curves, the factor of $(d-2)!^2$ comes from choices of labels of unramified points, and the denominator comes from automorphisms of $X$.

\item The reduced degree of $\nu_{00}$ is the number of branches in the normalization over a general cover $[f:X\to Y]$ of type $(\Delta_{00},\Delta_0)$. The complete local ring of such a cover is of the form
\begin{equation*}
\bC[[s_1,\ldots,s_m,t_1,\ldots,t_n,v]]/(s_i^a=t_j^b),
\end{equation*}
where the $s_i$ and $t_j$ are deformation parameters for the nodes of ramification indices $a,b$, respectively, and the deformation parameter $v$ comes from that on the target $\overline{M}_{0,4}$. Therefore, the number of branches upon normalization is $a^{m-1}b^{n-1}\gcd(a,b)$.

\item Fix $a,m,b,n$ as before, and let $\wt{\cB}_{(a,b)}\subset \wt{\cB}_{(\Delta_{00},\Delta_{0})}$ be the union of the components lying over $\wt{B}$ identified with $\barH_{(a,b)}$ as above. Then, a point of $\wt{\cB}_{(a,b)}$ is an $S_d$-cover constructed from Galois covers $f_0:\wt{X}_0\to Y_0\cong\bP^1$ ramified over four points with common ramification indices $2,2,\lcm(a,b),\lcm(a,b)$, by gluing together copies of $\wt{X}_0$ at ramification points of order $\lcm(a,b)$.

The Chern class of the normal bundle of $\wt{\cB}_{(a,b)}^{\red}$ in $\wt{\cH}_{2/1,d}$ is then given by $1-\psi$, where $\psi=\psi_{h}+\psi_{h'}$ is the sum of the $\psi$ classes on $\wt{\cB}_{(a,b)}^{\red}$ corresponding to the $S_d$-orbits of branches with ramification order $\lcm(a,b)$.

Let $\wt{\delta}_{(a,b)}:\wt{\cB}_{(a,b)}^{\red}\to\overline{M}_{0,4}$ be the target map. Then, by Lemma \ref{psi_kappa_comparison_forgetful}, we have
\begin{equation*}
\psi=\frac{2}{\lcm(a,b)}\cdot\wt{\delta}_{(a,b)}^{*}\psi_t,
\end{equation*}
where $\psi_t$ is any of the rationally equivalent $\psi$ classes on $\overline{M}_{0,4}$.

\item We claim that the multiplicity of $\wt{\cB}_{(a,b)}$ is given by $\lcm(a,b)/\max(a,b)$. Indeed, the map on complete local rings at $[f:X\to Y]$ induced by $\wt{\phi}':\wt{H}_{2/1,d}\to\barM_{2,2d-2}$ takes the deformation parameters corresponding to the two edges of $\Gamma_{00}$ to $t^{\lcm(a,b)/a}$ and $t^{\lcm(a,b)/b}$, where $t$ is the common deformation parameter of the nodes on $\wt{\cH}_{2/1,d}$. Thus, we get the claimed multiplicity in the direction corresponding to the smoothing of the nodes.

\item Analytically locally, $\wt{\cB}_{(a,b)}$ is a Cartier divisor in $\wt{\cH}_{2/1,d}$ cut out by the $\lcm(a,b)/\max(a,b)$-th power of an equation in the normal direction corresponding to the smoothing of the nodes. It follows that

\begin{equation*}
s(\wt{\cB}_{(a,b)},\wt{\cH}_{2/1,d})|_{\wt{\cB}_{(a,b)}^{\red}}=c(N_{\wt{\cB}_{(a,b)}/\wt{\cH}_{2/1,d}})^{-1}|_{\wt{\cB}_{(a,b)}^{\red}}=\frac{\lcm(a,b)}{\max(a,b)}+\left(\frac{\lcm(a,b)}{\max(a,b)}\right)^2\psi.
\end{equation*}

\begin{rem}
In the general setting, a component of $\wt{\cB}$ mapping to a normalized Harris-Mumford space $\wt{\cH}$ will always have the local picture of the closed embedding 
\begin{equation*}
\Spec\bC[[t_1,\ldots,t_r]]/(t_1^{c_1},\ldots,t_r^{c_r})\to \bC[[t_1,\ldots,t_n]],
\end{equation*}
where $t_1,\ldots,t_r$ correspond to deformation parameters at nodes of the target curve $\wt{X}$. The Segre class of such an embedding in this general situation may then be computed using the formula of \cite[\S 3.2]{aluffi}.
\end{rem}

By the excess intersection formula, the contribution of $\wt{\cB}_{(a,b)}$ to the desired intersection number is
\begin{equation*}
\int_{\cB_{(a,b)}^{\red}}c(N_{\overline{M}_{0,2d+2}/\barM_{2,2d-2}})|_{\cB_{\red}}\cdot s(\cB,\cH_{2/1,d})|_{\cB_{(a,b)}^{\red}}.
\end{equation*}
We have that $c(N_{\overline{M}_{0,2d+2}/\barM_{2,2d-2}})$ is given by $\psi$ class contributions corresponding to the two chosen nodes of $X$, with ramification indices $a$ and $b$. More precisely, by Lemma \ref{psi_kappa_comparison_forgetful}, we have
\begin{equation*}
c(N_{\overline{M}_{0,2d+2}/\barM_{2,2d-2}})|_{\cB_{(a,b)}^{\red}}=\left(1-\frac{\lcm(a,b)}{a}\psi\right)\left(1-\frac{\lcm(a,b)}{b}\psi\right)\\
\end{equation*}

\item We are now ready to put everything together. Continuing from above, the desired contribution is
\begin{align*}
&\int_{\cB_{(a,b)}^{\red}}\left(1-\frac{\lcm(a,b)}{a}\psi\right)\left(1-\frac{\lcm(a,b)}{b}\psi\right)\left(\frac{\lcm(a,b)}{\max(a,b)}+\left(\frac{\lcm(a,b)}{\max(a,b)}\right)^2\psi\right)\\
=&\frac{\lcm(a,b)^2}{\max(a,b)}\cdot\left(-\frac{1}{a}-\frac{1}{b}+\frac{1}{\max(a,b)}\right)\int_{\cB_{(a,b)}^{\red}}\psi\\
&=-\frac{\lcm(a,b)^2}{\min(a,b)\max(a,b)}\cdot\int_{\cB_{(a,b)}^{\red}}\psi\\
&=-\frac{\lcm(a,b)}{\gcd(a,b)}\int_{\cB_{(a,b)}^{\red}}\psi.
\end{align*}

On the other hand, we found in step (iii) that
\begin{align*}
\int_{\cB_{(a,b)}^{\red}}\psi&=\frac{2}{\lcm(a,b)}\int_{\cB_{(a,b)}^{\red}}\wt{\delta}_{(a,b)}^{*}\psi_t\\
&=\frac{2}{\lcm(a,b)}\cdot\deg(\wt{\delta}_{(a,b)})\int_{\overline{M}_{0,4}}\psi_t\\
&=\frac{2}{\lcm(a,b)}\cdot\deg(\delta_{(a,b)})\cdot\deg(\nu_{00})\int_{\overline{M}_{0,4}}\psi_t\\
&=\frac{2}{\lcm(a,b)}\cdot2\max(a,b)\cdot a^{m-1}b^{n-1}\gcd(a,b),
\end{align*}
where we by $\deg(\nu_{00})$ we mean the \textit{reduced} degree we computed in step (ii), that is, the degree of the induced map on reduced spaces, measured by the number of branches of the normalization. Therefore, the total contribution from $\cB_{(a,b)}$ is
\begin{equation*}
-4\max(a,b)\cdot a^{m-1}b^{n-1}.
\end{equation*}

Finally, we must multiply by the number of components of $\cB_{(\Delta_{00},\Delta_{0})}$ associated to $a,m,b,n$ as computed in step (i), and sum over all $a,m,b,n$. This gives the final excess contribution from covers of type $(\Delta_{00},\Delta_{0})$ of
\begin{equation*}
-(d-2)!^2\sum_{\substack{am+bn=d \\ m,n\ge0}}8\max(a,b) mn
\end{equation*}

\end{enumerate}

\underline{Type $(\Delta_{000},\Delta_{0})$.} See Figure \ref{Fig:Delta000_Delta0}. 

We follow the same method as before, so we omit the details in this case. We first assume that the $\Gamma_{00}$-structure on $X$ identifies two nodes of ramification indices $a,b$ with the edges of $\Gamma_{00}$.

Fix integers $a,m,b,n,k\ge1$. 
\begin{enumerate}
\item We form a cover $f:X\to Y$ of type $(\Delta_{000},\Delta_{0})$ by starting with two (distinguishable) components $X_1,X_2\subset X$ mapping to $\bP^1$ via a cover as in Lemma \ref{hurwitz_(a+b)_(a,b)_2}. Then, we connect these components by three chains of lengths $k-1,m-1,n-1$, respectively, totally ramified over the same $\bP^1$. Finally, we glue the two images of the nodes of $X$ on the target $\bP^1$ to get $Y$.

In this way, we get a total of
\begin{equation*}
4mn\cdot\frac{(d-2)!^2}{a^{m-1}b^{n-1}(a+b)^{k-1}}
\end{equation*}
components of $\cB_{(\Delta_{000},\Delta_{0})}$.

\item Normalizing the complete local ring of $[f]$ as above shows that the reduced degree of $\nu_{00}$ above such a cover is
\begin{equation*}
\gcd(a,b)^2a^{m-1}b^{n-1}(a+b)^{k-1}.
\end{equation*}

\item If $\wt{\cB}_{(a,b)}$ is the union of components of $\wt{\cB}_{(\Delta_{000},\Delta_{0})}$ lying over one of $\cB_{(\Delta_{000},\Delta_{0})}$ as above, then the covers parametrized by $\wt{\cB}_{(a,b)}$ are obtained by gluing Galois covers of ramification indices $2,2,\lcm(a,b,a+b),\lcm(a,b,a+b)$ in a similar way to the case of covers of type $(\Delta_{00},\Delta_0)$.

The Chern class of the normal bundle of $\wt{\cB}_{(a,b)}^{\red}$ is given by $1-\psi$, where $\psi=\psi_h+\psi_{h'}$ is the sum of $\psi$ classes on $\wt{\cB}_{(a,b)}^{\red}$ corresponding to the $S_d$-orbits of branches with ramification order $\lcm(a,b,a+b)=ab(a+b)/\gcd(a,b)$.

Let $\wt{\delta}_{(a,b)}:\wt{\cB}^{\red}_{(a,b)}\to\overline{M}_{0,4}$ be the target map. Then, we have
\begin{equation*}
\psi=\frac{2\gcd(a,b)}{ab(a+b)}\wt{\delta}_{(a,b)}^{*}\psi_t.
\end{equation*}

\item The multiplicity of $\wt{\cB}_{(a,b)}$ is given by
\begin{equation*}
\frac{\lcm(a,b)\cdot(a+b)}{\gcd(a,b)\cdot\max(a,b)},
\end{equation*}
and occurs in the direction of smoothing of the nodes of an $S_d$-cover $\wt{X}\to Y$ parametrized by $\wt{\cB}_{(a,b)}$

\item We have
\begin{equation*}
s(\wt{\cB}_{(a,b)},\wt{\cH}_{2/1,d})|_{\wt{\cB}_{(a,b)}^{\red}}=\frac{\lcm(a,b)(a+b)}{\gcd(a,b)\max(a,b)}+\left(\frac{\lcm(a,b)(a+b)}{\gcd(a,b)\max(a,b)}\right)\psi
\end{equation*}
and
\begin{equation*}
c(N_{\overline{M}_{0,2d+2}/\barM_{2,2d-2}})|_{\cB_{(a,b)}^{\red}}=\left(1-\frac{b(a+b)}{\gcd(a,b)}\psi\right)\left(1-\frac{a(a+b)}{\gcd(a,b)}\psi\right)\\
\end{equation*}

\item Applying the excess intersection formula as before and incorporating the above data yields a total contribution of
\begin{equation*}
-8(d-2)!^2\cdot\sum_{\substack{(a+b)k+am+bn \\ k,m,n\ge1}}(a+b)mn.
\end{equation*}

\end{enumerate}

Finally, we must also consider pointed curves where the $\Gamma_{00}$-structure on $X$ identifies one of the nodes of ramification index $a+b$ with an edge of $\Gamma_{00}$. Following the steps above, this yields an additional contribution of
\begin{equation*}
-16(d-2)!^2\cdot\sum_{\substack{(a+b)k+am+bn \\ k,m,n\ge1}}bkm.
\end{equation*}

\subsection{Proof of Theorem \ref{d-elliptic_recover}}

We will need the following lemma. We thank David Yang for providing the proof.

\begin{lem}\label{david_lemma}
We have
\begin{equation*}
\sum_{\substack{am+bn=d \\ a,b,m,n>0}}(mn-am)\min(a,b)=0.
\end{equation*}
and 
\begin{equation*}
\sum_{\substack{am+bn=d \\ a,b,m,n>0}}(mn-bn)\min(a,b)=0.
\end{equation*}
\end{lem}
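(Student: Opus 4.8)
The plan is to first strip off the evident symmetry, then reduce the problem to a single numerical identity that I would attack by a region decomposition (or, equivalently, by generating functions).

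First I would observe that the two displayed sums are interchanged by the involution $(a,m,b,n)\mapsto(b,n,a,m)$ of the solution set of $am+bn=d$: this preserves $\min(a,b)$ and carries $mn-am$ to $mn-bn$. Hence the two sums are equal, and proving either one is equivalent to proving their sum vanishes, namely
\[
\sum_{\substack{am+bn=d\\ a,b,m,n>0}}(2mn-d)\,\min(a,b)=0,
\]
where I have used $am+bn=d$ to rewrite $am+bn$ as $d$.

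Next I would split the sum according to $a<b$, $a=b$, $a>b$. The same involution identifies the $a>b$ block with the $a<b$ block, so the identity becomes
\[
2\sum_{\substack{a<b\\ am+bn=d}}(2mn-d)\,a \;+\; \sum_{\substack{a=b\\ am+bn=d}}(2mn-d)\,a \;=\;0.
\]
The diagonal term is only one-dimensional: writing $a=b$ and $s:=d/a$ (so $a\mid d$ and $m+n=s$), it collapses through the elementary evaluation $\sum_{m=1}^{s-1}m(s-m)=\binom{s+1}{3}$ to an explicit expression in $\sigma_1(d)$, $\sigma_2(d)$ and the divisor count $\tau(d)$.

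The main obstacle is the off-diagonal sum $\sum_{a<b}(2mn-d)\,a$, a genuine two-dimensional divisor sum. The cancellation here is global rather than term-by-term: grouping solutions by the shape $(am,bn)=(d_1,d_2)$ does not produce vanishing contributions individually (for $d=5$ the shapes $(am,bn)=(2,3)$ and $(1,4)$ contribute $+1$ and $-1$), so one cannot finish by a local bijection. Equivalently, inserting $\min(a,b)=\sum_{c\ge1}\mathbf 1[a\ge c]\,\mathbf 1[b\ge c]$ and summing the free variables $m,n$, the whole statement becomes the single power-series identity $\sum_{c\ge1}\mathsf P_c^2=\sum_{c\ge1}\mathsf D_c\,\vartheta\mathsf D_c$, where $\mathsf D_c=\sum_{a\ge c}q^a/(1-q^a)$, $\mathsf P_c=\sum_{a\ge c}q^a/(1-q^a)^2$ and $\vartheta=q\,d/dq$; this rests on the relations $q^a/(1-q^a)^2=x_a+x_a^2$ and $\vartheta x_a=a\,q^a/(1-q^a)^2$ for $x_a=q^a/(1-q^a)$. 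I expect establishing this off-diagonal cancellation—either by evaluating the divisor sum in closed form and checking it against the diagonal, or by proving the displayed power-series identity directly—to be the crux of the argument.
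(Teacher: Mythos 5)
Your preliminary reductions are correct: the swap $(a,b,m,n)\mapsto(b,a,n,m)$ does show the two sums are equal, the combined sum is $\sum_{am+bn=d}(2mn-d)\min(a,b)$, the split into $a<b$, $a=b$, $a>b$ blocks is valid, and the diagonal block does collapse to an elementary divisor expression. But the proposal stops exactly where the proof has to happen. You yourself identify the off-diagonal cancellation as ``the crux of the argument,'' and then offer only two candidate strategies (a closed-form evaluation of the two-dimensional divisor sum, or the power-series identity $\sum_c \mathsf P_c^2=\sum_c \mathsf D_c\,\vartheta\mathsf D_c$) without carrying out either. Neither is routine: the divisor sum $\sum_{a<b,\,am+bn=d}(2mn-d)a$ has no termwise or shape-by-shape cancellation (as your $d=5$ example shows), and the cancellation in your formulation genuinely mixes the diagonal and off-diagonal pieces, so no sign-reversing involution on the $a<b$ block alone can finish it. As written, this is a plan with a hole at its center, not a proof.

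For comparison, the paper's proof avoids the global/analytic difficulty entirely by a different first move: instead of symmetrizing, it replaces $\min(a,b)$ by $\min(a,b)-b$ inside $\sum m(n-a)\min(a,b)$, which is legitimate because $\sum_{am+bn=d}m(n-a)b=0$ (itself a consequence of the symmetries $(a,b,m,n)\mapsto(b,a,n,m)$ and $(a,b,m,n)\mapsto(m,n,a,b)$). Since $\min(a,b)-b$ vanishes unless $a<b$, this reduces the claim to
\begin{equation*}
\sum_{\substack{am+bn=d \\ a<b}} m(n-a)(a-b)=0,
\end{equation*}
which is then killed term-by-term by the explicit sign-reversing involution $(a,b,m,n)\mapsto(n,n+m,b-a,a)$ on the index set $\{a<b\}$. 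The lesson is that the choice of reduction matters: the paper's substitution produces a summand on which a local (involution-based) cancellation exists, whereas your symmetrized form $(2mn-d)a$ destroys that structure and forces you into genuinely global identities that you have not established. To salvage your route you would need to actually prove the stated power-series identity (e.g.\ via quasimodularity/Eisenstein series manipulations), which is considerably more machinery than the problem requires.
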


\begin{proof}
The two statements are clearly equivalent. We prove the first. We have
\begin{align*}
\sum_{\substack{am+bn=d \\ a,b,m,n>0}}m(n-1)\min(a,b)&=\sum_{\substack{am+bn=d \\ a,b,m,n>0}} m(n-a)(\min(a,b)-b)\\
&=\sum_{\substack{am+bn=d \\ a,b,m,n>0 \\ a<b}} m(n-a)(a-b).
\end{align*}
On the index set in the last sum, we have an involution
\begin{equation*}
(a,b,m,n)\mapsto (n,n+m,b-a,a)
\end{equation*}
on which the terms being summed become $(b-a)(a-n)(-m)$, that is, they change sign. The conclusion is now immediate.
\end{proof}

\begin{proof}[Proof of Theorem \ref{d-elliptic_recover}]
We have already obtained the intersection number with $\Delta_{01}$ in \S\ref{delta01_intersection}. It remains to combine the above contributions to the intersection number with $\Delta_{00}$. We have the desired intersection number coming from covers of type $(\Delta_{00},\Delta_{01})$; it suffices to show now that the other three contributions cancel. Ignoring the common factors of $(d-2)!^2$, we have
\begin{align*}
-&8\left(\sum_{\substack{am+bn=d \\ m,n\ge1}}\max(a,b)mn\right)-8\left(\sum_{\substack{(a+b)k+am+bn=d \\ k,m,n\ge1}}(a+b)mn\right)-16\left(\sum_{\substack{(a+b)k+am+bn \\ k,m,n\ge1}}bkm\right)\\
&+\sum_{\substack{(a+b)k+am+bn=d \\ k\ge1, m,n\ge0}}[(8mn+4m+4n)(a+b)+(16km+8k-8m)b]\\
=-&8\left(\sum_{\substack{am+bn=d \\ m,n\ge1}}\max(a,b)mn\right)+4\left(\sum_{\substack{(a+b)k+am+bn=d \\ k,m,n\ge1}}[(m+n)(a+b)+2(k-m)b]\right)\\
&+16\left(\sum_{\substack{(a+b)k+am=d \\ k\ge1,m\ge0}}bkm\right)
\end{align*}
where we have cancelled the second and third sums into the fourth, but with extra terms left over remaining from allowing $n=0$ in the sum over $16bkm$.

Now, note that we can replace the sum over $2(k-m)b$ with the sum over $(k-m)b+(k-n)a$, by symmetry. This yields simply
\begin{equation*}
-8\left(\sum_{\substack{am+bn=d \\ m,n\ge1}}\max(a,b)mn\right)+4d\left(\sum_{\substack{(a+b)k+am+bn=d \\ k,m,n\ge1}}1\right)+16\left(\sum_{\substack{(a+b)k+am=d \\ k\ge1,m\ge0}}bkm\right)
\end{equation*}
We will now combine these sums by changing the index conditions on the second and third terms also to be $am+bn=d$. We rewrite the condition in the second sum as $a(k+m)+b(k+n)=d$, and change coordinates to get:
\begin{equation*}
-8\left(\sum_{\substack{am+bn=d \\ m,n\ge1}}\max(a,b)mn\right)+4d\left(\sum_{\substack{am+bn=d \\ m,n\ge1}}\min(m,n)\right)+16\left(\sum_{\substack{(a+b)k+am=d \\ k\ge1,m\ge0}}bkm\right)
\end{equation*}
We rewrite the third sum as
\begin{align*}
16\left(\sum_{\substack{(a+b)k+am=d \\ k\ge1,m\ge0}}bkm\right)&=8\left(\sum_{\substack{a(k+m)+bk=d \\ k\ge1,m\ge0}}bkm\right)+8\left(\sum_{\substack{ak+b(k+n)=d \\ k\ge1,n\ge0}}akn\right)\\
&=8\sum_{\substack{am+bn=d \\ m,n\ge1}}\theta(\min(m,n))\min(m,n)|m-n|,
\end{align*}
where $\theta(m)=a$ and $\theta(n)=b$. On the other hand, combining back with the second sum, note that
\begin{equation*}
4d\min(m,n)+8\theta(\min(m,n))\min(m,n)|m-n|=8mn(a+b)-4d\min(m,n).
\end{equation*}
Therefore, putting everything together, we are left with
\begin{align*}
&\quad\sum_{\substack{am+bn=d \\ m,n\ge1}}[-8\max(a,b)mn+8mn(a+b)-4d\min(m,n)]\\
&=\sum_{\substack{am+bn=d \\ m,n\ge1}}[8mn\min(a,b)-4(am+bn)\min(m,n)]\\
&=0,
\end{align*}
by Lemma \ref{david_lemma}. The proof is complete.
\end{proof}

%
%
%
%


\begin{thebibliography}{AMSa}

\bibitem[ACV03]{acv} Dan Abramovich, Alessio Corti, Angelo Vistoli, \emph{Twisted bundles and admissible covers}, Commun. Algebra \textbf{8} (2003), 3547-3618

\bibitem[Alu13]{aluffi} Paolo Aluffi, \emph{Segre classes of monomial schemes}, Electron. Res. Announc. Math. Sci., \textbf{20} (2013), 55-70

\bibitem[AC87]{arbarellocornalba_picard} Enrico Arbarello and Maurizio Cornalba, \emph{The Picard groups of the moduli spaces of curves}, Topology \textbf{26} (1987), 153–171

\bibitem[AC96]{arbarellocornalba} Enrico Arbarello and Maurizio Cornalba, \emph{Combinatorial and algebro-geometric cohomology classes on the moduli spaces of curves}, J. Algebraic Geom. \textbf{5} (1996), 705-749

\bibitem[BR11]{br} Jos\'{e} Bertin and Matthieu Romagny, \emph{Champs de Hurwitz}, M\'{e}m. Soc. Math. Fr., no. 125-126 (2011)

\bibitem[Bin05]{bini} Gilberto Bini, \emph{Chern classes of the moduli stack of curves}, Math. Res. Lett. \textbf{12} (2005), 759-766

\bibitem[Bla20]{blankers} Vance Blankers, \emph{Hyperelliptic classes are rigid and extremal in genus two}, \'{E}pijournal Geom. Alg\'{e}brique \textbf{4} (2020), Article Nr. 2

\bibitem[CC14]{cc1} Dawei Chen and Izzet Coskun, \emph{Extremal effective divisors on $\barM_{1,n}$}, Math. Ann. \textbf{359} (2014), 891-908.

\bibitem[CC15]{cc2} Dawei Chen and Izzet Coskun, \emph{Extremal higher codimension cycles on moduli spaces of curves}, Proc. London Math. Soc. \textbf{111} (2015), 181–204.

\bibitem[CP16]{cp} Dawei Chen and Anand Patel, \emph{Extremal effective divisors of Brill-Noether and Gieseker-Petri type in $\barM_{1,n}$}, Adv. Geom. \textbf{16} (2016), 231–242.

\bibitem[CT16]{ct} Dawei Chen and Nicola Tarasca, \emph{Extremality of loci of hyperelliptic curves with marked
Weierstrass points}, Algebra Number Theory \textbf{10} (2016),1935–1948.

\bibitem[Cla17]{clader} Emily Clader, \emph{Relations on $\barM_{g,n}$ via orbifold stable maps}, Proc. Amer. Math. Soc. \textbf{145} (2017), 11-21


\bibitem[DSvZ]{sage} Vincent Delecroix, Johannes Schmitt, and Jason van Zelm, \emph{admcycles -- a Sage package for calculations in the tautological ring of the moduli space of stable curves}, arXiv 2002.01709


\bibitem[EH87]{eh_kodaira} David Eisenbud and Joe Harris, \emph{The Kodaira dimension of the moduli space of currves of genus $\ge23$}, Invent. Math. \textbf{90} (1987), 359-387.

\bibitem[Fab97]{faber_non-vanishing} Carel Faber, \emph{A non-vanishing result for the tautological ring of $\cM_g$}, arXiv 9711219

\bibitem[Fab99]{faber_conj} Carel Faber, \emph{A conjectural description of the tautological ring of the moduli space of
curves}, in ``Moduli of curves and abelian varieties'', C. Faber, E. Looijenga eds., 109-129, Aspects. Math. \textbf{E33} (1999), 109-129.



\bibitem[Ful98]{fulton} William Fulton, \emph{Intersection Theory} (1998)

\bibitem[GP03]{gp} Thomas Graber and Rahul Pandharipande, \emph{Constructions of nontautological classes on moduli spaces of curves}, Michigan Math. J. \textbf{51} (2003), 93-109

\bibitem[GV01]{gv} Thomas Graber and Ravi Vakil, \emph{On the tautological ring of $\barM_{g,n}$}, Turk. J. Math. \textbf{25} (2001), 237-243.

\bibitem[Har84]{harris} Joe Harris, \emph{On the Kodaira dimension of the moduli space of curves, II. The even genus case}. Invent. Math. \textbf{75} (1984), 437-466.

\bibitem[HM82]{hm} Joe Harris and David Mumford, \emph{On the Kodaira dimension of the moduli space of curves}. Invent. Math. \textbf{67} (1982), 23-86

\bibitem[JKK05]{jkk} Tyler J. Jarvis, Ralph Kaufmann, and Takashi Kimura, \emph{Pointed admissible $G$-covers and $G$-equivariant cohomological field theories}, Compos. Math., \textbf{144} (2005), 926-978

\bibitem[Kon92]{kontsevich} Maxim Kontsevich, \emph{Intersection theory on the moduli space of curves and the matrix airy function}, Comm. Math. Phys. \textbf{147} (1992), 1-23


\bibitem[L20]{lian_qmod} Carl Lian, \emph{$d$-elliptic loci in genus 2 and 3}, Int. Math. Res. Not. IMRN, to appear.

\bibitem[L21]{lian_nontaut} Carl Lian, \emph{Non-tautological Hurwitz cycles}, arXiv 2101.11050.


\bibitem[Loo95]{looijenga} Eduard Looijenga, \emph{On the tautological ring of $\cM_g$}, Invent. Math. \textbf{121} (1995), 411-419

\bibitem[Moc95]{mochizuki} Shinichi Mochizuki, \emph{The geometry of the compactification of the Hurwitz scheme}, Publ. Res. Inst. Math. \textbf{31} (1995), 355-441

\bibitem[Mum83]{mumford} David Mumford, \emph{Towards an enumerative geometry of the moduli spaces of curves}, Arith. Geom. \textbf{2} (1983), 271-328


\bibitem[PPZ15]{ppz} Rahul Pandharipande, Aaron Pixton, and Dmitri Zvonkine, \emph{Relations on $\barM_{g,n}$ via 3-spin structures}, J. Amer. Math. Soc. \textbf{28} (2015), 279-309

\bibitem[PS20]{pandharipandeschmitt} Rahul Pandharipande and Johannes Schmitt, \emph{Zero cycles on the moduli space of curves}, \'{E}pijournal Geom. Alg\'{e}brique \textbf{4} (2020), Article Nr. 12
279–309.

\bibitem[Pik95]{pikaart} Martin Pikaart, \emph{An orbifold partition of $\overline{M}_{g,n}$}, in \emph{The moduli space of curves} (R. Dijkgraaf, C. Faber, and G. van der Geer eds.), Birkh\"{a}user (1995), 467-482

\bibitem[Pix12]{pixton} Aaron Pixton, \emph{Conjectural relations in the tautological ring of $\barM_{g,n}$}, arXiv 1207.1918

\bibitem[SvZ18]{schmittvanzelm} Johannes Schmitt and Jason van Zelm, \emph{Intersection of loci of admissible covers with tautological classes}, Selecta Math. (N.S.) \textbf{26}, Article Nr. 79 (2020).

\bibitem[vZ18]{vanzelm} Jason van Zelm, \emph{Nontautological bielliptic cycles}, Pacific J. Math. \textbf{294} (2018), 495-504

\end{thebibliography}
\end{document}